\documentclass[12pt,a4paper]{article}

\usepackage{amssymb,amsmath,amsthm,xspace,epsfig, amsfonts,amscd, bm}

\usepackage[left=1.5cm, right=1.5cm, top=1.5cm]{geometry} 

\usepackage{mathrsfs}
\usepackage{epsfig}
\usepackage{tikz-cd}

\begin{document}

\newcommand{\N}{\mathbb{N}}
\newcommand{\R}{\mathbb{R}}
\newcommand{\Z}{\mathbb{Z}}
\newcommand{\Q}{\mathbb{Q}}
\newcommand{\C}{\mathbb{C}}
\newcommand{\PP}{\mathbb{P}}
\newcommand{\SSS}{\mathbb{S}}

\newcommand{\LL}{\Bbb L}
\newcommand{\OO}{\mathcal{O}}

\newcommand{\esp}{\vskip .3cm \noindent}
\mathchardef\flat="115B

\newcommand{\lev}{\text{\rm Lev}}

\newcommand{\essinf}{\text{\rm essinf\,}}
\newcommand{\jolt}{\text{\rm jolt}}

\def\ut#1{$\underline{\text{#1}}$}
\def\CC#1{${\cal C}^{#1}$}
\def\h#1{\hat #1}
\def\t#1{\tilde #1}
\def\wt#1{\widetilde{#1}}
\def\wh#1{\widehat{#1}}
\def\wb#1{\overline{#1}}

\def\restrict#1{\bigr|_{#1}}

\def\hu#1#2{\mathsf{U}_{fin}\bigl({#1},{#2}\bigr)}
\def\ch#1#2{\left(\begin{array}{c}#1 \\ #2 \end{array}\right)}

\newtheorem{lemma}{Lemma}[section]

\newtheorem{thm}[lemma]{Theorem}
\newtheorem*{thm*}{Theorem}
\newtheorem*{lemma?}{Lemma ??}

\newtheorem{defi}[lemma]{Definition}
\newtheorem{conj}[lemma]{Conjecture}
\newtheorem{cor}[lemma]{Corollary}
\newtheorem{prop}[lemma]{Proposition}
 
\newtheorem{prob}[lemma]{Problem}
\newtheorem{qu}[lemma]{Question}
\newtheorem{q}[lemma]{Questions}
\newtheorem*{rem}{Remark}
\newtheorem{examples}[lemma]{Examples}
\newtheorem{example}[lemma]{Example}

\theoremstyle{remark}
\newtheorem{claim}{Claim}[lemma]
\renewcommand{\theclaim}{\arabic{section}.\arabic{lemma}.\arabic{claim}}
\renewcommand{\theequation}{\arabic{section}.\arabic{equation}}

\title{Filling the gaps in an unpublished example of Nyikos: a 
       countably compact non-compact manifold under $\clubsuit_C$}
%\date{}
\author{Mathieu Baillif}
   
\maketitle

\begin{abstract}
   \footnotesize
   We provide details for a Theorem which is only available on a unfinished preliminary draft of P. Nyikos:
   the existence 
   under $\clubsuit_C$
   of a hereditarily collectionwise normal countably compact non-compact manifold 
   which does not contain a copy of $\omega_1$.
   This shows in particular that {\bf MA + $\neg$CH} does not imply 
   the existence of a copy of $\omega_1$ in a
   countably compact non-compact manifold (it is known that {\bf PFA} does imply it).
   The said manifold is obtained from a principal $\SSS^1$-bundle over the long ray, and
   some structural Theorems for these spaces (due to Nyikos as well) are also proved.
   We show that the same type of theorems hold for
   $n$-to-$1$ closed preimages of $\omega_1$ and $\Z_n$-``bundles'' over $\omega_1$, where
   $\Z_n$ is the additive group of integers modulo $n$.
   A small generalization of $\clubsuit_C$ is also quickly investigated.
\end{abstract}

\tableofcontents

%%%%%%%%%%%%%%%%%%%%%%%%%%%%%%%%%%%%%%%%%%%%%%%%%%%%%%%%%%%%%%%%%%%%%%%%%%%%%%%%%%%%%%%%%%%%%%%%%%%%%%%%%%%
%%%%%%%%%%%%%%%%%%%%%%%%%%%%%%%%%%%%%%%%%%%%%%%%%%%%%%%%%%%%%%%%%%%%%%%%%%%%%%%%%%%%%%%%%%%%%%%%%%%%%%%%%%%
%%%%%%%%%%%%%%%%%%%%%%%%%%%%%%%%%%%%%%%%%%%%%%%%%%%%%%%%%%%%%%%%%%%%%%%%%%%%%%%%%%%%%%%%%%%%%%%%%%%%%%%%%%%5
\section{Introduction}

In this note, a {\em manifold} is a connected Hausdorff space locally homeomorphic to $\R^n$ for some $n$
(which is fixed by connectedness). We use standard terminology, 
see the end of this introduction if in need of definitions.
The two following theorems were proved quite some time ago.

\begin{thm}[{\bf Countable compactness in manifolds}]
   \ \\
   (a) (Z. Balogh, 1989 \cite[Corollary 2.7]{Balogh:1989}) 
       Under {\bf PFA}, any countably compact non-compact manifold contains a copy of $\omega_1$.\\
   (b) (M.E. Rudin, 1976 \cite[Corollary 2]{RudinZenor}) Under $\diamondsuit$, there is a 
       perfectly normal non-compact countably 
       compact manifold which does not contain any
       copy of $\omega_1$.
   \label{thm:ctblecomp}
\end{thm}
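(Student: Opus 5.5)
This theorem collects two known results, (a) due to Balogh and (b) due to Rudin, which the paper cites rather than re-proves. A self-contained argument would follow the original sources, and the plan below is written with that in mind.

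\emph{Part (a).} Let $M$ be a countably compact non-compact manifold. First, $M$ is locally compact and locally metrizable, so every point has a countable base of open sets with compact closure. Since $M$ is not compact, we build, by transfinite recursion, an increasing chain of open sets $U_\alpha$ with compact (hence second countable) closures. At successor steps we use non-compactness to escape $\overline{U_\alpha}$. At limit steps we take unions; countable compactness, together with first countability, ensures that the closure of a countable union of Lindel\"of pieces is compact whenever the union is not already everything.

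The key step is to apply \textbf{PFA}, in the form of Balogh's reflection/free sequence argument, to show that $M$ cannot be ``too thin''. Concretely, we aim to produce a closed uncountable free sequence $\{x_\alpha:\alpha<\omega_1\}$ with $x_\alpha\in \overline{U_{\alpha+1}}\setminus \overline{U_\alpha}$, chosen so that $\alpha\mapsto x_\alpha$ is continuous at limits. The resulting set is then homeomorphic to $\omega_1$.

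Continuity at limits is the main obstacle. For each limit $\delta$, the points $x_\alpha$ with $\alpha<\delta$ have cluster points, and we need the chosen points to converge exactly to $x_\delta$. The tool for this is Balogh's theorem that, under \textbf{PFA}, a countably compact, locally compact space in which every point has countable character either is compact or contains a copy of $\omega_1$. This in turn rests on the \textbf{PFA} fact that first countable countably compact spaces with no copy of $\omega_1$ are sequentially compact in a strong, ``$\omega_1$-free'' sense. I would prove it via the poset of finite approximations to a closed copy of $\omega_1$, using properness from countable elementary submodels.

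\emph{Part (b).} Following Rudin's construction, we build a surface by transfinite recursion of length $\omega_1$, taking $M=\bigcup_\alpha M_\alpha$ with each $M_\alpha$ a metrizable open surface (say, a copy of $\R^2$ with boundary glued appropriately). At limit stages we take unions, and at successor stages we attach a new collar. $\diamondsuit$ guesses, at each limit $\delta$, a countable subset $A_\delta\subset M_\delta$, which we view as a candidate closed discrete set or as a candidate pair of disjoint closed sets.

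At stage $\delta$, the construction does the following:
\begin{itemize}
\item[(i)] if $A_\delta$ is infinite and closed discrete in $M_\delta$, it makes $A_\delta$ accumulate in the new boundary, which secures countable compactness;
\item[(ii)] it ensures that guessed disjoint closed sets are separated by a $G_\delta$ function, which secures perfect normality.
\end{itemize}
The main difficulty is (i) together with the absence of $\omega_1$. To rule out a copy of $\omega_1$, we make each new boundary point a limit only of the guessed sequences, so that no closed $\omega_1$-chain of points has continuous limits. This uses the $\diamondsuit$ guess of any such chain, which at a stationary set of $\delta$ is then defeated by adding limits that are not its continuation.
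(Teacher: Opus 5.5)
You are right that the paper does not prove this theorem; it is quoted from Balogh and from Rudin--Zenor as background. The self-contained argument you sketch, however, does not work.

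\textbf{Part (a).} The decisive step is circular. The ``tool'' you invoke (under \textbf{PFA}, a countably compact, locally compact, first countable space is compact or contains a copy of $\omega_1$) gives (a) at once, since manifolds are locally compact and first countable. So nothing has been proved. The \textbf{PFA} fact you say it rests on is empty, because every first countable countably compact space is already sequentially compact in ZFC.

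The object you aim for is also contradictory. A free sequence is never continuous at a limit $\delta$: otherwise $x_\delta\in\overline{\{x_\alpha:\alpha<\delta\}}\cap\overline{\{x_\alpha:\alpha\ge\delta\}}$.

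Your limit step is not justified either. As a general principle it says that countably compact manifolds are $\omega$-bounded (countable sets have compact closures), and this fails for the manifold in (b). A connected non-compact $\omega$-bounded manifold is a continuous increasing union of open sets $U_\alpha$ with compact closures $\overline{U_\alpha}\subset U_{\alpha+1}$. A Fodor argument then shows that the closed set $\bigcup_{\delta\in\Lambda}(\overline{U_\delta}\setminus U_\delta)$ is not a $G_\delta$, so such a manifold is never perfectly normal.

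Moreover, even when such a chain exists it gives nothing by itself. Nyikos' manifold in this paper is exactly such a union, and under $\clubsuit_C$ (which is compatible with \textbf{MA}$+\neg$\textbf{CH}) it has no copy of $\omega_1$. Turning the chain into a copy of $\omega_1$ is the whole difficulty. The genuine \textbf{PFA} argument of Balogh is never identified, let alone carried out.

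\textbf{Part (b).} You misplace the difficulty. No separate diagonalization against copies of $\omega_1$ is needed. Perfect normality is hereditary, and $\omega_1$ is not perfectly normal: its closed set of limit ordinals is not a $G_\delta$, by the same Fodor argument. So a perfectly normal manifold automatically contains no copy of $\omega_1$.

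Your mechanism for this, making each new boundary point a limit ``only of the guessed sequences'', is in any case impossible in a surface. There a point is the limit of continuum many sequences that no $\diamondsuit$-sequence controls.

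What Rudin's construction really has to achieve is perfect normality while still forcing every countable closed discrete set to accumulate. Item (ii) (``separated by a $G_\delta$ function'') is no argument for this. You would need to explain two things:
\begin{itemize}
\item how a closed $F\subset M$, whose traces $F\cap M_\delta$ are correctly guessed only at stationarily many $\delta$, ends up a $G_\delta$ in the final space despite all the boundary points added at later stages;
\item why the accumulation points added for (i) do not destroy this.
\end{itemize}

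As written, neither part is established. Citing the two sources, as the paper does, is the appropriate treatment here.
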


Countably compact metrizable manifolds are compact, hence the manifolds above are non-metrizable.
Here, {\bf PFA} is the proper forcing axiom and $\diamondsuit$ is Jensen's diamond axiom.
Since then, many variants of Rudin's construction have been given, but to our knowledge
all published ones use at least some flavor of $\diamondsuit$ incompatible with {\bf MA + $\neg$CH}, that is,
Martin's axiom plus the negation of the continuum hypothesis. Since
{\bf PFA} implies {\bf MA + $\neg$CH},
the following question is natural:

\begin{q}
   Is {\bf MA + $\neg$CH} enough for (a)~?
\end{q}

P. Nyikos answered this question in the negative.
\begin{thm}[{Nyikos, circa 2010 \cite{Nyikos:Antidiamond}}]\ \\
   \label{thm:intro}
   There is a countably compact non-compact manifold without any
   copy of $\omega_1$ under $\clubsuit_C$.
   The manifold is moreover hereditarily collectionwise normal.
\end{thm}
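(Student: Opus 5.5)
The construction is a principal $\SSS^1$-bundle over the long ray, modified over $\omega_1$ so that the total space has no copy of $\omega_1$, and then made countably compact by a $\clubsuit_C$-driven inductive construction.

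The first step is to build the manifold $M$ as the total space of a principal $\SSS^1$-bundle $p\colon M\to\LL_+$ over the (closed) long ray. Write $\LL_+=\bigcup_{\alpha<\omega_1}[0,\alpha]$. Over each countable initial segment $[0,\alpha]$ the bundle is trivial, so $p^{-1}([0,\alpha])\cong[0,\alpha]\times\SSS^1$. The bundle is therefore determined by an $\omega_1$-sequence of transition data, which are continuous maps $g_\alpha\colon[\alpha,\alpha+1]\to\SSS^1$ gluing the trivializations, built by transfinite induction. At limit stages we take unions. $M$ is then automatically a $2$-manifold, since it is locally (interval)$\times\SSS^1$, and it is non-compact and non-metrizable. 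The aim is to pick the gluing maps so that two things hold. First, $M$ should contain no copy of $\omega_1$: for this, every closed copy of $\omega_1$ would project to a club $C\subset\omega_1\subset\LL_+$ together with a choice of points in the fibers, and we want such a selection never to converge. Second, $M$ should be countably compact. Note that $\LL_+$ itself is countably compact but contains $\omega_1$, so both properties must come from the twisting.

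The second step, which I expect to be the heart of the matter, is to use $\clubsuit_C$ to kill all potential copies of $\omega_1$ while keeping countable compactness. $\clubsuit_C$ gives, for each limit $\alpha$, a ladder (cofinal $\omega$-sequence) $L_\alpha$ such that every club (or uncountable set) contains some $L_\alpha$ modulo finite. At stage $\alpha$, I would choose the twist $g$ on the interval approaching $\alpha$ so that the chosen points along $L_\alpha$, in trivialization coordinates, have angles that become rotated in a way making them fail to converge to a single point of the fiber over $\alpha$. Concretely, the total rotation accumulated along $L_\alpha$ can be made to spread densely around the circle, so that the points over $L_\alpha$ accumulate onto the whole fiber $p^{-1}(\alpha)$. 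Any copy of $\omega_1$ projects to a set containing a club, hence contains some $L_\alpha$ modulo finite, and its points over $L_\alpha$ then cannot converge to a single point. This contradicts continuity of the embedding of $\omega_1$ at $\alpha$. The difficulty is that a copy of $\omega_1$ chooses its points in the fibers arbitrarily, not just according to a fixed section. One must argue that for a club of $\alpha$ the chosen angles are determined, up to small error, by countable information, so that the ladder also "guesses" them. Alternatively, one can use the fact that the fiber is compact: restrict to a stationary set on which the angle readings relative to a fixed trivialization converge, and arrange via the twist that any single convergent angle pattern is destroyed. This is where I would spend the most effort, possibly strengthening $\clubsuit_C$ to guess pairs (club, countable angle data).

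The third step is to verify the remaining properties. Countable compactness: a countable closed discrete set has bounded projection, so it lies in a compact $p^{-1}([0,\alpha])$, and countable compactness follows. Hereditary collectionwise normality: use that $\LL_+$ is hereditarily collectionwise normal and that $p$ is a perfect map with compact fibers. Closed subsets separate via their projections, except for those "fiberwise tangled" over stationary sets, and these are handled by a pressing-down argument on $\omega_1$ together with compactness of $\SSS^1$.
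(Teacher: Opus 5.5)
You locate the difficulty correctly in your second step, but you don't resolve it, and the mechanism you sketch can't. However the fiber over a limit $\alpha$ is attached, the selections over $L_\alpha$ that follow the chart at $\alpha$ converge into $M_\alpha$. So no twist acting along $L_\alpha$, however densely it spreads the rotation, stops the points of a copy $W$ of $\omega_1$ over $L_\alpha$ from converging. There is also no global trivialization against which to read the angles of $W$.

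The missing idea is that $W$ is already pinned down at the intermediate limits, relative to the local charts $a_\eta$. Write $W\cap M_\eta=\{\langle\eta,\phi_\eta\rangle\}$ for $\eta$ in a club $C$. Closedness (Lemma \ref{lemma:converge}) gives, for each $\eta\in C$, some $\beta(\eta)<\eta$ with $\phi_\xi$ within $\theta/4$ of $a_\eta(\xi)+\phi_\eta$ for $\xi\in C\cap(\beta(\eta),\eta)$. By Fodor, $\beta(\eta)=\beta$ on a stationary $S_0\subset\{\eta\in C\,:\,L_\eta\subset C\}$, the latter set being stationary by $\clubsuit_C$. Since $S_0$ is merely stationary, no ladder $L_\alpha$ can be expected to lie inside it. Instead, take $\alpha\in S_0$ that is a limit of points of $S_0$, and an \emph{arbitrary} sequence $\eta_n\in S_0$ converging to $\alpha$. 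The chart at $\alpha$ must satisfy $\jolt_{\alpha,\eta_n}(\xi_n)=\theta$ for some $\xi_n\in L_{\eta_n}$ above $\beta$ (Lemma \ref{lemma:jolt}). Because $L_{\eta_n}\subset C$, the point of $W$ over $\xi_n$ must be within $\theta/4$ of both $a_\alpha(\xi_n)+\phi_\alpha$ and $a_{\eta_n}(\xi_n)+\phi_{\eta_n}$. Also $\phi_{\eta_n}$ is within $\theta/4$ of $a_\alpha(\eta_n)+\phi_\alpha$, which is incompatible with a jolt of size $\theta$.

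So the ladders are guessed at the $\eta_n$, not at $\alpha$. The chart at $\alpha$ must jolt against $a_\eta$ somewhere on $L_\eta$ for \emph{every} limit $\eta$ in a final segment of $\alpha$, at a point close to $\alpha$ when $\eta$ is, while keeping $a_\alpha=^*_\eta a_\eta$. Building such charts is the real combinatorial content: the fattened ladders of Lemma \ref{lemma:fatladder} and the functions $z_\alpha$ of Lemma \ref{lemma:z_alpha}. This needs only $\clubsuit_C$. Your fallback of strengthening $\clubsuit_C$ to guess angle data would not prove the statement as given.

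Your third step also has a gap. The ingredients you invoke for hereditary collectionwise normality are $p$ being perfect with fibers $\SSS^1$ over the hereditarily collectionwise normal space $\LL_+$, pressing down, and compactness of $\SSS^1$. All of these hold for the trivial bundle $\LL_+\times\SSS^1$. That bundle is not even hereditarily normal: it contains $\omega_1\times[0,1]$, which fails hereditary normality by Kat\v{e}tov's theorem (compare the paper's lemma on bundles of finite order, the trivial bundle having order $1$).

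What makes Nyikos' manifold hereditarily collectionwise normal is a dichotomy specific to $M(\mathscr{L},1)$: every club subset of $M$ contains $\pi^{-1}(C)$ for some club $C$ (order $\infty$). The paper gets this from the structure theorem, Theorem \ref{thm:Nyikos}. A club either contains the full fibers over a club, or contains a club meeting club-many fibers in exactly $n$ equally spaced points. Running the jolt argument with $\theta=1$, which is not of the form $2\pi k/n$, excludes the second alternative for every $n$. Then Lemma \ref{lemma:x_alpha} shows that the closure of any unbounded $\omega_1$-sequence contains whole fibers over a club. That is exactly what kills your ``fiberwise tangled'' configurations in the proof of Lemma \ref{lemma:hcwn}, and the absence of a copy of $\omega_1$ then follows from Lemma \ref{lemma:copyomega_1}.

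Smaller points:
\begin{itemize}
\item The freedom in the construction is the attachment of $M_\alpha$ at a limit $\alpha$: a chart near $\alpha$ differing from a trivialization over $(0,\alpha)$ by a continuous function with no limit at $\alpha$. Gluing maps on $[\alpha,\alpha+1]$ do not provide this; at limit stages they leave the fiber over $\alpha$ unspecified.
\item Countable compactness is automatic for any such bundle, so it never competes with the twisting.
\item Over the closed long ray your total space has boundary circle $p^{-1}(0)$, which must be capped off. The paper collapses $\pi^{-1}((0,1])$ to a point or doubles over the long line.
\item $\clubsuit_C$ guesses clubs on a stationary set, not arbitrary uncountable sets.
\end{itemize}
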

The axiom $\clubsuit_C$ is not killed by any ccc forcing, and thus may coexist with {\bf MA + $\neg$CH}.
This construction also sheds some light on other results concerning 
strengthenings of normality in manifolds, such as those ones (see the references for definitions and details).

\begin{thm}[{\bf Hereditary normality in manifolds}]
   \ \\
   (a) (M.E. Rudin , 1979 \cite{Rudin:1979}) 
       Under {\bf MA + $\neg$CH}, every perfectly normal manifold is metrizable.\\
   (b) (A. Dow and F.D. Tall, 2016 \cite{DowTall:2016b}) 
       Under {\bf PFA(S)[S]}, any hereditarily normal manifold of dimension $>1$
       is metrizable.\\
   (c) (Nyikos, 2004 \cite{Nyikos:correction}) Under {\bf PFA}, every normal 
       hereditarily collectionwise Hausdorff manifold is metrizable.\\
   (d) (M.E. Rudin and P. Zenor, 1976 \cite[Theorem 1]{RudinZenor}) 
       There is a perfectly normal non-metrizable manifold without any
       copy of $\omega_1$ under {\bf CH}.
\end{thm}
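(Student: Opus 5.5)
This statement collects results that are not proved in the present note. They are quoted from the cited sources, and the paper relies on them only as background. Should one want to reconstruct them, the common strategy for (a)--(c) is as follows. A non-metrizable manifold $M$ is non-Lindelöf, so one writes it as an increasing union of open Lindelöf (hence metrizable) submanifolds and looks for a topological obstruction. For the metrizability results one shows that the normality hypothesis, combined with the set-theoretic axiom, forces $M$ to be (hereditarily) Lindelöf. A Lindelöf manifold is second countable, hence metrizable by Urysohn.

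For (a) I would proceed in three steps. First, a manifold is locally compact and first countable, so in particular countably tight. Second, under \textbf{MA + $\neg$CH}, Szentmiklóssy's theorem says that a locally compact countably tight space with no uncountable discrete subspace is hereditarily Lindelöf. Third, I would rule out uncountable discrete subspaces $D$ using perfect normality. Since $D$ is open in $\wb{D}$ and closed sets are $G_\delta$, $D$ is an $F_\sigma$ in $\wb{D}$. Hence some uncountable $D'\subset D$ is closed in $M$, and so closed discrete. One then uses that $M$ is normal and that closed sets are zero-sets: the points of $D'$ can be separated by a locally finite family, and this is to be played against countable compactness of a suitable submanifold containing uncountably many of them. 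I expect this third step to be the main obstacle. Rudin's original argument needs a ccc poset here, and that is where Martin's axiom actually enters.

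For (b) and (c), the plan is the same but with stronger axioms. Under \textbf{PFA}, or \textbf{PFA(S)[S]}, one shows that a non-metrizable manifold satisfying the normality hypothesis contains either a copy of $\omega_1$ or an uncountable closed discrete set that cannot be separated. In the first case, hereditary normality fails on the product-like neighbourhood of the copy of $\omega_1$ in dimension $>1$, which is why the dimension restriction appears in (b). In the second case, hereditary collectionwise Hausdorffness or normality fails. This is a reflection argument in the style of Balogh's Theorem~\ref{thm:ctblecomp}(a), and the reflection step is where the forcing axiom is essential.

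Finally, (d) is a construction rather than a metrizability proof. I would build, under \textbf{CH}, a manifold by transfinite induction of length $\omega_1$ as a union of metrizable open pieces $M_\alpha$. At each stage one enumerates, using \textbf{CH}, the countably many closed sets to be taken care of, and glues the next piece so that every closed set is made a $G_\delta$. The gluing must also be arranged so that no closed copy of $\omega_1$ arises. The delicate point is the bookkeeping that keeps perfect normality at limit stages, handled by Rudin--Zenor's careful choice of attaching maps.
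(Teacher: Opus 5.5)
You are right that the paper gives no proof of this theorem: it lists results quoted from Rudin, Dow--Tall, Nyikos and Rudin--Zenor as background for Nyikos' example, so citing the sources is exactly what the paper does. Your reconstruction sketches go beyond the paper and should not be read as proofs: in (a) the decisive third step is left open, and its claim that the uncountable closed discrete set can be separated by a locally finite family does not follow from perfect normality alone.
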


Nyikos' Theorem \ref{thm:intro} appears in Sections 4 and 5 of an unfinished draft \cite{Nyikos:Antidiamond}
which he made available on his webpage.
There is a sketch of a proof through a series of lemmas (which are given without proof), while
the construction itself of the manifold is described in some more detail.
Since P. Nyikos unfortunately passed away before completing his draft, we thought
that it might be a good idea to try to fill the gaps and this note is the result of our efforts.
Most of our presentation follows Nyikos' roadmap, 
we only add some lemmas of our own when we were not able to 
cut directly through the arguments, and adapt a bit his notations or conventions to our taste. 
\\
There are two notable exceptions to our loyalty to Nyikos' plan. The first one is Section \ref{sec:tool},
where we isolate a kind of combinatorial result on ladder systems which makes Nyikos' construction more easily 
translatable in other settings. (Another reason is that we struggled to find the correct
definition for making the crucial Lemma \ref{lemma:jolt} work, as none is given in Nyikos' notes.
We thus felt the need to temporarily focus on the combinatorial aspects of our problem, away from
the more geometrical ones.)
\\
The other exception is Section \ref{sec:finitetoone}. What happened is that
during the course of writing these notes, we thought that it would be nice to say something about
a related class of spaces that Nyikos also investigated, in the same or in other papers and drafts 
(e.g. in \cite{Nyikoscoherent}):
the finite-to-$1$ closed preimages of $\omega_1$.
According to some remarks on his drafts,
Nyikos had planned to present his manifold results together with similar ones about these spaces,
and we include some of them. We also allowed ourselves 
a bit of leeway to explore some things on our own and
as is often the case, the small set of remarks we wanted to add grew up taking almost a third of the note
and might contain new results, 
at least ones that we could not spot in the literature or in Nyikos' drafts.
Some of them are immediate consequences of the combinatorial results of Section \ref{sec:tool}.
These possible novelties have the advantage of giving us the sensation of a little original contribution
and a small boost to our self-esteem, which
gave us enough momentum to add a short last section (Section \ref{sec:variationclub})
where we investigate a variation of $\clubsuit_C$ (which might also be new, although a first proof that it is consistent
was provided by G. Goldberg and A. Lietz, see below).
Due to a misunderstanding of a definition in Nyikos' draft, we thought for some time 
that this variation was needed in the proof of Theorem \ref{thm:intro}.
To summarize: the only parts of this note that are not based on Nyikos drafts are Subsection \ref{subsec:notprincipal}
and Section \ref{sec:variationclub}.

\vskip .3cm
This note is organized as follows. 
Nyikos' manifold is actually a principal $\SSS^1$-bundle over the long ray $\LL_+$;
Section \ref{sec:generalities} contains generalities about these spaces and 
a structure theorem for them. This enables us to define the order of such a bundle,
which is an integer or $\infty$, and to prove that bundles of order $>1$ do not possess a copy of $\omega_1$.
Our goal then becomes showing that there are bundles of any order under $\clubsuit_C$. 
Section \ref{sec:tool} contains the aforementioned combinatorial results on 
(arbitrary) ladder systems,
and Section \ref{sec:constr} uses these results to 
construct ``arcs'' $a_\alpha$ on the set $\LL_+\times\SSS^1$ which engender 
(in a precise sense given in this section) the topology of a principal $\SSS^1$-bundle on this set.
This section is probably filled with way too many details for a lot of readers' taste,
but we were not confident enough to sprint to conclusions more quickly.
Then, Nyikos's theorem (or rather, the fact that there are bundles
of any order under $\clubsuit_C$) is proved in Section \ref{sec:usingclub}.
As said above, Section \ref{sec:finitetoone} 
contains some musing about finite-to-$1$ closed preimages of $\omega_1$.
In particular, we investigate ``discrete'' equivalents of principal $\SSS^1$-bundle over the long ray $\LL_+$:
principal $\Z_n$-bundles over $\omega_1$ (where $\Z_n$ is the group of integers modulo $n$).
Of course, there is something a bit silly in the idea of looking at bundles above a (highly) disconnected space,
but $\omega_1$ has a kind of ``rigidity'' that makes some things possible.
The contents of Section \ref{sec:variationclub} were already discussed above: a generalization of $\clubsuit_C$.

\vskip .3cm
Our terminology for topological terms is standard and follows e.g. \cite{Engelking}.
We denote ordered pairs by $\langle\cdot,\cdot\rangle$, reserving 
parenthesis $(\cdot,\cdot)$ for open intervals in totally ordered spaces.
We use the symbol $\upharpoonright$ for restrictions (of a function or a topology to a subspace, etc). 
We recall that the word {\em club} is a shorthand
for ``closed and unbounded''. The greak letters $\alpha,\beta,\gamma,\eta,\xi$ are used 
exclusively for ordinals. 
Any ordinal is the set of its predecessors, hence $\beta<\alpha$ and $\beta\in\alpha$
mean the same thing. Similarly, as sets, $\alpha$ is the same as the interval $[0,\alpha)$ taken in $\omega_1$.
We generally favor the notations $\beta<\alpha$ and $[0,\alpha)$ to avoid unnecessary confusions
between variables and domains of some functions.
The {\em long ray} $\LL_+$ is the space $\omega_1\times[0,1)$ with
lexicographic order topology and the smallest point removed.
It is a $1$-dimensional manifold.
It will be convenient to write the members of $\LL_+$ as $\alpha + r$, for $r\in[0,1)$ and $\alpha\in\omega_1$,
and when needed we see $\omega_1$ as a subset of $\LL_+$ (all the points with $r=0$).
\\
We let $\Lambda$ be the club subset of limit ordinals in $\omega_1$, and $\Lambda_2$ be that of limits of limits.
If $E\subset\omega_1$, we write $E'$ for the subset of its limit points.
A {\em ladder system} on $\omega_1$ is a sequence $\langle L_\alpha\,:\,\alpha\in\Lambda\rangle$
where each $L_\alpha$ is a strictly increasing sequence of ordinals with limit $\alpha$.
We recall that $\clubsuit_C$ is the following axiom:

\begin{center}
    \begin{minipage}{.05\textwidth} 
       $\clubsuit_C$ : 
    \end{minipage}
    \begin{minipage}{.85\textwidth}
                     There is a ladder system $\langle L_\alpha\,:\,\alpha\in\Lambda\rangle$ such that
                     for each club $C\subset\omega_1$, the set
                     $\{\alpha\in\Lambda\,:\,L_\alpha \subset C\}$ is stationary.
    \end{minipage} 
\end{center}
Such a ladder system is called a $\clubsuit_C$-sequence. 
The following classical lemma will be used many times in the proofs.

\begin{lemma}[{\bf Fodor's lemma}]
   If $S\subset\omega_1$ is stationary and $f:S\to\omega_1$ satisfies $f(\alpha)<\alpha$,
   then there is a sationary $S_0\subset S$ and $\beta\in\omega_1$ such that $f(\alpha) = \beta$
   for each $\alpha\in S_0$.
\end{lemma}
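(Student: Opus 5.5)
The plan is to argue by contradiction: if $f$ took each value on only a nonstationary subset of $S$, we would build a single club missing $S$ entirely, contradicting stationarity. Concretely, suppose that for every $\beta\in\omega_1$ the set $S_\beta=\{\alpha\in S\,:\,f(\alpha)=\beta\}$ is nonstationary. Then for each $\beta$ we choose a club $C_\beta\subset\omega_1$ disjoint from $S_\beta$. The aim is to combine these countably-many-at-each-stage clubs into one club via the diagonal intersection
\[
   D=\{\alpha\in\omega_1\,:\,\alpha\in C_\beta\text{ for all }\beta<\alpha\}.
\]

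The key step, and the only real work, is checking that $D$ is a club. Closedness: if $\alpha$ is a limit of points $\alpha_n\in D$ and $\beta<\alpha$, then $\alpha_n\in C_\beta$ for all large $n$, and since $C_\beta$ is closed, $\alpha\in C_\beta$; hence $\alpha\in D$. Unboundedness: starting from any $\gamma_0$, choose $\gamma_{n+1}>\gamma_n$ with $\gamma_{n+1}\in\bigcap_{\beta\le\gamma_n}C_\beta$. This is possible because a countable intersection of clubs is a club. That fact is proved by the same interleaving trick: pick increasing points cycling through the countably many clubs, so their supremum lies in every one of them. Then $\gamma=\sup_n\gamma_n$ lies in every $C_\beta$ with $\beta<\gamma$, since each such $\beta$ is $\le\gamma_n$ for some $n$, so $\gamma_m\in C_\beta$ for all $m>n$, and closedness of $C_\beta$ gives $\gamma\in C_\beta$. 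Thus $\gamma\in D$ and $\gamma>\gamma_0$.

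Now $D\cap\Lambda$ is a club, so it meets the stationary set $S$; pick $\alpha\in S\cap D\cap\Lambda$ with $\alpha>0$. Put $\beta=f(\alpha)<\alpha$. Then $\alpha\in S_\beta$, but $\alpha\in D$ and $\beta<\alpha$ give $\alpha\in C_\beta$, which is disjoint from $S_\beta$. This is a contradiction. Hence some $S_\beta$ is stationary, and we take $S_0=S_\beta$.

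The main (mild) obstacle is the unboundedness of the diagonal intersection, which depends on countable intersections of clubs being clubs, i.e., on the regularity of $\omega_1$ (countable cofinality of the construction). Everything else is routine.
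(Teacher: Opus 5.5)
Your argument is correct: it is the standard proof of Fodor's lemma via the diagonal intersection $D$ of clubs $C_\beta$ avoiding the fibres $S_\beta$, and your closedness and unboundedness checks are both right. The paper gives no proof of its own and simply quotes the lemma as classical. It likewise takes for granted that countable intersections of clubs are club, which is the one auxiliary fact your argument relies on, so your route is the textbook one.
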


We also use several times (without acknowledging it)
the facts that a finite or countable union of non-stationary sets is non-stationary, and 
that a countable intersection of club subsets of $\omega_1$ or $\LL_+$ is club.

%%%%%%%%%%%%%%%%%%%%%%%%%%%%%%%%%%%%%%%%%%%%%%%%%%%%%%%%%%%%%%%%%%%%%%%%%%%%%%%%%%%%%%%%%%%%%%
%%%%%%%%%%%%%%%%%%%%%%%%%%%%%%%%%%%%%%%%%%%%%%%%%%%%%%%%%%%%%%%%%%%%%%%%%%%%%%%%%%%%%%%%%%%%%%
%%%%%%%%%%%%%%%%%%%%%%%%%%%%%%%%%%%%%%%%%%%%%%%%%%%%%%%%%%%%%%%%%%%%%%%%%%%%%%%%%%%%%%%%%%%%%%
\section{Generalities on principal $\SSS^1$-bundles over $\LL_+$}
\label{sec:generalities}

Nyikos' example is built as a principal $\SSS^1$-bundle over $\LL_+$, with $\SSS^1$ the circle.
A quick reminder on principal $\mathbb{S}^1$-bundles $E$ over a space $X$:
\begin{itemize}
   \item[$\bullet$] It is a fiber bundle, i.e. there is a continuous bundle map
                    $\pi:E\to X$ such that for each $x\in X$ there is an open $U\ni x$
                    and a fiber preserving homeomorphism 
                    $f_U:\pi^{-1}(U)\to U\times\SSS^1$, i.e.
                    $f_U$ sends $\{y\}\times \SSS^1$ to $\pi^{-1}(\{y\})$ for each $y\in U$. 
                    (This is sometimes summarized by the phrase ``$\pi:E\to X$ is locally trivial''.)
   \item[$\bullet$] There is a free transitive continuous group action by $\SSS^1$ on $E$ which leaves
                    fibers $\pi^{-1}(\{x\})$ invariant.          
\end{itemize}
We sometimes abuse the terminology and 
say that a fiber (or principal) bundle $\pi:M\to\LL_+$
has a topological property $\mathcal{P}$ whenever the total space $M$ has $\mathcal{P}$,
or say ``the (fiber or principal) bundle'' instead of ``the total space of the (fiber or principal) bundle''.
We see $\SSS^1$ as the reals modulo $2\pi$, with the group action given by the addition modulo $2\pi$.
Recall that any fiber bundle or principal $\SSS^1$-bundle on $\R$ is trivial, that is, there is a 
fiber preserving homeomorphism onto $\R\times\SSS^1$.

\vskip .3cm
\noindent
{\bf Notation:}\ \\
  If $\pi:M\to\LL_+$ is an $\SSS^1$-principal bundle and $p\in\LL_+$, we write $M_p = \pi^{-1}(\{p\})$. 
  If $x\in M$ and $\theta\in[0,2\pi)$, we denote by $x+\theta$ the point in $M_{\pi(x)}$ that is $\theta$ radians
  from $x$ in the positive direction.
\vskip .3cm
\noindent
Notice that $x+\theta$ is defined by the action of $\SSS^1$ on $M$, and 
since the group action is free, $x+\theta$ is uniquely defined.
Principal $\SSS^1$-bundles have the property that convergent sequences preserve angular separation:
\begin{lemma} 
   \label{lemma:x_ntheta_n}
   Let $\pi:M\to\LL_+$ be a principal $\SSS^1$-bundle.
   If $x_n\to x\in M$ and $y_n=x_n+\theta_n$, then $\theta_n\to\theta$ iff $y_n\to y$. 
\end{lemma}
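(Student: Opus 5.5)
The plan is to reduce everything to a local trivialization in which the $\SSS^1$-action becomes translation in the second coordinate, and then use continuity of the group action together with continuity of the "difference" map on the fibered product.

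Take $U\ni\pi(x)$ open and a fiber-preserving homeomorphism $f_U:\pi^{-1}(U)\to U\times\SSS^1$. Since $x_n\to x$ and $\pi^{-1}(U)$ is open, we may assume every $x_n$ lies in $\pi^{-1}(U)$, and hence so does every $y_n$.

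The first issue is that $f_U$ need not be equivariant. I would fix this by building an equivariant trivialization. Take $\pi(x)\in V\subset U$ with $V$ an open interval; then $f_U^{-1}(\cdot,0)$ restricted to $V$ is a continuous section $s:V\to M$. Define
\[
  g:V\times\SSS^1\to\pi^{-1}(V),\qquad g(p,\theta)=s(p)+\theta .
\]
This map is continuous because the action is, and it is bijective because the action is free and transitive on fibers.

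The key step is to show that $g^{-1}$ is continuous. Write $g^{-1}(z)=\langle\pi(z),\phi(z)\rangle$, where $\phi(z)$ is the unique angle with $z=s(\pi(z))+\phi(z)$. In the coordinates given by $f_U$, write $f_U(z)=\langle\pi(z),u(z)\rangle$. For fixed $p$, the map $\theta\mapsto u(s(p)+\theta)$ is a continuous bijection $\SSS^1\to\SSS^1$, hence a homeomorphism, and it depends continuously on $p$. So the map $\langle p,\theta\rangle\mapsto\langle p,u(s(p)+\theta)\rangle$ is a continuous bijection of $V\times\SSS^1$ onto itself.

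To see that this bijection is a homeomorphism, argue as follows. It is closed on $K\times\SSS^1$ for every compact interval $K\subset V$, since it is a continuous bijection from a compact space onto a Hausdorff one. It is therefore a homeomorphism on each such piece, and these pieces cover $V$ by interiors. This is the main obstacle, but compactness of $\SSS^1$ handles it. Composing with $f_U$ shows that $g$ is a homeomorphism.

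In the coordinates $g$, the map $x\mapsto x+\theta$ is simply $\langle p,\phi\rangle\mapsto\langle p,\phi+\theta\rangle$. Hence $x_n\to x$ means $p_n\to p$ and $\phi_n\to\phi$, and $y_n=\langle p_n,\phi_n+\theta_n\rangle$.

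For the "if" direction: if $\theta_n\to\theta$, then $y_n\to\langle p,\phi+\theta\rangle=x+\theta$. For the "only if" direction: if $y_n\to y$, then $y=\langle p,\psi\rangle$ lies in the fiber of $x$, because $\pi(y_n)=p_n\to p$. It follows that $\theta_n=(\phi_n+\theta_n)-\phi_n\to\psi-\phi$ in $\SSS^1$. So the two statements correspond, with $y=x+\theta$; here convergence of the $\theta_n$ is understood in $\SSS^1$, that is, modulo $2\pi$.
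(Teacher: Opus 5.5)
Your proof is correct, but it takes a heavier route than the paper. The paper's proof is the single observation that the action $\SSS^1\times M\to M$ is (jointly) continuous, so $\theta_n\to\theta$ gives $x_n+\theta_n\to x+\theta$; the converse is left implicit.

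You instead use local triviality to build an equivariant chart $g(p,\theta)=s(p)+\theta$. You prove $g^{-1}$ is continuous by the compact-to-Hausdorff argument on each $K\times\SSS^1$, and then read off both directions in coordinates. This gains something the paper only asserts in the remark after the lemma: trivializations can be chosen to act as rotations on each fiber. The paper uses this later, e.g.\ in Theorem~\ref{thm:canonical} and in Lemma~\ref{lemma:opengap}, where $f_\alpha$ is taken to be an isometry on fibers. Your argument also treats both implications uniformly.

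If you only want the lemma, the ``only if'' direction needs no trivialization at all. Suppose $y_n\to y$. Any subsequential limit $\theta'$ of $(\theta_n)$, which exists by compactness of $\SSS^1$, gives $y_{n_k}\to x+\theta'$ by the ``if'' direction. Hausdorffness then gives $x+\theta'=y$, and freeness makes $\theta'$ the unique angle with this property. So all subsequential limits agree, and compactness forces $\theta_n\to\theta$.

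In both arguments, compactness of $\SSS^1$ is what drives the ``only if'' direction. You spend it on the continuity of $g^{-1}$ rather than on a subsequence.
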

\begin{proof}
   The group action is continuous, hence $x_n + \theta$ converges to $x+\theta$.
\end{proof}
We may thus treat each fiber $M_p$ as an isometric copy of $\SSS^1$ 
with distance given by angular separation.
It follows that the trivializing local fiber-preserving maps 
can be chosen such that their restrictions to the various rings
$\{r\}\times\SSS^1$ are isomorphisms to the images $M_p$.
The following ``almost classical''
theorem shows that there is a ``canonical way'' to present the data of an $\SSS^1$-bundle over $\LL_+$.
We recall that angle additions are modulo $2\pi$.

\begin{thm}
   \label{thm:canonical}
   Let $\pi:M\to\LL_+$ be a principal $\SSS^1$-bundle.
   Then up to bundle-isomorphism we may assume that
   the total space $M$ has underlying set $\LL_+\times\SSS^1$,
   $\pi$ is the projection on the first factor,
   the action of $\SSS^1$ is given by $\theta\cdot\langle p,\phi\rangle = \langle p,\phi+\theta\rangle$ and there are 
   trivializing maps $f_\alpha:(0,\alpha+1)\times\SSS^1\to \pi^{-1}((0,\alpha+1))$ (intervals are taken
   in $\LL_+$) such that $\pi\circ f_\alpha(\langle p,\phi\rangle) = p$.  
\end{thm}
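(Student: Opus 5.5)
The plan is to build a global set-theoretic identification of $M$ with $\LL_+\times\SSS^1$ by choosing a point in each fiber, i.e.\ a (not necessarily continuous) section $s:\LL_+\to M$. The bijection is then $\langle p,\phi\rangle\mapsto s(p)+\phi$, and we transport the topology of $M$ along it. This automatically makes $\pi$ the first projection and the action $\theta\cdot\langle p,\phi\rangle=\langle p,\phi+\theta\rangle$, since the action is free and transitive on fibers. The remaining work is to choose $s$ so that the required trivializations exist. The natural choice is to make $s$ continuous on each initial segment $(0,\alpha+1)$. Continuity on all of $\LL_+$ cannot be expected, since the bundle need not be trivial; we only ask for enough coherence.

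\textbf{Step 1.} Every principal $\SSS^1$-bundle over an interval $(0,\alpha+1)$ is trivial, because that interval is homeomorphic to $\R$ (or to a half-open interval) and the paper recalls that bundles over $\R$ are trivial. A trivial principal bundle has a continuous section $\sigma_\alpha:(0,\alpha+1)\to M$. Conversely, given a continuous section $\sigma$ on an open set $U$, the map $\langle p,\phi\rangle\mapsto\sigma(p)+\phi$ is a fiber-preserving continuous bijection $U\times\SSS^1\to\pi^{-1}(U)$. Its inverse is continuous by Lemma~\ref{lemma:x_ntheta_n}: if $y_n\to y$ then, setting $x_n=\sigma(\pi(y_n))\to\sigma(\pi(y))$ and $y_n=x_n+\theta_n$, we get $\theta_n\to\theta$. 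Sequences suffice locally because $M$ is locally metrizable, being locally $U\times\SSS^1$ with $U$ an interval.

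\textbf{Step 2.} I would not try to fix one global section; instead I would take $s(p)=\sigma_{\alpha}(p)$ for $p\in[\alpha,\alpha+1)$, with the $\sigma_\alpha$ chosen arbitrarily. Define $f_\alpha$ on $(0,\alpha+1)\times\SSS^1$ by
\[
f_\alpha(\langle p,\phi\rangle)=\sigma_\alpha(p)+\phi ,
\]
read in the new coordinates. Writing $\sigma_\alpha(p)=s(p)+g_\alpha(p)$, this becomes the map $\langle p,\phi\rangle\mapsto\langle p,\phi+g_\alpha(p)\rangle$, where $g_\alpha(p)\in\SSS^1$ is merely some function. By Step~1, $f_\alpha$ is a homeomorphism onto $\pi^{-1}((0,\alpha+1))$ for the transported topology, and it satisfies $\pi\circ f_\alpha(\langle p,\phi\rangle)=p$. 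The statement asks only for the existence of such trivializations, not that they be the identity map, so nothing further is needed.

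\textbf{Main obstacle.} The delicate points are the justification of Step~1 and the fact that the theorem is stated ``up to bundle-isomorphism''. The latter is handled by defining the topology on $\LL_+\times\SSS^1$ as the one pulled back by the bijection $\langle p,\phi\rangle\mapsto s(p)+\phi$, which then is tautologically an isomorphism of principal bundles. For triviality over $(0,\alpha+1)$, I would either cite the classical fact directly, or construct the section by hand. Cover the interval by trivializing intervals and glue local sections along overlaps by rotating each new piece by a constant-free continuous angle correction, which exists on an interval by path lifting in $\SSS^1$. Since $(0,\alpha+1)$ is metrizable and homeomorphic to a real interval, the usual countable chain-of-intervals argument applies.
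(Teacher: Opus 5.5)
Your proof is correct, but it reaches the trivializations over initial segments by a more direct route than the paper. The paper takes local trivializations over intervals $(q_0(p),q_1(p))$ and applies Fodor's lemma to the regressive map $\alpha\mapsto q_0(\alpha)$, getting a stationary $S$ on which all left endpoints equal a single $\beta$. It then patches in a trivialization over $(0,\beta+1)$ to bring $\beta$ down to $0$, gives each $p$ the trivialization of the least $\alpha\in S$ above $p$, and finally straightens the base map with $j_\alpha^{-1}$ to get $\pi\circ f_\alpha(\langle p,\phi\rangle)=p$.

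You skip the pressing-down step by applying the classical triviality of bundles over $\R$ directly to each $(0,\alpha+1)$. This is legitimate, since the paper itself recalls that bounded open intervals of $\LL_+$ are homeomorphic to $\R$. The paper's ``patching'' step rests on the same classical fact anyway, so Fodor only reduces the number of appeals to it.

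Building $f_\alpha$ from a continuous section $\sigma_\alpha$ gives you the projection condition for free, with no $j_\alpha$ needed. It also makes the trivializations equivariant, of the form $\langle p,\phi\rangle\mapsto\langle p,\phi+g_\alpha(p)\rangle$. This is exactly the form $f_\alpha(\langle p,\theta\rangle)=a_\alpha(p)+\theta$ used in Section \ref{sec:constr}, which the paper only asserts in its remark preceding Theorem \ref{thm:canonical}.

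Three minor comments:
\begin{itemize}
\item The piecewise choice of the global section $s$ plays no role. Any set-theoretic section works, because the functions $g_\alpha$ absorb the discrepancy.
\item The direction of Lemma \ref{lemma:x_ntheta_n} you use, $y_n\to y\Rightarrow\theta_n\to\theta$, is not what the paper's one-line proof literally shows, but it follows from compactness of $\SSS^1$. Every subsequence of $(\theta_n)$ has a further subsequence converging to some $\theta'$. Along it $y_n\to x+\theta'$, so $x+\theta'=y=x+\theta$ by Hausdorffness, and $\theta'=\theta$ by freeness.
\item When gluing two local sections on overlapping intervals, no path lifting is needed. A constant rotation of the second section that matches the first at one point $e$ of the overlap, followed by pasting on the two closed pieces meeting at $e$, already gives a continuous section on the union.
\end{itemize}
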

The proof uses well known techniques, so we just sketch it.
\begin{proof}[Sketch of the proof]
   First, each fiber $M_p$ is isomorphic to $\SSS^1$, so we may assume that the total space
   has underlying set $\LL_+\times\SSS^1$.
   For each $p\in\LL_+$ there are some $q_0(p)<p<q_1(p)$ and a trivializing map
   $f_p:(q_0(p),q_1(p))\times\SSS^1\to\pi^{-1}((q_0(p),q_1(p)))$. By Fodor, 
   there is a stationary set $S\subset\omega_1\subset\LL_+$
   and some $\beta\in\omega_1$ such that $q_0(\alpha) = \beta$ for each $\alpha\in S$.
   Recall that any interval segment $(a,b)\subset\LL_+$ is homeomorphic to $\R$. 
   Using the fact that we may ``patch up'' trivial bundles together,
   we may assume that $\beta = 0$. For each $p\in\LL_+$, set $f_p = f_\alpha$ for the smallest $\alpha\in S$ bigger
   than $p$.
   This shows that we may assume that we have trivializing fiber-preserving maps
   $f_\alpha:(0,\alpha+1)\times\SSS^1\to\pi^{-1}((0,\alpha+1))$.
   The map $j_\alpha(q) = \pi\circ f_\alpha(\langle q,0\rangle)$ is a self-homeomorphism
   of $(0,\alpha+1)\subset\LL_+$, 
   letting $g_\alpha(\langle q , \theta\rangle) = f_\alpha(\langle j_\alpha^{-1}(q),0\rangle)$,
   then $g_\alpha$ is a fiber-preserving homeomorphism satisfying
   $\pi\circ g_\alpha(\langle q,\phi\rangle) = q$.
\end{proof}

\begin{lemma}
   \label{lemma:ctblycpct}
   Any $\SSS^1$-fiber bundle $\pi:M\to\LL_+$
   is ``almost'' countably compact, in the sense that
   $\pi^{-1}([p,\omega_1))$ is countably compact for each $p\in\LL_+$.
\end{lemma}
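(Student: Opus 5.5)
Let $\{x_n\}_{n\in\omega}$ be a countably infinite subset of $\pi^{-1}([p,\omega_1))$; the plan is to produce an accumulation point inside $\pi^{-1}([p,\omega_1))$. First project down: the points $\pi(x_n)$ all lie in $[p,\omega_1)\subset\LL_+$. A countable subset of $\LL_+$ is bounded, since each point lies below some $\alpha_n+1$ with $\alpha_n\in\omega_1$ and $\sup_n\alpha_n<\omega_1$. So there is $\alpha\in\omega_1$ with all $\pi(x_n)\in[p,\alpha+1]$. The interval $[p,\alpha+1]\subset\LL_+$ is a closed bounded interval of a line, homeomorphic to a compact interval of $\R$; I would use that it is compact, being order-isomorphic to a closed segment of $\R$.

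Next, the main step is to show that $\pi^{-1}([p,\alpha+1])$ is compact. The bundle is locally trivial, so each $q\in[p,\alpha+1]$ has an open neighborhood $U_q$ with $\pi^{-1}(U_q)\cong U_q\times\SSS^1$. Pick for each $q$ a smaller closed interval neighborhood $I_q\subset U_q$ (in the relative topology of $[p,\alpha+1]$). Then $\pi^{-1}(I_q)\cong I_q\times\SSS^1$ is compact. By compactness of $[p,\alpha+1]$, finitely many interiors of the $I_q$ cover it, hence $\pi^{-1}([p,\alpha+1])$ is a finite union of compact sets, thus compact. (Alternatively, one could invoke Theorem \ref{thm:canonical} and observe that for principal bundles $[p,\alpha+1]\times\SSS^1$ maps homeomorphically via $f_{\alpha+1}$ onto $\pi^{-1}([p,\alpha+1])$, but the general argument covers arbitrary $\SSS^1$-fiber bundles as stated.)

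Finally, the sequence $\{x_n\}$ lies in the compact set $\pi^{-1}([p,\alpha+1])$, so it has an accumulation point $x$ there, and $x\in\pi^{-1}([p,\omega_1))$. This gives countable compactness. Nothing here is really an obstacle; the only point needing care is the boundedness of countable subsets of $\LL_+$, i.e. regularity of $\omega_1$, which is what the word ``almost'' in the statement reflects (near $0$ the space is not countably compact, since $(0,p)$ is an open ray of $\R$, so the restriction to $[p,\omega_1)$ is exactly what is needed).
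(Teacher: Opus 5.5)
Your proof is correct and follows the paper's argument. You use regularity of $\omega_1$ to trap the countable set inside $\pi^{-1}([p,\alpha+1])$, and then use compactness of that preimage. The only difference is how compactness is obtained: the paper notes that $\pi^{-1}([p,\alpha])$ is homeomorphic to $[p,\alpha]\times\SSS^1$, since bundles over intervals are trivial, whereas you derive it from finitely many local trivializations, which is slightly more self-contained.
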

\begin{proof}
   Any countable subset of $\pi^{-1}([p,\omega_1))$ is contained in $\pi^{-1}([p,\alpha])$ for some $\alpha$,
   which is homeomorphic to $[p,\alpha]\times\SSS^1$ and hence compact.
\end{proof}

We say that $F\subset M$ is unbounded iff $\pi(M)$ is unbounded in $\LL_+$.
The following structural theorem showcases the intrinsic symmetry of principal $\SSS^1$-bundles.

\begin{thm}[{\cite[Theorem 4.1]{Nyikos:Antidiamond}}]
   \label{thm:Nyikos}
   Let $M$ be a principal $\SSS^1$-bundle over $\LL_+$, with bundle map $\pi:M\to\LL_+$ and homeomorphisms
   $f_\alpha:(0,\alpha+1)\times\SSS^1\to \pi^{1}((0,\alpha+1))$ as above. 
   Let $F\subset M$ be club. Then there is a club $C\subset\omega_1$ such that
   exactly one of the two alternatives below hold.\\
   (a) $F\supset\pi^{-1}(C)$;\\
   (b) There is $n\in\omega$ and a club $E\subset F$ such that $|E\cap M_\alpha|=n$ for each $\alpha\in C$,
       and if $x\in M_\alpha\cap E$, then $M_\alpha\cap E = \{x+\frac{2\pi\cdot k}{n}\,:\,k=0,\dots,n-1\}$.
\end{thm}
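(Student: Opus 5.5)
Fix the canonical presentation of Theorem \ref{thm:canonical}, with $M$ having underlying set $\LL_+\times\SSS^1$ and fibers identified isometrically with $\SSS^1$. The strategy is to study, for each $\alpha\in\omega_1$, the closed set $F_\alpha = F\cap M_\alpha\subset\SSS^1$, and to show that on a club these sets stabilize to one of two kinds: the whole circle, or a single coset of a finite subgroup. The basic tool is Lemma \ref{lemma:x_ntheta_n}. If $x,y\in F$ with $y=x+\theta$, I will produce, using countable compactness of $\pi^{-1}([p,\omega_1))$ (Lemma \ref{lemma:ctblycpct}) and closedness of $F$, limit points at a limit ordinal $\alpha$ that again differ by $\theta$. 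The natural object to track is therefore the set of angular differences
\[
D_\alpha = \{\theta\in\SSS^1 : \exists x\in F_\alpha,\ x+\theta\in F_\alpha\}.
\]

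\textbf{Step 1 (a closure/reflection argument).} Call $\theta$ \emph{unboundedly realized} if for unboundedly many $\alpha$ there is $x\in F_\alpha$ with $x+\theta\in F_\alpha$. A standard closing-off construction gives the key fact. Take increasing $\alpha_n$ with pairs $x_n, x_n+\theta\in F_{\alpha_n}$. By Lemma \ref{lemma:ctblycpct}, a subsequence $x_n$ converges to some $x\in M_{\sup\alpha_n}$. Then $x\in F$, and by Lemma \ref{lemma:x_ntheta_n} also $x+\theta\in F$. I will run this simultaneously for a countable dense set of unboundedly realized angles, taking a countable elementary submodel chain, to get a club $C_0$ of $\alpha$ such that every unboundedly realized angle lies in $D_\alpha$. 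Conversely, each $\theta\in D_\alpha$ for stationarily many $\alpha$ is unboundedly realized. Let $G$ be the closure of the set of unboundedly realized angles. Since $F$ is closed, realized angles pass to limits, so $G$ is a closed subset of $\SSS^1$. I will show that $G$ is closed under addition. Given $x,x+\theta,x+\theta+\theta'$, the composition argument needs $\theta'$ to be realized starting at the particular point $x+\theta$. To handle this I will use that $F_\alpha$ is, on a club, ``homogeneous'': its difference set from each point is the same. This is obtained by a Fodor argument applied to countable dense subsets. Granting this, $G$ is a closed subgroup of $\SSS^1$, hence either all of $\SSS^1$ or $\{2\pi k/n\}$.

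\textbf{Step 2 (the dichotomy).} If $G=\SSS^1$, then on a club each $F_\alpha$ is a closed set invariant under a dense set of rotations (by homogeneity), hence $F_\alpha=M_\alpha$. This gives alternative (a), with $C$ the intersection of the clubs found so far. If $G=\{2\pi k/n\}$ is finite, each $F_\alpha$ on the club is a union of cosets of $G$. I then need to extract a club $E\subset F$ meeting each $M_\alpha$ ($\alpha\in C$) in exactly one coset. To do this I will define $E$ as the closure of a ``thread'': a continuous choice of coset along $\alpha\in C$, obtained by following limits of points. Any two distinct cosets occurring unboundedly in a coherent way would produce a realized difference outside $G$, a contradiction. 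Exclusivity of (a) and (b) is immediate for $n$ finite, since in case (a) $F$ would realize every angle.

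\textbf{Main obstacle.} I expect the homogeneity and closure-under-addition step to be the main difficulty. The concern is the transfer from ``$\theta$ is realized at some point of $F_\alpha$'' to ``$\theta$ is realized at every point of $F_\alpha$''. There is also a problem with a countable-versus-uncountable number of angles: $\SSS^1$ is uncountable, so Fodor cannot be applied angle by angle. My first attempt is to restrict to rational multiples of $\pi$ together with the closedness of $F$. I would diagonalize over countably many angles and use continuity to extend. If that fails, I would instead define $E$ directly as the set of limit points of $F\cap\pi^{-1}(C)$ and analyze its fibers via Lemma \ref{lemma:x_ntheta_n}.
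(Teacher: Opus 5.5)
There is a genuine gap, and it is exactly at the step you flagged: the claimed ``homogeneity'' of $F_\alpha$ on a club is false. Without it $G$ need not be a group, and neither branch of your dichotomy is valid. Homogeneity (the set $F_\alpha-x$ being independent of $x\in F_\alpha$) is equivalent to $F_\alpha$ being a coset of a closed subgroup of $\SSS^1$. So you are asserting for $F$ itself the structure that the theorem only promises for a sub-club $E$.

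In the trivial bundle $\LL_+\times\SSS^1$, the set $F=\omega_1\times K$ is club for every closed $K\subset\SSS^1$. For $K=\{0,1\}$ one gets $D_\alpha=G=\{0,1,-1\}$, which is not a subgroup; there the theorem is witnessed only by proper sub-clubs such as $E=\omega_1\times\{0\}$ with $n=1$. For $K=[0,\pi]$ one gets $D_\alpha=G=\SSS^1$ while $F_\alpha\neq M_\alpha$, so (a) fails even though your Step 2 would conclude it. Nothing depends on $\alpha$ in these examples, so no Fodor or diagonalization argument over countably many angles can repair this. Your verification that $D_\alpha$ is closed and equals $G$ on a club is fine, but it carries no group structure. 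The fallback of taking limit points of $F\cap\pi^{-1}(C)$ does not help either: in these examples it just reproduces $F$ over $C'$.

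The missing idea is that you must first select a sub-club by a feature of the fibers that survives limits; only the final selection is a coset. The paper does this with gaps:
\begin{enumerate}
\item The maximal closed-gap width $h_F$ attains its essential infimum $\theta_0$ on a club (Lemma \ref{lemma:essinf}).
\item If $\theta_0=0$, the fibers are full (Lemma \ref{lemma:h_F=0}), which gives (a).
\item If $\theta_0>0$, Lemma \ref{lemma:opengap} applied to $M-F$ spreads open $\theta_0$-gaps from a stationary set of fibers to all fibers above some $\beta$. Hence the number of maximal gaps is constant on a club, and their endpoints form a club $F_0\subset F$ (Lemma \ref{lemma:ngaps}).
\item The remaining gap widths in $F_0\cap M_\alpha$ are then constant on a club (Lemma \ref{lemma:essinfphi_i}), so every fiber shows the same cyclic pattern.
\item Repeatedly discarding the points preceded by a smallest gap keeps the set closed, because by Lemma \ref{lemma:x_ntheta_n} a limit of points lying between gaps of widths $\theta_1,\theta_2$ lies between gaps of the same widths. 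The procedure ends in an equally spaced configuration (Lemma \ref{lemma:final}).
\end{enumerate}

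Finally, your exclusivity argument confuses $E$ with $F$. In the trivial bundle with $F=M$, (a) holds, and (b) also holds with $E=\omega_1\times\{0\}$, $n=1$. The paper's proof only shows that at least one alternative holds.
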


(Again, additions are modulo $2\pi$.)
We prove this theorem following Nyikos' series of lemmas
given in \cite{Nyikos:Antidiamond}, although we could not always find
quick arguments 
and had to add some (rather inelegant) lemmas of our own.
The final steps of the proof of (b) seem particularly clumsy and contorted,
but we have not found anything better.
First, an almost triviality.
\begin{lemma}
   \label{lemma:club}
   If $F\subset M$ is club, then $\pi(M)$ is club and $F\cap M_p$ is compact for each $p\in\LL_+$.
\end{lemma}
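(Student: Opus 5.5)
Note first that the statement contains an evident typo: $\pi(M)$ is trivially all of $\LL_+$, so the intended claim is that $\pi(F)$ is club in $\LL_+$. Here ``club'' for $F\subset M$ means closed and unbounded, with unbounded meaning that $\pi(F)$ is unbounded in $\LL_+$. So $\pi(F)$ is unbounded by definition, and two things remain to be shown: $\pi(F)$ is closed in $\LL_+$, and each $F\cap M_p$ is compact.

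\emph{Compactness of $F\cap M_p$.} The fiber $M_p=\pi^{-1}(\{p\})$ is homeomorphic to $\SSS^1$, via a trivializing map $f_\alpha$ with $\alpha>p$ restricted to $\{p\}\times\SSS^1$. Hence $M_p$ is compact. Since $F$ is closed, $F\cap M_p$ is a closed subset of a compact set, and is therefore compact.

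\emph{Closedness of $\pi(F)$.} I would show that $\pi$ restricted to $\pi^{-1}((0,\alpha+1))$ is a closed map for each $\alpha$, and then conclude.

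\begin{itemize}
   \item Via $f_\alpha$, the set $\pi^{-1}((0,\alpha+1))$ is identified with $(0,\alpha+1)\times\SSS^1$, and $\pi$ becomes the first projection.
   \item Projection along a compact factor is a closed map (the tube lemma), so $\pi\restrict{\pi^{-1}((0,\alpha+1))}$ is closed.
   \item Now let $q\in\overline{\pi(F)}$, and pick $\alpha$ with $q<\alpha+1$. Then $q$ lies in the closure, relative to $(0,\alpha+1)$, of $\pi\bigl(F\cap\pi^{-1}((0,\alpha+1))\bigr)$.
   \item The set $F\cap\pi^{-1}((0,\alpha+1))$ is relatively closed, so its image is closed in $(0,\alpha+1)$. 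Therefore $q\in\pi(F)$.
\end{itemize}

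An alternative argument uses sequences, since $\LL_+$ is first countable. Take $p_n\in\pi(F)$ with $p_n\to p$, and choose $x_n\in F\cap M_{p_n}$. All $x_n$ lie in the compact set $\pi^{-1}([p_0',\alpha])$ for suitable bounds, so a subsequence converges to some $x$. Then $x\in F$ because $F$ is closed, and $\pi(x)=p$ by continuity of $\pi$.

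None of these steps is a real obstacle. The only point requiring care is recognizing the typo, and noting that closedness is a local statement, so local triviality over bounded intervals suffices.
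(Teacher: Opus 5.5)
Your proof is correct, and you are right that $\pi(M)$ should read $\pi(F)$. Your sequential alternative is essentially the paper's one-line argument, which uses a cluster point in $F$ via the countable compactness of $\pi^{-1}([p,\omega_1))$ from Lemma \ref{lemma:ctblycpct}; your tube-lemma version repackages the same compactness of $[p,\alpha]\times\SSS^1$ (extracting a convergent subsequence implicitly uses that this set is compact metrizable, or you can take a cluster point instead).
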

\begin{proof}
   Immediate by countable compactness (in the sense of Lemma \ref{lemma:ctblycpct}).
\end{proof}

\begin{defi}
   Let $Y\subset M$ and $p\in\LL_+$. A closed gap in $Y\cap M_p$ is a pair of points $x,x+\theta\in M_p\cap Y$
   ($0<\theta\le 2\pi$) such that $x+\phi\not\in Y$ for each $0<\phi<\theta$. We call $\theta$ the width of the gap. 
   An open gap of width $\theta$ in $Y\cap M_p$ is a set $\{x+\delta\,:\,0<\delta<\theta\}\subset Y\cap M_p$ for some 
   $x$. We abbreviate ``gap of width $\theta$'' by ``$\theta$-gap''.
   For each $\alpha\in\pi(Y)\cap\omega_1$, we set 
   $$ h_Y(\alpha) = \max\{\theta\,:\,\text{there is a closed $\theta$-gap in }F\cap M_\alpha\}.$$
\end{defi}
Notice that if $\theta = 2\pi$, then $x=x+\theta$ and $F\cap M_p$ is a singleton.
Also, $Y\cap M_p$ has only finitely many closed 
gaps of width $>\theta>0$, hence the max is attained in the definition of $h_Y$.
%If $\phi_0,\phi_1\in\SSS^1$, the interval $(\phi_0,\phi_1)$ is understood as the set
%of all points reached when going from $\phi_0$ to $\phi_1$ in the positive direction. (Hence, we may
%have $\phi_0>\phi_1$ seen as members of $[0,2\pi)$.)
\begin{defi}
  \label{defi:strips}
  Given $p<q<\alpha+1$ in $\LL_+$ and $\phi,\theta\in\SSS^1$ we let $S_\alpha(p,q,\phi,\theta)$ be 
  the image under $f_\alpha$ of the strip $(p,q)\times(\phi,\phi+\theta)$. 
\end{defi}
We also call these sets ``strips'',
  somewhat informally.

\begin{lemma}
   \label{lemma:opengap}
   Let $U\subset M$ be open such that there is $\theta > 0$ and a stationary set $S$ of $\alpha$
   such that $U\cap M_\alpha$ contains $n$ disjoint open $\theta$-gaps.
   Then, there is $\beta\in\omega_1$ such that for each $\alpha>\beta$
   there are disjoint open $U^\alpha_i\subset U\cap \pi^{-1}((0,\alpha+1))$, $i=1,\dots,n$, 
   $U_i = S_\alpha(\beta,\alpha,\phi_i,\theta)$,
   whose intersection with $M_\gamma$ is an 
   open $\theta$-gap for each $\beta<\gamma\le\alpha$. 
\end{lemma}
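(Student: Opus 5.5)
Work in the canonical presentation of Theorem \ref{thm:canonical}, with trivializations $f_\alpha:(0,\alpha+1)\times\SSS^1\to\pi^{-1}((0,\alpha+1))$. The plan is a pressing-down argument: first freeze the gaps at each $\alpha\in S$ down to a countable amount of data, then apply Fodor to make that data constant on a stationary set, and finally transport the resulting uniform strips to every larger $\alpha$ using the compatibility of the trivializations.

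\textbf{Step 1 (local strips).} Fix $\alpha\in S$, and let $x_1,\dots,x_n\in M_\alpha$ be such that the open $\theta$-gaps $\{x_i+\delta:0<\delta<\theta\}$ lie in $U$ and are pairwise disjoint. Shrink slightly, working with gaps of width $\theta'<\theta$; we can recover width $\theta$ at the end by letting $\theta'$ run through a sequence increasing to $\theta$ and intersecting clubs, or else prove the lemma with $\theta'$ in place of $\theta$, which is all later uses need.

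\begin{itemize}
\item The closed arcs $\{x_i+\delta:\delta\in[\epsilon,\theta-\epsilon]\}$ are compact subsets of $U$.
\item By local triviality via $f_{\alpha+1}$, and compactness of the arcs, there are $q(\alpha)<\alpha$ and angles $\phi_i(\alpha)$ such that each strip $S_{\alpha+1}(q(\alpha),\alpha+1,\phi_i(\alpha),\theta-2\epsilon)$ is contained in $U$.
\item These strips are pairwise disjoint, because the arcs are.
\item The angles $\phi_i(\alpha)$ may be taken rational multiples of $2\pi$, at the cost of a further slight shrinking.
\end{itemize}

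\textbf{Step 2 (uniformize by Fodor).} Apply Fodor's lemma to $\alpha\mapsto q(\alpha)$. This gives a stationary $S_0\subset S$ and a single $\beta$ with $q(\alpha)=\beta$ for all $\alpha\in S_0$. The angles $\phi_i(\alpha)$ range over a countable set, so refining further makes the tuple $(\phi_1,\dots,\phi_n)$ constant on a stationary $S_1\subset S_0$.

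The remaining difficulty is that these angles are measured in different charts $f_{\alpha+1}$. To compare them, use the transition maps on $(0,\beta+1)$: for $\alpha<\alpha'$ the map $f_{\alpha'}^{-1}\circ f_{\alpha}$ restricted to $(0,\alpha+1)\times\SSS^1$ is fiberwise a rotation $\langle p,\phi\rangle\mapsto\langle p,\phi+\rho_{\alpha,\alpha'}(p)\rangle$, with $\rho_{\alpha,\alpha'}$ continuous. This holds because each $f_\alpha$ preserves the $\SSS^1$-action fiberwise, which is the isometry remark after Lemma \ref{lemma:x_ntheta_n}.

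\begin{itemize}
\item Fix a point $p_0\in(0,\beta+1)$.
\item Record $\rho_{\beta+1,\alpha+1}(p_0)$, approximated by a rational multiple of $2\pi$.
\item Pressing down once more (the data is countable) fixes this rotation on a stationary set $S_2$.
\end{itemize}

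Since the strips are open along all of $(\beta,\alpha+1)$, the angular positions of the gaps in each fiber $M_\gamma$ are then determined continuously from $\beta$ upward. The rotation $\rho$ is continuous in $p$, so shrinking $\theta$ a bit more makes the strips coherent across charts.

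\textbf{Step 3 (all $\alpha>\beta$).} Given any $\alpha>\beta$, choose $\alpha'\in S_2$ with $\alpha'\ge\alpha$, and take $U^\alpha_i$ to be the image of the $\alpha'$-strip intersected with $\pi^{-1}((\beta,\alpha+1))$. Rewrite it in the chart $f_\alpha$ as $S_\alpha(\beta,\alpha,\phi_i,\theta)$, with the index convention of the statement. In each fiber $M_\gamma$, $\beta<\gamma\le\alpha$, these sets are disjoint open $\theta$-gaps contained in $U$.

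\textbf{Main obstacle.} I expect the hardest step to be Step 2's claim that a strip written in $f_{\alpha'}$ coordinates is again a straight strip $S_\alpha(\beta,\alpha,\phi_i,\theta)$ in $f_\alpha$ coordinates. The rotation $\rho_{\alpha,\alpha'}(p)$ depends on $p$, so the image is in general a twisted strip. I would first try to absorb the twist by redefining $f_\alpha$ on $(0,\alpha+1)$ to agree with $f_{\alpha'}$, i.e. composing with the rotation. Theorem \ref{thm:canonical} leaves this freedom, and on $(0,\alpha+1)\cong\R$ the rotation function is null-homotopic, so it can be untwisted. If that is not allowed, I would read the statement's "$U_i=S_\alpha(\dots)$" loosely, as strips in the chart $f_{\alpha'}$ restricted to $(\beta,\alpha]$.
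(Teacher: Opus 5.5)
Your skeleton is the paper's. You use compactness and the tube lemma at each $\alpha\in S$ to get $n$ disjoint strips over $(q(\alpha),\alpha]$, Fodor to make $q(\alpha)=\beta$ on a stationary $S_0$, and, for an arbitrary $\alpha>\beta$, the strips of some $\alpha'\ge\alpha$ in $S_0$ (the paper takes the least such $\alpha'$). Three points need fixing or can be dropped.

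\textbf{The second half of Step 2 is unnecessary.} This is the part where you freeze rational angles and press down an approximation of $\rho_{\beta+1,\alpha+1}(p_0)$. Step 3 uses the strips of a single $\alpha'$ for each $\alpha$, so no coherence between charts is ever needed. The argument would not give coherence anyway. The value of a transition rotation at one point $p_0<\beta+1$ says nothing about how $f_\alpha$ and $f_{\alpha'}$ twist relative to each other over $(\beta,\alpha]$. So ``shrinking $\theta$ a bit more'' cannot align strips from different charts. Two smaller corrections to Step 1: $f_\alpha$ already suffices there, since its domain contains $M_\alpha$. Also, the tube lemma only gives strips over some $(q(\alpha),r)$ with $r>\alpha$, not up to $\alpha+1$; only $(q(\alpha),\alpha]$ matters.

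\textbf{You cannot settle for width $\theta'<\theta$.} Lemma \ref{lemma:ngaps} needs open gaps of $U=M-F$ of width exactly $\theta=h_F$. Only then can it conclude that their endpoints lie in $F$, because a complementary arc of length $\ge\theta'$ need not be a $\theta$-gap. Lemma \ref{lemma:essinfphi_i} likewise needs the exact widths $\phi_i$. So your first option, letting $\theta'\uparrow\theta$, is required, and it is the paper's last step. Run the argument for $\theta-\frac1m$ and put $\beta=\sup_m\beta(\theta-\frac1m)$. Then, for $\gamma>\beta$, $U\cap M_\gamma$ contains $n$ disjoint open arcs of length $\theta-\frac1m$ for every $m$. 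Two facts then finish it. Only finitely many components of $U\cap M_\gamma$ have length $\ge\theta/2$. A component of length $\ell$ contains $k$ disjoint open arcs of length $L$ iff $kL\le\ell$. Letting $m\to\infty$ yields $n$ disjoint open $\theta$-gaps. ``Intersecting clubs'' should be replaced by this: a countable sup of ordinals plus a finiteness count. Until you do this, Step 3 only delivers $(\theta-2\epsilon)$-gaps.

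\textbf{Your main obstacle is real, and your loose reading is what the paper proves.} The borrowed $\alpha'$-strip, restricted to $(\beta,\alpha]$, is in general not of the form $S_\alpha(\beta,\alpha,\phi_i,\theta)$ in the chart $f_\alpha$. The paper does not claim otherwise: it sets $U_{\eta,i}(\theta')=U_{\alpha,i}(\theta')$ for the least $\alpha\ge\eta$ in $S_0$, which is a strip straight in $f_\alpha$. The literal reading can in fact fail. On the trivial bundle, let $f_{\xi+1}$ rotate the fibers by an angle winding once around $\SSS^1$ over $[\xi,\xi+\frac12]$, and let $U=\LL_+\times(0,\theta)$. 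Then for $\beta<\xi$ no straight $f_{\xi+1}$-strip of positive width over $(\beta,\xi+1)$ lies in $U$. Re-choosing $f_\alpha$ as you suggest would work, but it makes the charts depend on $U$ and is not needed.
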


\begin{proof}
   Fix some strictly positive $\theta'<\theta$.
   Notice that in $(0,\alpha+1)\times\SSS^1$ with the product topology, if an open  
   set $V$ intersected with $\{\alpha\}\times\SSS^1$ contains $n$ $\theta$-gaps
   and $\alpha\in\Lambda$, then by taking compact intervals of length $\theta'$ strictly contained
   in these gaps and the definition of the product topology,
   there is $\beta<\alpha$ such that $V$ contains $n$ 
   disjoint
   strips $(\beta,\alpha]\times(\phi_i,\phi_i+\theta')$.   
   Since $f_\alpha$ is an isometry on the fibers $M_\alpha$, 
   there is $\beta(\theta',\alpha)<\alpha$ and disjoint
   $U_{\alpha,i}(\theta') = S_\alpha(\beta(\theta',\alpha),\alpha,\phi_i,\theta')$
   such that $U$ contains all these strips.
   By Fodor, there is a stationary $S_0\subset S$
   and $\beta(\theta')\in\omega_1$ such that $\beta(\theta',\alpha)=\beta(\theta')$ for each $\alpha\in S'$.
   If $\eta>\beta$,
   set $U_{\eta,i}(\theta')=U_{\alpha,i}(\theta')$ for the minimal $\alpha\ge\eta$ in $S_0$, we thus showed:
   \begin{equation}
      \label{eq:theta'}
         \alpha>\beta(\theta') \quad\Longrightarrow\quad U_{\alpha,i}(\theta')\subset U\,\forall i=1,\dots,n.
   \end{equation}
   Set $\beta = \sup_{m\in\omega}\beta(\theta-\frac{1}{m})$, then if $\alpha>\beta$,
   by \ref{eq:theta'} $U\cap\pi^{-1}((\beta,\alpha))$ 
   contains $n$ disjoint strips of widths $\theta-1/m$ for each $m$.
   Since the fibers have finite length $2\pi$, 
   except for finitely many of them, strips of bigger widths contain those of smaller widths.
   It follows that $U\cap\pi^{-1}((\beta,\alpha))$ contains at least $n$ disjoint strips
   of width $\theta$.
\end{proof}

\begin{lemma}
   \label{lemma:h_F=0}
   Let $F\subset M$ be closed. If $h_F(\alpha) = 0$ and $M_\alpha\cap F\not=\varnothing$, then 
   $M_\alpha\subset F$.
\end{lemma}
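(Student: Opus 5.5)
The proof is a direct unpacking of the definition of $h_F$. We use Lemma \ref{lemma:x_ntheta_n} only to view $M_\alpha$ as an isometric copy of $\SSS^1$, with distance given by angular separation.

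The first step is to show that $F\cap M_\alpha$ is closed in $M_\alpha$, hence compact. This holds because $F$ is closed in $M$ and $M_\alpha=\pi^{-1}(\{\alpha\})$ is closed in $M$. The fiber is compact, being homeomorphic to $\SSS^1$ through $f_\alpha$.

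Now suppose $M_\alpha\not\subset F$, and fix $x\in M_\alpha\cap F$. The complement $M_\alpha\setminus F$ is then a nonempty open subset of the circle $M_\alpha$. Pick $y\in M_\alpha\setminus F$ and write $y=x+\psi$ with $0<\psi<2\pi$. Consider
$$\theta_1=\sup\{\phi\in[0,\psi]\,:\,x+\phi\in F\},\qquad \theta_2=\inf\{\phi\in[\psi,2\pi]\,:\,x+\phi\in F\}.$$
The second set is nonempty because $x+2\pi=x\in F$. Since $F\cap M_\alpha$ is closed, $x+\theta_1$ and $x+\theta_2$ both lie in $F$. Since $y\notin F$ and $F\cap M_\alpha$ is closed, we get $\theta_1<\psi<\theta_2$. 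By construction, no point $x+\phi$ with $\theta_1<\phi<\theta_2$ belongs to $F$. Hence the pair $x+\theta_1$, $(x+\theta_1)+(\theta_2-\theta_1)$ is a closed gap in $F\cap M_\alpha$ of width $\theta_2-\theta_1>0$. This gives $h_F(\alpha)\ge\theta_2-\theta_1>0$, contradicting $h_F(\alpha)=0$. Therefore $M_\alpha\subset F$.

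The only delicate point is the degenerate case where $F\cap M_\alpha$ is the single point $x$. Then $\theta_1=0$ and $\theta_2=2\pi$, and the gap is the $2\pi$-gap from $x$ to itself, which the definition explicitly allows. So no separate case is needed.

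One should also check that the definition of $h_F$ reads $0$ when there is no closed gap at all, so that $h_F(\alpha)=0$ means precisely that no closed gap of positive width exists. This is the content used in the argument. For $\alpha\notin\pi(F)$ the statement is vacuous, since the hypothesis $M_\alpha\cap F\neq\varnothing$ fails.
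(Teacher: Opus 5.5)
Your proposal is correct and follows the paper's argument. The paper says that having no closed gap of positive width makes $F$ dense in $M_\alpha$, and then closedness forces $M_\alpha\subset F$. You prove the contrapositive by building the maximal arc of $M_\alpha\setminus F$ around a missing point, whose endpoints lie in $F$ by closedness, which is the same reasoning written out in detail.
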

\begin{proof}
   Since $F\cap M_\alpha$ has no gap of strictly positive width, $F$ is dense in $M_\alpha$.
   By closedness, $F\supset M_\alpha$.
\end{proof}

If $g:E\to\R$ is a %(not necessarily continuous) 
function, where $E\subset\LL_+$, we define its essential infimum as
$$ \text{essinf\,}g =\sup_{\alpha\in\omega_1}\{\inf g\upharpoonright (E\cap[\alpha,\omega_1))\}.$$
\begin{lemma}
   \label{lemma:essinf}
   If $F\subset M$ is club, then $h_F(\alpha)$ attains it essential infimum $\theta_0$ on a club set $C$.
\end{lemma}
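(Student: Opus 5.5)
We need to show that the set $\{\alpha\in\pi(F)\cap\omega_1 : h_F(\alpha)=\theta_0\}$ contains a club, where $\theta_0=\essinf h_F$. The plan has two halves: first show that $h_F(\alpha)\ge\theta_0$ on a club, then show that the set where $h_F(\alpha)>\theta_0$ is non-stationary. Intersecting the two clubs with the club $\pi(F)\cap\omega_1$ (Lemma \ref{lemma:club}) will give $C$.

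For the first half, by definition of the essential infimum we have $\theta_0=\lim_\alpha \inf h_F\upharpoonright(\pi(F)\cap[\alpha,\omega_1))$. This is a non-decreasing limit of values in $[0,2\pi]$ indexed by $\omega_1$, so it is eventually constant. Hence there is $\alpha_0$ with $\inf h_F\upharpoonright[\alpha_0,\omega_1)=\theta_0$, and thus $h_F(\alpha)\ge\theta_0$ for all $\alpha\ge\alpha_0$ in $\pi(F)$.

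For the second half, suppose toward a contradiction that $S=\{\alpha : h_F(\alpha)>\theta_0\}$ is stationary. Then by pigeonholing over $m\in\omega$ there is $\theta>\theta_0$ such that $h_F(\alpha)>\theta$ on a stationary $S'\subset S$.
\begin{itemize}
\item For $\alpha\in S'$, the fiber $M_\alpha$ contains a closed gap of width $>\theta$ in $F$. Its interior is an open $\theta$-gap contained in the open set $U=M\setminus F$.
\item Lemma \ref{lemma:opengap} with $n=1$ gives $\beta$ such that for every $\alpha>\beta$ there is a strip $S_\alpha(\beta,\alpha,\phi,\theta)\subset U$ meeting each $M_\gamma$, $\beta<\gamma\le\alpha$, in an open $\theta$-gap. (We apply it on the closed range of the strip, enlarging $\alpha$ slightly if needed, since the lemma asserts the gap for $\gamma\le\alpha$.)
\item Hence for every $\gamma>\beta$ in $\pi(F)$, the fiber $M_\gamma\setminus F$ contains an open arc of length $\theta$. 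Since $F\cap M_\gamma\neq\varnothing$, this arc lies inside some closed gap of $F\cap M_\gamma$, so $h_F(\gamma)\ge\theta>\theta_0$ for all $\gamma>\beta$ in $\pi(F)$.
\item This contradicts $\inf h_F\upharpoonright(\pi(F)\cap[\alpha,\omega_1))=\theta_0$ for large $\alpha$, since that infimum would be at least $\theta$.
\end{itemize}

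So $S$ is non-stationary, and there is a club $D$ disjoint from $S$. Then $C=D\cap[\alpha_0,\omega_1)\cap\pi(F)\cap\Lambda$ is a club on which $h_F=\theta_0$.

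The delicate point is the application of Lemma \ref{lemma:opengap}, namely checking that the open gap in $M_\alpha$ lies in $M\setminus F$, which is open since $F$ is closed. The remaining steps are just bookkeeping of the essential infimum.
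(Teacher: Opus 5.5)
Your proof is correct. Its first half ($h_F\ge\theta_0$ on a tail of $\pi(F)$, because the non-decreasing $\omega_1$-sequence of infima is eventually constant) matches the paper's step with the $\beta_n$. The second half takes a different route.

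The paper neither argues by contradiction nor uses Lemma \ref{lemma:opengap}. It picks $\alpha_n\in\pi(F)$ increasing to $\alpha$ with $h_F(\alpha_n)<\theta_0+\frac{1}{n}$, and shows directly, by passing to the limit in the fibres (Lemma \ref{lemma:x_ntheta_n}), that $h_F(\alpha)=\theta_0$. Such $\alpha$ are unbounded, and limits of them are again of this form, so the club is exhibited explicitly. The case $\theta_0=0$ is handled separately through Lemma \ref{lemma:h_F=0}.

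You instead prove that $\{h_F>\theta_0\}$ is non-stationary. You let Lemma \ref{lemma:opengap} (hence Fodor) spread an open $\theta$-gap from stationarily many fibres to every fibre above some $\beta$.

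The paper's route is more elementary and pointwise. It needs no pressing-down, and it yields a stronger fact: any limit of fibres on which $h_F$ tends to $\theta_0$ already has $h_F=\theta_0$.

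Your route has its own advantages. It treats $\theta_0=0$ and $\theta_0>0$ uniformly, and it reuses a lemma already available. It also makes rigorous a step the paper only asserts: that the limit fibre carries no closed gap wider than $\theta_0$. That step really rests on $M\setminus F$ being open, exactly as in your argument, rather than on tracking limits of individual points of $F\cap M_{\alpha_n}$.
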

\begin{proof}
   We may assume that $D=\pi(F)\subset\omega_1\subset\LL_+$.
   Assume first that $\theta_0 = 0$.
   For each $\alpha_0$ we may choose $\alpha_n\in F$, $n>0$, such that $h_F(\alpha_n) < \frac{1}{n}$
   and $\alpha_{n+1}\ge\alpha_n$. Let $\alpha$ be the supremum of the $\alpha_n$.
   Then we must have $h_F(\alpha)=0$ and $F\supset M_\alpha$ by Lemma \ref{lemma:h_F=0}.
   There is thus a club set of $\alpha$ such that $F\supset M_\alpha$ and the lemma follows.\\
   Assume now that $\theta_0>0$. 
   By the definition of the essential infimum, for each $n$, there is $\beta_n$ such that 
   for each $\beta\in D$, $\beta>\beta_n$, we have $h_F(\beta)\ge\theta_0 -\frac{1}{n}$.
   Let $\beta=\sup_n\beta_n$, by restricting to $D-[0,\beta]$ we may assume 
   that $h_F(\alpha)\ge\theta_0$ for each $\alpha\in D$.
   \\
   As above, for each $\alpha_0$ 
   we may choose for each $n$ an $\alpha_n$ such that $\theta_0 \le h_F(\alpha_n) < \theta_0 + \frac{1}{n}$
   and $\alpha_{n+1}\ge\alpha_n$. Let $\alpha$ be the supremum of the $\alpha_n$.
   For each point in $F\cap M_{\alpha_n}$ there is another point at distance between $\theta_0$ and
   $\theta_0+\frac{1}{n}$, 
   by Lemma \ref{lemma:x_ntheta_n} each point of $F\cap\alpha$ has another one at distance exactly $\theta_0$.
   Since $h_F(\alpha)\ge\theta_0$, there is at least one closed gap of width $\theta_0$ in $M_\alpha$ (and none
   of bigger width).
\end{proof}

Of course, there might be gaps of any kind of widths on the same club $F$, for instance in the
trivial bundle $\LL_+\times\SSS^1$ (product topology). 

\begin{defi}
   Let $F\subset M$ be club such that $0<\essinf h_F = \theta_0 = h_F(\alpha)$ 
   for $\alpha\in C = \pi(F)$. We define $g_F:C\to\omega$
   as the number of gaps of width $\theta_0$ in $F\cap M_\alpha$.
\end{defi}

\begin{lemma}
   \label{lemma:ngaps}
   If $F\subset M$ is club with $\essinf h_F = \theta > 0$, 
   then there is $n\in\omega$ and a club 
   $C\subset\omega_1$ such that $g_F$ is constant with value $n$ on $C$.
   Moreover, the subset $F_0$ of points in $\pi^{-1}(C)\cap F$ that are boundary points
   of the $n$ $\theta$-gaps is club.
\end{lemma}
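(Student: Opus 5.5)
By Lemma \ref{lemma:essinf} we may shrink $F$ to $F\cap\pi^{-1}(C)$ with $C=\pi(F)$ club and $h_F(\alpha)=\theta$ for every $\alpha\in C$. Then $g_F(\alpha)\ge1$ on $C$, and $g_F(\alpha)\le 2\pi/\theta$ because the $\theta$-gaps in a fiber have disjoint interiors. So $g_F$ takes finitely many values. The plan is to show that $g_F$ is eventually constant on a club. This is done in two steps: first show the sets where $g_F$ is large are closed, then pick the right value.

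\emph{Step 1: upper semicontinuity along limits.} I will show that if $\alpha_k\in C$ increase to $\alpha$ and $g_F(\alpha_k)\ge m$ for all $k$, then $g_F(\alpha)\ge m$.
\begin{itemize}
\item[$\bullet$] Work in the trivialization $f_{\alpha}$, so that all $M_{\alpha_k}$ and $M_\alpha$ are identified isometrically with $\SSS^1$.
\item[$\bullet$] For each $k$, choose $m$ pairs of endpoints $x^k_i,\ x^k_i+\theta$ of $\theta$-gaps in $M_{\alpha_k}$. Passing to a subsequence, the angular coordinates converge, so $x^k_i\to x_i\in M_\alpha$. Since $F$ is closed, $x_i\in F$, and by Lemma \ref{lemma:x_ntheta_n} also $x_i+\theta\in F$.
\item[$\bullet$] Each arc $(x_i,x_i+\theta)$ misses $F$. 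Otherwise a point $y$ of $F$ lies strictly inside, at distance $<\theta$ from both ends, and the adjacent gap through $y$ has width $<\theta$. This gives nothing by itself, so I argue instead that a closed gap of $F\cap M_\alpha$ contained in $[x_i,x_i+\theta]$ has width $\le h_F(\alpha)=\theta$, hence cannot be a proper subarc carrying all of it.

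\emph{This is the main obstacle.} Points of $F\cap M_\alpha$ need not be limits of points of $F\cap M_{\alpha_k}$, so the limit arc might contain points of $F$. The first thing to try is the open-gap Lemma \ref{lemma:opengap}. The arcs $(x^k_i,x^k_i+\theta)$ lie in the open set $U=M\setminus F$, so an open $\theta$-gap is present on stationarily, indeed cofinally, many fibers. Applying that lemma (or its proof for the relevant fibers near $\alpha$), we get strips $S_\alpha(\beta,\alpha,\phi_i,\theta)\subset U$ which also contain the fiber $M_\alpha$ part. Hence $(x_i,x_i+\theta)\cap F=\varnothing$. If that lemma is awkward to apply here, I would redo its Fodor argument directly.
\item[$\bullet$] Limit arcs coming from distinct gaps have disjoint interiors, since the approximating arcs do. One must still rule out two of them coinciding in the limit. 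If they coincided, the approximating arcs would overlap for large $k$; they are disjoint arcs of equal length, so this can happen only when they are the same arc. Hence $g_F(\alpha)\ge m$.
\end{itemize}

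\emph{Step 2: choosing $n$.} By Step 1, each set $C_m=\{\alpha\in C: g_F(\alpha)\ge m\}$ is closed. Let $n$ be the largest $m$ for which $C_m$ is unbounded; then $C_m$ is club. For $m>n$, $C_{m}$ is bounded, so removing an initial segment leaves $g_F=n$ on a club $C'$.

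\emph{The moreover part.} $F_0$ is unbounded because each fiber over $C'$ contributes endpoints. For closedness, take a sequence in $F_0$ converging to $x\in M_\alpha$ with $\alpha\in C'$ (recall $C'$ is closed). Pass to a subsequence whose partner endpoints converge as well. The argument of Step 1 shows the limit pair bounds a $\theta$-gap of $F\cap M_\alpha$, so $x\in F_0$.
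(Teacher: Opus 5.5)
\textbf{There is a genuine gap: your Step~1 is false, and so is the choice of $n$ in Step~2.} Take the trivial bundle $\LL_+\times\SSS^1$ and $F=\{\langle\alpha,0\rangle,\langle\alpha,\pi\rangle:0<\alpha<\omega_1\}\cup(\Lambda_2\times\{\pi/2\})$.
\begin{itemize}
\item[$\bullet$] $F$ is club and $h_F\equiv\pi$. Moreover $g_F(\alpha)=2$ for $\alpha\notin\Lambda_2$ and $g_F(\alpha)=1$ for $\alpha\in\Lambda_2$.
\item[$\bullet$] For $\alpha_k=\omega\cdot k$ you get $g_F(\alpha_k)=2$ but $g_F(\omega^2)=1$. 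The limit arc $(0,\pi)$ in $M_{\omega^2}$ contains $\langle\omega^2,\pi/2\rangle\in F$, which is not a limit of points in earlier fibers. This is exactly the obstacle you flagged.
\item[$\bullet$] So $C_2$ is unbounded but not closed. Your Step~2 would output $n=2$, while the correct value is $n=1$, on the club $\Lambda_2$.
\end{itemize}
Your proposed repair does not work. A countable sequence $\alpha_k$ gives cofinally many fibers, not stationarily many, and Lemma~\ref{lemma:opengap} needs a stationary set. Even then, that lemma produces (via Fodor) a single $\beta$ valid for all large fibers. It gives no information localized at a prescribed limit $\alpha$. The sets $\{g_F\ge m\}$ are not closed in general; the relevant dichotomy is stationary versus non-stationary.

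\textbf{The missing idea is the paper's argument.}
\begin{itemize}
\item[$\bullet$] Let $n$ be maximal such that $\{\alpha\in C: g_F(\alpha)=n\}$ is stationary. Then $g_F\le n$ on a club $D$.
\item[$\bullet$] Apply Lemma~\ref{lemma:opengap} to $U=M\setminus F$ on that stationary set. This gives $\beta$ such that for each $\alpha>\beta$ there are $n$ disjoint strips $S_\alpha(\beta,\alpha,\phi_i,\theta)\subset U$, each meeting every fiber $M_\gamma$, $\beta<\gamma\le\alpha$, in an open $\theta$-gap.
\item[$\bullet$] Since $h_F=\theta$ on $D$, each such open gap is a closed $\theta$-gap of $F$. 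Hence $g_F=n$ on $D\setminus[0,\beta]$.
\item[$\bullet$] The same strips give the ``moreover'' part. Given a convergent sequence of endpoints over $D\setminus[0,\beta]$, fix $\alpha$ above it. The endpoints lie on the $2n$ boundary curves of the strips for that $\alpha$, which are closed, so the limit is again an endpoint.
\end{itemize}

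Discarding the fibers up to $\beta$ is essential, so your ``moreover'' argument (which rests on Step~1) fails as well. Let $F$ meet:
\begin{itemize}
\item[$\bullet$] $M_k$ ($1\le k<\omega$) in the angles $\{0,2\pi/3,\pi,5\pi/3-1/k\}$;
\item[$\bullet$] $M_\omega$ in $\{0,\pi/3,2\pi/3,\pi,5\pi/3\}$;
\item[$\bullet$] every later $M_\alpha$, $\alpha<\omega_1$, in $\{0,2\pi/3,\pi,3\pi/2\}$.
\end{itemize}
This is a club with $h_F\equiv 2\pi/3$ and $g_F\equiv1$ on all of $\pi(F)$. Yet the endpoints $\langle k,0\rangle$ converge to $\langle\omega,0\rangle$, which is not an endpoint of the unique $2\pi/3$-gap $[\pi,5\pi/3]$ of $F\cap M_\omega$.
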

\begin{proof}
   Let $C$ be club such that $h_F(\alpha) = \theta$ on $C$, given by the previous lemma.
   Let $n$ be maximal such that $S=\{\alpha\in C\,:\,g_F(\alpha) = n\}$ is stationary.
   Then $\{\alpha\in C\,:\,g_F(\alpha) \not= m\}$ contains a club when $m>n$, 
   hence there is a club $D\subset C$ such that $h_F(\alpha)\le n$ on $D$.
   Let $U = M-F$. By Lemma \ref{lemma:opengap}, there is 
   $\beta$ such that $U\cap M_\gamma$ contains at least $n$ disjoint open gaps of width $\theta$
   when $\gamma>\beta$.
   Since $h_F(\alpha)=\theta$ on $D$, 
   there must be points of $F\cap M_\alpha$ at the extremities of each of these open gaps, hence 
   $F$ contains at least $n$, and thus exactly $n$, closed gaps of width $\theta$.   
   These boundary points form a closed set: take a sequence $x_n$ of them converging to 
   some $x$ and take $\alpha>\beta$ bigger than $x$ and each $x_n$.
   Take the $n$ disjoint strips of width $\theta$ in $U$ given by Lemma \ref{lemma:opengap}.
   Then infinitely many $x_n$ lie on the same boundary of one of the strips.
   Since the boundaries of these strips are closed in $M$, $x$ must be on the same boundary
   of the same strip.
\end{proof}

Let us say that $E\subset M$ is a {\em $(\theta,n)$-gap-club} if it is as in the conclusion of the previous lemma:
for each $\alpha\in\pi(F) = C$, $F\cap M_\alpha$ contains exactly the boundary points of $n$ closed gaps
of width $\theta$.
If $F$ is such a $(\theta,n)$-gap-club, for each $\alpha\in C$ there are 
$\phi_1^\alpha\ge\phi_2^\alpha\ge\dots\ge\phi_n^\alpha \ge 0$
which are the widths of the gaps between the closed gaps of width $\theta$ (see Figure \ref{fig:phi}).
Notice that we allow $\phi_i^\alpha$ to be $0$.
Let $\phi_i = \text{essinf}\phi_i^\alpha$, $i=1,\dots,n$.
\begin{figure}
   \begin{center}
       \epsfig{figure=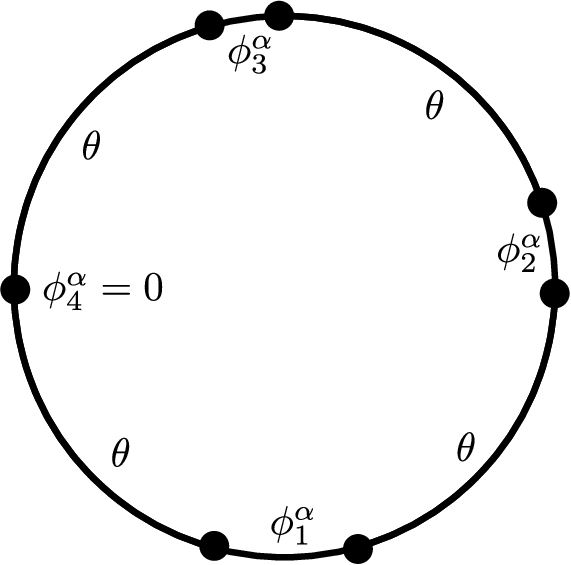, width=.35\textwidth}
       \caption{The inter-gaps $\phi_1^\alpha\ge\phi_2^\alpha\ge\dots\ge\phi_n^\alpha \ge 0$.
                The black dots are the members of $F\cap M_\alpha$.}
       \label{fig:phi}
   \end{center}
\end{figure}

\begin{lemma}
   \label{lemma:essinfphi_i}
   If $F\subset M$ is a $(\theta,n)$-gap-club, there is a club $D\subset\omega_1$ such that
   $\phi_i^\alpha = \phi_i$ for each $i=1,\dots,n$ and $\alpha\in D$.
\end{lemma}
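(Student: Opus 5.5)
We prove this by the same scheme as Lemma \ref{lemma:essinf}, handling the $\phi_i^\alpha$ one index at a time, from $i=1$ to $i=n$. Shrinking $C=\pi(F)$ by a club is harmless, since countable intersections of clubs are club.

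The basic tool is a closure property. Suppose $\alpha_k\in C$ increase to $\alpha\in C$. Then every limit configuration of $F\cap M_{\alpha_k}$ sits inside $F\cap M_\alpha$. Indeed, take a large $\gamma$ and the trivialization $f_\gamma$; it is an isometry on fibers, and by Lemma \ref{lemma:x_ntheta_n} angular separations pass to the limit. So pass to a subsequence along which the $2n$ boundary points of the $\theta$-gaps converge in the compact set $\pi^{-1}([\alpha_0,\alpha])$. By closedness the limits lie in $F\cap M_\alpha$, which has exactly $2n$ points. The cyclic arrangement ``gap $\theta$, inter-gap, gap $\theta$, inter-gap, ...'' is preserved in the limit, with inter-gap widths converging. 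Distinct points might collapse only when an inter-gap tends to $0$.

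The delicate point is that the $\theta$-gaps must converge to $\theta$-gaps of $F\cap M_\alpha$. The limit of a $\theta$-gap is a pair at distance $\theta$ in $F\cap M_\alpha$. No point of $F\cap M_\alpha$ can lie strictly between them, because $h_F(\alpha)=\theta$ and there are exactly $n$ maximal gaps. Alternatively, Lemma \ref{lemma:opengap} gives strips of width $\theta$ in $M-F$ above some $\beta$, and these persist to the limit fiber. So the limit pairs are $\theta$-gaps of $F\cap M_\alpha$, hence they are exactly its $n$ $\theta$-gaps (Lemma \ref{lemma:ngaps}). It follows that the multiset of inter-gap widths at $\alpha$ is the limit of the multisets at $\alpha_k$, with the sorted vectors $(\phi_1^{\alpha_k},\dots,\phi_n^{\alpha_k})$ converging coordinatewise to $(\phi_1^\alpha,\dots,\phi_n^\alpha)$. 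Sorting is continuous, so this is legitimate.

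Now the induction. Suppose a club $D_{i-1}$ has been found on which $\phi_j^\alpha=\phi_j$ for all $j<i$. As in Lemma \ref{lemma:essinf}, choose $\beta_m$ such that $\phi_i^\alpha\ge\phi_i-\frac1m$ for $\alpha>\beta_m$, and discard $[0,\sup_m\beta_m]$. Then $\phi_i^\alpha\ge\phi_i$ on the remainder. The set of $\alpha$ with $\phi_i^\alpha<\phi_i+\frac1m$ is unbounded for each $m$. Given $\alpha_0$, pick increasing $\alpha_m\in D_{i-1}$ above $\alpha_0$ with $\phi_i^{\alpha_m}<\phi_i+\frac1m$. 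By the closure property, the supremum $\alpha$ satisfies $\phi_i^\alpha=\phi_i$, and $\phi_j^\alpha=\phi_j$ for $j<i$ still holds. Hence the set $D_i'$ of points of $D_{i-1}$ where $\phi_i^\alpha=\phi_i$ is unbounded. It is also closed, again by the closure property applied to sequences in $D_i'$. So $D_i=D_i'$ is club, and $D=D_n$ is the desired club.

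The main obstacle is the closure property, specifically showing that the limit $\theta$-gaps are genuine gaps, so that the inter-gap widths really pass to the limit. Here I would rely on the strips of Lemma \ref{lemma:opengap} together with $h_F=\theta$ on $C$.
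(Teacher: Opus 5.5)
\textbf{Verdict.} Your route differs from the paper's, and it has a genuine gap at the step you yourself flagged.

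\textbf{How the routes differ.} You prove a closure property for the inter-gap vector and then run a coordinatewise essinf induction modelled on Lemma \ref{lemma:essinf}. The paper instead shows $\phi_i^\alpha\ge\phi_i$ eventually. It then applies Lemma \ref{lemma:opengap} to $M-F$ to get, in every fibre above some $\beta$, disjoint open arcs of widths $\theta$ and $\phi_1,\dots,\phi_n$, and concludes with the length count $n\theta+\sum_i\phi_i^\alpha=2\pi$.

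\textbf{The closure property is false as stated.} You claim it for every increasing sequence in $C$ and every limit $\alpha\in C$, and your argument for it does not work. Closedness only gives that limits of points of $F\cap M_{\alpha_k}$ lie in $F\cap M_\alpha$. Nothing forces every point of $F\cap M_\alpha$ to be such a limit. Moreover, $h=\theta$ and $g=n$ at $\alpha$ do not prevent a limit $\theta$-pair from being split by a point of the original club.

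Here is an example. Take the trivial bundle $\LL_+\times\SSS^1$, with $n=2$ and $\theta=\pi/3$. Let the original club $G$ satisfy
\begin{itemize}
\item $G\cap M_k=[\frac{\pi}{3},\frac{4\pi}{3}]\cup\{\frac{5\pi}{3}-\frac1k\}\cup[2\pi-\frac1k,2\pi]$ for $1\le k<\omega$;
\item $G\cap M_\gamma=[\frac{\pi}{3},\frac{4\pi}{3}]\cup\{\frac{5\pi}{3},\frac{11\pi}{6},0\}$ for $\gamma\ge\omega$.
\end{itemize}
Then $G$ is closed, and every fibre has exactly two closed $\frac{\pi}{3}$-gaps and none wider.

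The gap-club $F$ has $F\cap M_k=\{0,\frac{\pi}{3},\frac{5\pi}{3}-\frac1k,2\pi-\frac1k\}$, with inter-gaps $(\frac{4\pi}{3}-\frac1k,\frac1k)\to(\frac{4\pi}{3},0)$. But $F\cap M_\omega=\{0,\frac{\pi}{3},\frac{4\pi}{3},\frac{5\pi}{3}\}$ has inter-gaps $(\pi,\frac{\pi}{3})$.

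Two things go wrong at $\omega$:
\begin{itemize}
\item The limit pair $\{\frac{5\pi}{3},0\}$ is a $\theta$-gap of $F\cap M_\omega$, but not one of the two distinguished ones, since it contains $\frac{11\pi}{6}\in G$. So ``hence they are exactly its $n$ $\theta$-gaps'' fails.
\item The point $\frac{4\pi}{3}$ is not a limit from below.
\end{itemize}
The lemma itself survives in this example, since the vector is constant above $\omega$. What is refuted is your closure property.

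\textbf{What is missing.} At best the property holds on a tail. Proving that is the real content of the lemma, and your proposal does not supply it.

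You could invoke Lemma \ref{lemma:opengap} for the complement of the \emph{original} club. It gives strips $S_\alpha(\beta,\alpha,\psi_i,\theta)$ whose sections at every $\gamma\in(\beta,\alpha]$ are open $\theta$-gaps. Those sections must then be the $n$ distinguished gaps. Hence the configuration in $M_\gamma$ is that of $M_\alpha$ rotated by $a_\alpha(\gamma)$. This gives constancy of $(\phi^\gamma_1,\dots,\phi^\gamma_n)$ on $\pi(F)-[0,\beta]$ outright, and the essinf induction becomes unnecessary.

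\textbf{A separate slip in the induction step.} The essinf is taken over all of $\pi(F)$. So it gives points with $\phi_i^\alpha<\phi_i+\frac1m$ in $\pi(F)$, not in $D_{i-1}$. Simultaneous attainment is exactly what must be proved. Indeed, since $\sum_i\phi_i^\alpha=2\pi-n\theta$ on $D_{n-1}$, your last step would force $\sum_i\phi_i=2\pi-n\theta$, which does not follow from the separate essinfs. Given a valid closure property, you could repair this by interleaving points of $D_{i-1}$ with such points of $\pi(F)$.
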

\begin{proof}
   Essentially the same proof as that of Lemma \ref{lemma:essinf}.
   First, there is $\beta_0\in\omega_1$ such that $\phi_i^\alpha\ge\phi_i$ for each $i$
   and $\alpha\ge\beta_0$, by the same argument.
   Now, by Lemma \ref{lemma:opengap}, there is $\beta$ such that $M-F$ contains disjoint open gaps
   of width $\phi_i$, with $i=1,\dots,n$. These open gaps are also disjoint from the $n$ closed gaps
   of width $\theta$. It follows that if $\alpha\ge\beta$, $\alpha\in\pi(F)$,
   then $F\cap M_\alpha$ contains $n$ closed gaps of width $\theta$ and open gaps of width $\phi_1,\dots,\phi_n$
   (perhaps some equal to $0$). But for each $\alpha\in\pi(F)$, $n\theta + \sum_{i=1}^n\phi^\alpha_i =2\pi$, hence 
   in particular $n\theta + \sum_{i=1}^n\phi_i =2\pi$, so the boundary points of the closed gaps of width $\theta$
   are also the boundary points of these open gaps of width $\phi_1,\dots,\phi_n$.
\end{proof}

Let us now introduce a removal procedure.
Given a list $\mathcal{A}$ 
of distinct points $\varphi_1,\dots,\varphi_m\in\SSS^1$ written in increasing order, denote
by $\theta_i,\phi_i$ the widths of the gaps before and after $\varphi_i$.
If $\theta_i=\phi_i=\theta_j$ for each $i,j\le m$, we return the original list.
Otherwise, we remove all the points for which $\theta_i$ is minimal, and return
the remaining points re-enumerated as $\varphi_0,\dots,\varphi_k\in\SSS^1$ with $k<m$.
Denote the returned list by $r(\mathcal{A})$.
Notice that we always return at least one point.
After applying $r$ finitely many times, we end up with a list with constant gaps
between its members.

\begin{lemma}
   \label{lemma:final}
   If $F\subset M$ is a $(\theta,n)$-gap-club, there is a club $E\subset F$ such that
   $M_\alpha\cap E$ contains exactly $m$ points
   at distance $2\pi/m$, with $1\le m\le n$. I.e. $M_\alpha\cap E$ is
   made of $m$ closed gaps of width $2\pi/m$, for each $\alpha\in\pi(E)$.
\end{lemma}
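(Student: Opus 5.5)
We plan to iterate the removal procedure $r$ fiberwise and check that at each stage what remains is again a club with constant combinatorial pattern on a club of levels.

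\textbf{Step 1: the fiber pattern is fixed.} By Lemma \ref{lemma:essinfphi_i}, after shrinking to a club $D$ of levels, the cyclic pattern of gaps in $F\cap M_\alpha$ is the same for all $\alpha\in D$. The widths are $\theta,\phi_i^\alpha,\theta,\dots$ with $\phi_i^\alpha=\phi_i$. The cyclic order of the $\phi_i$ may a priori vary with $\alpha$. Since there are finitely many cyclic arrangements of the labels, some arrangement occurs on a stationary set.

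We need more: on a club of levels there should be a single arrangement, or at least an arrangement-independent description of what $r$ removes. For this we use Lemma \ref{lemma:opengap} on $U=M-F$, as in the proof of Lemma \ref{lemma:essinfphi_i}. Above some $\beta$, the open gaps of widths $\theta$ and $\phi_i$ are realized as disjoint strips $S_\alpha(\beta,\alpha,\cdot,\cdot)$. Hence the gap pattern is constant along $f_\alpha$-trivialized levels in $(\beta,\alpha]$. Since the pattern is determined by the widths, compatibility of the $f_\alpha$ makes the cyclic word of gap widths the same for all $\alpha\in D$, $\alpha>\beta$.

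Consequently, applying $r$ to $F\cap M_\alpha$ removes, at every such level, the points in the same positions of this cyclic word. We define $F_1$ as the union over $\alpha\in D\setminus[0,\beta]$ of $r(F\cap M_\alpha)$.

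\textbf{Step 2: $F_1$ is club.} Unboundedness is clear, since $r$ always returns at least one point. For closedness, take $x_k\in F_1$ with $x_k\to x$; then $x\in F$ because $F$ is closed. Each $x_k$ is a point whose preceding gap has a width that is not minimal among the gaps of its fiber. Using the strips from Lemma \ref{lemma:opengap}, the gap just before $x_k$ is the trace of a fixed strip on that level. Passing to a subsequence, infinitely many $x_k$ lie on the same boundary of the same strip, exactly as in the closedness argument of Lemma \ref{lemma:ngaps}. Therefore $x$ lies on that boundary too. So $x$ is preceded by a gap of the same non-minimal width, and $x\in F_1$.

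The level $\pi(x)$ is in $D$, since $D$ is closed and $\pi(F_1)$ is closed. If $x$ lies over a successor level or another non-$D$ point, we intersect with $\pi^{-1}(D)$; that set is closed, so the intersection is harmless.

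\textbf{Step 3: iterate.} Now $F_1$ is again a club with a constant gap pattern on a club of levels, and Steps 1–2 apply to it. Each application strictly decreases the number of points per fiber unless all gaps are already equal, so after at most $n$ iterations we reach a club $E$. Its fibers are $m$ points with all gaps equal, hence of width $2\pi/m$, with $1\le m\le 2n$.

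Getting $m\le n$ needs one more observation. Either the $n$ gaps of width $\theta$ alternate with gaps of width $0$ (some $\phi_i=0$), in which case the points come in coincident pairs and the fiber really has at most $n$ points; or the first application of $r$ already cuts the count to at most $n$. This point needs care in the write-up. Finally, we intersect the countably many clubs of levels obtained along the way, which yields the club $E$.

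\textbf{Main obstacle.} The delicate step is Step 1's claim that the cyclic arrangement of gap widths is the same on a club of levels, rather than merely on a stationary set. This is what makes $r$ act "uniformly". We would get it from the strip structure of Lemma \ref{lemma:opengap} together with the continuity of the $f_\alpha$. If that fails, the fallback is to replace $F$ by the closure of its restriction to a stationary set with a fixed arrangement. Closures of stationary-indexed subsets of a club are clubs in the sense needed here.
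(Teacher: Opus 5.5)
Your plan is the paper's: fix a club of levels on which the cyclic word of gap widths is constant, apply $r$ fiberwise, check that the result is again club, and iterate.

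The paper gets both the constancy and your Step 2 more cheaply, from Lemma \ref{lemma:x_ntheta_n}. A fiber of $F$ is a rigid rotated copy of its cyclic word. So if levels carrying the word $\mathcal{A}$ converge to a level in $D$, the limiting $n+k$ points are distinct, exhaust that fiber, and carry $\mathcal{A}$ with each point in the same position. Hence each of the finitely many sets of levels with a given word is closed, only one of them is unbounded, and positions in the word pass to limits.

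Your strip route can be repaired by applying Fodor to slightly shrunk strips around all gaps of a single fiber. As written, though, it does not work: Lemma \ref{lemma:opengap} produces strips of one width at a time, and a strip of width $\phi_i<\theta$ could a priori lie inside a $\theta$-gap. So it does not give the disjoint family of strips you invoke.

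The genuine gap is the bound $m\le n$, and your dichotomy fails on both sides.
\begin{itemize}
\item A vanishing $\phi_i$ only means that two $\theta$-gaps share an endpoint; nothing ``pairs up''. If $0<k<n$ of the $\phi_i$ are positive, the fiber has $n+k>n$ points.
\item Take $n=3$ with $\theta>\phi_1>\phi_2>\phi_3>0$. The fiber has $6$ points, and the first application of $r$ deletes only the point following the $\phi_3$-gap, leaving $5>3$.
\end{itemize}
For the same reason, the number of iterations is bounded by $n+k-1$, not $n$. The paper's proof does not spell out this bound either, so you have to supply it.

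The simplest fix is to replace $F$, before running $r$, by the set of left endpoints of its closed $\theta$-gaps. With exactly $n$ such gaps per fiber (the intended meaning of a $(\theta,n)$-gap-club), this set has exactly $n$ points in each fiber over $D$. It is closed by the limit argument above. Since $r$ only deletes points, the final $m$ is then automatically at most $n$.

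Alternatively, when the positive $\phi_i$ are $<\theta$, you can argue on the final configuration. Each final gap is a union of consecutive original gaps, and two inter-gaps are never adjacent. So a final gap containing none of the $n$ $\theta$-gaps is a single inter-gap of width $<\theta$, while a final gap containing a $\theta$-gap has width $\ge\theta$. Since all final gaps are equal, each contains a $\theta$-gap, and $m\le n$.
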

\begin{proof}
   Let $D$ be given by the previous lemma. 
   Let $k$ be the number of indices $i$ such that $\phi_i>0$, then 
   each $M_\alpha\cap F$ contains exactly $m = n+k$ distinct points $\varphi^\alpha_1,\dots,\varphi^\alpha_m$
   for each $\alpha\in D$. Denote by $\phi^\alpha_i$ the width of the gap between 
   $\varphi^\alpha_i$ and $\varphi^\alpha_{i+1}$ (with $\varphi^\alpha_{m+1}=\varphi^\alpha_{1}$).
   Then $\phi^\alpha_i=\theta$ for $n$ indices. Denote this list of gaps by $\mathcal{A}_\alpha$.
   Notice that by Lemma \ref{lemma:x_ntheta_n}, 
   a converging sequence of points which all lie bewteen two gaps of sizes $\theta_1,\theta_2$
   has a limit which also lies bewteen two gaps of sizes $\theta_1,\theta_2$.
   Hence, 
   $\mathcal{A}_\alpha = \mathcal{A}_\beta$ up to a cyclic permutation
   whenever $\alpha,\beta\in D$. 
   Hence, if we remove all the points lying between gaps of some fixed sizes,
   what remains is still a closed set.
   Hence, the subspace given the union of all $r(\mathcal{A}_\alpha)$
   ($\alpha\in D$), is club. We may thus continue removing points with $r$ inductively until 
   having gaps that are all of equal size.
\end{proof}

\begin{proof}[{Proof of Theorem \ref{thm:Nyikos}}]
   Let $\theta = \text{essinf\,}h_F$. If $\theta = 0$, then (a) holds by Lemmas \ref{lemma:h_F=0}--\ref{lemma:essinf}.
   If $\theta>0$, let $C$ be club and $n\in\omega$ such that $h_F$ is constant with value $\theta$ 
   and $g_F$ is constant with value $n$
   on $C$. 
   Lemma \ref{lemma:ngaps} gives us a $(\theta,n)$-gap club $F_0\subset F$.
   Then, Lemma \ref{lemma:final} gives us the required $E\subset F_0$.   
\end{proof} 

\begin{defi}
   Let $M$ be a principal $\SSS^1$-bundle over $\LL_+$. If (a) of Theorem \ref{thm:Nyikos} holds, 
   we say that $M$ is of order $\infty$. If not, the order of $M$ is the minimal $n$
   such that (b) holds.
\end{defi}

\begin{lemma}
   \label{lemma:copyomega_1}
   $M$ contains a copy of $\omega_1$ iff $M$ is of order $1$.
\end{lemma}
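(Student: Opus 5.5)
We prove both directions separately. Order $1$ means that alternative (b) of Theorem \ref{thm:Nyikos} holds with $n=1$ for some club $F$: there is a club $E\subset M$ and a club $C\subset\omega_1$ with $|E\cap M_\alpha|=1$ for each $\alpha\in C$.

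\emph{Order $1$ gives a copy of $\omega_1$.} Assume the order is $1$, and fix $E$ and $C$ as above. Shrinking $E$ to $E\cap\pi^{-1}(C)$ keeps it closed, since $\pi^{-1}(C)$ is closed. We may also assume $C\subset\pi(E)$, since by Lemma \ref{lemma:club} $\pi(E)$ is club. For $\alpha\in C$, let $x_\alpha$ be the unique point of $E\cap M_\alpha$, and set $\sigma:C\to M$, $\alpha\mapsto x_\alpha$. We show $\sigma$ is an embedding with closed image.
\begin{itemize}
   \item[$\bullet$] \emph{Continuity at a limit $\alpha$ of $C$.} The points $x_{\alpha_k}$, for $\alpha_k\nearrow\alpha$ in $C$, lie in the compact set $\pi^{-1}([\alpha_0,\alpha])$. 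Any cluster point lies in $E\cap M_\alpha$ by closedness and continuity of $\pi$, so it equals $x_\alpha$. The same argument applied to nets or to countable sequences works because $M$ is first countable.
   \item[$\bullet$] \emph{Embedding.} $\sigma$ is injective, and its inverse is $\pi$ restricted to the image, which is continuous.
\end{itemize}
Thus $C\cong\omega_1$ embeds in $M$.

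\emph{A copy of $\omega_1$ gives order $1$.} Suppose $A\subset M$ is homeomorphic to $\omega_1$ via $\alpha\mapsto y_\alpha$. First we show that $\pi(A)$ is unbounded. Otherwise $A$ lies in some $\pi^{-1}((0,\beta])$, which is homeomorphic to $(0,\beta]\times\SSS^1$ and hence metrizable, and $\omega_1$ is not metrizable.

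Next, the map $\alpha\mapsto\pi(y_\alpha)$ is continuous from $\omega_1$ to $\LL_+$. A standard pressing-down argument then yields a club $D\subset\omega_1$ on which $\pi(y_\alpha)=\alpha$, after identifying $\omega_1\subset\LL_+$. One way to see this: the sets $\{\alpha : \pi(y_\gamma)<\alpha$ for all $\gamma<\alpha\}$ and $\{\alpha : \sup_{\gamma<\alpha}\pi(y_\gamma)\ge\alpha\}$ are club, and at limits continuity gives $\pi(y_\alpha)=\sup_{\gamma<\alpha}\pi(y_\gamma)$. Let $F$ be the closure of $\{y_\alpha:\alpha\in D\}$, which equals $\{y_\alpha:\alpha\in D\}$ because a copy of a club of $\omega_1$ is countably compact. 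Then $F$ is club and meets each $M_\alpha$, $\alpha\in D$, in exactly one point.

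Apply Theorem \ref{thm:Nyikos} to $F$. Alternative (a) fails because the fibres of $F$ are singletons. In alternative (b), $E\subset F$ and $n\ge1$, so $n=1$. Hence the minimal $n$ witnessing (b) is $1$, and the order is $1$.

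The main obstacle is this last step. By definition, the order is computed over all clubs and not just $F$, so we must check that for $n=1$ we do not need $E$ to coincide with anything in particular. Since the order is the \emph{minimal} $n$ such that (b) holds for some club, the existence of the club $F$ with singleton fibres already forces the order to be $1$.
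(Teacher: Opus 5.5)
Your proof is correct and follows the paper's route. A club meeting club-many fibres in a single point is a copy of $\omega_1$; conversely, an embedding satisfies $\pi(y_\alpha)=\alpha$ on a club $D$, so its restriction to $D$ is a club with singleton fibres witnessing order $1$, and you supply the details the paper leaves implicit.

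One line should be tightened. Continuity only gives $\pi(y_\alpha)=\lim_{\gamma\to\alpha}\pi(y_\gamma)$, not the supremum. Still, for $\alpha$ in both of your clubs, each $\max_{\gamma\le\delta}\pi(y_\gamma)$ with $\delta<\alpha$ is $<\alpha$ by compactness of $[0,\delta]$. So the supremum $\alpha$ is approached along $\gamma\to\alpha$, and continuity does yield $\pi(y_\alpha)=\alpha$.
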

\begin{proof}
   A club subset of $M$ which intersects $M_\alpha$ on exactly one point for a club set of $\alpha$
   contains a copy of $\omega_1$.
   Let $e:\omega_1\to M$ be an embedding.
   There is a club $C$ such that $\pi(e(\alpha))=\alpha$ for each $\alpha\in C$.
   Then $e\upharpoonright C$ is an embedding of a copy of $\omega_1$ which intersects each $M_\alpha$
   at most once.
\end{proof}
Notice that we do not claim that $M$ is homeomorphic
to the trivial bundle $\LL_+\times\SSS^1$.

\begin{lemma}
   \label{lemma:x_alpha}
   If $M$ is of order $\infty$ and $\{x_\alpha\,:\,\alpha\in\omega_1\}$ is an unbounded
   subset of $M$, there is a club $C$ such that $\wb{\{x_\beta\,:\,\beta<\alpha\}}\supset M_\alpha$
   when $\alpha\in C$.
\end{lemma}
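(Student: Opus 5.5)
The plan is to build a club subset of $M$ out of the sequence and then apply Theorem \ref{thm:Nyikos}. For each $\alpha$, set
$$F_\alpha=\wb{\{x_\beta\,:\,\beta<\alpha\}}.$$
Consider the set $F$ of \emph{accumulation points} of the sequence, namely those $x\in M$ such that every neighbourhood of $x$ contains $x_\beta$ for unboundedly many $\beta<\alpha$, where $\alpha$ is least with $x\in F_\alpha$; equivalently $x\in F_\alpha - \bigcup_{\gamma<\alpha}F_\gamma$ for a limit $\alpha$. A cleaner choice is
$$F=\{x\in M : x\in\wb{\{x_\beta:\beta\ge\gamma\}}\ \forall\gamma\}.$$

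The first step is to show that $F$ is closed and unbounded. Closedness is immediate, since $F$ is an intersection of closed sets. For unboundedness, first shrink to a club $D$ of $\alpha$ such that $\pi(x_\beta)<\alpha$ for $\beta<\alpha$ and $\{\pi(x_\beta):\beta<\alpha\}$ is cofinal in $\alpha$; this uses unboundedness of the sequence and the standard closure argument. Now fix $\alpha\in D\cap\Lambda$. Choose $\beta_n\nearrow\alpha$ with $\pi(x_{\beta_n})\to\alpha$. These points lie in $\pi^{-1}((0,\alpha])$, which is compact by Lemma \ref{lemma:ctblycpct} and the trivialization, so a cluster point $y\in M_\alpha$ exists. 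I then need $y\in F$. The indices $\beta_n$ are bounded below $\alpha$, so take instead a diagonal: for a given $\gamma$, pick $\alpha\in D$ with $\alpha>\gamma$ and choose $\beta_n\in[\gamma,\alpha)$. This shows $F\cap M_\alpha\neq\varnothing$ is not automatic for a fixed $\alpha$. Instead I would define $F'=\{x: x\in \wb{\{x_\beta:\gamma\le\beta<\pi(x)\}}\ \forall\gamma<\pi(x)\}$ restricted to fibers over $D\cap\Lambda$, together with limits. By compactness of $\pi^{-1}([p,\alpha])$, for every $\alpha\in D\cap\Lambda$ we get $F'\cap M_\alpha\neq\varnothing$. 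Closedness of $F'$ follows from a diagonal argument using Lemma \ref{lemma:ctblycpct}.

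Next apply Theorem \ref{thm:Nyikos} to the club $F'$. Since $M$ has order $\infty$, alternative (b) fails for every club. The main subtle point is to check that (b) failing for $F'$ forces (a). Theorem \ref{thm:Nyikos} says exactly one alternative holds with some club $C$, and if (b) held for $F'$ then $M$ would have finite order. Hence $F'\supset\pi^{-1}(C)$ for a club $C$, and intersecting with $D$ we may assume $C\subset D$. Then for $\alpha\in C$ we have $M_\alpha\subset F'\cap M_\alpha\subset\wb{\{x_\beta:\beta<\alpha\}}$, because points of $F'$ over $\alpha$ are limits of $x_\beta$ with $\beta<\alpha$ by definition.

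The expected obstacle is verifying that $F'$ is closed. The tool is that a limit $x$ of points $y_k\in F'$ with $\pi(y_k)\nearrow\pi(x)=\alpha$ is a limit of $x_\beta$'s with $\beta<\alpha$ arbitrarily large. Here I would use first countability of $M$ and choose, for each $k$, an $x_{\beta_k}$ within distance $1/k$ of $y_k$ in a local trivialization, with $\beta_k\ge\gamma$. Because $\pi(y_k)\in D$, we get $\beta_k<\pi(y_k)<\alpha$. The case of constant $\pi(y_k)$ is trivial, since each fiber's piece of $F'$ is an intersection of closed sets.
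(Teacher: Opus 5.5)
Your argument is correct in substance, and its core matches the paper's: build a club subset of $M$ from the sequence, then use order $\infty$ to obtain alternative (a) of Theorem \ref{thm:Nyikos}. The localization step, however, is done differently.

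The paper thins the sequence to $x_{\eta(\gamma)}$, with $\eta(\gamma)$ least such that $\pi(x_{\eta(\gamma)})>\gamma$. It takes the plain closure $\wb{\{x_{\eta(\gamma)}:\gamma\in\omega_1\}}$, which is club for free, so order $\infty$ gives full fibers over a club. It then intersects with a club $C_0$ of closure points ($\eta(\gamma)<\alpha$ and $\pi(x_{\eta(\gamma)})<\alpha$ for $\gamma<\alpha$). For $\alpha\in C_0$, the points with $\gamma\ge\alpha$ have height $>\gamma\ge\alpha$, and height $>\alpha+1$ once $\gamma>\alpha$. So they cannot accumulate on $M_\alpha$, and the part of the closure over $\alpha$ comes only from indices below $\alpha$.

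You instead build the localization into $F'$ itself. This costs you a hand-made closedness proof and a separate nonemptiness argument; the paper's route avoids both. Your diagonal closedness argument is fine, since $M$ is first countable and any sequence of ordinals has a subsequence that is constant or strictly increasing.

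There are two slips to fix.

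\textbf{Nonemptiness over $D\cap\Lambda$.} With your $D$, $F'\cap M_\alpha\neq\varnothing$ can fail for some $\alpha\in D\cap\Lambda$. The $x_n$, $n<\omega$, may already have heights cofinal in $\alpha$ while every $x_\beta$ with $\omega\le\beta<\alpha$ stays below some fixed $\delta<\alpha$; then the tail from $\gamma=\omega$ misses $M_\alpha$. The claim does hold for $\alpha\in D'$. There, every tail $\{x_\beta:\gamma\le\beta<\alpha\}$ has heights cofinal in $\alpha$. Hence the sets $K_\gamma=\wb{\{x_\beta:\gamma\le\beta<\alpha\}}\cap M_\alpha$ form a decreasing family of nonempty compact sets, and $F'\cap M_\alpha=\bigcap_{\gamma<\alpha}K_\gamma\neq\varnothing$. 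This is all that unboundedness of $F'$ requires.

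\textbf{``Together with limits.''} This phrase should be dropped. Your diagonal argument already shows that the set defined over $D\cap\Lambda$ is closed. The final step uses exactly that every point of $F'$ over $\alpha$ lies in $\wb{\{x_\beta:\beta<\alpha\}}$.
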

\begin{proof}
   Let $\eta(\alpha)$ be minimal such that $\pi(x_{\eta(\alpha)})>\alpha$.
   Now, there is a club set $C_0$ such that $\pi(x_{\eta(\beta)})<\alpha$ for each $\beta<\alpha$ and $\alpha\in C_0$.
   Let $C=\pi(\wb{\{x_{\eta(\alpha)}\,:\,\alpha\in\omega_1\}})\cap C_0$.
   Then 
   $$
     \wb{\{x_\beta\,:\,\beta<\alpha\}}\cap M_\alpha \supset \wb{\{x_{\eta(\alpha)}\,:\,\alpha<\beta\}}\cap M_\alpha =
     \wb{\{x_{\eta(\alpha)}\,:\,\alpha\in\omega_1\}}\cap M_\alpha =M_\alpha
   $$ 
   for each $\alpha\in C$.
\end{proof}

\begin{lemma}
   \label{lemma:hcwn}
   If $M$ is of order $\infty$, then
   it is hereditarily collectionwise normal.
\end{lemma}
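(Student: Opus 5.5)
We will use the standard reduction: a space is hereditarily collectionwise normal iff every open subspace is collectionwise normal. Equivalently, for every discrete-in-itself family of separated sets we must find a disjoint open expansion. Concretely, we take a subspace $Y\subset M$ (which we may assume open) and a family $\{F_i\,:\,i\in I\}$ of subsets of $Y$ that is discrete in $Y$ and consists of closed sets. We look for pairwise disjoint open $U_i\supset F_i$.

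The key observation is that such a family can have only countably many members that are unbounded in $M$, and is in fact almost entirely bounded. Suppose uncountably many $F_i$ meet $M$ above every level, or more generally that $\bigcup_i F_i$ has points $x_\alpha$ in distinct members with $\pi(x_\alpha)$ unbounded. Choose $x_\alpha\in F_{i_\alpha}$ with the $i_\alpha$ distinct. By Lemma \ref{lemma:x_alpha}, since $M$ is of order $\infty$, there is a club $C$ with $\wb{\{x_\beta:\beta<\alpha\}}\supset M_\alpha$ for $\alpha\in C$.

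In that case we argue that $\pi(Y)$ misses all but a nonstationary set of levels. Otherwise some $M_\alpha\cap Y\neq\varnothing$ with $\alpha\in C$. Each point of $M_\alpha\cap Y$ is then a limit of points from infinitely many distinct members of the family, which contradicts discreteness in $Y$. A Fodor/pressing-down argument on the open set $Y$ makes this precise: if $Y\cap M_\alpha\neq\varnothing$ for stationarily many $\alpha$, then $Y\supset\pi^{-1}((\beta,\omega_1))\cap V$ for some open $V$ meeting club-many fibers.

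Hence, apart from at most one member (the one containing a tail, using Theorem \ref{thm:Nyikos}(a) applied to closures), everything relevant lies in $\pi^{-1}((0,\gamma))$ for some $\gamma<\omega_1$, or in a part of $Y$ living over a nonstationary set. Over a nonstationary set, $\pi^{-1}$ of the complement of a club is a disjoint union of pieces $\pi^{-1}((\alpha,\alpha'))$, with $\alpha<\alpha'$ consecutive points of the club, and each piece is metrizable, being homeomorphic to an open subset of $\R\times\SSS^1$ via some $f_{\alpha'}$.

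To finish, we split $Y$ along a club $C'\subset C$ that avoids $\pi(Y)$, except possibly on a tail handled by the single exceptional member. The part of $Y$ over each complementary interval is metrizable, hence hereditarily collectionwise normal. We separate the traces of the $F_i$ there and take the union of the expansions over the disjoint open pieces. If there is an exceptional unbounded member $F_{i_0}$, the other members are contained in $\pi^{-1}((0,\gamma))$, which is metrizable. We first separate $\overline{\bigcup_{i\ne i_0}F_i}$ from $F_{i_0}$ using normality of $M$, which is obtained from the same dichotomy applied to two closed sets. We then expand inside the metrizable piece.

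The main obstacle is the middle step: showing rigorously, from order $\infty$, that an open set $Y$ whose fibers occur stationarily forces the closure of a discrete family to accumulate inside $Y$. This is where Lemma \ref{lemma:x_alpha} combined with Fodor's lemma must be applied carefully to the open set $Y$ rather than to $M$.
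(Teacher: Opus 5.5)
Your proposal follows essentially the same route as the paper's proof. Both use metrizable pieces $\pi^{-1}((\alpha,\alpha'))$ when the relevant projection is nonstationary, Theorem \ref{thm:Nyikos}(a) applied to closures to allow at most one unbounded member, and Lemma \ref{lemma:x_alpha} to rule out an $\aleph_1$-sized family that keeps reaching arbitrarily high fibers.

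The step you flag as the main obstacle is in fact immediate, and no Fodor argument is needed. Take $\alpha$ in the club of Lemma \ref{lemma:x_alpha}, shrunk so that $\pi(x_\beta)<\alpha$ for all $\beta<\alpha$. Any point of $Y\cap M_\alpha$ then lies in the $Y$-closure of $\{x_\beta:\beta<\alpha\}$, yet is not one of the $x_\beta$, and that set is closed discrete in $Y$, which is a contradiction.

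One slip needs fixing. $F_{i_0}$ and $\bigcup_{i\ne i_0}F_i$ are only closed in $Y$, and their closures in $M$ may meet outside $Y$, so normality of $M$ is the wrong tool. Instead, separate them inside the metrizable open piece $Y\cap\pi^{-1}((0,\gamma+1))$, using a neighbourhood of the bounded set contained in $\pi^{-1}((0,\gamma+\frac12))$; its $Y$-closure then stays inside that piece.
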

Recall that a family of subsets of a space $X$ is {\em discrete} iff each
point of $X$ has a neighborhood intersecting at most one member of the family.
The union of any discrete family of closed sets is closed.
A space is collectionwise Hausdorff [resp. collectionwise normal]
iff any discrete family of points [resp. closed sets] $\{D_\alpha\,:\,\alpha\in\kappa\}$
can be expanded to a disjoint family of open sets $U_\alpha\supset D_\alpha$.
\begin{proof}
   Let $N\subset M$ and
   $\mathscr{D}$ be a discrete-in-$N$ collection of closed-in-$N$ subsets.
   If $\cup\mathscr{D}$ has empty intersection with a club set of fibers $C$, we may expand 
   the members of $\mathscr{D}$ by disjoint open sets in
   each $\pi^{-1}((\alpha,\beta))$, where $\alpha,\beta$ are consecutive points of $C$,
   because $\pi^{-1}((\alpha,\beta))$ is a copy of $\R\times\SSS^1$.
   Suppose that $S = \pi(\cup\mathscr{D})$ is stationary.
   If $A,B\subset N$ are disjoint and closed-in-$N$, then at most one of them is unbounded.
   Indeed, 
   if both are there is some club $C_0$ such that be their closures in $M$ $\wb{A},\wb{B}$ both 
   contain $\pi^{-1}(D)$, and thus $M_\alpha\cap N\cap A\cap B\not=\varnothing$ when $\alpha\in S\cap C_0$. 
   Hence, if one member $D\in\mathscr{D}$ is unbounded, 
   then $\cup\mathscr{D}-D$ is (closed, hence) bounded and we may expand $\mathscr{D}$ as well.
   We now assume that each member of $\mathscr{D}$ is bounded.
   Note that $|\mathscr{D}|\le\aleph_1$ 
   since otherwise uncountably many intersect $\pi^{-1}((0,\alpha+1))$ for some $\alpha$,
   which is impossible for a discrete family, as $\R\times\SSS^1$
   is hereditarily Lindel\"of (for instance).
   If $|\mathscr{D}|=\aleph_0$, then $\pi(\cup\mathscr{D})$ is bounded
   and thus non-stationary, hence 
   $|\mathscr{D}|=\aleph_1$.
   We now show that it implies that
   $\mathscr{D}$ is actually not discrete.
   Denote its members by $D_\alpha$ for $\alpha\in\omega_1$
   and let $C_0$ be club such that $\cup_{\beta<\alpha}D_\beta\subset\pi^{-1}((0,\alpha))$
   for each $\alpha\in C_0$.
   Choosing one point in each $D_\alpha$ and applying Lemma \ref{lemma:x_alpha}
   we see that there is some $\alpha\in S\cap C_0$ such that 
   $M_\alpha\cap N\not=\varnothing$ is a subset of the closure in $N$
   of $\cup_{\beta<\alpha}D_\beta$.
   Hence, $\mathscr{D}$ is not a discrete family in $N$.
\end{proof}

\begin{lemma}
   \label{lemma:uncountable}
   If $M$ is of finite order $n$ and $E\subset M$ satisfies (b) of Theorem \ref{thm:Nyikos}, then 
   for each uncountable $E_0\subset E$ there is a club $D\subset\omega_1$
   such that $\wb{E_0}\supset E\cap\pi^{-1}(D)$.
\end{lemma}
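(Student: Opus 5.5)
We will use minimality of the order $n$. Let $E_0\subset E$ be uncountable. Since each $E\cap M_\alpha$ is finite, $\pi(E_0)$ is uncountable, and hence unbounded. Put $F=\wb{E_0}\cap E$. This set is closed because $E$ is closed, and it is unbounded, so $F$ is club. The plan is to apply Theorem \ref{thm:Nyikos} to $F$ and show that $F$ must contain all of $E$ on a club set of fibers.

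First, alternative (a) cannot hold for $F$, since $F\subset E$ meets each $M_\alpha$ (for $\alpha$ in a club) in only $n<\infty$ points. So (b) holds for $F$: there are $m\in\omega$, a club $C$ and a club $E'\subset F$ such that, for each $\alpha\in C$, $E'\cap M_\alpha$ consists of $m$ points equally spaced at angle $2\pi/m$. By the definition of the order (the minimal $n$ for which (b) holds), $m\ge n$. On the other hand, $E'\cap M_\alpha\subset E\cap M_\alpha$, which has exactly $n$ points, so $m\le n$. Hence $m=n$. Intersecting clubs, we get a club $D$ with $D\subset C\cap\pi(E)$ (shrinking further with the club on which $E$ has exactly $n$ points per fiber) such that, for $\alpha\in D$, $E'\cap M_\alpha\subset E\cap M_\alpha$ and both sets have $n$ elements. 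The two fibers therefore coincide, giving $E\cap\pi^{-1}(D)\subset E'\subset\wb{E_0}$.

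The only delicate point is making sure $F$ is really club, i.e. that $\pi(F)$ is unbounded, and that the cardinalities match fiberwise on a common club. Both are handled by intersecting the clubs from the definition of $E$ in Theorem \ref{thm:Nyikos}(b) with those for $E'$. Some care is also needed because the order is defined via minimality over \emph{all} clubs satisfying (b), not just over subsets of $E$. That is exactly what makes $m\ge n$ valid for $E'$, since $E'$ is itself a club satisfying (b) with parameter $m$.
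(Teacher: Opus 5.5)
Your proof is correct and is essentially the paper's argument: both apply Theorem \ref{thm:Nyikos} to the club $\wb{E_0}\subset E$ and conclude from the minimality of the order $n$. The paper argues by contradiction (otherwise $\wb{E_0}$ has some $m<n$ points per fiber on a stationary set, which produces a club with fewer than $n$ points per fiber), whereas your squeeze $n\le m\le n$ gives the fiberwise equality directly; like the paper, you implicitly use that $\pi(E)\subset C\subset\omega_1$ (as produced in Lemma \ref{lemma:final}), and it is this, rather than intersecting clubs, that makes ``$\pi(E_0)$ uncountable, hence unbounded'' valid.
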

\begin{proof}
   Otherwise, there is $m<n$ and a stationary set $S$ such that $\wb{E_0}\cap M_\alpha$
   contains $m$ points when $\alpha\in S$. By Lemma \ref{lemma:essinf},
   $h_{\wb{E_0}}$ is equal to $m$. 
   By Theorem \ref{thm:Nyikos} (b),
   there is a club subset of $\wb{E_0}$ which intersects each fiber in at most $m<n$ points,
   which is impossible since the order of $M$ is $n$.
\end{proof}

\begin{lemma}
   If $M$ is of finite order $n$, then $M$ is neither
   hereditarily normal nor hereditarily strongly collectionwise Hausdorff.
\end{lemma}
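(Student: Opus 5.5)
We need to show that if $M$ has finite order $n$, then $M$ is neither hereditarily normal nor hereditarily strongly collectionwise Hausdorff. Note first that $n\ge 2$ is not needed for the argument itself. For $n=1$, $M$ contains a copy of $\omega_1$ by Lemma \ref{lemma:copyomega_1}, which would still have to be handled. The plan below uses only a club $E$ as in (b) of Theorem \ref{thm:Nyikos}.

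The approach is to exhibit, inside a subspace of $M$, two disjoint closed sets (resp. a closed discrete set) that cannot be separated. Fix $E\subset F=M$ and a club $C$ as in (b), so that $E\cap M_\alpha=\{x_\alpha+2\pi k/n\,:\,k<n\}$ for $\alpha\in C$. The natural subspace is
$$N = M - E\cap\pi^{-1}(C'),$$
obtained by removing the points of $E$ lying over limit points of $C$. Inside $N$, consider the set $A=E\cap\pi^{-1}(C-C')$ of points of $E$ over successor points of $C$. This set $A$ is closed in $N$: all its limit points in $M$ lie in $E\cap\pi^{-1}(C')$, because $E$ is closed and $C-C'$ is discrete in $\omega_1$. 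For the same reason $A$ is discrete. It is also uncountable.

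For non-normality, split $A$ into two uncountable disjoint pieces $A_0,A_1$, for instance by alternating successors in $C$. Both are closed in $N$ and disjoint. Suppose $U_0\supset A_0$ and $U_1\supset A_1$ are disjoint open sets of $N$. By Lemma \ref{lemma:uncountable} there is a club $D$ with $\overline{A_0}\cap\overline{A_1}\supset E\cap\pi^{-1}(D)$. Pick $\alpha\in D\cap C'$ and a point $y\in E\cap M_\alpha$; then $y\notin N$, and $y$ is a limit of points of both $A_0$ and $A_1$.

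The main obstacle is that $y$ is missing from $N$, so we cannot argue directly at $y$. The fix is to use the open sets near the fiber. Take the open set $U_i\cup (M-\overline{N})$. More concretely, extend each $U_i$ to an open $V_i\subset M$ with $V_i\cap N=U_i$; the extensions still satisfy $V_0\cap V_1\subset M-N\subset E$. We then push $U_i$ into an open set of $M$ containing many points of $A_i$ over a stationary set, and apply Lemma \ref{lemma:opengap}-style pressing down. For stationarily many $\alpha$ in $C'$, $V_i$ contains a strip neighborhood of a point of $A_i$ just below $\alpha$. Fodor then gives common heights and angles, so that $V_0$ and $V_1$ both contain open strips meeting a common fiber $M_\gamma$ in nonempty overlapping arcs away from $E$. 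That overlap gives a contradiction. I expect that this Fodor step, securing overlapping strips, needs care; the alternative is to choose $A_0,A_1$ so that $A_0$ uses points $x_\alpha$ and $A_1$ uses $x_\alpha+2\pi/n$, so that the angular structure helps.

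For hereditary strong collectionwise Hausdorffness, the same discrete closed set $A$ in $N$ should fail to have a discrete open expansion. Suppose such an expansion $\{U_a : a\in A\}$ exists. A pressing-down argument as above gives uncountably many $a$ whose neighborhoods contain strips of a fixed angular size and height. Their closures then accumulate at a point of $N$ on a fiber $M_\alpha$ with $\alpha\in C'$, off $E$; this is Lemma \ref{lemma:x_alpha}-like behavior, with the accumulation point off $E$ and hence in $N$. This contradicts discreteness of the expansion.
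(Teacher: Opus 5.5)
\textbf{Non-normality: the argument cannot be completed.} The two closed sets $A_0,A_1$ you choose can always be separated in $N$.

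Each $a\in A$ lies over some $\alpha\in C-C'$. Let $\alpha^-=\max(C\cap\alpha)$, or $0$ if $C\cap\alpha=\varnothing$. Consider the strips of angular width $\pi/n$ centred at the $n$ points of $E\cap M_\alpha$ and lying over $(\alpha^-+\frac12,\alpha+\frac12)$. These strips have the following properties:
\begin{itemize}
\item they are open and contained in $N$, since no point of $C'$ lies in that interval;
\item they are pairwise disjoint inside a fibre;
\item the intervals $(\alpha^-+\frac12,\alpha+\frac12)$ are pairwise disjoint for distinct $\alpha\in C-C'$.
\end{itemize}
So $A$ has a disjoint open expansion in $N$. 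The unions of these strips over $A_0$ and over $A_1$ separate $A_0$ from $A_1$.

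Consequently no Fodor step can produce your overlapping strips. Note that $C-C'$ is non-stationary, so there is nothing to press down on, and the neighbourhoods of points of $A$ need not reach down to any common height. This also rules out your alternative choice of $A_0,A_1$, since those are again subsets of $A$. (Also, $E\cap\pi^{-1}(C')$ is closed, so $N$ is open in $M$ and your extensions $V_i$ are just the $U_i$.)

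The missing idea is to separate $A$ not from another part of $A$, but from $H=\pi^{-1}(C')\cap N$. This set is closed in $N$ and disjoint from $A$. This is what the paper does: your $N$ and $A$ are its $M-E'$ and $Y=E-E'$. Its set $\pi^{-1}(C)-E'$ must be read as $\pi^{-1}(C')-E'$ to be disjoint from $Y$.

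\textbf{The argument that works for both halves.} It is the one you only gesture at in your last paragraph, and its key step is missing there. Let $U\supset A$ be open in $N$, or let $\{U_a\}$ be an open expansion of $A$.
\begin{itemize}
\item By pigeonhole, pick an uncountable $A_0\subset A$ and $\theta\in(0,2\pi/n)$ such that $U$ (resp. $U_a$) contains the arc of width $\theta$ centred at each $a\in A_0$. Then $a+\theta/4\in U$ (resp. $U_a$).
\item Lemma \ref{lemma:uncountable} gives a club $D$ with $\overline{A_0}\supset E\cap\pi^{-1}(D)$.
\item Rotation by $\theta/4$ is a homeomorphism of $M$, so $\overline{\{a+\theta/4\,:\,a\in A_0\}}\supset(E+\theta/4)\cap\pi^{-1}(D)$.
\item A point $z$ of $E+\theta/4$ over $D\cap C'$ is not in $E$, hence $z\in H\subset N$.
\end{itemize}
Every neighbourhood of $z$ meets infinitely many $U_a$, so the expansion is not discrete. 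Every open $V\supset H$ contains $z$ and hence meets $U$, so $A$ and $H$ cannot be separated.

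It is this rotation by a small angle that moves the accumulation point off $E$ and into $N$. Lemma \ref{lemma:x_alpha} is not the right tool here, since it concerns order $\infty$. No pressing-down is needed either; the pigeonhole on $\theta$ suffices.
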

\begin{proof}
   Take a club subspace $E$ given by Theorem \ref{thm:Nyikos} (b) with $\pi(E)=C$.
   Set 
   $Y = E - E'$, that is, the subspace of isolated points of $E$.
   Then, $Y$ is closed discrete in $M-E'$.
   For any uncountable $Y_0\subset Y$, by Lemma \ref{lemma:uncountable}
   we have that (in $M$) $\wb{E_0}\supset M_\alpha\cap E$ for a club set $D$
   of $\alpha$.
   Suppose that for each $y\in Y$ there is $U_y\ni y$,
   then $U_y$ contains a (vertical) gap of width $1/m$ centered around $y$ for some $m$.
   Let $m$ be such that uncountably many $U_y$
   contain an open $\theta$-gap, with $\theta=\frac{1}{m}$.
   Let $Y_0$ be the subset of these $y$. 
   In particular, these $U_y$ contain the point $y+\theta/4$. 
   Let $F$ be the club set $\{x+\theta/4\,:\,x\in E\}$.
   Then the closure of $\{y+\theta/4\,:\,y\in Y_0\}$ is equal to $F\cap M_\alpha$ for a club set of $\alpha$.
   In particular, there are accumulation points of these $U_y$ in $M-E'$. It follows
   that the collection $U_y$ cannot be discrete in $M-E'$, while
   $Y$ is closed dicrete in it, that is: $M-E'$ is not strongly collectionwise Hausdorff.
   The same $Y$ together with the set 
   $\pi^{-1}(C)-E'$, which is
   closed in $M-E'$ and disjoint from $Y$, show that $M-E'$ is not hereditarily normal.
\end{proof}

We end this section with tangential remarks about an unrelated subject that
the non-interested reader may skip.
Say that a cover of a space $X$ by open sets $\mathscr{U}=\{U_\alpha\,:\,\alpha\in\omega_1\}$ 
is {\em systematic} iff $U_\alpha\not= X$ and $\wb{U_\alpha}\subset U_\beta$ whenever $\alpha<\omega_1$.
A closed subspace $Y\subset X$ is {\em narrow in $X$} iff 
for each 
systematic cover $\mathscr{U}$, either there is some $\alpha<\omega_1$ with $Y\subset U_\alpha$, or
$\wb{U_\alpha}\cap Y$ is Lindel\"of for each 
$\alpha<\omega_1$. Both $\omega_1$ (as a topological space) and $\LL_+$ are narrow in themselves, see 
\cite[Example 3.1]{Mesziguesnarrow}, and if $Y$ 
is narrow in itself, then any closed copy of $Y$ in a space $X$ is narrow in $X$.
By Theorem \ref{thm:ctblecomp}, under {\bf PFA}
any countably compact non-compact manifold contains (a closed copy of $\omega_1$ hence) 
a subspace which is narrow in it.
Nyikos claimed around 2006 that he has found a countably compact non-compact manifold that does not
contain any subspace narrow in it under $\diamondsuit^+$. Unfortunately, 
not even a draft supporting this claim is available, and we were unable to re-invent his construction
(see \cite[Section 4]{Mesziguesnarrow} for more on the subject). But we notice the following.

\begin{lemma}
   If $M$ is a principal $\SSS^1$-bundle over $\LL_+$, then $M$ is narrow in itself.
\end{lemma}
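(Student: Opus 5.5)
Given a systematic cover $\mathscr{U}=\{U_\alpha\,:\,\alpha\in\omega_1\}$ of $M$ (with $Y=M$), we must show that either some $U_\alpha=M$, which is excluded by the definition, or each $\wb{U_\alpha}$ is Lindel\"of. Since $\pi^{-1}((0,\beta])$ is a closed subset of $\pi^{-1}((0,\beta+1))\cong(0,\beta+1)\times\SSS^1$, it is Lindel\"of. So it suffices to show that each $\wb{U_\alpha}$ is contained in some $\pi^{-1}((0,\beta])$, i.e.\ that $\pi(\wb{U_\alpha})$ is bounded in $\LL_+$.

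We argue by contradiction. Suppose $\pi(\wb{U_{\alpha_0}})$ is unbounded for some $\alpha_0$. Then for every $\alpha>\alpha_0$ the set $U_\alpha\supset\wb{U_{\alpha_0}}$ is also unbounded.

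\textbf{Step 1: a full fiber inside some $U_\alpha$.} Fix $\alpha>\alpha_0$. The closed set $F_\alpha=\wb{U_\alpha}$ is unbounded, hence club in the sense of Theorem \ref{thm:Nyikos}. By that theorem, either $F_\alpha\supset\pi^{-1}(C_\alpha)$ for a club $C_\alpha$, or $F_\alpha$ contains a club $E_\alpha$ meeting a club set of fibers in $n$ equally spaced points. In either case $\pi(F_\alpha)$ contains a club of $\omega_1$. Since $\wb{U_\alpha}\subset U_{\alpha+1}$, the open set $U_{\alpha+1}$ meets fibers $M_\gamma$ for a club set of $\gamma$.

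To get whole fibers, I would use compactness of fibers together with the nesting. Each $M_\gamma$ is compact and $\mathscr{U}$ is an increasing open cover, so each $M_\gamma$ lies in some $U_{\delta(\gamma)}$. Better still, each compact set $\pi^{-1}([\epsilon,\gamma])$ lies in some $U_{\delta(\gamma)}$, with $\delta(\gamma)<\omega_1$.

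\textbf{Step 2: pressing down.} Let $C$ be the club of $\gamma$ that are closed under $\delta$, i.e.\ $\delta(\gamma')<\gamma$ for all $\gamma'<\gamma$. For $\gamma\in C$ we have $\pi^{-1}((0,\gamma))\subset U_\gamma$, and therefore $\pi^{-1}((0,\gamma])\subset\wb{U_\gamma}\subset U_{\gamma+1}$. Applying Fodor's lemma to $\gamma\mapsto\delta(\gamma)$ would give a single $U_\delta$ containing unboundedly many initial segments, hence $U_\delta=M$, contradicting $U_\delta\ne M$. However, $\delta(\gamma)\ge\gamma$ is possible, so Fodor does not apply directly. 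This is the main obstacle: turning "$\mathscr{U}$ cannot stabilize" into "each $U_\alpha$ is bounded".

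\textbf{Step 3: the intended contradiction.} Assume $\wb{U_{\alpha_0}}$ is unbounded, with club $C_0$ of fibers met as in Step 1. For $\gamma\in C\cap C_0$ with $\gamma>\alpha_0$, consider $M-U_\gamma$. It is closed and nonempty. It is also unbounded: otherwise $U_\gamma$ would contain a tail $\pi^{-1}([q,\omega_1))$, and then $\wb{U_\gamma}\supset\pi^{-1}((0,\gamma])$ together with that tail would put everything inside $U_{\gamma+1}$, which must then equal $M$, contradicting $U_{\gamma+1}\ne M$.

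So for $\gamma\in C$, $\gamma>\alpha_0$, both $\wb{U_{\alpha_0}}$ and $M-U_\gamma$ are closed and unbounded, and they are disjoint since $\wb{U_{\alpha_0}}\subset U_\gamma$. The key step is now to derive a contradiction from two disjoint unbounded closed sets. In order $\infty$ this is immediate, as in the proof of Lemma \ref{lemma:hcwn}: both would contain $\pi^{-1}$ of a club, so their intersection is nonempty. In finite order $n$, both contain clubs $E,E'$ of $n$ equally spaced points per fiber, which can be disjoint (rotate by $\pi/n$). The route around this is to use that $\wb{U_{\alpha_0}}\subset U_{\alpha_1}$ is contained in an \emph{open} set. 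Then Lemma \ref{lemma:opengap} yields a uniform width $\theta$ of open strips around $E\cap M_\gamma$ lying in $U_{\alpha_1}$, for a club set of $\gamma$. Iterating this along $\alpha_0<\alpha_1<\alpha_2<\cdots$, the essential infimum of the gap function of $M-U_{\alpha_k}$ must strictly decrease. At the limit $\alpha_\omega$, $\wb{U_{\alpha_\omega}}$ then has $h=0$ on a club, so by Lemma \ref{lemma:h_F=0} it contains whole fibers, and the tail argument above gives $U_{\alpha_\omega+1}=M$, a contradiction.

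Making this decrease precise, with a uniform amount per step so that it reaches $0$ in countably many steps, is the delicate point. If the decrease is not uniform, I would instead choose $\theta$ at each step via Fodor on the strip widths.
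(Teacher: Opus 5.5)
Your Step 3 has the right mechanism, and it is the one the paper uses. Since $\wb{U_\eta}\subset U_{\eta+1}$, an open arc of width $\epsilon$ in $U_\eta\cap M_\gamma$ yields a compact closed arc inside the open set $U_{\eta+1}\cap M_\gamma$, hence a strictly wider open arc there. Pigeonholing the excess width on a stationary set and applying Lemma \ref{lemma:opengap} makes the wider arc appear on a whole tail of fibers. Once some $U_\eta$ contains every $M_\gamma$ beyond a countable bound, $M-U_\eta$ is closed and, by Lemma \ref{lemma:club}, bounded. So $U_\eta\supset\pi^{-1}((\xi,\omega_1))$, and the Lindel\"of set $\pi^{-1}((0,\xi+1))$ lies in a single $U_\delta$, forcing some member of the cover to equal $M$. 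Your own tail argument in Step 3 needs this Lindel\"of step when $q>\gamma$, and Steps 1--2 are not needed at all.

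The genuine gap is the termination of the iteration, which you flag but do not resolve. Along $\alpha_0<\alpha_1<\cdots$ each step gives some increment $\delta_k>0$, but nothing bounds the $\delta_k$ from below. So $\sup_k\epsilon_k$ may stay below $2\pi$, and nothing forces $\wb{U_{\alpha_\omega}}$ to have $h=0$ on a club. Choosing $\theta$ via Fodor at each step does not help, since it again gives only a positive, step-dependent increment. (Also, the gap function of $M-U_{\alpha_k}$ measures arcs of $U_{\alpha_k}$ and therefore increases; what shrinks are the gaps of $\wb{U_{\alpha_k}}$.)

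The missing idea is to iterate transfinitely and use that there is no strictly increasing $\omega_1$-sequence of reals. For every $\eta<\omega_1$ beyond $\alpha_0$, define a width $\epsilon_\eta$ and a countable bound $\beta_\eta$ such that $U_\eta\cap M_\gamma$ contains an open arc of width $\epsilon_\eta$ whenever $\gamma>\beta_\eta$.
\begin{itemize}
\item At successor stages, use the widening step above; it succeeds as long as $\epsilon_\eta<2\pi$.
\item At limit stages, set $\beta_\eta=\sup_{\xi<\eta}\beta_\xi$ and $\epsilon_\eta=\sup_{\xi<\eta}\epsilon_\xi$. This works because only finitely many components of $U_\eta\cap M_\gamma$ are wider than a fixed positive amount, so one component contains arcs of width $\epsilon_\xi$ for cofinally many $\xi$.
\end{itemize}
Then $\xi<\eta$ gives $\epsilon_\xi<\epsilon_{\xi+1}\le\epsilon_\eta$. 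If the widths never reached $2\pi$ they would be strictly increasing along $\omega_1$, which is impossible. Hence at some countable $\eta$ we get $U_{\eta+1}\supset M_\gamma$ for all $\gamma>\beta_\eta$, and the ending above applies. This is exactly where the uncountable length of the cover is used, and an $\omega$-length iteration cannot replace it.
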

\begin{proof}
   Let $\{U_\alpha\,:\,\alpha\in\omega_1\}$ be a systematic cover. 
   We show that each $\wb{U_\alpha}$ is Lindel\"of, which implies that $M$ is narrow in itself.
   Suppose that some $\wb{U_\alpha}$ is non-Lindel\"of.
   Then $\pi(\wb{U_\alpha})$ contains a club subset $C$.
   Since $U_{\alpha+1}\supset\wb{U_\alpha}$,
   $U_{\alpha+1}\cap M_\beta$ contains a gap of size $\theta_{\gamma}>0$ for each $\gamma\in C$.
   There is some $n$ such that $\theta_{\gamma}\ge 1/n$ for a stationary set $S$ of $\gamma$.
   By Lemma \ref{lemma:opengap}, $U_{\alpha+1}\cap M_\gamma$ contains an open gap
   of width $\epsilon_\alpha = 1/n$.
   for each $\gamma$ above some $\beta_\alpha$.
   The same argument and induction shows that for each $\eta\ge\alpha+1$ there is some 
   $\epsilon_\eta>0$ and $\beta_\eta$ such that $U_\eta\cap M_\gamma$ contains an open $\epsilon_\eta$-gap
   for each $\gamma>\beta_\eta$, with the property that $\epsilon_\eta<\epsilon_\xi$ whenever $\alpha\ge\eta<\xi$.
   Hence, $\epsilon_\eta = 2\pi$ for some $\eta$. It follows that 
   $U_{\eta+1}\cap M_\gamma = M_\gamma$ for each $\gamma>\sup_{\xi\le\eta}\beta_\eta$. 
   \begin{claim}
      $U_{\eta+1}\supset\pi^{-1}((\xi,\omega_1))$ for some $\xi$.
   \end{claim}
   \begin{proof}
      If not, its complement is club. By Lemma \ref{lemma:club}, it intersects club-many $M_\gamma$, a contradiction.      
   \end{proof}
   Since $\pi^{-1}((0,\xi+1))$ is Lindel\"of, there is some $\gamma>\xi$ such that $U_\gamma$ contains
   all of $M$, contradicting the definition of systematic cover.
\end{proof}

%%%%%%%%%%%%%%%%%%%%%%%%%%%%%%%%%%%%%%%%%%%%%%%%%%%%%%%%%%%%%%%%%%%%%%%%%%%%%%%%%%%%%%%%%%%%%%%%%%%%%%%%%
%%%%%%%%%%%%%%%%%%%%%%%%%%%%%%%%%%%%%%%%%%%%%%%%%%%%%%%%%%%%%%%%%%%%%%%%%%%%%%%%%%%%%%%%%%%%%%%%%%%%%%%%%
%%%%%%%%%%%%%%%%%%%%%%%%%%%%%%%%%%%%%%%%%%%%%%%%%%%%%%%%%%%%%%%%%%%%%%%%%%%%%%%%%%%%%%%%%%%%%%%%%%%%%%%%%

\section{Tool: a collection of functions $z_\alpha:[0,\alpha]\to\omega$ from a ladder system}
\label{sec:tool}

In this section, we assume that we are given a ladder system $\mathscr{L}=\langle L_\alpha\,:\,\alpha\in\Lambda\rangle$
such that
\begin{equation}
  \label{eq:LalphaLambda0}
  %\begin{array}{l}
  L_\alpha\subset\Lambda\text{ when } \alpha\in\Lambda_2. %\text{ and } \\
  % L_\alpha =\{\beta + n\,:\,n\in\omega\}
  %\text{ when } \alpha = \beta + \omega\in\Lambda-\Lambda_2\text{, with }\beta\in\Lambda.
  %\end{array}
  %\tag{$\Lambda$}
\end{equation}

We denote the $k$-th member of $L_\alpha$ (in increasing order) by $\sigma_\alpha(k)$.
The goal of this section is to show the following (actually, its Corollary \ref{cor:z_alpha} below).
\begin{lemma}
  \label{lemma:z_alpha}
  Given a ladder system $\mathscr{L}$ satisfying (\ref{eq:LalphaLambda0}),
  there are functions $z_\alpha:[0,\alpha]\to\omega$ (interval taken in $\omega_1$) such that:
  \renewcommand\theenumi{\Roman{enumi}}
  \begin{enumerate}
     \item 
     \label{enum:lemma1}
     When $\eta<\alpha$, both in $\Lambda$, 
     there is $\beta<\eta$ such that for each $\gamma\in(\beta,\eta]$,
     $z_\alpha(\gamma) = z_\eta(\gamma)+z_\alpha(\eta)$.
     \item 
     \label{enum:lemma2}
     $z_\alpha(\alpha) = z_\alpha(\sigma_\alpha(k)) = 0$ for each $k\in\omega$.
     \item 
     \label{enum:lemma3}
     If $\alpha\in\Lambda_2$, for each $\eta\in\Lambda$,
     if there is a (maximal) $n\ge 0$ such that 
     $\sigma_\alpha(n)\le\eta<\alpha$,
     then
     there is $k\in\omega$ such that
     \begin{equation}
        \label{eq:z_alphaz_eta}
        \sigma_\alpha(n) <\sigma_\eta(k)<\eta
         \text{ and } 
        z_\alpha(\sigma_\eta(k))= z_\eta(\sigma_\eta(k)) + z_\alpha(\eta) + 1.
     \end{equation}   
     \item 
     \label{enum:lemma4}
     For each $\alpha\in\Lambda$, $z_{\alpha+\omega}$ is equal to $z_\alpha$ on $[0,\alpha]$
     and to $0$ on $[\alpha+1,\alpha+\omega]$, and $z_{\alpha+n}=z_{\alpha+\omega}\upharpoonright[0,\alpha+n]$
     for each $n\in\omega$.  
  \end{enumerate}
\end{lemma}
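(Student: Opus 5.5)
The plan is to define the $z_\alpha$ for $\alpha\in\Lambda$ by transfinite recursion on $\alpha$. For a non-limit $\alpha+n$ we take $z_{\alpha+n}=z_{\alpha+\omega}\upharpoonright[0,\alpha+n]$. The case $\alpha=\beta+\omega\in\Lambda-\Lambda_2$ is forced by (\ref{enum:lemma4}): $z_\alpha=z_\beta$ on $[0,\beta]$ and $0$ above. There we check (\ref{enum:lemma1}) for $\eta\le\beta$ from the corresponding property of $z_\beta$ (with $\beta$ itself as the trivial case $\eta=\beta$). Property (\ref{enum:lemma2}) for $\beta+\omega$ holds provided we choose the ladder $L_{\beta+\omega}$ inside $(\beta,\beta+\omega)$, which we may assume after a finite modification of the ladder. (\ref{enum:lemma3}) is vacuous here. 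The real work is the case $\alpha\in\Lambda_2$. By (\ref{eq:LalphaLambda0}) its ladder points $\sigma_\alpha(k)$ are limits, so $z_{\sigma_\alpha(k)}$ is already defined.

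For $\alpha\in\Lambda_2$ we define $z_\alpha$ blockwise. We set $z_\alpha(\alpha)=0$, and on $[0,\sigma_\alpha(0)]$ we take $z_\alpha=z_{\sigma_\alpha(0)}$. On each block $(\sigma_\alpha(n),\sigma_\alpha(n+1)]$ we set
$$z_\alpha(\gamma)=z_{\sigma_\alpha(n+1)}(\gamma)+f_n(\gamma),$$
where $f_n:(\sigma_\alpha(n),\sigma_\alpha(n+1)]\to\omega$ is a correction function with $f_n(\sigma_\alpha(n+1))=0$. Then (\ref{enum:lemma2}) holds automatically, since $z_{\sigma_\alpha(n+1)}(\sigma_\alpha(n+1))=0$.

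Property (\ref{enum:lemma1}) at some $\eta$ in a block follows from (\ref{enum:lemma1}) for the pair $\eta<\sigma_\alpha(n+1)$, provided $f_n$ is constant with value $f_n(\eta)$ on some tail $(\beta,\eta]$. The case $\eta=\sigma_\alpha(n+1)$ uses $f_n=0$ near $\sigma_\alpha(n+1)$. So the requirement on $f_n$ is a local-constancy condition: for each limit $\eta$ in the block, $f_n$ must be eventually constant below $\eta$ with value $f_n(\eta)$.

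Unwinding (\ref{eq:z_alphaz_eta}) with the same coherence shows what (\ref{enum:lemma3}) asks of $f_n$. For every limit $\eta$ with $\sigma_\alpha(n)<\eta<\sigma_\alpha(n+1)$ we need some $k$ with the following three properties:
\begin{itemize}
\item[(i)] $\sigma_\eta(k)>\sigma_\alpha(n)$;
\item[(ii)] $\sigma_\eta(k)$ lies in the tail where $z_{\sigma_\alpha(n+1)}=z_\eta+z_{\sigma_\alpha(n+1)}(\eta)$;
\item[(iii)] $f_n(\sigma_\eta(k))=f_n(\eta)+1$.
\end{itemize}
Requirements (i) and (ii) only demand that $k$ be large. (The degenerate case $\eta=\sigma_\alpha(n)$ has to be read with the maximal $n$ taken with $\sigma_\alpha(n)<\eta$, since otherwise (\ref{eq:z_alphaz_eta}) cannot hold. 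I would make that reading explicit.)

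So everything reduces to a combinatorial lemma on a countable interval $I=(\sigma_\alpha(n),\sigma_\alpha(n+1))$. We need $f:I\to\omega$ such that each limit $\eta\in I$ has a tail below it on which $f\equiv f(\eta)$, yet $f$ jumps by exactly $+1$ at some ladder point $\sigma_\eta(k)$ beyond a prescribed index. These two demands are compatible only because a single ladder point can sit below the tail. I would enumerate the countably many limits of $I$ as $\eta_0,\eta_1,\dots$ and build $f$ by finite approximations. At stage $j$, pick $b_j=\sigma_{\eta_j}(k_j)$ with $k_j$ so large that $b_j$ is above $\sigma_\alpha(n)$, inside the tail from (ii), and above every previously chosen $b_i$ and $\eta_i$ lying below $\eta_j$. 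Then put $f(b_j)=f(\eta_j)+1$ and let $f$ be $0$ elsewhere.

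The choice of $b_j$ above the earlier points below $\eta_j$ is meant to ensure that the set of $b_j$'s below any given $\eta$ is finite near $\eta$, which is what local constancy needs. Consistency of the values is the other issue: $b_j$ may itself be some $\eta_i$, or an earlier $b_i$. To handle this I would assign values by recursion downward along the well-founded relation ``$b_j<\eta_j$'', using ordinal values, so each demand $f(b)=f(\eta)+1$ refers to a point $\eta$ above $b$. Keeping this consistent and keeping the $b_j$ non-accumulating at every limit of $I$ is the step I expect to be the main obstacle. If the naive enumeration fails, I would first try choosing $b_j$ beyond the sup of all $b_i$ ($i<j$) below $\eta_j$ and make all $b$'s successor-of-limit points, so that they never coincide with some $\eta_i$.
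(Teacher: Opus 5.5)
Your argument fails at the reduction itself. With requirement (ii), the problem for $f_n$ has no solution whenever $\delta=\sigma_\alpha(n+1)\in\Lambda_2$. This happens, e.g., in every block when $\alpha=\omega^3$ and $L_\alpha=\{\omega^2\cdot(n+1)\,:\,n\in\omega\}$.

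\begin{itemize}
\item You already noted that (I) for the pair $\delta<\alpha$, together with (II), forces $f_n\equiv 0$ on some tail $(\beta,\delta]$, and we may take $\beta\ge\sigma_\alpha(n)$.
\item Now apply the induction hypothesis (III) for $\delta$, read with strict inequality as you propose. Let $\eta\in(\sigma_\delta(0),\delta)$ be a limit and $m$ maximal with $\sigma_\delta(m)<\eta$. Then there is $p\in L_\eta\cap(\sigma_\delta(m),\eta)$ with $z_\delta(p)=z_\eta(p)+z_\delta(\eta)+1$.
\item So every tail on which $z_\delta=z_\eta+z_\delta(\eta)$ lies above $p>\sigma_\delta(m)$.
\item Take $\eta=\sigma_\delta(m+1)$ with $\sigma_\delta(m)\ge\beta$. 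Every $\sigma_\eta(k)$ permitted by (ii) lies in $(\beta,\delta)$, so $f_n(\sigma_\eta(k))=0$. But (iii) demands $f_n(\sigma_\eta(k))=f_n(\eta)+1=1$.
\end{itemize}

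Your point-supported construction fails for a second, independent reason. By (\ref{eq:LalphaLambda0}), the ladder points of any $\eta_j\in\Lambda_2$ are limit ordinals, so $b_j$ is itself a limit with $f(b_j)\ge 1$. Local constancy at $b_j$ then forces $f\equiv f(b_j)$ on a whole interval below $b_j$, contradicting ``$f=0$ elsewhere''. Your fallback of taking the $b_j$ to be successors is excluded for the same reason.

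The missing idea is that jolts near the top of a block must be \emph{inherited} from $z_\delta$, not created by $f_n$. At the point $p$ above, (III) for $\alpha$ only requires $f_n(p)=f_n(\eta)$, which is easy to arrange.

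This is what the paper does. On a block with top point $\delta$, it adds to $z_\delta$ a single $+1$ bump on one ladder interval $(\sigma_\delta(k),\sigma_\delta(k+1)]$.
\begin{itemize}
\item Such a correction is constant on every ladder interval of $\delta$, so all jolts supplied by (III) for $\delta$ survive.
\item The bump itself provides the one jolt, at $\eta=\delta$, that the induction hypothesis cannot give.
\end{itemize}

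For this to cover every limit of the block, two things are needed. Each limit strictly inside the block must satisfy $\sigma_\delta(0)<\eta$ with $\delta\in\Lambda_2$. The inherited jolt must also land above the bottom of the block. Your blocks $(\sigma_\alpha(n),\sigma_\alpha(n+1)]$ do not have this property in general. That is why the paper first refines $L_\alpha$ into the fat ladder $M_\alpha$ of Lemma \ref{lemma:fatladder} and uses consecutive points of $M_\alpha$ as blocks. The refinement repeatedly inserts, below each point, the largest $\nu\in\Lambda_2$ in the gap, the points $\nu+\omega\cdot\ell$, and a ladder point of $\nu$.

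Your remarks on reading (III) with $\sigma_\alpha(n)<\eta$, and on taking $L_{\beta+\omega}\subset(\beta,\beta+\omega)$, are fair observations about the statement, but they do not touch this gap.
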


The idea behind this lemma is the following. (Caution: extremely vague notions approaching.) 
Imagine that you have a space of the form $X\times\LL_+$, or $X\times\omega_1$, where $X$ has a base point $x_0$
and a (vague) notion of `unique oriented distance'', that is: there is only one point at a
given distance from $x_0$. 
This is the case for $X=\SSS^1,\R,\Z$, the integers modulo $n$, etc.
Then $z_\alpha(\eta)$ indicates
at which ``positive distance from $\langle\eta,x_0\rangle$''  
does the ``center'' of a neighborhood $B$ of $\langle\alpha,x_0\rangle$ lie.
Hence, if one has $z_\alpha(\gamma)= z_\eta(\gamma) + z_\alpha(\eta)$ for $\gamma$ in some interval,
then it means that the neighborhoods of $\langle\alpha,x_0\rangle$
``follow'' those of $\langle\eta,z_\alpha(\eta)\rangle$ in this interval.
Lemma \ref{lemma:z_alpha} ensures that it is not the case in all of $[0,\eta]$,
and indeed the zone where they are appart (by $1$) is in some sense close to $\eta$,
viewed from $\alpha$.
\\
We first need the following (sub)lemma.
\begin{lemma}
    \label{lemma:fatladder}
    Let $\alpha\in\Lambda_2$ and $L_\alpha\subset\Lambda$ 
    be a ladder at $\alpha$.
    Then there exist a strictly increasing sequence $M_\alpha=\{\delta_\alpha(k)\,:\,k\in\omega\}\subset\Lambda$
    such that
    $L_\alpha\subset M_\alpha$
    (i.e. $\sigma_\alpha(n) = \delta_\alpha(k(n))$), 
    and if $\eta\in\Lambda$ and $\eta\ge\sigma_\alpha(0)$,
    then there is an $n>0$ such that either $\delta_\alpha(n) = \eta$, or
    $\delta_\alpha(n)\in\Lambda_2$ and
    $\sigma_{\delta_\alpha(n)}(0)<\eta<\delta_\alpha(n)$.
    %Moreover, for each $n$, if $\delta_\alpha(n)\in\Lambda_2$ there is a minimal $k\ge 0$ such that
    %$\sigma_{\delta_\alpha(n+1)}(k) \ge \delta_\alpha(n)$.
\end{lemma}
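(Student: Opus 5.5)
We build $M_\alpha$ by refining the ladder $L_\alpha$ inside each gap between consecutive rungs. This is a finite (or short) transfinite refinement, controlled by the ladders at limits of limits. Since $L_\alpha\subset\Lambda$ (by (\ref{eq:LalphaLambda0}), as $\alpha\in\Lambda_2$), each interval $[\sigma_\alpha(n),\sigma_\alpha(n+1)]$ has both endpoints in $\Lambda$. It therefore suffices to prove the following local claim.

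\emph{Claim.} For $\gamma<\delta$ in $\Lambda$, there is a finite increasing sequence $\gamma=\delta_0<\delta_1<\dots<\delta_m=\delta$ in $\Lambda$ with the following property. Every $\eta\in\Lambda\cap(\gamma,\delta]$ either equals some $\delta_i$, or satisfies $\delta_i\in\Lambda_2$ and $\sigma_{\delta_i}(0)<\eta<\delta_i$ for some $i\ge1$.

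Given the claim, we concatenate the finite sequences for consecutive rungs. This gives a strictly increasing $\omega$-sequence $M_\alpha$ containing $L_\alpha$, all of whose members lie below $\alpha$ with supremum $\alpha$. Any $\eta\in\Lambda$ with $\sigma_\alpha(0)\le\eta<\alpha$ is either $\sigma_\alpha(0)=\delta_\alpha(0)$ or lies in some $(\sigma_\alpha(n),\sigma_\alpha(n+1)]$, so it is covered with index $>0$. The case $\eta=\sigma_\alpha(0)$ is the only spot where the required $n>0$ might fail. We can handle it by prepending one extra point below $\sigma_\alpha(0)$, or simply accept $n\ge0$ there; we will check which reading the statement needs. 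We take $\eta<\alpha$, as the statement implicitly intends.

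We prove the claim by induction on $\delta$ (for all $\gamma<\delta$ simultaneously), working downward from $\delta$. There are three cases.

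\emph{Successor limit.} If $\delta=\delta'+\omega$ with $\delta'\in\Lambda$, then $\Lambda\cap(\gamma,\delta]$ is $\{\delta\}$ together with $\Lambda\cap(\gamma,\delta']$. We apply induction to $(\gamma,\delta']$, when $\gamma<\delta'$, and append $\delta$.

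\emph{Limit of limits, ladder starts above $\gamma$.} If $\delta\in\Lambda_2$ and $\sigma_\delta(0)>\gamma$, then $\sigma_\delta(0)\in\Lambda$. Every $\eta\in\Lambda$ with $\sigma_\delta(0)<\eta<\delta$ is covered by $\delta$ itself. We apply induction to $(\gamma,\sigma_\delta(0)]$ and append $\delta$.

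\emph{Limit of limits, ladder starts at or below $\gamma$.} If $\delta\in\Lambda_2$ and $\sigma_\delta(0)\le\gamma$, then $\delta$ alone already covers all of $(\gamma,\delta)$, so the sequence $\gamma<\delta$ works.

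The induction is legitimate because each recursive call is on an interval with right endpoint strictly smaller than $\delta$, so well-foundedness of the ordinals guarantees that the process terminates after finitely many steps. Concretely, the right endpoints $\delta>\delta'$ or $\delta>\sigma_\delta(0)$ strictly decrease, so the recursion yields a finite sequence.

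The main subtlety is bookkeeping rather than depth. We need each $\delta_i$ to be in $\Lambda$, which is where (\ref{eq:LalphaLambda0}) is used for $\sigma_\delta(0)$. We need strict monotonicity across the concatenation. We also need the boundary indices $n>0$ as discussed above.
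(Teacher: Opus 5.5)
Your proof is correct and is essentially the paper's construction, recast as a recursion on the right endpoint. The paper also refines each gap $(\sigma_\alpha(n-1),\sigma_\alpha(n)]$ downward from the upper rung. From the current point $\xi$ it adds the successor limits down to the largest $\nu\in\Lambda_2\cap(\sigma_\alpha(n-1),\xi]$, then jumps to $\sigma_\nu(0)$ when $\sigma_\nu(0)>\sigma_\alpha(n-1)$; this matches your first two cases.

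The one divergence is your third case. When $\sigma_\nu(0)\le\sigma_\alpha(n-1)$, the paper does not stop. It inserts the first rung $\sigma_\nu(k)>\sigma_\alpha(n-1)$ and keeps refining below it. As a result, in its $M_\alpha$ the predecessor of every $\Lambda_2$-member $\nu\neq\delta_\alpha(0)$ is a rung of $L_\nu$. This does not matter for the lemma as stated, so your shortcut is fine here. However, the paper's proof of Lemma \ref{lemma:z_alpha}\,(\ref{enum:lemma3}) tacitly uses it: the case split on whether $k$ is minimal with $\sigma_{\delta_\alpha(n+1)}(k)>\delta_\alpha(n)$ needs the inductive witness $\sigma_\eta(\ell)$ to lie above $\delta_\alpha(n)$. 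So your $M_\alpha$ could not replace the paper's there.

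On the boundary issue, prepending a point cannot always work. If $\sigma_\alpha(0)=\min\Lambda$, then no $n>0$ can serve $\eta=\sigma_\alpha(0)$. Indeed, $\delta_\alpha(n)>\delta_\alpha(0)\ge\min\Lambda$ for $n>0$, and $\sigma_{\delta_\alpha(n)}(0)\in\Lambda$, hence $\ge\eta$, whenever $\delta_\alpha(n)\in\Lambda_2$. The correct reading is $\sigma_\alpha(0)<\eta<\alpha$. That is exactly what the paper's proof covers (it only treats $\eta\in(\sigma_\alpha(n-1),\sigma_\alpha(n)]$ with $n\ge 1$), and it is the only case used later.
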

\begin{proof}
    We start with $M_\alpha=L_\alpha$ and 
    for each $n\ge 1$, we add finitely many members 
    to $M_\alpha$ between
    $\sigma_\alpha(n-1)$ and $\sigma_\alpha(n)$. We
    then re-enumerate the sequence when everything is defined.
    We proceed as follows. Fix $n\ge 1$.
    \renewcommand\theenumi{\arabic{enumi}}
    \begin{enumerate} 
    \item We let $\xi_0$ be $\sigma_\alpha(n)$.
    \item
       \label{enum:one}
       If there is no member of $\Lambda_2$ in the interval $(\sigma_\alpha(n-1),\xi_0]$,
       then $\xi_0 = \sigma_\alpha(n-1) + \omega\cdot m$ for some $m$ with $0<m<\omega$.
       We add all the intermediate $\sigma_\alpha(n-1)+\omega\cdot \ell$ for $\ell = 1,\dots,m-1$ in $M_\alpha$
       and set $\xi_1 = \sigma_\alpha(n)$.
    \item
       \label{enum:two}
       Else, we take the maximal member $\nu_0$ of $\Lambda_2$ in the interval $(\sigma_\alpha(n-1),\xi_0]$.
       If $\nu_0<\xi_0$, then $\xi_0 = \nu_0 + \omega\cdot m$ for some $m$ with $0<m<\omega$,
       we add all the intermediate $\nu_0+\omega\cdot \ell$ for $\ell = 0,\dots,m-1$
       (in particular, we add $\nu_0$) in $M_\alpha$.
       Then we define $\xi_1$ as follows.
       \begin{enumerate}
          \item
             If $\sigma_{\nu_0}(0) \le \sigma_\alpha(n-1)$, we let $\xi_1$ be equal to 
             $\sigma_{\nu_0}(k)$ for the smallest $k>0$ such that $\sigma_{\nu_0}(k) > \sigma_\alpha(n-1)$.
          \item
             If $\sigma_{\nu_0}(0) > \sigma_\alpha(n-1)$, we let $\xi_1$ be equal to $\sigma_{\nu_0}(0)$.
       \end{enumerate}
       We add the new $\xi_1$ to $M_\alpha$ and start again the procedure at point \ref{enum:one}, 
       replacing $\xi_0$ by $\xi_1$ (and defining $\nu_1$).
    \item
       We proceed by induction, adding points in $M_\alpha$ and defining $\xi_i$ and $\nu_i$,
       until $\xi_i = \sigma_\alpha(n)$ 
    \end{enumerate}
    Since $\xi_i$ decreases strictly with $i$, we add finitely many new members in $M_\alpha$.
    Doing it for each $n$, we obtain a new sequence where finitely many points
    have been added between $\sigma_\alpha(n)$ and $\sigma_\alpha(n+1)$, 
    $M_\alpha$ is thus again a sequence converging to $\alpha$.
    We may 
    thus
    enumerate $M_\alpha$ increasingly as $\delta_\alpha(k)$ ($k\in\omega$).
    The rest of the argument is clearer if we keep the algorithmic description above 
    instead of using
    the $\delta_\alpha(k)$ notation.
    If $\eta\in\Lambda$ is in the interval $(\sigma_\alpha(n-1),\sigma_\alpha(n)]$ for some $n\ge 1$,
    there is a maximal stage $i$ in the construction such that $\xi_i\ge\eta$.
    If we are in case \ref{enum:one}, then $\eta$ is one of the
    finitely many members of $\Lambda$ between $\sigma_\alpha(n)$ and $\xi_i$
    that were added at stage $i$, hence $\eta\in M_\alpha$.
    Otherwise, either $\nu_i\le\eta\le\xi_i$ and again $\eta\in M_\alpha$,
    or $\xi_{i+1}<\eta<\nu_i$.
    But in that case $\sigma_{\nu_i}(0)<\eta$, and since $\nu_i\in M_\alpha$, we are in the 
    second case we were looking for. %The ``moreover'' part is by construction.
\end{proof}

\begin{proof}[{Proof of Lemma \ref{lemma:z_alpha}.}]
    \ \\
    We define $z_\alpha$ by induction as follows.
    Set $z_0$ be constant on $0$.
    If $z_\alpha$ is defined for $\alpha\in\Lambda$, 
    $z_{\alpha+\omega}$ is defined by \ref{enum:lemma4}.
    Then \ref{enum:lemma1} and \ref{enum:lemma2} hold immediately.
    \\
    Let now $\alpha\in\Lambda_2$.
    Let $\delta_\alpha(n)$ be defined for each $n$ by Lemma \ref{lemma:fatladder}.
    We first set $z_\alpha(\alpha) = 0$. Fix $n\ge 0$ and
    take the smallest $k$ such that $\sigma_{\delta_\alpha(n+1)}(k) > \delta_\alpha(n)$.
    We define $z_\alpha\upharpoonright (\delta_\alpha(n),\delta_\alpha(n+1)]$ as
    \begin{equation}
       \label{eq:defz_alpha}
       z_\alpha(\beta)   = \left\{
         \begin{array}{ll}
             z_{\delta_\alpha(n+1)}(\beta) & \text{ if } \beta \in (\delta_\alpha(n), \sigma_{\delta_\alpha(n+1)}(k)]
                                                            \cup (\sigma_{\delta_\alpha(n+1)}(k+1),\delta_\alpha(n+1)] \\
             z_{\delta_\alpha(n+1)}(\beta) + 1 & \text{ if } \beta \in(\sigma_{\delta_\alpha(n+1)}(k),\sigma_{\delta_\alpha(n+1)}(k+1)]
         \end{array}
         \right.
    \end{equation}
    In particular, $z_{\alpha}(\delta_\alpha(n+1)) = z_{\delta_\alpha(n+1)}(\delta_\alpha(n+1)) = 0$ for all $n\ge 0$.\\
    We now prove that $z_\alpha$ has the desired properties by induction on $\alpha$.
    If $\alpha = \omega\cdot\omega$, then $z_\beta$ is identically $0$ for each $\beta<\alpha$, $\beta\in\Lambda$
    and $\delta_\alpha(n) = \omega\cdot n$.
    Then all the required properties follow from (\ref{eq:defz_alpha}).
    Let now $\alpha\in\Lambda$ and assume that the lemma holds for each $\beta<\alpha$. Then 
    \ref{enum:lemma2} follows directly by (\ref{eq:defz_alpha}). 
    If $\alpha\not\in\Lambda_2$, $\alpha=\beta+\omega$ for some $\beta\in\Lambda$
    and \ref{enum:lemma1} follows by definition of $z_\alpha$.
    We thus assume that $\alpha\in\Lambda_2$.
    Let $\eta\in\Lambda$, $\eta<\alpha$. Let $n$ be minimal such that $\delta_\alpha(n)\le\eta$.
    Then $z_\alpha(\delta_\alpha(n)) = z_{\delta_\alpha(n)}(\delta_\alpha(n))=0$. 
    By induction \ref{enum:lemma1} holds for $\delta_\alpha(n)$,
    hence 
    there is some $\beta<\eta$ such that
    $$ z_{\delta_\alpha(n)}(\gamma) = z_{\delta_\alpha(n)}(\eta) + z_{\eta}(\gamma)$$
    for each $\gamma\in(\beta,\eta]$. By (\ref{eq:defz_alpha}), up to taking a bigger $\beta$ if necessary,
    both $\beta$ and $\eta$ belong to an interval in which either $z_{\delta_\alpha(n)}$ and $z_\alpha$
    take the same values, or in which $z_\alpha$ takes the values of $z_{\delta_\alpha(n)}$ plus $1$.
    In the former case, \ref{enum:lemma1} follows immediately,
    in the latter it is enough to compute
    $$ 
      z_{\alpha}(\gamma) = z_{\delta_\alpha(n)}(\gamma) + 1 = z_{\delta_\alpha(n)}(\eta) +1 + z_{\eta}(\gamma)
      = z_{\alpha}(\eta) + z_{\eta}(\gamma)
    $$
    for each $\gamma\in(\beta,\eta]$. This proves \ref{enum:lemma1}.
    We now prove \ref{enum:lemma3} and thus suppose that 
    $\sigma_\alpha(0)<\eta<\alpha$.
    By Lemma \ref{lemma:fatladder}, there is some $n\ge 0$ such that
    either $\eta=\delta_\alpha(n+1)$, or $\sigma_{\delta_\alpha(n+1)}(0) < \eta < \delta_\alpha(n+1)$ 
    and $\delta_\alpha(n+1)\in\Lambda_2$.
    In the former case, $z_\alpha(\eta) = 0$ and by (\ref{eq:defz_alpha}) 
    we know that $z_\alpha(\sigma_\eta(k)) = z_{\eta}(\sigma_\eta(k))+1$.
    Since $\sigma_\eta(k) > \delta_\alpha(n)$ and $L_\alpha$ is a subsequence of $M_\alpha$,
    we know in particular that if $\ell$ is maximal such that $\sigma_\alpha(\ell)<\eta$
    then $\sigma_\eta(k) > \sigma_\alpha(\ell)$ as well. Hence, \ref{enum:lemma3} holds in that case.
    We now assume that $\sigma_{\delta_\alpha(n+1)}(0) < \eta < \delta_\alpha(n+1)$ and $\delta_\alpha(n+1)\in\Lambda_2$.
    By induction, if $k$ is maximal such that $\sigma_{\delta_\alpha(n+1)}(k) < \eta$,
    there is some $\ell$ such that 
    \begin{equation}
       \label{eq:f_alphainduction}
       z_{\delta_\alpha(n+1)}(\sigma_\eta(\ell)) = z_\eta(\sigma_\eta(\ell)) + z_{\delta_\alpha(n+1)}(\eta) + 1.
    \end{equation}
    and $\sigma_{\delta_\alpha(n+1)}(k)<\sigma_\eta(\ell) <\eta$.
    Then, by (\ref{eq:defz_alpha}), either $k$ is also minimal such that
    $\sigma_{\delta_\alpha(n+1)}(k) > \delta_\alpha(n)$, 
    in which case $z_\alpha(\sigma_\eta(\ell)) = z_{\delta_\alpha(n+1)}(\sigma_\eta(\ell)) + 1$
    and $z_\alpha(\eta) = z_{\delta_\alpha(n+1)}(\eta) + 1$, or $k$ is not minimal, in which case
    $z_\alpha(\sigma_\eta(\ell)) = z_{\delta_\alpha(n+1)}(\sigma_\eta(\ell))$
    and $z_\alpha(\eta) = z_{\delta_\alpha(n+1)}(\eta)$.
    That is, if one replaces $\delta_\alpha(n+1)$ by $\alpha$ in 
    (\ref{eq:f_alphainduction}), then either we add $1$ on both sides of the equation, or the equation stays the same. 
    In both cases, \ref{enum:lemma3} holds.
    %This shows the required properties of $z_\alpha$.
\end{proof}

\begin{cor}
   \label{cor:z_alpha}
   If $z_\alpha$ is defined as in Lemma \ref{lemma:z_alpha}, then for each $\alpha\in\Lambda_2$
   and each increasing sequence $\beta_n\in\Lambda$ with supremum $\alpha$, there are $\xi_n\in L_{\beta_n}$
   such that the sequence $\xi_n$ has limit $\alpha$ and 
   $z_\alpha(\xi_n) = z_{\beta_n}(\xi_n) + z_\alpha(\beta_n) + 1$.
\end{cor}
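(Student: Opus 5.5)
We derive the corollary directly from property \ref{enum:lemma3} of Lemma \ref{lemma:z_alpha}, applied with $\eta=\beta_n$. Fix $\alpha\in\Lambda_2$ and an increasing sequence $\beta_n\in\Lambda$ with supremum $\alpha$. Since the ladder $L_\alpha$ converges to $\alpha$, we have $\sigma_\alpha(0)<\alpha$. As $\beta_n\to\alpha$, all but finitely many $\beta_n$ satisfy $\sigma_\alpha(0)\le\beta_n<\alpha$. For such $n$, the set of $m$ with $\sigma_\alpha(m)\le\beta_n$ is finite and nonempty (again because $L_\alpha$ is cofinal in $\alpha$ and $\beta_n<\alpha$). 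So there is a maximal $m_n\ge 0$ with $\sigma_\alpha(m_n)\le\beta_n<\alpha$, and the hypothesis of \ref{enum:lemma3} holds with $\eta=\beta_n$.

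Property \ref{enum:lemma3} then gives $k_n\in\omega$ such that $\xi_n:=\sigma_{\beta_n}(k_n)\in L_{\beta_n}$ satisfies $\sigma_\alpha(m_n)<\xi_n<\beta_n$ and
$$z_\alpha(\xi_n)=z_{\beta_n}(\xi_n)+z_\alpha(\beta_n)+1.$$
This is exactly the required identity.

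It remains to check that $\xi_n\to\alpha$. The maximality of $m_n$ gives $\sigma_\alpha(m_n+1)>\beta_n$. Since $\beta_n\to\alpha$ and $\beta_n<\alpha$, for every $m$ we eventually have $\beta_n\ge\sigma_\alpha(m)$, hence $m_n\ge m$. So $m_n\to\infty$, and therefore $\sigma_\alpha(m_n)\to\alpha$. The squeeze $\sigma_\alpha(m_n)<\xi_n<\beta_n<\alpha$ then forces $\xi_n\to\alpha$.

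For the finitely many initial $n$ with $\beta_n<\sigma_\alpha(0)$, property \ref{enum:lemma3} does not apply. The identity cannot be guaranteed there, so either the statement is read as ``for all but finitely many $n$'', or we discard those initial terms of the sequence. This affects neither the limit nor any later use of the corollary, where only a tail of the sequence matters. This bookkeeping is the only delicate point; the rest is a direct application of the lemma.
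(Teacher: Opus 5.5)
Your proof is correct and follows the paper's argument: apply property \ref{enum:lemma3} of Lemma \ref{lemma:z_alpha} with $\eta=\beta_n$ and the maximal $m$ with $\sigma_\alpha(m)\le\beta_n$, so that the bound $\xi_n>\sigma_\alpha(m)$ forces $\xi_n\to\alpha$. Your explicit squeeze argument, and your remark that the finitely many $n$ with $\beta_n<\sigma_\alpha(0)$ must be discarded (a case the paper's one-line proof silently skips), are legitimate refinements that do not affect any later use of the corollary.
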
 
\begin{proof}
   This is immediate since if one takes the biggest $m$ such that
   $\sigma_\alpha(m)<\beta_n$, Lemma \ref{lemma:z_alpha} provides us such a $\xi_n$
   which is above $\sigma_\alpha(m)$.
\end{proof}

We end this section with a definition for further reference.
\begin{defi}
   \label{defi:modelled}
   Let $\mathscr{L}$ be a ladder system satisfying (\ref{eq:LalphaLambda0}).
   We say that a family $z_\alpha$ of functions 
   $[0,\alpha]\to\omega$ 
   is {\em modelled on $\mathscr{L}$} if
   it satisfies \ref{enum:lemma1}--\ref{enum:lemma2}--\ref{enum:lemma3}--\ref{enum:lemma4}
   of Lemma \ref{lemma:z_alpha} and also (\ref{eq:defz_alpha}), 
   where the ladder system $\delta_\alpha(n)$ is
   given by Lemma \ref{lemma:fatladder}.
\end{defi}

Notice in passing that the ladder system $\delta_\alpha(n)$ and the functions $z_\alpha$
depend only on $\mathscr{L}$, as no choices were made in their definitions.

%%%%%%%%%%%%%%%%%%%%%%%%%%%%%%%%%%%%%%%%%%%%%%%%%%%%%%%%%%%%%%%%%%%%%%%%%%%%%%%%%%%%%%%%%%%%%%
%%%%%%%%%%%%%%%%%%%%%%%%%%%%%%%%%%%%%%%%%%%%%%%%%%%%%%%%%%%%%%%%%%%%%%%%%%%%%%%%%%%%%%%%%%%%%%
%%%%%%%%%%%%%%%%%%%%%%%%%%%%%%%%%%%%%%%%%%%%%%%%%%%%%%%%%%%%%%%%%%%%%%%%%%%%%%%%%%%%%%%%%%%%%%
\section{Constructing bundles from a ladder system}
\label{sec:constr}

We now return to our bundles.
Combining the features of angle preservation and the fact that
$f_\alpha$'s are isometries on the fibers gives us a convenient way of defining a local base at
any point $x$. Fix a choice of $f_\alpha : (0,\alpha+1) \times\SSS^1\to\pi^{-1}((0,\alpha+1))$
 for each $\alpha\in\omega_1$ , and for
$p = \alpha+ r$ ($\alpha\in\omega_1$, $r \in [0, 1)$) 
let $f_p = f_\alpha$, as above.
Then the topology in $\pi^{-1}((0,\alpha+1))$ is entirely determined by the strips
$S_\alpha(p,q,\phi,\theta)$ (Definition \ref{defi:strips})
with $0<p<q<\alpha+1$ and $\phi,\theta\in\SSS^1$.
The horizontal boundaries of these strips are parts of $f_\alpha\bigl((0,\alpha+1)\times\{\phi\}\bigr)$
and $f_\alpha\bigl((0,\alpha+1)\times\{\phi+\theta\}\bigr)$.
Let $a_\alpha:(0,\alpha+1)\to\SSS^1$ be defined as $a_\alpha(p) = f_\alpha(\langle p,0\rangle)$,
then $f_\alpha(\langle p,\theta\rangle) = a_\alpha(p) + \theta$.
Hence, the ``arcs'' $a_\alpha$ determine entirely the topology.
We say that the collection $a_\alpha$ engender the topology.
Of course, these arcs must behave coherently with each other (in some sense) in order to
obtain a manifold topology. A criteria is given later.
In the trivial bundle, $a_\alpha(p) = 0$ for each $\alpha$, that is:
each $a_\alpha$ is ``completely flat''. 
Nyikos' idea is to introduce ``bumps'' in $a_\alpha$ so that these bumps accumulate on a club set of $M_\alpha$,
which force club subsets of $M$ to have ``too many points'' in club-many fibers.
This will be done by defining
these $a_\alpha$ as  extensions to $\LL_+$ of the function $\theta\cdot z_\alpha\text{(mod }2\pi\text{)}$,
where $z_\alpha$ is modelled on $\mathscr{L}$ (Definition \ref{defi:modelled}) and $\theta\in(0,2\pi)$. 
The purpose of this section is to make these vague claims explicit.
It is important to note that the functions $a_\alpha$ will in general {\em not} be continuous.

%%%%%%%%%%%%%%%%%%%%%%%%%%%%%%%%%%%%%%%%%%%%%%%%%%%%%%%%%%%%%%%%%%%%%%%%%%%%%%%%%%%%%%%%%%%%%
\subsection{Properties of $a_\alpha$ that yield a bundle topology}

Let us now be a bit more rigorous and define properly the tools we are going to use.
The underlying set of our bundles is always $\LL_+\times\SSS^1$, and we will define
the topology by induction on $(0,\alpha+1)\times\SSS^1$, which we denote by $R_\alpha$,
and $R_{\omega_1}$ denotes $\LL_+\times\SSS^1$.
To avoid confusion, we denote by $\mu_\alpha$ the usual product topology
on $R_\alpha$, and by $\tau_\alpha$ our new topology (to be defined).
We let also $\pi:\LL_+\times\SSS^1\to\LL_+$ be the projection on the first factor and
as before denote $\pi^{-1}(\{p\})$ by $M_p$.
If $\rho$ is a topology on some set $X$ and $Y\subset X$, we denote by $\rho\upharpoonright Y$
the topology induced on $Y$ by $\rho$.
In our constructions, we will always ensure that 
\begin{align}
   \label{eq:restriction}
   %\tag{$\star$}
      \beta\le\alpha \quad\Longrightarrow\quad \tau_\alpha\upharpoonright R_\beta = \tau_\beta,\\
   \label{eq:picont}
   %\tag{$\star\star$}
      \pi\upharpoonright R_\alpha\text{ is $\tau_\alpha$-continuous.}
\end{align}

Instead of defining $\tau_\alpha$ directly, we define our functions $a_\alpha$,
and first look for conditions under which (\ref{eq:restriction}) 
and (\ref{eq:picont}) hold for the engendered topology.

\begin{defi}
   \label{defi:g_s}
   Given a function (not necessarily continuous) $s:\LL_+\to\SSS^1$,
   we let 
   $\chi_{s}:R_{\omega_1}\to R_{\omega_1}$ be defined as  
   $$ \chi_{s}(\langle p,\theta\rangle) = \langle p,s(p) + \theta\rangle.$$
\end{defi}
Of course, if $s(p)=0$, then $\chi_s\upharpoonright M_p$
is the identity on $M_p$.

\begin{lemma}
   \label{lemma:chi_commute}
   If $r,s:\LL_+\to\SSS^1$ are two (not necessarily continuous) functions, then
   $\chi_r\circ\chi_s = \chi_s\circ\chi_r = \chi_{r+s}$.
\end{lemma}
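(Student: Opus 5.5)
The plan is a direct pointwise computation from Definition \ref{defi:g_s}. Both sides are maps $R_{\omega_1}\to R_{\omega_1}$, so it suffices to evaluate them on an arbitrary point $\langle p,\theta\rangle$ with $p\in\LL_+$ and $\theta\in\SSS^1$. No continuity is needed anywhere, which matches the hypothesis that $r$ and $s$ are arbitrary functions.

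First I will compute $\chi_s(\langle p,\theta\rangle)=\langle p,s(p)+\theta\rangle$. The key observation is that $\chi_s$ leaves the first coordinate $p$ unchanged. Hence, when $\chi_r$ is applied next, it uses the same value $r(p)$, and we get
$$\chi_r\circ\chi_s(\langle p,\theta\rangle)=\langle p,r(p)+s(p)+\theta\rangle.$$
Here $r+s$ denotes the pointwise sum $(r+s)(p)=r(p)+s(p)$ modulo $2\pi$. By Definition \ref{defi:g_s}, the right-hand side is exactly $\chi_{r+s}(\langle p,\theta\rangle)$.

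Exchanging the roles of $r$ and $s$ gives
$$\chi_s\circ\chi_r(\langle p,\theta\rangle)=\langle p,s(p)+r(p)+\theta\rangle.$$
This equals the previous expression because addition in $\SSS^1=\R/2\pi\Z$ is commutative and associative. Since $p$ and $\theta$ were arbitrary, all three maps coincide.

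There is no real obstacle. The only points to state explicitly are that the first coordinate is preserved, so both shifts are evaluated at the same $p$, and that the angle arithmetic is done in the abelian group $\SSS^1$. As an aside, the same computation with $s$ replaced by $-s$ shows that $\chi_s$ is a bijection with inverse $\chi_{-s}$, since $\chi_0$ is the identity.
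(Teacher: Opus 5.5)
Your proposal is correct and is exactly the direct pointwise verification that the paper leaves as ``Immediate''. The two facts it rests on are that $\chi_s$ fixes the first coordinate, so both shifts are evaluated at the same $p$, and that $\SSS^1$ is an abelian group. Your closing remark that $\chi_{-s}$ inverts $\chi_s$ is a useful bonus, since Lemma \ref{lemma:tau_alpha} uses the bijectivity of $f_\alpha=\chi_{a_\alpha}\upharpoonright R_\alpha$ without comment.
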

\begin{proof}
   Immediate.
\end{proof}

Recall that the support $\text{supp}(f)$ of a function $f$
is those $x$ for which $f(x)\not= 0$.
It will be convenient to extend the domain of $a_\alpha$ to all
of $\LL_+$, but with support a subset of $(0,\alpha)$, that is:
\begin{equation}
   \label{eq:a_alpha_0}
   %\tag{$a_\alpha$}
   a_\alpha(p) = 0\,\forall p\ge\alpha,\text{ i.e. supp}(a_\alpha)\subset(0,\alpha).
\end{equation}
Given such $a_\alpha$, we write $f_\alpha$ for $\chi_{a_\alpha}\upharpoonright R_\alpha$ defined as above.
\begin{defi}
   \label{defi:tau_alpha}
   Given $a_\alpha, f_\alpha$ as above,
   we set $\tau_\alpha = \{f_\alpha(U)\,:\,U\in\mu_\alpha\}$. 
\end{defi}
\begin{lemma}
   \label{lemma:tau_alpha}
   Assume that $a_\alpha$, $f_\alpha$ and $\tau_\alpha$ are as above. Then the following holds.\\
   (a) $\tau_\alpha$ is indeed a topology whose basis is given by the images by $f_\alpha$ of all the strips 
       $(a,b)\times(\phi,\phi+\theta)\subset R_\alpha$, that is, by the
       $S_\alpha(a,b,\phi,\theta)$. \\
   (b) $f_\alpha:\langle R_\alpha,\mu_\alpha\rangle\to\langle R_\alpha,\tau_\alpha\rangle$
       is a homeomorphism.\\
   (c) (\ref{eq:picont}) holds, that is: $\pi\upharpoonright R_\alpha$ is $\tau_\alpha$-continuous.
\end{lemma}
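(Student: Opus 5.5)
The whole lemma rests on one observation: $f_\alpha=\chi_{a_\alpha}\upharpoonright R_\alpha$ is a bijection of $R_\alpha$ onto itself. Indeed, $\chi_{a_\alpha}$ maps each fiber $M_p$ to itself and acts there as the rotation by $a_\alpha(p)$. By Lemma \ref{lemma:chi_commute}, $\chi_{a_\alpha}\circ\chi_{-a_\alpha}=\chi_{-a_\alpha}\circ\chi_{a_\alpha}=\chi_0=\mathrm{id}$, so $f_\alpha$ has inverse $\chi_{-a_\alpha}\upharpoonright R_\alpha$. Continuity of $a_\alpha$ is never used, and none is needed. All three items follow by transporting structure through this bijection.

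For (a), I would note that the image of a topology under a bijection is a topology, because bijections commute with arbitrary unions and finite intersections. Concretely, $f_\alpha(\bigcup U_i)=\bigcup f_\alpha(U_i)$ holds for any map, and $f_\alpha(U\cap V)=f_\alpha(U)\cap f_\alpha(V)$ holds by injectivity. Also $f_\alpha(\varnothing)=\varnothing$ and $f_\alpha(R_\alpha)=R_\alpha$ by surjectivity. The strips $(a,b)\times(\phi,\phi+\theta)$ form a basis of $\mu_\alpha$; here I allow $\theta=2\pi$, or else note that whole circles are unions of two arcs. Every $U\in\mu_\alpha$ is therefore a union of strips, so $f_\alpha(U)$ is the corresponding union of the sets $f_\alpha((a,b)\times(\phi,\phi+\theta))=S_\alpha(a,b,\phi,\theta)$. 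This makes the $S_\alpha$ a basis of $\tau_\alpha$.

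For (b), $f_\alpha$ is a bijection, and by definition of $\tau_\alpha$ it maps $\mu_\alpha$-open sets exactly onto $\tau_\alpha$-open sets. Since $f_\alpha^{-1}(f_\alpha(U))=U$, preimages of $\tau_\alpha$-open sets are $\mu_\alpha$-open. So $f_\alpha$ is both continuous and open, hence a homeomorphism.

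For (c), I use that $\chi_{a_\alpha}$ preserves fibers, which gives $\pi\circ f_\alpha=\pi\upharpoonright R_\alpha$. Thus $\pi=(\pi\circ f_\alpha)\circ f_\alpha^{-1}$ on $R_\alpha$, where $\pi$ on the left is regarded on $(R_\alpha,\tau_\alpha)$. Here $f_\alpha^{-1}$ is continuous from $\tau_\alpha$ to $\mu_\alpha$ by (b), and the projection $(R_\alpha,\mu_\alpha)\to(0,\alpha+1)$ is continuous for the product topology. The composite is therefore continuous, as required.

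There is no real obstacle in this lemma. The only point needing care is to keep the argument at the level of set bijections and not implicitly assume continuity of $a_\alpha$; the lemma was stated precisely to allow discontinuous arcs.
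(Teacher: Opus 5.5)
Your proposal is correct and follows the paper's proof: (a) and (b) transport $\mu_\alpha$ through the bijection $f_\alpha$ and use the strip basis, exactly as the paper does. For (c), the paper only remarks that $\tau_\alpha$-convergence of $\langle p_n,\theta_n\rangle$ forces $p_n\to p$; your factorization $\pi\upharpoonright R_\alpha=(\pi\circ f_\alpha)\circ f_\alpha^{-1}$, using $\pi\circ f_\alpha=\pi$ and (b), is the same idea stated without sequences.
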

\begin{proof}
   \
   \\
   (a) Since $f_\alpha$ is a bijection, $\tau_\alpha$ is indeed a topology.
       If $\mathscr{U}$ is a collection of subsets of $R_\alpha$, then 
       $f_\alpha(\cup\mathscr{U}) = \cup\{f(U)\,:\,U\in\mathscr{U}\}$.
       The result follows since the strips form a basis for $\mu_\alpha$.\\
   (b) It follows directly from the definition and the fact that $f_\alpha$ is a bijection.\\
   (c) If $\langle p_n,\theta_n\rangle$ $\tau_\alpha$-converge to $\langle p,\theta\rangle$,
       then $p_n$ converges to $p$ in the usual topology of $(0,\alpha+1)$. 
\end{proof}

The next two lemmas will be useful when we investigate cluster points of infinite sets in our bundles.
The proof of the first one is much shorter than its statement.
\begin{lemma}
   \label{lemma:converge}
   Let $\alpha\in\Lambda$ and $a_\alpha$ be as above. 
   Suppose that for some $A$ cofinal in $[0,\alpha)$ (interval in $\omega_1$), there
   is $\theta_\beta$ for each $\beta\in A$ such that the $\tau_\alpha$-closure of
   $\{\langle \beta,\theta_\beta\rangle\,:\,\beta\in A\}$ intersected with $M_\alpha$
   contains only $\langle\alpha,\theta\rangle$.
   Then for each $\epsilon>0$ there is $\gamma<\alpha$ such that 
   $$ 
     |a_\alpha(\gamma) +\theta- \theta_\beta| \le \epsilon
     \quad \forall \beta\ge \gamma,\, \beta\in A.
   $$
\end{lemma}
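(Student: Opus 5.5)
We argue by contradiction, pulling everything back to the product topology through $f_\alpha$. By Lemma \ref{lemma:tau_alpha} (b), $f_\alpha=\chi_{a_\alpha}\upharpoonright R_\alpha$ is a homeomorphism from $\langle R_\alpha,\mu_\alpha\rangle$ onto $\langle R_\alpha,\tau_\alpha\rangle$, so we may compute closures in the product topology. Since $f_\alpha(\langle p,\phi\rangle)=\langle p,a_\alpha(p)+\phi\rangle$, we have $f_\alpha^{-1}(\langle\beta,\theta_\beta\rangle)=\langle\beta,\theta_\beta-a_\alpha(\beta)\rangle$. By (\ref{eq:a_alpha_0}), $a_\alpha(\alpha)=0$, so $f_\alpha^{-1}(\langle\alpha,\theta\rangle)=\langle\alpha,\theta\rangle$. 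The hypothesis therefore says exactly this: the $\mu_\alpha$-closure of $P=\{\langle\beta,\theta_\beta-a_\alpha(\beta)\rangle:\beta\in A\}$, intersected with $\{\alpha\}\times\SSS^1$, is the single point $\langle\alpha,\theta\rangle$.

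Suppose the conclusion fails for some $\epsilon>0$. Then for every $\gamma<\alpha$ there is $\beta\ge\gamma$ in $A$ with $|a_\alpha(\beta)+\theta-\theta_\beta|>\epsilon$, where the distance is angular, modulo $2\pi$. Since $A$ is cofinal in $\alpha$ and $\alpha\in\Lambda$ has countable cofinality, we can pick a strictly increasing sequence $\beta_n\in A$ with supremum $\alpha$ such that the angles $\psi_n=\theta_{\beta_n}-a_\alpha(\beta_n)$ all satisfy $|\psi_n-\theta|>\epsilon$. The circle is compact, so after passing to a subsequence we get $\psi_n\to\psi$ with $|\psi-\theta|\ge\epsilon$; in particular $\psi\ne\theta$.

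In the product topology on $(0,\alpha+1)\times\SSS^1$ we have $\beta_n\to\alpha$ and $\psi_n\to\psi$, so $\langle\beta_n,\psi_n\rangle\to\langle\alpha,\psi\rangle$. Thus $\langle\alpha,\psi\rangle$ lies in the $\mu_\alpha$-closure of $P$ and in $\{\alpha\}\times\SSS^1$, yet differs from $\langle\alpha,\theta\rangle$, contradicting the reformulated hypothesis. Alternatively, one can apply $f_\alpha$ directly: $\langle\alpha,\psi\rangle$ is a second point in the $\tau_\alpha$-closure of the given set inside $M_\alpha$.

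The only delicate point is the bookkeeping of signs and angles. We must check that $f_\alpha$ fixes the fiber $M_\alpha$, which uses $a_\alpha(\alpha)=0$, and that the angular distance $|\cdot|$ is taken modulo $2\pi$, so that a limit of angles at distance greater than $\epsilon$ from $\theta$ stays at distance at least $\epsilon$. Everything else is just compactness of $\SSS^1$ together with Lemma \ref{lemma:tau_alpha}.
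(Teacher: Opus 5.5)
Your argument is correct and is essentially the paper's proof, which just says ``by (a) of Lemma \ref{lemma:tau_alpha}''. The basic $\tau_\alpha$-neighbourhoods of $\langle\alpha,\phi\rangle$ are the $f_\alpha$-images of product strips, so your pull-back through $f_\alpha$ (using $a_\alpha(\alpha)=0$) followed by extracting a convergent subsequence of angles in $\SSS^1$ is exactly the omitted detail.

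You were also right to read the inequality as $|a_\alpha(\beta)+\theta-\theta_\beta|\le\epsilon$. The printed $a_\alpha(\gamma)$ is a typo: the literal version fails whenever $a_\alpha$ oscillates on $A$ near $\alpha$ (take $\theta_\beta=a_\alpha(\beta)+\theta$), and the later application in Theorem \ref{thm:omega_1} uses the running index.
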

\begin{proof}
   By (a) of Lemma \ref{lemma:tau_alpha}.
\end{proof}

Let $\alpha\in\Lambda$ and $\beta<\alpha<p<\alpha+1$, $\epsilon >0$.
Assume that $a_\alpha$ is defined as above. 
For $H\subset M_\alpha$ we set $T_\alpha(\beta,H,\epsilon)$ to be the union of the strips 
$S_\alpha(\beta,\alpha,\phi-\frac{\epsilon}{2},\epsilon)$ 
for each $\phi$ such that $\langle \alpha,\phi\rangle\in H$. In words: 
the union of the strips of width $\epsilon$
centered around points of $M_\alpha\cap H$.

\begin{lemma} 
   \label{lemma:Fclosed} 
   Let $\alpha\in\Lambda$ and $a_\alpha$ be as above.
   Let $F\subset R_\alpha$ be $\tau_\alpha$-closed. 
   Then for each $\epsilon > 0$ there is $\beta<\alpha$
   such that $F\cap(\beta,\alpha)\subset T_\alpha(\beta,H,\epsilon)$.
\end{lemma}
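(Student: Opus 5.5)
Here $H$ is read as $F\cap M_\alpha$, and $F\cap(\beta,\alpha)$ as $F\cap\pi^{-1}((\beta,\alpha))$. The plan is to transport everything to the product topology $\mu_\alpha$ via the homeomorphism $f_\alpha$ of Lemma \ref{lemma:tau_alpha}(b). There the statement becomes a tube-lemma argument over a compact piece of the fiber $\{\alpha\}\times\SSS^1$.

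The first step is to set $G=f_\alpha^{-1}(F)$, which is $\mu_\alpha$-closed in $(0,\alpha+1)\times\SSS^1$. By (\ref{eq:a_alpha_0}) we have $a_\alpha(\alpha)=0$, so $f_\alpha$ is the identity on $M_\alpha$. Hence $G\cap(\{\alpha\}\times\SSS^1)=H$, identified with a closed set $H'\subset\SSS^1$ of angles. Let $O\subset\SSS^1$ be the open set of angles at distance $<\epsilon/2$ from some $\phi\in H'$, and let $K=\{\alpha\}\times(\SSS^1-O)$. Then $K$ is compact and disjoint from $G$. If $H=\varnothing$, then $K$ is the whole fiber.

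The second step is the compactness argument. For each $x=\langle\alpha,\psi\rangle\in K$, since $G$ is closed and $x\notin G$, there are $\beta_x<\alpha$ and an open arc $I_x\ni\psi$ such that $(\beta_x,\alpha+1)\times I_x$ misses $G$. Finitely many $I_{x_1},\dots,I_{x_k}$ cover $\SSS^1-O$; let $\beta=\max_i\beta_{x_i}<\alpha$. Then every point $\langle\gamma,\psi\rangle\in G$ with $\beta<\gamma<\alpha$ has $\psi\in O$. So $\psi\in(\phi-\epsilon/2,\phi+\epsilon/2)$ for some $\phi\in H'$, and the point lies in the product strip $(\beta,\alpha)\times(\phi-\frac{\epsilon}{2},\phi+\frac{\epsilon}{2})$. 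When $H=\varnothing$, this says $G\cap((\beta,\alpha)\times\SSS^1)=\varnothing$, which matches $T_\alpha(\beta,\varnothing,\epsilon)=\varnothing$.

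The final step applies $f_\alpha$. Since $f_\alpha$ preserves each fiber ($\pi\circ f_\alpha=\pi$), the image of that product strip is exactly $S_\alpha(\beta,\alpha,\phi-\frac{\epsilon}{2},\epsilon)$. Therefore $F\cap\pi^{-1}((\beta,\alpha))\subset T_\alpha(\beta,H,\epsilon)$.

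There is no real obstacle. The only points needing care are the open-versus-closed bookkeeping, which is handled by taking $O$ open so that $K$ is closed, and checking that the angles of $H$ are the same in both coordinates. That check is exactly where $a_\alpha(\alpha)=0$ is used.
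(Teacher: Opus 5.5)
Your argument is correct. It is essentially the paper's compactness argument, run directly as a tube-lemma/finite-subcover argument in the product coordinates given by $f_\alpha$, where $a_\alpha(\alpha)=0$ makes the two descriptions of $F\cap M_\alpha$ agree; the paper instead argues by contradiction, picking points of $F$ outside the tubes for every $\beta<\alpha$ and using countable compactness and Lemma \ref{lemma:converge} to produce a point of $F\cap M_\alpha$ at distance at least $\epsilon/2$ from $F\cap M_\alpha$. One harmless slip: a $\mu_\alpha$-neighbourhood of $\langle\alpha,\psi\rangle$ missing $G$ is in general only of the form $(\beta_x,q_x)\times I_x$ with $\alpha<q_x<\alpha+1$, since $G$ may meet $(\alpha,\alpha+1)\times I_x$, not all of $(\beta_x,\alpha+1)\times I_x$; but you only ever use $(\beta_x,\alpha)\times I_x$, so the conclusion is unaffected.
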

\begin{proof}
   Otherwise, for some $\epsilon>0$ there are points of $F$ outside of
   $T_\alpha(\beta,H,\epsilon)\cap(\beta,\alpha)$ for each $\beta<\alpha$.
   By the previous lemma and countable compactness, there must be a point of $F$ 
   in $M_\alpha$ which is at distance $>\epsilon/2$ of $F\cap M_\alpha$, a contradiction.
\end{proof}

\begin{defi}
   \label{defi:ring-homeomorphism}
   We call a function $h:R_\alpha\to R_\alpha$ a ring-homeomorphism iff 
   $h=\chi_s\upharpoonright R_\alpha$ for a continuous $s:\LL_+\to\SSS^1$.
\end{defi}
Hence, $f_\alpha$ is {\em almost} a ring-homeomorphism, except for the fact that $a_\alpha$ is not assumed
(and will not be) continuous. Ring-homeomorphisms are indeed homeomorphisms for $\mu_\alpha$ and $\tau_\alpha$:

\begin{lemma}
   \label{lemma:chihomeo}
   Let $\chi_s$ be a ring-homeomorphism of $R_\alpha$. Then the diagram below commutes and each map
   is a homeomorphism.
   \begin{center} 
     \begin{tikzcd}
     \langle R_\alpha,\mu_\alpha\rangle 
       \arrow[r, rightarrow , "\chi_s"]
       \arrow[d, rightarrow , "f_\alpha"]
     &
     \langle R_\alpha,\mu_\alpha\rangle 
       \arrow[d, rightarrow , "f_\alpha"]
     \\
     \langle R_\alpha,\tau_\alpha\rangle 
       \arrow[r, rightarrow , "\chi_s"]
     &
     \langle R_\alpha,\tau_\alpha\rangle
     \end{tikzcd}
   \end{center}
\end{lemma}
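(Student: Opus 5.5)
The plan is to verify the commutativity algebraically and then deduce all four homeomorphism claims from the two vertical maps together with the top horizontal map.

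\emph{Commutativity.} By Lemma \ref{lemma:chi_commute}, $\chi_s\circ\chi_{a_\alpha}=\chi_{s+a_\alpha}=\chi_{a_\alpha}\circ\chi_s$. Restricting to $R_\alpha$ gives $\chi_s\circ f_\alpha=f_\alpha\circ\chi_s$, since each $\chi_t$ preserves every fiber $M_p$ and hence maps $R_\alpha$ onto itself. This is exactly the commutativity of the square.

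\emph{Top arrow.} Next we show that $\chi_s$ is a homeomorphism of $\langle R_\alpha,\mu_\alpha\rangle$. The map $\langle p,\theta\rangle\mapsto\langle p,s(p)+\theta\rangle$ is continuous for the product topology, because $s$ is continuous and addition in $\SSS^1$ is continuous. By Lemma \ref{lemma:chi_commute} its inverse is $\chi_{-s}$, and $-s$ is also continuous, so $\chi_{-s}$ is continuous by the same argument. Hence $\chi_s$ is a $\mu_\alpha$-homeomorphism.

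\emph{Vertical arrows and bottom arrow.} The vertical arrows are homeomorphisms by (b) of Lemma \ref{lemma:tau_alpha}. From commutativity we can write the bottom arrow as
$$\chi_s=f_\alpha\circ\chi_s\circ f_\alpha^{-1}$$
on $R_\alpha$, with $\chi_s$ on the right-hand side taken as the $\mu_\alpha$-homeomorphism. As a composition of three homeomorphisms, the bottom arrow is a homeomorphism of $\langle R_\alpha,\tau_\alpha\rangle$.

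\emph{Anticipated difficulty.} No real obstacle is expected. The only point needing care is that $\chi_s$ and $f_\alpha$ are defined on all of $R_{\omega_1}$ and must be restricted to $R_\alpha$. This is harmless because both preserve fibers, so each restricts to a bijection of $R_\alpha$. The discontinuity of $a_\alpha$ plays no role, since it enters only through $f_\alpha$, which is a homeomorphism by definition of $\tau_\alpha$.
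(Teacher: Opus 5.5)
Your proof is correct and follows essentially the paper's argument: commutativity comes from Lemma~\ref{lemma:chi_commute}, $\chi_s$ is a $\mu_\alpha$-homeomorphism because $s$ is continuous, and the bottom arrow $\chi_s=f_\alpha\circ\chi_s\circ f_\alpha^{-1}$ is therefore a $\tau_\alpha$-homeomorphism. You also spell out details the paper leaves implicit, namely the inverse $\chi_{-s}$ and the restriction of the maps to $R_\alpha$.
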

\begin{proof}
   It should be clear that $\chi_s$ is a $\mu_\alpha$-homeomorphism since $s$ is assumed to be continuous.
   Since $f_\alpha\circ\chi_s = \chi_s\circ f_\alpha$ by Lemma \ref{lemma:chi_commute},
   $\chi_s = f_\alpha\circ \chi_s \circ f_\alpha^{-1}$ is also a $\tau_\alpha$-homeomorphism.
\end{proof}

\begin{defi}
   Let $a,b:\LL_+\to\SSS^1$.
   We write $ a =^*_\beta b $
   iff there is a continuous $s:\LL_+\to\SSS^1$ such that 
   $(a+s)\upharpoonright(0,\beta+1) = b\upharpoonright(0,\beta+1)$.    
\end{defi}
Notice that we may ask $s$ to be a continuous function with domain $(0,\beta+1)$ instead of $\LL_+$
without altering the definition.

\begin{lemma}
   \label{lemma:=^*}
   Let $\gamma\le\beta\le\alpha$, $a,b,c:\LL_+\to\SSS^1$, and $a_\alpha$ be defined as above.
   \
   \\
   (a) If $c =^*_\gamma b =^*_\beta a$, then $c =^*_\gamma a$.\\
   (b) If $a_\alpha =^*_\beta a_\beta$, then (\ref{eq:restriction}) holds, that is:
       $\tau_\alpha\upharpoonright R_\beta = \tau_\beta$.
\end{lemma}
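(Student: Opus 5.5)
For (a), we unwind the definition of $=^*$. By hypothesis there is a continuous $s_1:\LL_+\to\SSS^1$ with $(b+s_1)\upharpoonright(0,\gamma+1)=c\upharpoonright(0,\gamma+1)$, and a continuous $s_2$ with $(a+s_2)\upharpoonright(0,\beta+1)=b\upharpoonright(0,\beta+1)$. Since $\gamma\le\beta$, the second equality holds in particular on $(0,\gamma+1)$. Hence on $(0,\gamma+1)$ we get $c=b+s_1=a+s_2+s_1$. The sum $s_1+s_2$ is continuous as a map into $\SSS^1$, because addition modulo $2\pi$ is continuous. So $c=^*_\gamma a$, witnessed by $s=s_1+s_2$.

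For (b), we want to show that the two topologies $\tau_\alpha\upharpoonright R_\beta$ and $\tau_\beta$ on $R_\beta=(0,\beta+1)\times\SSS^1$ coincide. Fix a continuous $s$ with $(a_\alpha+s)\upharpoonright(0,\beta+1)=a_\beta\upharpoonright(0,\beta+1)$. By Lemma \ref{lemma:chi_commute}, on $R_\beta$ we have
$$ f_\beta=\chi_{a_\beta}\upharpoonright R_\beta=\chi_s\circ\chi_{a_\alpha}\upharpoonright R_\beta=\chi_s\circ \bigl(f_\alpha\upharpoonright R_\beta\bigr). $$
This uses that $\chi$ acts fiberwise, so the restriction to $R_\beta$ only involves values of the functions on $(0,\beta+1)$.

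Next we identify $\tau_\alpha\upharpoonright R_\beta$. The set $R_\beta$ is $\mu_\alpha$-open in $R_\alpha$ and is $f_\alpha$-invariant, since $f_\alpha$ preserves fibers. By Lemma \ref{lemma:tau_alpha}(b), $f_\alpha$ is a homeomorphism from $\mu_\alpha$ to $\tau_\alpha$. Therefore $f_\alpha\upharpoonright R_\beta$ is a homeomorphism from $\langle R_\beta,\mu_\alpha\upharpoonright R_\beta\rangle$ onto $\langle R_\beta,\tau_\alpha\upharpoonright R_\beta\rangle$. Moreover $\mu_\alpha\upharpoonright R_\beta=\mu_\beta$, since both are the product topology. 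Consequently $\tau_\alpha\upharpoonright R_\beta=\{f_\alpha(U): U\in\mu_\beta\}$.

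We now use Lemma \ref{lemma:chihomeo}, applied on $R_\beta$ with the product topology, or directly the continuity of $s$. It tells us that $\chi_s\upharpoonright R_\beta$ is a $\mu_\beta$-homeomorphism, so $\chi_s$ permutes $\mu_\beta$. We also need that $\chi_s$ permutes the family $\{f_\alpha(U):U\in\mu_\beta\}$. This holds because $\chi_s$ commutes with $f_\alpha$ (Lemma \ref{lemma:chi_commute}), so $\chi_s(f_\alpha(U))=f_\alpha(\chi_s(U))$ and $\chi_s(U)$ ranges over $\mu_\beta$ as $U$ does. It follows that
$$\tau_\beta=\{f_\beta(U):U\in\mu_\beta\}=\{\chi_s(f_\alpha(U)):U\in\mu_\beta\}=\{f_\alpha(\chi_s(U)):U\in\mu_\beta\}=\{f_\alpha(V):V\in\mu_\beta\}=\tau_\alpha\upharpoonright R_\beta.$$

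The only delicate point is the identification $\tau_\alpha\upharpoonright R_\beta=f_\alpha(\mu_\beta)$. This needs $f_\alpha(R_\beta)=R_\beta$ and that the subspace topology is transported by the homeomorphism $f_\alpha$. Both facts are immediate once it is noted that $f_\alpha$ is fiber-preserving.
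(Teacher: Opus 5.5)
Your proof is correct and follows the paper's route: in (a) you add the two continuous witnesses, and in (b) you write one chart as $\chi_s$ composed with the other using Lemma \ref{lemma:chi_commute}, then use that $\chi_s$ is a homeomorphism, as in Lemma \ref{lemma:chihomeo}. You also make explicit the identification $\tau_\alpha\upharpoonright R_\beta=\{f_\alpha(U):U\in\mu_\beta\}$, which the paper uses only implicitly, and the reversed orientation of $=^*_\gamma$ in your part (a) is harmless because the relation is symmetric under $s\mapsto -s$.
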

\begin{proof}
   \ \\
   (a) Writing the restrictions is a bit tedious, 
       but in essence the result is immediate since $s+r$ is continuous whenever
       $r,s:\LL_+\to\SSS^1$ are continuous.\\
   (b)
   Set $\wt{a}_\beta = a_\alpha\upharpoonright (0,\beta+1)$ and $\wt{f}_\beta = \chi_{\wt{a}_\beta}$.
   Then 
   $$ \wt{f}_\beta = \chi_{\wt{a}_\beta} = \chi_{a_\beta + s} = \chi_{a_\beta}\circ\chi_s $$
   by Lemma \ref{lemma:chi_commute}, and the result follows since $\chi_s$ is a $\tau_\beta$-homeomorphism
   by Lemma \ref{lemma:chihomeo}.
\end{proof}

Let us summarize what we showed in a theorem.
\begin{thm}
   \label{thm:Mbundle}
   Let $a_\alpha,f_\alpha,\tau_\alpha$ be defined 
   as above
   for each $\alpha\in\omega_1$
   such that $a_\beta =^*_\beta a_\alpha$ for each $\beta\le\alpha$.
   Then (\ref{eq:restriction}) and (\ref{eq:picont}) hold, and 
   $M = \LL_+\times\SSS^1$ with topology $\tau$ whose basis is 
   $\cup_{\alpha<\omega_1}\tau_\alpha$
   is a principal $\SSS^1$-bundle over $\LL_+$. In particular,
   $\pi:M\to\LL_+$ is $\tau$-continous.
\end{thm}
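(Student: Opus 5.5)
The proof assembles Lemmas \ref{lemma:tau_alpha} and \ref{lemma:=^*} and then checks the bundle axioms directly.

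\emph{Coherence and continuity of $\pi$.} For $\beta\le\alpha$ the hypothesis gives $a_\alpha =^*_\beta a_\beta$; this is the form needed in Lemma \ref{lemma:=^*}(b), because $=^*_\beta$ is symmetric (replace $s$ by $-s$). That lemma then yields (\ref{eq:restriction}), and Lemma \ref{lemma:tau_alpha}(c) yields (\ref{eq:picont}) for each $\alpha$.

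\emph{$\cup_\alpha\tau_\alpha$ is a basis.} Let $U\in\tau_\alpha$ and $V\in\tau_\beta$ with $\beta\le\alpha$. The set $R_\beta$ is $\tau_\alpha$-open, since $R_\beta=f_\alpha\bigl((0,\beta+1)\times\SSS^1\bigr)$ (indeed $f_\alpha$ preserves fibers). By (\ref{eq:restriction}) we have $V=W\cap R_\beta$ for some $W\in\tau_\alpha$, so $V\in\tau_\alpha$ and $U\cap V\in\tau_\alpha$. The sets $R_\alpha$ cover $M$, so $\tau$ is a topology. Moreover $\tau\upharpoonright R_\alpha=\tau_\alpha$: any basic set $V\in\tau_\gamma$ with $\gamma\ge\alpha$ satisfies $V\cap R_\alpha\in\tau_\alpha$ by (\ref{eq:restriction}), and for $\gamma<\alpha$ we have just shown $V\in\tau_\alpha$. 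Since each $R_\alpha$ is $\tau$-open, continuity of $\pi$ is local and follows from (\ref{eq:picont}).

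\emph{Hausdorff and local triviality.} Two points lie in a common open $R_\alpha$, and $\tau_\alpha$ is Hausdorff because it is homeomorphic to the product topology $\mu_\alpha$ via $f_\alpha$ (Lemma \ref{lemma:tau_alpha}(b)). So $M$ is Hausdorff. For local triviality, $f_\alpha:(0,\alpha+1)\times\SSS^1\to\pi^{-1}((0,\alpha+1))=R_\alpha$ is a homeomorphism onto an open set, and $\pi\circ f_\alpha$ is the first projection because $\chi_{a_\alpha}$ preserves the first coordinate.

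\emph{Principal action.} The action is $\theta\cdot\langle p,\phi\rangle=\langle p,\phi+\theta\rangle$. It is free, transitive on fibers, and leaves fibers invariant. It remains to check continuity of $\SSS^1\times M\to M$. Locally, inside $R_\alpha$ and through the chart $f_\alpha$, the action becomes $\langle\theta,\langle p,\phi\rangle\rangle\mapsto\langle p,\phi+\theta\rangle$ on $(0,\alpha+1)\times\SSS^1$, because $f_\alpha(\langle p,\phi+\theta\rangle)=\langle p,a_\alpha(p)+\phi+\theta\rangle=\theta\cdot f_\alpha(\langle p,\phi\rangle)$ (cf.\ Lemma \ref{lemma:chi_commute}). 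This map is continuous for the product topology.

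The main point needing care is the bookkeeping showing that $\tau$ restricts to $\tau_\alpha$ on $R_\alpha$ and that each $R_\alpha$ is open. Everything else is a direct consequence of the earlier lemmas.
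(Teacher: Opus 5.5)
Your proposal is correct and follows the same route as the paper. Lemma \ref{lemma:=^*}(b) gives (\ref{eq:restriction}), Lemma \ref{lemma:tau_alpha}(c) gives (\ref{eq:picont}), and the action is continuous because in each chart $f_\alpha$ it is plain rotation in the second coordinate (the paper's ``sends strips to strips''); you also correctly fill in what the paper leaves implicit, namely the symmetry of $=^*_\beta$, that $\tau_\beta\subset\tau_\alpha$ (so the union is a basis with $\tau\upharpoonright R_\alpha=\tau_\alpha$ and each $R_\alpha$ open), Hausdorffness, local triviality, and joint continuity of the action.
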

\begin{proof}
   Lemmas \ref{lemma:tau_alpha} and \ref{lemma:=^*} imply that (\ref{eq:restriction}) and (\ref{eq:picont}) hold.
   The continuity of 
   the action of $\SSS^1$ by rotations is ensured in each $R_\alpha$ by 
   construction, since it sends strips to strips.
\end{proof}

%%%%%%%%%%%%%%%%%%%%%%%%%%%%%%%%%%%%%%%%%%%%%%%%%%%%%%%%%%%%%%%%%%%%%%%%%%%%%%%%%%%%
\subsection{Defining $a_\alpha$ from $z_\alpha$}

Suppose that we have a family of functions $z_\alpha:[0,\alpha]\to\omega$ modelled on a ladder system $\mathscr{L}$
(Definition \ref{defi:modelled}).
We extend the domain of $z_\alpha$ to all of $\omega_1$ by taking the value $0$ above $\alpha$.
Fix some $\theta\in[0,2\pi)$.
Then, we define $a_\alpha$ as follows.
First, it takes the same values as $\theta\cdot z_\alpha\text{ (mod }2\pi)$
on ordinals.
If $z_\alpha(\beta) = z_\alpha(\beta+1)$, then $a_\alpha$ takes the constant value $z_\alpha(\beta)$
on $[\beta,\beta+1]$ (interval in $\LL_+$, of course).
If $z_\alpha(\beta) \not= z_\alpha(\beta+1)$, we interpolate continuously (for instance linearly).
Then (\ref{eq:a_alpha_0}) holds by condition \ref{enum:lemma2} of Lemma \ref{lemma:z_alpha}.
\\
We now show that $a_\alpha =^*_\beta a_\beta$ for each $\beta<\alpha$ in $\omega_1$, by induction on $\alpha$.
By \ref{enum:lemma4} of Lemma \ref{lemma:z_alpha}, if it holds for $\alpha$, then it holds for $\alpha+\omega$.
To show the case $\alpha\in\Lambda_2$, notice that
by (\ref{eq:defz_alpha}), 
$a_\alpha$ is related to (or obtained from) the $a_\eta$ with smaller $\eta$
as follows. There are sequences $\xi_n<\gamma_n<\beta_{n}<\xi_{n+1}$ with limit $\alpha$
($\xi_n$ being actually $\delta_\alpha(n)$), such that
\begin{itemize}
   \item[(a)] $a_\alpha(\beta_n) = a_{\beta_n}(\beta_n)=0$;
   \item[(b)] $a_\alpha(p) = a_{\beta_n}(p)$ if $p\in [\beta_{n-1}+1,\xi_n]\cup [\gamma_n+1,\beta_n]$;
   \item[(c)] $a_\alpha(p) = a_{\beta_n}(p) + \theta$ if $p\in[\xi_n+1,\gamma_n]$;
   \item[(d)] $a_\alpha$ is continuous on each interval $[\eta,\eta+1]$ with $\eta<\alpha$.
\end{itemize}
(The last point holds by definition of $a_\alpha$.) 
The next lemma shows that $a_\alpha =^*_\eta a_\eta$ for each $\eta<\alpha$.

\begin{lemma}
   \label{lemma:crux}
   Let $\alpha\in\Lambda$. Assume that $a_\eta:\LL_+\to\SSS^1$ is defined
   for each $\eta<\alpha$, 
   with $\text{supp}(a_\eta)\subset(0,\eta)$,
   such that for each $\gamma\le\eta<\alpha$ we have $a_\gamma =^*_\gamma a_\eta$.
   Let $\beta_n,\gamma_n,\xi_n$ be sequences of ordinals with limit $\alpha$ such that 
   $\xi_n<\gamma_n<\beta_{n}<\xi_{n+1}$
   and let $\theta\in(-2\pi,2\pi)$ such that (a), (b), (c), (d) just above hold.
   Then 
   $a_\alpha =^*_\eta a_\eta$
   for each $\eta<\alpha$. 
\end{lemma}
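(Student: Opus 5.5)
Fix $\eta<\alpha$ and pick $n$ with $\eta<\xi_n$. Since $a_\eta =^*_\eta a_{\beta_n}$ by hypothesis, Lemma~\ref{lemma:=^*}(a) reduces the claim to $a_{\beta_n} =^*_{\beta_n} a_\alpha$, i.e. to a continuous $s:(0,\beta_n+1)\to\SSS^1$ with $a_\alpha = a_{\beta_n}+s$ there. We build $s$ segment by segment on the intervals cut out by the $\xi_m,\gamma_m,\beta_m$ for $m\le n$. This is possible because the segments $[\beta_{m-1}+1,\beta_m]$ (with $\beta_{-1}+1=0$) are compact, and on each one $a_\alpha$ differs from $a_{\beta_m}$ only by a step function.

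\textbf{Step 1 (local correction on one block).} For each $m$, let $t_m:\LL_+\to\SSS^1$ be $0$ on $[\beta_{m-1}+1,\xi_m]\cup[\gamma_m+1,\beta_m]$ and $\theta$ on $[\xi_m+1,\gamma_m]$. On the gaps $[\xi_m,\xi_m+1]$ and $[\gamma_m,\gamma_m+1]$ we set $t_m = a_\alpha - a_{\beta_m}$. Both $a_\alpha$ and $a_{\beta_m}$ are continuous on each $[\nu,\nu+1]$ by (d) (and by the same construction for $a_{\beta_m}$), so $t_m$ is continuous on $[\beta_{m-1}+1,\beta_m]$. By (b) and (c), $a_\alpha = a_{\beta_m}+t_m$ on this block.

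\textbf{Step 2 (gluing blocks).} On $[\beta_{m-1}+1,\beta_m]$ we must compare $a_{\beta_m}$ with $a_{\beta_n}$. By hypothesis $a_{\beta_m} =^*_{\beta_m} a_{\beta_n}$, so there is a continuous $r_m$ on $(0,\beta_m+1)$ with $a_{\beta_n} = a_{\beta_m}+r_m$ there. On the block we therefore get $a_\alpha = a_{\beta_n} + (t_m - r_m)$. The issue is that the pieces $t_m - r_m$ on consecutive blocks must agree at the junctions, on the transition intervals $[\beta_{m-1},\beta_{m-1}+1]$. These intervals are not yet covered, so I would enlarge the blocks to $[\beta_{m-1},\beta_m]$ and handle the overlap point $\beta_{m-1}$ directly. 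There $a_\alpha(\beta_{m-1}) = 0 = a_{\beta_{m-1}}(\beta_{m-1})$ by (a), and (b) gives agreement just after $\beta_{m-1}+1$. On the interval $[\beta_{m-1},\beta_{m-1}+1]$, the function $a_\alpha - a_{\beta_n}$ is itself continuous, since both summands are continuous on unit intervals (this is (d) for $a_\alpha$, and for $a_{\beta_n}$ it holds by construction). So I would simply define $s = a_\alpha - a_{\beta_n}$ on every unit interval $[\nu,\nu+1]$ and check continuity only at the ordinals $\nu<\beta_n$ that are limits.

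\textbf{Step 3 (continuity at limit ordinals --- the main obstacle).} With $s := a_\alpha - a_{\beta_n}$ on all of $(0,\beta_n+1)$, continuity at successor points is automatic. The real content is at limits $\nu\le\beta_n$, and there we use the local description. If $\nu$ lies in the interior of some block (or equals $\beta_m$), then near $\nu$, say on $(\nu',\nu]$, we have $a_\alpha = a_{\beta_m} + t_m$ with $t_m$ locally constant near $\nu$ (the only non-constant parts of $t_m$ sit on unit intervals just after $\xi_m$ and $\gamma_m$). We also have $a_{\beta_n} = a_{\beta_m} + r_m$ with $r_m$ continuous. Hence $s = t_m - r_m$ is continuous near $\nu$. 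Limits cannot be endpoints of blocks from the left except at $\beta_m$, and $\beta_m$ is interior to the extended block, so this case analysis is exhaustive.

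The delicate point I expect is bookkeeping at $\beta_m$ from the right versus the left, together with the requirement that $a_{\beta_m}$ has support in $(0,\beta_m)$. That support condition is what makes $a_\alpha$ agree with $a_{\beta_{m+1}}$ (rather than with $a_{\beta_m}$) just after $\beta_m$ without a jump: both vanish at $\beta_m$ and are continuous on $[\beta_m,\beta_m+1]$. Once $s$ is continuous, $a_\alpha =^*_{\beta_n} a_{\beta_n}$, and Lemma~\ref{lemma:=^*}(a) then gives $a_\alpha =^*_\eta a_\eta$.
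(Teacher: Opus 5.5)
Your proof is correct. It starts with the same reduction as the paper: Lemma~\ref{lemma:=^*}(a) brings everything down to $a_\alpha =^*_{\beta_n} a_{\beta_n}$. After that it takes a different route.

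\textbf{The paper's route.} It writes $a_\alpha$ as a sum of bumps times the $a_{\beta_m}$, plus $\theta$ times bumps. It then proves $a_\alpha =^*_{\beta_m} a_{\beta_m}$ by induction on $m$, gluing one block at a time with the step-function Lemma~\ref{lemma:abcd}.

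\textbf{Your route.} You note that the only possible witness is $s=a_\alpha-a_{\beta_n}$, and you check its continuity locally. On closed unit intervals continuity is immediate. At a limit $\nu\in(\beta_{m-1},\beta_m]$, a left neighbourhood of $\nu$ lies inside a single block and avoids the transition intervals just after $\xi_m$ and $\gamma_m$. There $s=t_m-r_m$, with $t_m$ constant and $r_m$ the continuous witness of $a_{\beta_m}=^*_{\beta_m}a_{\beta_n}$.

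\textbf{Comparison.} Your version avoids the induction. It also avoids forming products of $[0,1]$-valued steps with $\SSS^1$-valued functions, which the paper handles rather loosely. It shows exactly where coherence is used: only on left neighbourhoods of limit ordinals. The paper's version, in exchange, packages the gluing in a reusable lemma.

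Two small remarks.

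First, you take continuity of $a_{\beta_m}$ on closed unit intervals from the construction. This is not a hypothesis of the lemma as stated, but it is genuinely needed. Take $a_\eta=0$ for $\eta\le\xi_0$. For $\xi_0<\eta<\alpha$, let every $a_\eta$ equal one fixed function supported in $(\xi_0,\xi_0+1)$ that is continuous on $(0,\xi_0+1)$ but has a nonzero left limit at $\xi_0+1$. Then the supports and coherence hypotheses hold, and some $a_\alpha$ satisfies (a)--(d). Yet $a_\alpha-a_{\beta_0}$ jumps at $\xi_0+1$, by (c) and (d). The paper's proof also relies on this continuity tacitly, when it asserts that $d_0$ is continuous.

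Second, your closing claim that $a_{\beta_{m+1}}$ vanishes at $\beta_m$ is false in general; only $a_\alpha(\beta_m)=0$ is given. Your Step~3 never uses it, though: right-continuity at $\beta_m$ already follows from continuity on $[\beta_m,\beta_m+1]$.
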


We first show the following lemma.
We believe that it is convenient to introduce step functions.
If $\gamma<\beta$ in $\omega_1$, we define a {\em $\beta$-step} to be any continuous
function $\LL_+\to[0,1]$ which is constant with value $0$ on $[0,\beta]$ and constant with value $1$ 
in $[\beta+1,\omega_1)$. A {\em $(\gamma,\beta)$-bump} is the difference $c-d$, where $c$ is a
$\gamma$-step and $d$ a $\beta$-step.
In words: a $(\gamma,\beta)$-bump takes value $1$ in $[\gamma+1,\beta]$
and $0$ in $(0,\gamma]\cup[\beta+1,\omega_1)$. We also call $\beta$-step and $(\gamma,\beta)$-bumps
the restriction of these functions to $(0,\alpha+1)$ for any $\alpha>\beta>\gamma$.

\begin{lemma}
   \label{lemma:abcd}
   Let $\gamma<\beta$, $a,b:\LL_+\to\SSS^1$ be (not necessarily continuous) functions
   with $\text{supp}(a)\subset (0,\gamma]$, $\text{supp}(b)\subset (0,\beta]$.
   Let
   $u$ be a $\gamma$-step and set
   $d = u\cdot b + (1-u)\cdot a$. If $a =^*_\gamma b$, then 
   $d =^*_\beta b$.
\end{lemma}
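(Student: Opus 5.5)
The plan is to exhibit explicitly a continuous $t:(0,\beta+1)\to\SSS^1$ with $(d+t)\upharpoonright(0,\beta+1)=b\upharpoonright(0,\beta+1)$. By the remark after the definition of $=^*_\beta$, this suffices. I will build $t$ piecewise on the three closed pieces $(0,\gamma]$, $[\gamma,\gamma+1]$ and $[\gamma+1,\beta+1)$, and then glue.

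First I record what $d$ is on each piece, using that $u$ vanishes on $[0,\gamma]$ and equals $1$ on $[\gamma+1,\omega_1)$.
\begin{itemize}
\item On $(0,\gamma]$ we have $d=a$.
\item On $[\gamma+1,\beta+1)$ we have $d=b$.
\item On the open interval $(\gamma,\gamma+1)$ we have $a=0$, because $\text{supp}(a)\subset(0,\gamma]$, so there $d=u\cdot b$.
\end{itemize}
Now use the hypothesis $a=^*_\gamma b$: there is a continuous $s$ with $a+s=b$ on $(0,\gamma+1)$. On $(\gamma,\gamma+1)$ this gives $b=s$, so $b$ is continuous there. Since $s$ is also continuous on $[\gamma,\gamma+1]$, the restriction $s\upharpoonright[\gamma,\gamma+1]$ lifts to a continuous real-valued $\tilde s$, because the interval is contractible.

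Here one must fix the meaning of the product $u\cdot b$ of a $[0,1]$-valued function with an $\SSS^1$-valued one. I will read it on $(\gamma,\gamma+1)$ as $u\cdot\tilde s \pmod{2\pi}$. This is the reading that makes $d$ continuous on $[\gamma,\gamma+1]$, as in (d) of the construction, and it is the one intended when the lemma is applied in Lemma~\ref{lemma:crux}. Pinning down this convention, and checking that it agrees with $b$ at the right endpoint, is the only delicate step. A naive choice of representatives of $b$ in $[0,2\pi)$ would produce jumps.

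Then define $t$ piece by piece:
\begin{itemize}
\item $t=s$ on $(0,\gamma]$;
\item $t=(1-u)\cdot\tilde s \pmod{2\pi}$ on $[\gamma,\gamma+1]$;
\item $t=0$ on $[\gamma+1,\beta+1)$.
\end{itemize}
Each piece is continuous. For the gluing, note $u(\gamma)=0$ and $u(\gamma+1)=1$. At $\gamma$ the middle formula gives $\tilde s(\gamma)\equiv s(\gamma)$, and at $\gamma+1$ it gives $0$, so the three closed pieces match at $\gamma$ and $\gamma+1$. Hence $t$ is continuous on $(0,\beta+1)$ by the pasting lemma.

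Finally verify $d+t=b$ on each piece.
\begin{itemize}
\item On $(0,\gamma]$: $d+t=a+s=b$.
\item On $(\gamma,\gamma+1)$: $d+t=u\tilde s+(1-u)\tilde s=\tilde s\equiv s=b$.
\item On $[\gamma+1,\beta+1)$: $d+t=b+0=b$.
\end{itemize}
Here $\text{supp}(b)\subset(0,\beta]$ is not even needed beyond $d$ being defined. Thus $d=^*_\beta b$.
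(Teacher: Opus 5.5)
Your proof is correct and is essentially the paper's argument. The paper derives $d=b-(1-u)\cdot s$ and notes that $(1-u)\cdot s$ is continuous, and your piecewise $t$ is that same function $(1-u)\cdot s$, with the product computed through a lift.

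The one thing you add is lifting $s$ to a real-valued $\tilde s$ on $[\gamma,\gamma+1]$ and reading $u\cdot b$ there as $u\tilde s$. This is a genuine repair, not a cosmetic choice. With the paper's literal convention of representatives in $[0,2\pi)$, the function $(1-u)\cdot s$ is forced to equal $b-d$, and it can jump where $s$ wraps around inside $(\gamma,\gamma+1)$; in that case no continuous correction exists at all.

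One small correction: your aside that this reading makes $d$ continuous on the closed interval $[\gamma,\gamma+1]$ is not quite right. For example, $d$ jumps at $\gamma$ when $a(\gamma)\neq 0$, since $d(\gamma)=a(\gamma)$ while $u\tilde s\to 0$ from the right. Your argument never uses that claim, so the proof stands.
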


Here, the multiplication is again understood modulo $2\pi$ in $[0,2\pi)$.

\begin{proof}
   Since $\text{supp}(a)\subset (0,\gamma]$, $a\cdot (1-u) = a$. Let $s$ be continuous such that 
   $a+s = b$ on $(0,\gamma+1)$. Since $(1-u)$ has support in $(0,\gamma+1)$,
   $$ b = u\cdot b + (1-u)\cdot b = u\cdot b + (1-u)\cdot(a+s) = u\cdot b + a + (1-u)\cdot s.$$
   Hence,
   $$ d = u\cdot b + (1-u)\cdot a = b - a - (1-u)\cdot s + a = b - (1-u)\cdot s,$$
   which finishes the proof since $(1-u)\cdot s$ is continuous.
\end{proof}

\begin{proof}[Proof of Lemma \ref{lemma:crux}]
   By (a), (b), (c), 
   $$ a_\alpha = \sum_{n\in\omega} \theta\cdot( b_n\cdot a_{\beta_n} + d_n) $$
   where $b_n$ is a $(\beta_{n-1},\beta_n)$-bump and $d_n$ a $(\xi_{n},\gamma_n)$-bump.
   To show that $a_\alpha =^*_\gamma a_\gamma$ for each $\gamma<\alpha$,
   it is enough to show that $a_\alpha =^*_\beta a_{\beta_n}$ for each $n$ by Lemma \ref{lemma:=^*} (a)
   and the assumption $a_{\beta_n} =^*_\gamma a_\gamma$ for each $\gamma<\beta_n$.  
   We do it by induction on $n$. It is clear for $n=0$ since 
   $a_\alpha\upharpoonright (0,\beta_0+1) = a_{\beta_0} + d_0$ and $d_0$ is continuous.
   For the induction step, let $a = \sum_{n\le m} b_n\cdot a_{\beta_n}$, 
   $b = b_{m+1}\cdot a_{\beta_{m+1}}$.
   Then we have $a_\alpha\upharpoonright(0,\beta+1) = u\cdot a + (1-u)\cdot b$
   where $u$ is a $\beta_n$-step,
   and the result follows by Lemma \ref{lemma:abcd}.
\end{proof}
   
Notice that the above construction depends only on the ladder system $\mathscr{L}$ and $\theta$.
Write $M(\mathscr{L},\theta)$ for the resulting space.

\begin{cor}
   \label{cor:Mbundle}
   If $a_\alpha$ is defined as in this subsection, then $\pi:M(\mathscr{L},\theta)\to\LL_+$
   (with topology in $M(\mathscr{L},\theta)$ engendered by $a_\alpha$)
   is a principal $\SSS^1$-bundle over $\LL_+$.
\end{cor}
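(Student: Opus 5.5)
The plan is to reduce the corollary to Theorem \ref{thm:Mbundle}. That theorem requires two things: functions $a_\alpha:\LL_+\to\SSS^1$ with $\text{supp}(a_\alpha)\subset(0,\alpha)$, and the coherence condition $a_\beta =^*_\beta a_\alpha$ for every $\beta\le\alpha<\omega_1$. The topology engendered by the $a_\alpha$ is then exactly the topology $\tau$ of that theorem, so $\pi:M(\mathscr{L},\theta)\to\LL_+$ is a principal $\SSS^1$-bundle.

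First I check the support condition (\ref{eq:a_alpha_0}). We have $z_\alpha(\alpha)=0$ by \ref{enum:lemma2} of Lemma \ref{lemma:z_alpha}, and $z_\alpha$ is extended by $0$ above $\alpha$. The interpolation on $[\gamma,\gamma+1]$ is constant whenever $z_\alpha(\gamma)=z_\alpha(\gamma+1)$, so $a_\alpha$ vanishes on $[\alpha,\omega_1)$. Successor ordinals $\alpha=\eta+1$ are harmless. For them I take $a_\alpha$ to agree with $a_{\eta'}$ for the appropriate limit $\eta'$ via \ref{enum:lemma4}; alternatively I note that only $\alpha\in\Lambda$ are needed, since $R_{\eta+k}\subset R_{\eta+\omega}$ and the relations $=^*$ restrict.

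The core step is the coherence $a_\beta=^*_\beta a_\alpha$, which I prove by induction on $\alpha\in\Lambda$.
\begin{itemize}
\item Base case $\alpha=\omega$: $z_\omega\equiv 0$, so everything is trivially flat.
\item Successor-limit case $\alpha=\eta+\omega$ with $\eta\in\Lambda$: by \ref{enum:lemma4}, $z_\alpha$ equals $z_\eta$ on $[0,\eta]$ and $0$ on $[\eta+1,\alpha]$. Hence $a_\alpha$ and $a_\eta$ agree on $(0,\eta+1)$, apart from possibly the interpolation piece on $[\eta,\eta+1]$, which is identical in both cases because $z_\eta(\eta+1)=0=z_\alpha(\eta+1)$. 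So $a_\alpha=^*_\eta a_\eta$ with $s=0$. Lemma \ref{lemma:=^*}(a) together with the induction hypothesis then gives $a_\alpha=^*_\beta a_\beta$ for every $\beta\le\eta$. For $\eta<\beta<\alpha$, $a_\beta$ and $a_\alpha$ coincide on $(0,\beta+1)$.
\item Case $\alpha\in\Lambda_2$: I verify the hypotheses (a)--(d) preceding Lemma \ref{lemma:crux}. Set $\xi_n=\delta_\alpha(n)$ and $\beta_n=\delta_\alpha(n+1)$, and let $\gamma_n$ be such that the shifted interval in (\ref{eq:defz_alpha}) is $(\sigma_{\beta_n}(k),\sigma_{\beta_n}(k+1)]$, reindexing so that the ``$+1$'' zone is written as $[\xi_n+1,\gamma_n]$ after absorbing its left endpoint. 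Then (a) follows from \ref{enum:lemma2}, (b) and (c) follow from (\ref{eq:defz_alpha}) multiplied by $\theta$ modulo $2\pi$, and (d) holds by the construction of the interpolation. Lemma \ref{lemma:crux} then yields $a_\alpha=^*_\eta a_\eta$ for all $\eta<\alpha$.
\end{itemize}
I expect the main obstacle to be the bookkeeping in the $\Lambda_2$ case. The interval where $z_\alpha=z_{\beta_n}+1$ begins at $\sigma_{\beta_n}(k)$, not at $\delta_\alpha(n)$. So I must either apply Lemma \ref{lemma:crux} with $\xi_n:=\sigma_{\beta_n}(k)$, which still satisfies $\xi_n<\gamma_n<\beta_n<\xi_{n+1}$ because $\sigma_{\beta_n}(k)>\delta_\alpha(n)$, or check directly that the bump $d_n$ appearing in its proof is continuous. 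I also need to ensure that interpolation across the endpoints of these intervals matches the bump description, i.e.\ that $a_\alpha-a_{\beta_n}$ is a continuous function which is $\theta$ on the zone and $0$ outside. This holds because the interpolation is done piecewise on each $[\gamma,\gamma+1]$ and is additive, with linear interpolation of $z_{\beta_n}$ and of $z_{\beta_n}+1$ differing by exactly $\theta$ away from the zone endpoints.

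With coherence established for all $\beta\le\alpha$, Theorem \ref{thm:Mbundle} applies directly. It gives (\ref{eq:restriction}) and (\ref{eq:picont}), continuity of $\pi$, and the principal bundle structure.
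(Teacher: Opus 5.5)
Your proposal is correct and follows the paper's own route. You prove $a_\beta =^*_\beta a_\alpha$ by induction on $\alpha$, which is immediate for $\alpha=\eta+\omega$ by \ref{enum:lemma4} and goes through (a)--(d) and Lemma \ref{lemma:crux} for $\alpha\in\Lambda_2$, and then you apply Theorem \ref{thm:Mbundle}. Of your two indexings, only $\beta_n=\delta_\alpha(n+1)$, $\xi_n=\sigma_{\beta_n}(k)$, $\gamma_n=\sigma_{\beta_n}(k+1)$ (with $k$ minimal such that $\sigma_{\beta_n}(k)>\delta_\alpha(n)$) actually satisfies (b)--(c). Taking $\xi_n=\delta_\alpha(n)$, as your first try and the paper's parenthetical remark do, breaks $\beta_{n-1}<\xi_n$ and makes (c) fail at $\delta_\alpha(n)+1$.
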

\begin{proof}
   By Theorem \ref{thm:Mbundle} and Lemma \ref{lemma:crux}.
\end{proof}

%%%%%%%%%%%%%%%%%%%%%%%%%%%%%%%%%%%%%%%%%%%%%%%%%%%%%%%%%%%%%%%%%%%%%%%%%%%%%%%%%%%%%%%%%%%%%%%%%%%%%%%%%  
%%%%%%%%%%%%%%%%%%%%%%%%%%%%%%%%%%%%%%%%%%%%%%%%%%%%%%%%%%%%%%%%%%%%%%%%%%%%%%%%%%%%%%%%%%%%%%%%%%%%%%%%%
%%%%%%%%%%%%%%%%%%%%%%%%%%%%%%%%%%%%%%%%%%%%%%%%%%%%%%%%%%%%%%%%%%%%%%%%%%%%%%%%%%%%%%%%%%%%%%%%%%%%%%%%%

\section{Bundles of any order with a $\clubsuit_C$-sequence}
\label{sec:usingclub}

In this section, we put together the results of the previous two.
%Recall that if $\alpha\in\Lambda-\Lambda_2$, then $\alpha=\beta+\omega$ for some $\beta\in\Lambda\cup\{0\}$.

\begin{rem}
  A $\clubsuit_C$-sequence $\mathscr{L}=\langle L_\alpha\,:\,\alpha\in\Lambda\rangle$ can be assumed to satisfy
  (\ref{eq:LalphaLambda0}) without loss of generality.
\end{rem}

Given $M=M(\mathscr{L},\theta)$ defined as in the previous section and $\gamma\le\beta\le\alpha$, we define the jolt of 
$\alpha,\beta$ at $\gamma$ as
$$ \jolt_{\alpha,\beta}(\gamma) = a_\alpha(\gamma) - a_\beta(\gamma) - a_\alpha(\beta) .$$
In words: since $a_\beta(\beta) = 0$,
we push $a_\beta$ up by $a_\alpha(\beta)$, which gives a function $\wt{a}_\beta$ such that
$\wt{a}_\beta(\beta) = a_\alpha(\beta)$. Then
$\jolt_{\alpha,\beta}(\gamma)$ is the difference between $\wt{a}_\beta$ and $a_\alpha$
at the point $\gamma$, see Figure \ref{fig:jolt}.
\begin{figure}
   \begin{center}
       \epsfig{figure=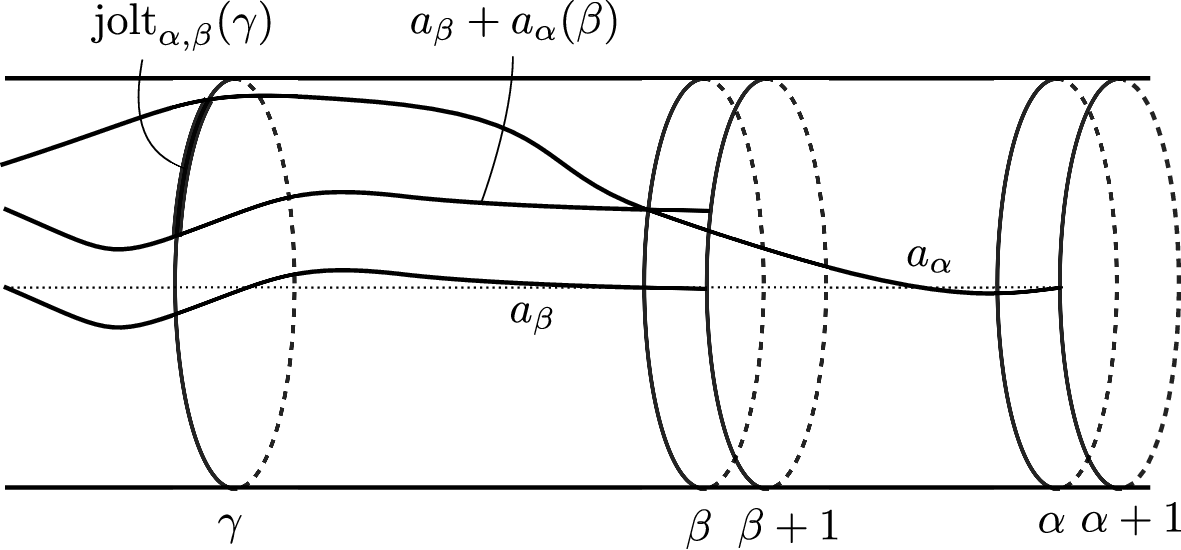, width=.6\textwidth}
       \caption{$\jolt_{\alpha,\beta}(\gamma)$}
       \label{fig:jolt}
   \end{center}
\end{figure}

The next lemma is the crucial part of Nyikos' construction.
%We recall that $\sigma_\alpha$ is an increasing enumeration of $L_\alpha$.
\begin{lemma}
   \label{lemma:jolt}
   Let $\theta\in[0,2\pi)$,
   $M=M(\mathscr{L},\theta)$, $\alpha\in\Lambda_2$ and $\beta_n\in\Lambda$
   be an increasing sequence with limit $\alpha$.
   Then there is a sequence $\xi_n$, also converging to $\alpha$, with $\xi_n\in L_{\beta_n}$ and 
   such that 
   $$ \jolt_{\alpha,\beta_n}(\xi_n) = \theta.$$  
\end{lemma}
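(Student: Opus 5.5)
The plan is to reduce the statement directly to Corollary \ref{cor:z_alpha}, using the definition of $a_\alpha$ from $z_\alpha$. By that corollary, applied to $\alpha\in\Lambda_2$ and the increasing sequence $\beta_n\in\Lambda$ with supremum $\alpha$, there are $\xi_n\in L_{\beta_n}$ with $\xi_n\to\alpha$ and
$$ z_\alpha(\xi_n) = z_{\beta_n}(\xi_n) + z_\alpha(\beta_n) + 1 . $$
These will be the required $\xi_n$.

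The next step is to translate this identity into one about the arcs. All of $\xi_n,\beta_n,\alpha$ are ordinals, and $\xi_n<\beta_n<\alpha$, so they lie in the domains of the relevant functions. By construction of $a_\eta$ in the previous section, on ordinals $a_\eta$ agrees with $\theta\cdot z_\eta \pmod{2\pi}$; this holds for each of $\eta=\alpha,\beta_n$, with $z_\eta$ extended by $0$ above $\eta$. Hence
$$ a_\alpha(\xi_n) = \theta z_\alpha(\xi_n),\quad a_{\beta_n}(\xi_n)=\theta z_{\beta_n}(\xi_n),\quad a_\alpha(\beta_n)=\theta z_\alpha(\beta_n) \pmod{2\pi}. $$
Substituting into the definition of the jolt gives
$$ \jolt_{\alpha,\beta_n}(\xi_n) = \theta\bigl(z_\alpha(\xi_n)-z_{\beta_n}(\xi_n)-z_\alpha(\beta_n)\bigr) = \theta \pmod{2\pi}, $$
which is the claim, since angles are taken modulo $2\pi$.

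The only real content is Corollary \ref{cor:z_alpha}, which rests on property \ref{enum:lemma3} of Lemma \ref{lemma:z_alpha}. The main point to check is that the corollary applies without extra hypotheses. If some $\beta_n$ lies below $\sigma_\alpha(0)$, there is no maximal $m$ with $\sigma_\alpha(m)\le\beta_n$. This affects only finitely many $n$, and for those the argument above breaks down, since property \ref{enum:lemma3} no longer provides a witness there. I would handle them either by discarding the finitely many initial terms, which does not affect convergence to $\alpha$, or by choosing any $\xi_n\in L_{\beta_n}$ for them and noting that the statement is really about a tail. For the remaining $n$, the corollary's $\xi_n$ lies above the largest $\sigma_\alpha(m)<\beta_n$, which forces $\xi_n\to\alpha$.
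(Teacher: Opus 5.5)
Your proposal is correct and is essentially the paper's own proof: invoke Corollary \ref{cor:z_alpha}, then use that each $a_\eta$ agrees with $\theta\cdot z_\eta \pmod{2\pi}$ on ordinals to read off $\jolt_{\alpha,\beta_n}(\xi_n)=\theta$. Your remark about the finitely many $\beta_n$ that are not above $\sigma_\alpha(0)$ is a legitimate refinement that the paper silently skips: for those terms no witness in $L_{\beta_n}$ need exist, so the conclusion really holds only on a tail of the sequence, and that tail version is all that the later applications in Theorems \ref{thm:omega_1} and \ref{thm:main} use.
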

\begin{proof}
   By construction, $a_\alpha$ is equal to $\theta\cdot z_\alpha\text{ (mod }2\pi)$ on ordinals.
   Take $\xi_n$ given by Corollary \ref{cor:z_alpha}
   such that $z_\alpha(\xi_n) = z_{\beta_n}(\xi_n) + z_\alpha(\beta_n) + 1$.
   Hence, $a_\alpha(\xi_n) = a_{\beta_n}(\xi_n) + a_\alpha(\beta_n) + \theta\text{ (mod }2\pi)$.
   Isolating $\theta$, we obtain $\jolt_{\alpha,\beta_n}(\xi_n) = \theta$.
\end{proof}

We are now almost ready to prove Nyikos' theorem. But
as a warm-up, let us prove the following, which is simpler than
Theorem \ref{thm:main} below because it does not use Theorem \ref{thm:Nyikos} in its proof.
Notation reminder: $A'$ is the set of limit points of $A$.
\begin{thm}
    \label{thm:omega_1}
    If $\mathscr{L}$ is a $\clubsuit_C$-sequence
    and $\theta\in(0,2\pi)$, then 
    $M=M(\mathscr{L},\theta)$ does not contain a copy of $\omega_1$.
\end{thm}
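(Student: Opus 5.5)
Suppose, towards a contradiction, that $e:\omega_1\to M$ is an embedding. As in the proof of Lemma \ref{lemma:copyomega_1}, I first pass to a club $C_0\subset\Lambda$ such that $\pi(e(\gamma))=\gamma$ for each $\gamma\in C_0$. Write $e(\gamma)=\langle\gamma,\phi_\gamma\rangle$. The set $F=e(C_0)$ is club in $M$ and meets each fiber $M_\gamma$, $\gamma\in C_0$, in the single point $\langle\gamma,\phi_\gamma\rangle$. The aim is to find some $\alpha\in C_0'\cap\Lambda_2$ at which $F$ accumulates on $M_\alpha$ at two distinct points, namely $\langle\alpha,\phi_\alpha\rangle$ and $\langle\alpha,\phi_\alpha+\theta\rangle$. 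This contradicts closedness plus single-valuedness, since $\theta\not\equiv 0$ mod $2\pi$.

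The first key step is to relate $\phi_\gamma$ to the arcs. Since $a_\beta(\beta)=0$ and $f_\beta$ is a homeomorphism from the product topology, a sequence $\langle\gamma_k,\phi_{\gamma_k}\rangle\to\langle\beta,\phi_\beta\rangle$ in $\tau_\beta$ forces $\phi_{\gamma_k}-a_\beta(\gamma_k)\to\phi_\beta$; this is Lemma \ref{lemma:converge}. I apply it at $\beta\in C_0'$, using continuity of $e$ along $C_0\cap\beta$ and the fact that the limit in $M_\beta$ is the unique point $\langle\beta,\phi_\beta\rangle$. This gives the following: for every $\beta\in C_0'\cap\Lambda$ and every $\epsilon>0$ there is $g_\epsilon(\beta)<\beta$ such that
$$|\phi_\gamma-a_\beta(\gamma)-\phi_\beta|<\epsilon \quad\text{for all } \gamma\in C_0\cap(g_\epsilon(\beta),\beta).$$
This controls $\phi$ only on points of $C_0$. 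The jolt lemma, however, concerns ladder points $\xi_n\in L_{\beta_n}$. So I need the ladders themselves to sit inside $C_0$, and this is exactly where $\clubsuit_C$ enters.

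The second key step is to choose a club $C\subset C_0'$ that is closed under the functions $g_{1/m}$, i.e. $g_{1/m}(\beta)<\gamma$ whenever $\beta<\gamma\in C$, for every $m$; such a club exists by the usual diagonal closure. By $\clubsuit_C$, the set of $\beta\in\Lambda$ with $L_\beta\subset C$ is stationary. Hence the set $S=\{\beta\in C\cap\Lambda : L_\beta\subset C\}$ is stationary as well, because $\{\beta: L_\beta\subset C\}\subset C'\subset C$. Since $S$ is stationary, $S'\cap\Lambda_2$ contains a point $\alpha$. I then pick $\beta_n\in S$ increasing to $\alpha$. Lemma \ref{lemma:jolt} gives $\xi_n\in L_{\beta_n}\subset C\subset C_0$ with $\xi_n\to\alpha$ and $a_\alpha(\xi_n)=a_{\beta_n}(\xi_n)+a_\alpha(\beta_n)+\theta$.

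The final step, which I expect to be the main obstacle, is the estimate that compares $\phi_{\xi_n}$ with $\phi_{\beta_n}$ and then with $\phi_\alpha$. Applying the displayed estimate at $\beta_n$ requires $\xi_n>g_{\epsilon}(\beta_n)$, and this need not hold, since $\xi_n$ is merely some point of the ladder of $\beta_n$. The remedy is that Corollary \ref{cor:z_alpha} (via Lemma \ref{lemma:z_alpha}\ref{enum:lemma3}) lets one take $\xi_n\in L_{\beta_n}$ arbitrarily late among the points above $\sigma_\alpha(m)$. To exploit this, I would strengthen the choice: pick $\beta_n\in S$ with $\beta_n>\sigma_\alpha(m_n)\ge g_{1/n}(\beta_n)$. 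One way to arrange this is Fodor: pressing down $g_{1/n}$ on $S$ gives stationary sets on which $g_{1/n}$ is constant, and a diagonal intersection of these handles all $n$ simultaneously. With that in place, $\phi_{\xi_n}\approx a_{\beta_n}(\xi_n)+\phi_{\beta_n}$. Also, $\beta_n\in C_0$ converges to $\alpha$, and $\phi_{\beta_n}\approx a_\alpha(\beta_n)+\phi_\alpha$ by the estimate at $\alpha$. Combining these, $\phi_{\xi_n}\approx a_\alpha(\xi_n)+\phi_\alpha-\theta$. On the other hand, the estimate at $\alpha$ applied to $\xi_n\in C_0$ gives $\phi_{\xi_n}\approx a_\alpha(\xi_n)+\phi_\alpha$. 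Letting $\epsilon\to0$ yields $\theta\equiv0$, which is the contradiction.
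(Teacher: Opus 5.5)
Your architecture is the paper's. You restrict to a club $C_0$ on which the copy of $\omega_1$ is a section $\gamma\mapsto\langle\gamma,\phi_\gamma\rangle$. You get from Lemma~\ref{lemma:converge} thresholds $g_\epsilon(\beta)$ below which $\phi$ follows $a_\beta+\phi_\beta$. You use $\clubsuit_C$ so that the ladders $L_\beta$ lie in the club. Finally you contradict $\jolt_{\alpha,\beta_n}(\xi_n)=\theta$ by a three-term triangle inequality.

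The step that fails as written is the uniformization of the thresholds in your last paragraph. You want one stationary set on which all the $g_{1/n}$ are simultaneously constant, obtained as a ``diagonal intersection'' of the successive Fodor sets. But a decreasing $\omega$-sequence of stationary sets can have empty intersection, and nothing here prevents it. For instance, if $g_{1/n}(\beta)\to\beta$ for each $\beta\in S$, then any set on which every $g_{1/n}$ is constant has at most one point. In addition, $\alpha$ cannot be fixed beforehand as an arbitrary point of $S'\cap\Lambda_2$, as in your previous paragraph. For such an $\alpha$, nothing guarantees $\beta\in S$ near $\alpha$ with $g_{1/n}(\beta)\le\sigma_\alpha(m)$. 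The point $\alpha$ has to be chosen \emph{after} the Fodor step, as a limit of the uniformized set.

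The repair is what the paper does, and it needs no $\epsilon\to0$. Fix one $\epsilon>0$ with $3\epsilon<\min(\theta,2\pi-\theta)$. Apply Fodor once to get a stationary $S_0\subset S$ and $\delta$ with $g_\epsilon\equiv\delta$ on $S_0$. Only then take $\alpha\in S_0'$ (so $\alpha\in\Lambda_2\cap C_0'$ and $\alpha>\delta$) and $\beta_n\in S_0$ increasing to $\alpha$. Since the $\xi_n$ of Lemma~\ref{lemma:jolt} tend to $\alpha$, for large $n$ you have $\xi_n>\delta=g_\epsilon(\beta_n)$ and $\xi_n,\beta_n>g_\epsilon(\alpha)$. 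So all three estimates hold, and $\theta$ lies within $3\epsilon$ of $0$ modulo $2\pi$, a contradiction. The position of $\xi_n$ relative to $\sigma_\alpha(m)$ plays no role.

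The paper takes $\epsilon=\theta/4$, which strictly speaking yields a contradiction only for $\theta\le 8\pi/7$; measuring against $\min(\theta,2\pi-\theta)$ fixes this, as would your $\epsilon\to0$ idea. If you prefer $\epsilon\to0$, nest the Fodor sets $S_{n+1}\subset S_n$ with $g_{1/n}\equiv\delta_n$ on $S_n$. Then take $\alpha$ in the club $\bigcap_n S_n'$ above $\sup_n\delta_n$, and $\beta_n\in S_n$.

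Two minor remarks. Your club ``closed under $g_{1/m}$'' imposes no condition, since $g_{1/m}(\beta)<\beta$ always. Also, Lemma~\ref{lemma:z_alpha}\,\ref{enum:lemma3} does not let you take $\xi_n$ arbitrarily late in $L_{\beta_n}$; it produces only one $\xi_n$ above $\sigma_\alpha(m)$. However, $\xi_n\to\alpha$ is all you need.
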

\begin{proof}
   Suppose that there is one which we call $W$, then it must be unbounded in $M$ and
   we may assume that it is located in $\pi^{-1}(C)$ for a club $C$
   and that each fiber contains exactly one point $x_\alpha=\langle\alpha,\phi_\alpha\rangle\in M_\alpha\cap W$, 
   $\alpha\in C$.
   Let $S$ be stationary such that $L_\alpha\subset C$ when $\alpha\in S$.
   By Lemma \ref{lemma:converge}, for each $\alpha\in C$ there is $\beta(\alpha)$
   such that 
   $$
       |a_\alpha(\eta) +\phi_\alpha - \phi_\eta|< \frac{\theta}{4}
   $$
   for each $\eta$ between $\beta(\alpha)$ and $\alpha$.
   By Fodor there is some $\beta$ and a stationary
   $S_0\subset S\cap C$ such that $\beta(\alpha) = \beta$ for all $\beta\in S_0$.
   Take a point $\alpha>\beta$ in $S_0'$, then there is a sequence $\eta_n\in S_0$ with supremum $\alpha$.
   We may assume (by using the action of $\SSS^1$) that $\phi_\alpha = 0$. 
   By Lemma \ref{lemma:jolt}, there is a sequence $\xi_n\in L_{\eta_n}$ converging to $\alpha$
   with $\jolt_{\alpha,\eta_n}(\xi_n) = \theta$.
   Fix some $n$ such that $\beta<\xi_n$, and let $\eta=\eta_n$, $\xi=\xi_n$.
   Since $S_0\subset S$, $L_{\eta}\subset C$, hence $\langle \xi,\phi_\xi\rangle\in W$.
   By closedness, since $L_{\eta}\subset C$, $\langle\eta,\phi_\eta\rangle\in W$ as well.
   Since $\eta\in S_0$, $\phi_\xi$ is at distance at most $\frac{\theta}{4}$ from $a_{\eta}(\xi)+\phi_\eta$.
   Since $\alpha\in S_0$, $\phi_\xi$ is at distance at most $\frac{\theta}{4}$ from $a_{\alpha}(\xi)$
   and $\phi_\eta$ at distance at most $\frac{\theta}{4}$ from $a_{\alpha}(\eta)$.
   Hence:
   \begin{align*}
      \theta & = \jolt_{\alpha,\eta}(\xi) \\
        & =  a_\alpha(\xi) - a_\alpha(\eta) - a_\eta(\xi)   \\
        & = a_\alpha(\xi) -\phi_\xi +\phi_\xi -a_\alpha(\eta)-\phi_\eta + \phi_\eta - a_\eta(\xi)  \\
        & \le |a_\alpha(\xi)-\phi_\xi | + |\phi_\xi - (a_\eta(\xi)+\phi_\eta)| + |\phi_\eta - a_\alpha(\eta)|  \\
        & < \frac{\theta}{4} + \frac{\theta}{4} + \frac{\theta}{4} < \frac{3\theta}{4}
   \end{align*}
   a contradiction. 
\end{proof}

Now, the complete theorem.
\begin{thm}
   \label{thm:main}
   Let $\mathscr{L}=\langle L_\alpha\,:\,\alpha\in\Lambda\rangle$ be a $\clubsuit_C$-sequence.
   Then the following holds.
   \\
   (a) $M(\mathscr{L},1)$ is of order $\infty$.\\
   (b) $M(\mathscr{L},\frac{2\pi}{n})$ is of order $n$ for each $n\ge 1$.
\end{thm}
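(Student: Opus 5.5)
We use Theorem \ref{thm:Nyikos} together with the jolt argument from Theorem \ref{thm:omega_1}, run on a club $E$ that meets each fiber in finitely many equally spaced points instead of a single point. Fix $\theta$ and suppose some club $F\subset M$ satisfies (b) of Theorem \ref{thm:Nyikos}. This gives $m$, a club $C$, and a club $E\subset F$ with $E\cap M_\alpha=\{x_\alpha+\frac{2\pi k}{m}\,:\,k<m\}$ for $\alpha\in C$. Write $x_\alpha=\langle\alpha,\phi_\alpha\rangle$ and $G_m=\{\frac{2\pi k}{m}\}$, so that $E\cap M_\alpha=\{\alpha\}\times(\phi_\alpha+G_m)$.

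\textbf{Step 1: a rigidity inequality.} Apply Lemma \ref{lemma:Fclosed} to $E$ with $\epsilon$ small compared with $\frac{\pi}{m}$ and $\theta$. For each $\alpha\in C\cap\Lambda$ this gives $\beta(\alpha)<\alpha$ such that, for $\eta\in C$ with $\beta(\alpha)<\eta<\alpha$,
$$\phi_\eta+G_m\subset a_\alpha(\eta)+\phi_\alpha+G_m+(-\epsilon,\epsilon).$$
Fodor's lemma yields a stationary $S_0\subset C$, contained in the stationary set of $\alpha$ with $L_\alpha\subset C$, on which $\beta(\alpha)=\beta$ is constant.

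Pick $\alpha\in S_0'\cap\Lambda_2$ above $\beta$, an increasing sequence $\eta_n\in S_0$ converging to $\alpha$, and by Lemma \ref{lemma:jolt} points $\xi_n\in L_{\eta_n}\subset C$ with $\jolt_{\alpha,\eta_n}(\xi_n)=\theta$. Choose $n$ with $\xi_n>\beta$ and use the inequality for the pairs $(\alpha,\xi)$, $(\alpha,\eta)$ and $(\eta,\xi)$. Exactly as in the chain of inequalities of Theorem \ref{thm:omega_1}, but now working modulo $G_m$, this gives
$$\theta\in G_m+(-3\epsilon,3\epsilon).$$

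\textbf{Step 2: deriving (a) and (b).} For $\theta=1$, the number $1/2\pi$ is irrational, so $1\notin G_m$ for every $m$. Choosing $\epsilon$ small relative to the distance from $1$ to $G_m$ gives a contradiction, so (b) fails for every $m$ and the order is $\infty$. For $\theta=\frac{2\pi}{n}$, Step 1 forces $\frac{2\pi}{n}\in G_m$, which means $m$ is a multiple of $n$. In particular no club with fewer than $n$ points per fiber exists, so the order is at least $n$.

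\textbf{Step 3: order exactly $n$.} It remains to exhibit a club meeting club many fibers in exactly $n$ equally spaced points. Since $n\theta\equiv 0$, the function $a_\alpha$ takes values in $G_n$ on ordinals. We take
$$E=\{\langle\gamma,\psi\rangle\,:\,\gamma\in\omega_1,\ \psi\in G_n\}=\omega_1\times G_n,$$
which meets each $M_\gamma$ in exactly $n$ equally spaced points. We need $E$ to be closed; unboundedness is clear. A $\tau_\alpha$-neighbourhood of $\langle\alpha,\psi\rangle$ with $\psi\notin G_n$ has the form $f_\alpha$ of a strip, and by (a) of Lemma \ref{lemma:tau_alpha} its trace on each ordinal fiber $\gamma<\alpha$ is centred at $a_\alpha(\gamma)+\psi$. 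Since $a_\alpha(\gamma)\in G_n$, this centre lies in $\psi+G_n$, which stays at positive distance from $G_n$. A narrow strip therefore misses $E$ near the fiber $M_\alpha$, so the limit points of $E$ at limit fibers lie in $\{\alpha\}\times G_n\subset E$, and $E$ is closed.

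The main obstacle is Step 1: making Lemma \ref{lemma:Fclosed} give a coherent choice of branch in $G_m$ for the three comparisons. The aim is to show that the $G_m$-ambiguity cancels, so that the jolt is forced to be $\equiv0 \pmod{G_m}$ up to $3\epsilon$. Here we would rely on the fact that addition of $G_m$-cosets is well defined.
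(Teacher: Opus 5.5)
Your proposal is correct and follows the paper's route: the club from Theorem \ref{thm:Nyikos}(b), Lemma \ref{lemma:Fclosed} plus Fodor, and the jolt of Lemma \ref{lemma:jolt} at a limit of $S_0$ give the lower bound, and the club $\omega_1\times G_n$ gives the upper bound in (b). The obstacle you flag is not real: $G_m$ is a subgroup of $\SSS^1$, so the three error terms simply add up inside $G_m+(-3\epsilon,3\epsilon)$. This coset formulation is cleaner than the paper's strip picture, and it yields the right separation constant in (b), namely the distance from $\frac{2\pi}{n}$ to $G_m$; just take $\alpha\in S_0\cap S_0'$, or require $\xi_n>\beta(\alpha)$ as well.
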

\begin{proof} \ \\
   (a) Let $M= M(\mathscr{L},1)$ and
   $F\subset M$ be club.
   If (a) of Theorem \ref{thm:Nyikos} does not hold, there is some club 
   $E\subset F$ satisfying (b). We show that it leads to a contradiction.
   The points in the fibers $M_\alpha\cap E$
   are at distance $2\pi/n$ from the closest other point.
   Let $\delta=\min\{|\frac{2\pi\cdot k}{n} - 1|\,:\,k=0,\dots,n-1\}$, then $\delta>0$.
   Let $S$ be stationary such that $L_\alpha\subset C$ when $\alpha\in S$.
   Let $\alpha\in C$,
   and $\{\langle\alpha,\phi+k\cdot 2\pi/n\rangle\,:\,k=1,\dots,n-1\}$ be the members
   of $E\cap M_\alpha$. Fix some $\epsilon$ with $0<\epsilon <\frac{\delta}{4}$.
   By Lemma \ref{lemma:Fclosed}, if $\alpha\in C$ there is $\beta(\alpha)$
   such that,
   when $\langle\eta,\phi\rangle\in M_\eta\cap E$ and $\beta(\alpha)<\eta<\alpha$,
   then there is a {\em unique} $\theta(\eta,\varphi)$ 
   such that $\langle\alpha,\phi(\eta,\varphi)\rangle\in M_\alpha\cap E$ and:
   $$
      |a_\alpha(\eta) +\theta(\eta,\varphi) -\phi |< \epsilon.
   $$
   The proof is then very similar to that of Theorem \ref{thm:omega_1}, but 
   we phrase it a bit less formally. 
   Take $\alpha\in S_0'$, 
   where $S_0\subset S\cap C$ 
   is stationary with $\beta(\alpha)=\beta$ for each $\alpha\in S$, 
   and $\eta_m$ a sequence converging to $\alpha$ in $S_0$.
   Let $U_\alpha^k$ and $U_{\eta_m}^k$ ($k=1,\dots,n$) be the strips of width $\epsilon$ around the points
   of $E\cap M_\alpha$ and $E\cap M_{\eta_m}$,
   which respectively follow $a_\alpha$ and $a_{\eta_m}$
   and go all the way down to $\beta$.
   There are $\theta_\alpha$ and $\theta_{\eta_m}$ such that
   these strips are respectively centered around $a_\alpha + \theta_\alpha + 2k\pi/n$
   and $a_{\eta_m} + \theta_{\eta_m} + 2k\pi/n$.
   By Lemma \ref{lemma:jolt} we may choose $m$ such that for $\eta = \eta_m$ and 
   some $\gamma\in L_\eta$, $\gamma>\beta$,
   we have $\jolt_{\alpha,\eta}(\gamma) = 1$.
   Since $\epsilon <\delta/4$, the strips $\wt{U}_\alpha^k$ 
   of width $2\epsilon$ around $a_\alpha + \theta_\alpha + 2k\pi/n - 1$
   are disjoint from each $U_\alpha^k$.
   Take $\langle\eta,\theta_\eta\rangle\in S^\eta_0\subset M_\eta\cap E$, then 
   it is in $U_\alpha^k$ for
   some $k$. Thus the distance between $\theta_\eta$ and $a_\alpha(\eta) + \theta_\alpha + 2k\pi/n$
   is less than $\epsilon$.
   Since $\jolt_{\alpha,\eta}(\gamma) = 1$, it means that the strip around
   $a_\eta + \theta_\eta$ is contained in $\wt{U}_\alpha^k$ when intersected with $M_\gamma$,
   and hence disjoint from all the $U_\alpha^\ell$, see Figure \ref{fig:thm}.
   This is a contradiction, because
   all the points in $M_\gamma\cap E$ are in the intersection of one of $U_\alpha^k$ with one of $U_\eta^\ell$.\\
   (b) 
   Let $M=M(\mathscr{L},\frac{2\pi}{n})$.
   Since the jolts given by Lemma \ref{lemma:jolt}
   are exactly of $\frac{2\pi}{n}$ and the ``transitions'' of the various $a_\alpha$
   happen only between consecutive ordinals, for each $\alpha,\beta,\gamma$ with $\gamma<\beta<\alpha$,
   the difference $a_\alpha(\gamma)-a_\beta(\gamma)$ 
   is $k\cdot\frac{2\pi}{n}$
   for some $k\in\Z$. It follows that the set $\omega_1\times\{\frac{2k\pi}{n}\,:\,k=0,\dots,n-1\}$
   is club in $M$, showing that $M$ is of order at most $n$.
   Suppose that it is of order $m<n$ and as in (a) take $F\subset M$ be club
   such that $\pi(F)=C$ and $F$ satisfies (b) of Theorem \ref{thm:Nyikos} with $m$,
   in particular points of $F$ in the same fiber have $\frac{2\pi}{m}$-gaps in between.
   Since $m<n$, $\delta = |\frac{2\pi}{m}-\frac{2\pi}{n}| > 0$.
   The proof can now proceed exactly as in (a), with some $\epsilon<\delta/4$, until a contradiction is
   reached.
\end{proof}

\begin{figure}
   \begin{center}
       \epsfig{figure=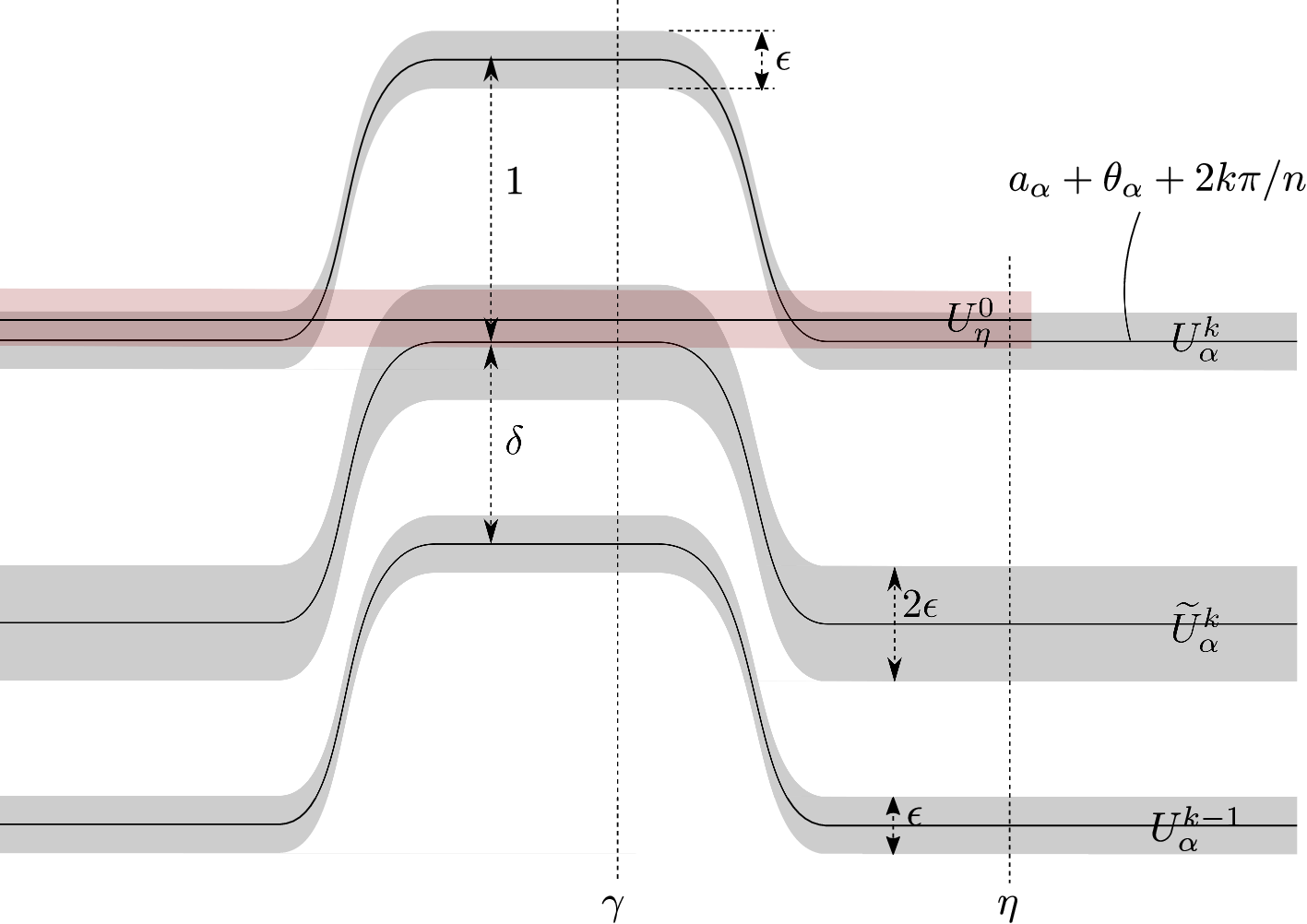, width=.75\textwidth}
       \caption{The proof of Theorem \ref{thm:main}.}
       \label{fig:thm}
   \end{center}
\end{figure}

An immediate consequence is:
\begin{proof}[Proof of Theorem \ref{thm:intro}]
   Take a principal $\SSS^1$-bundle over $\LL_+$ of order $\infty$
   and collapse $\pi^{-1}((0,1])$ to a point, or mirror the construction to build a bundle
   over the long line $\LL$, made of two copies of $\LL_+$ glued at a common $0$-point. 
   This yields a countably compact non-compact hereditarily collectionwise normal manifold without any copy of $\omega_1$
   by Lemmas \ref{lemma:copyomega_1} and \ref{lemma:hcwn}.
\end{proof}

%%%%%%%%%%%%%%%%%%%%%%%%%%%%%%%%%%%%%%%%%%%%%%%%%%%%%%%%%%%%%%%%%%%%%%%%%%%%%%%%%%%%%%%%%%%%%%
%%%%%%%%%%%%%%%%%%%%%%%%%%%%%%%%%%%%%%%%%%%%%%%%%%%%%%%%%%%%%%%%%%%%%%%%%%%%%%%%%%%%%%%%%%%%%%
%%%%%%%%%%%%%%%%%%%%%%%%%%%%%%%%%%%%%%%%%%%%%%%%%%%%%%%%%%%%%%%%%%%%%%%%%%%%%%%%%%%%%%%%%%%%%%

\section{Finite-to-$1$ closed preimages of $\omega_1$}
\label{sec:finitetoone}

This section started up by merely writing a few remarks linking $\SSS^1$-bundles over $\LL_+$
to another type of spaces that Nyikos has investigated, which can be seen as 
a discrete version of them. We then decided to add a small new result to make things more interesting,
which pushed us in the territory of elementary group theory. The said territory is a kind
of ``hic sunt leones'' place for us, the result being a bloated number of elementary lemmas
that we had to write down explicitely in order to convince ourselves that 
we were not about to be eaten by a lion lurking in these lands. 
We hope that the end product remains somewhat readable.
In the definition below, $n$ is identified with the set $\{0,1,\dots,n-1\}$ (i.e. we view $n$
as an ordinal).

\begin{defi}
   Let $n\in\omega$. A closed $n$-to-$1$ preimage of $\omega_1$ is the set $\omega_1\times n$ endowed
   with a Hausdorff topology such that the projection on the first factor $\pi$ is a closed continuous
   map. We abbreviate ``closed $n$-to-$1$ preimage of $\omega_1$'' by ``$n$-cp$\omega_1$''.
\end{defi}

%%%%%%%%%%%%%%%%%%%%%%%%%%%%%%%%%%%%%%%%%%%%%%%%%%%%%%%%%%%%%%%%%%%%%%%%%%%%%%%%%%
\subsection{Generalities on $n$-cp$\omega_1$'s}

If $K$ is a $n$-cp$\omega_1$ and $\alpha\in\omega_1$, then there are disjoint open $U_0,\dots,U_{n-1}$
containing each $\langle \alpha,i\rangle$. If $\alpha$ is limit, there must be some
$\beta<\alpha$ such that $\cup_i U_i\supset \pi^{-1}((\beta,\alpha])$.
Indeed, otherwise there is a sequence $\langle \alpha_n,i_n\rangle$ outside of this union
with $\alpha_n\to\alpha$, but the image of this sequence by $\pi$ is not closed.
We may thus proceed by induction and define $B(\alpha,i)$ to be clopen such that $B(\alpha,0),\dots,B(\alpha,n-1)$
partition $\pi^{-1}([0,\alpha])$, and $B(\alpha,i)\ni\langle\alpha,i\rangle$. 
These $B(\alpha,i)$ intersected with $\pi^{-1}((\beta,\alpha])$ for $\beta<\alpha$ form a neighborhood
basis of $\langle\alpha,i\rangle$.
(This is what we mean later on when we say that the collection of the $B(\alpha,i)$ engenders the topology.)
Of course, the initial parts of the $B(\alpha,i)$ do not matter for the topology (as long as they are open).
In particular, for successor ordinals
we may take $B(\alpha+1,i)=\{\langle\alpha+1,i\rangle\}\cup B(\alpha,i)$.
Going the other way, one may define the topology on $\omega_1\times n$ 
by specifying a collection $\mathscr{B} = \{B(\alpha,i)\,:\,\alpha\in\omega_1,\,i\in n\}$,
with the $B(\alpha,i)\subset[0,\alpha]\times n$, $B(\alpha,i)\ni\langle\alpha,i\rangle$ and
such that the $B(\alpha,i)$ partition $[0,\alpha+1]\times n$
(the intervals are taken in $\omega_1$).
We then say that $\mathscr{B}$ is a {\em family in $\omega_1\times n$}.
The resulting topological space is denoted by $K(\mathscr{B})$.
From now on, cursive letters $\mathscr{A},\mathscr{B},\dots$ always denote such families,
with the same letter (in roman capitals) used for the subsets, i.e. $A(\alpha,i),B(\alpha,i),\dots$

\begin{lemma}
   A family $\mathscr{B}$ engenders a topology of an $n$-cp$\omega_1$ iff
   $\forall\beta<\alpha$ there is $\gamma<\beta$ such that 
   \begin{equation}
      \label{eq:engenders}
      B(\alpha,i)\cap\bigl((\gamma,\beta]\times n\bigr) = 
      \bigcup_{j\,:\,\langle\beta,j\rangle\in B(\alpha,i)}
      B(\beta,j)\cap\bigl((\gamma,\beta]\times n\bigr). 
   \end{equation}
\end{lemma}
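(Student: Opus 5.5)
We will prove the two directions separately. Throughout, the topology of $K(\mathscr{B})$ is the one generated by the sets $B(\alpha,i)\cap\bigl((\beta,\alpha]\times n\bigr)$ for $\beta<\alpha$, together with the singletons at successor levels. The first task is to check when this family is really a neighbourhood base. The condition to verify is that if a point $\langle\beta,j\rangle$ lies in a basic set around $\langle\alpha,i\rangle$, then some basic set around $\langle\beta,j\rangle$ is contained in it. This is exactly what (\ref{eq:engenders}) provides. If $\langle\beta,j\rangle\in B(\alpha,i)$, the condition gives $\gamma<\beta$ with $B(\beta,j)\cap((\gamma,\beta]\times n)\subset B(\alpha,i)$, since the right-hand side of (\ref{eq:engenders}) contains that term.

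\emph{Sufficiency.} Assume (\ref{eq:engenders}) holds.
\begin{itemize}
\item The sets $B(\alpha,i)\cap((\beta,\alpha]\times n)$ form a base by the remark above, so we get a topology.
\item Each $B(\alpha,i)$ is open: for every point $\langle\beta,j\rangle$ of it, (\ref{eq:engenders}) supplies a basic neighbourhood inside it.
\item Each $B(\alpha,i)$ is closed in $\pi^{-1}([0,\alpha])$, because its complement there is the union of the other $B(\alpha,i')$, which are open.
\item Hausdorffness: two points in the same fiber $\alpha$ are separated by the disjoint sets $B(\alpha,i)$ and $B(\alpha,i')$. Points in different fibers $\beta<\alpha$ are separated by $B(\alpha,i)\cap((\beta,\alpha]\times n)$ and $B(\beta,j)$.
\item Continuity of $\pi$: the preimage of $(\beta,\alpha]$ is $\bigcup_i B(\alpha,i)\cap((\beta,\alpha]\times n)$, which is open.
\item Closedness of $\pi$: let $F$ be closed and let $\alpha$ be a limit of $\pi(F)$. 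Choose $\langle\alpha_k,i_k\rangle\in F$ with $\alpha_k\to\alpha$ and pass to a subsequence with constant $i$-label inside $B(\alpha,i)$ for one fixed $i$; this is possible because the $B(\alpha,i)$ partition $[0,\alpha]\times n$. Then every basic neighbourhood $B(\alpha,i)\cap((\beta,\alpha]\times n)$ contains a tail of the subsequence. Hence $\langle\alpha,i\rangle\in F$ and $\alpha\in\pi(F)$.
\end{itemize}

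\emph{Necessity.} Assume $K(\mathscr{B})$ is an $n$-cp$\omega_1$ in which the $B(\alpha,i)$ intersected with tails form neighbourhood bases of $\langle\alpha,i\rangle$, as in the discussion preceding the lemma, and fix $\beta<\alpha$. For each $j$ with $\langle\beta,j\rangle\in B(\alpha,i)$, the set $B(\alpha,i)$ is an open neighbourhood of $\langle\beta,j\rangle$. Hence there is $\gamma_j<\beta$ with
\[
B(\beta,j)\cap\bigl((\gamma_j,\beta]\times n\bigr)\subset B(\alpha,i).
\]
For the $j$ with $\langle\beta,j\rangle\notin B(\alpha,i)$, the point lies in some other $B(\alpha,i')$, which is open and disjoint from $B(\alpha,i)$. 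The same argument gives $\gamma_j$ with
\[
B(\beta,j)\cap\bigl((\gamma_j,\beta]\times n\bigr)\cap B(\alpha,i)=\varnothing.
\]
Let $\gamma=\max_j\gamma_j$, a finite maximum. Since the sets $B(\beta,j)$ partition $[0,\beta]\times n$, intersecting with $(\gamma,\beta]\times n$ yields (\ref{eq:engenders}).

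The main obstacle is bookkeeping rather than substance. For a successor $\beta$ the condition is trivial, since we may take $\gamma$ to be the predecessor of $\beta$, so only limit $\beta$ matter. One must also be careful that "engenders" is read as "the $B(\alpha,i)$ are open and their tails form bases". Under that reading, necessity is immediate from openness, and the only genuinely topological step is closedness of $\pi$, which uses the finiteness of $n$ through the pigeonhole choice of the label $i$.
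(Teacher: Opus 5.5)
Your proposal is correct. The paper's proof is just the word ``Immediate'', and your argument is exactly the routine verification it leaves to the reader: the displayed condition says each basic set at $\langle\beta,j\rangle$ eventually lies inside $B(\alpha,i)$ or is disjoint from it, and the pigeonhole step for closedness of $\pi$ is the only part that goes beyond unwinding definitions. Your reading of ``engenders'' (the $B(\alpha,i)$ open, their tails forming local bases) is the intended one, since in the discussion preceding the lemma the $B(\alpha,i)$ are chosen clopen.
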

\begin{proof}
   Immediate.
\end{proof}

Of course, two distinct families may yield homeomorphic $n$-cp$\omega_1$'s.
We now define two classes of families. 

\begin{defi}
   In $\omega_1\times n$,
   $\mathscr{B}$
   is transverse if $B(\alpha,i)\cap\pi^{-1}(\{\beta\})$
   is a singleton for each $\beta\le\alpha$, and banded if 
   for all $\alpha\in\Lambda$ and $\beta<\alpha$,
   $B(\alpha,i)$ contains $\{\beta\}\times n$
   whenever $B(\alpha,i)\cap\{\beta\}\times n\not=\varnothing$.
   A topology on $\omega_1\times n$ is transverse/banded iff
   there is some collection $\mathscr{B}$ which engenders the topology
   such that the resulting space $K(\mathscr{B})$ is transverse/banded.
\end{defi} 

If $n=2$, transverse families are called symmetric by Nyikos, for obvious reasons. 
We felt that this new terminology fits better with the situation when $n\ge 3$.
We prefer to fix the underlying set and to talk about properties of topologies 
instead of that of the space 
up to homeomorphism because of 
the following.

\begin{example}[{\bf Nyikos \cite{Nyikoscoherent}}]
   \label{ex:banded}
   For each $n$ there is
   a banded family $\mathscr{B}$ in $\omega_1\times n$ such 
   that $K(\mathscr{B})$ is homeomorphic to $\omega_1$. If $n\ge 3$ there is
   a family $\mathscr{C}$ which is neither banded nor transverse
   such that $K(\mathscr{C})$ is homeomorphic to $\omega_1\times 2$ with the product topology.
\end{example}
\begin{proof}[Details]
   Set $B(\alpha,i) = \{\langle\alpha,i\rangle\}$ for each $i\ge 1$ and $\alpha\in\omega_1$.
   Hence, $B(\alpha,0) = \{\langle\alpha,0\rangle\}\cup\left([0,\alpha)\times n\right)$.
   This is easily seen to be homeomorphic to $\omega_1$.
   For $\mathscr{C}$, set $C(\alpha,i)=B(\alpha,i)$ when $i=0,1,\dots,n-1$ and
   $C(\alpha,n) = [0,\alpha]\times\{n\}$.
\end{proof}

\begin{lemma}
   \label{lemma:symbundle}
   If $\mathscr{B}$ is a transverse then in $K(\mathscr{B})$, $\pi^{-1}([0,\alpha])$ is homeomorphic to
   $[0,\alpha]\times n$ with the product topology for each $\alpha$.
\end{lemma}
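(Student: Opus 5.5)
The plan is to avoid any transfinite induction and read the homeomorphism directly off the sets $B(\alpha,i)$. Since $\mathscr{B}$ is transverse, each $B(\alpha,i)$ meets every fiber $\pi^{-1}(\{\beta\})$, $\beta\le\alpha$, in exactly one point. So each $B(\alpha,i)$ is the graph of a section of $\pi$ over $[0,\alpha]$. I would show that each such section is a homeomorphism onto its image, and then glue the $n$ sections together.

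The first step, which I expect to be the main obstacle, is to check that each $B(\alpha,i)$ is clopen in $\pi^{-1}([0,\alpha])$ for the topology of $K(\mathscr{B})$. The family partitions $\pi^{-1}([0,\alpha])$ into $n$ pieces, so it suffices to show that each piece is open.
\begin{itemize}
\item Take $\langle\beta,j\rangle\in B(\alpha,i)$ with $\beta<\alpha$. Condition (\ref{eq:engenders}) gives $\gamma<\beta$ such that $B(\beta,j)\cap\bigl((\gamma,\beta]\times n\bigr)$ is contained in $B(\alpha,i)$.
\item That set is a basic neighborhood of $\langle\beta,j\rangle$.
\item For $\beta=\alpha$, the sets $B(\alpha,i)\cap\pi^{-1}((\gamma,\alpha])$ are basic by definition.
\end{itemize}
Some care is needed here. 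The definition of "engenders" only says that these sets form neighborhood bases, so openness must be derived from this consistency condition rather than assumed. I would therefore phrase it as above: every point of $B(\alpha,i)$ has a basic neighborhood inside $B(\alpha,i)$.

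Next, note that $[0,\alpha]$ is clopen in $\omega_1$ and $\pi$ is continuous, so $\pi^{-1}([0,\alpha])$ is clopen in $K(\mathscr{B})$. Hence each $B(\alpha,i)$ is closed in $K(\mathscr{B})$. Since $\pi$ is a closed continuous map, its restriction to the closed set $B(\alpha,i)$ is again closed and continuous. By transversality this restriction is a bijection $B(\alpha,i)\to[0,\alpha]$, hence a homeomorphism. Write $s_i:[0,\alpha]\to B(\alpha,i)$ for its inverse.

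Finally, define $h:[0,\alpha]\times n\to\pi^{-1}([0,\alpha])$ by $h(\langle\beta,i\rangle)=s_i(\beta)$.
\begin{itemize}
\item $h$ is a bijection, because the $B(\alpha,i)$ partition $\pi^{-1}([0,\alpha])$.
\item The domain is the topological sum of the $n$ copies $[0,\alpha]\times\{i\}$.
\item The target is the topological sum of the $n$ clopen pieces $B(\alpha,i)$.
\item $h$ maps the $i$-th summand homeomorphically onto the $i$-th piece.
\end{itemize}
Therefore $h$ is a homeomorphism. It is moreover fiber preserving, i.e. $\pi\circ h$ is the first projection. Alternatively, one could use compactness of $[0,\alpha]\times n$ together with Hausdorffness of $K(\mathscr{B})$ and continuity of $h$, but the argument via closedness of $\pi$ above is cleaner.
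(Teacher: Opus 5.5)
Your proof is correct and follows the paper's approach; the paper's entire proof is the remark that the $B(\alpha,i)$ are disjoint clopen sets each meeting every fiber in exactly one point. You add the details the paper leaves implicit: openness of each $B(\alpha,i)$ from (\ref{eq:engenders}) together with transversality, and the fact that $\pi\upharpoonright B(\alpha,i)$ is a homeomorphism onto $[0,\alpha]$ because $\pi$ is closed.
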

\begin{proof}
   The $B(\alpha,i)$
   are disjoint clopen sets containing a point in each fiber.
\end{proof}

\begin{cor}
   If $\mathscr{A},\mathscr{B}$ are transverse, satisfy (\ref{eq:engenders}), and $K(\mathscr{A})$ and 
   $K(\mathscr{B})$ are homeomorphic, then they are both $n$-cp$\omega_1$ (for the same $n$).
\end{cor}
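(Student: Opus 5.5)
The plan is to read the Corollary as follows: $\mathscr{A}$ is a family in $\omega_1\times n$, $\mathscr{B}$ is a family in $\omega_1\times m$, and we must show $n=m$. By the previous lemma, (\ref{eq:engenders}) already makes both spaces $n$-cp$\omega_1$ resp. $m$-cp$\omega_1$, so the content is the equality of the integers. I would extract $n$ topologically through Cantor--Bendixson ranks. The key input is Lemma \ref{lemma:symbundle}: for transverse families, $\pi^{-1}([0,\alpha])$ is homeomorphic to $[0,\alpha]\times n$ with the product topology.

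\textbf{Step 1: trap the homeomorphism on a club.} Let $h:K(\mathscr{A})\to K(\mathscr{B})$ be a homeomorphism. For each $\alpha$, the set $\pi^{-1}([0,\alpha])$ is compact (it is $[0,\alpha]\times n$ by Lemma \ref{lemma:symbundle}). Hence its image under $h$ is compact and so has bounded projection, i.e. it lies in $\pi^{-1}([0,\beta])$ for some $\beta<\omega_1$; the same holds for $h^{-1}$. A standard closing-off argument then gives a club $C\subset\omega_1$ such that
\[
h\bigl(\pi^{-1}([0,\alpha))\bigr)=\pi^{-1}([0,\alpha)) \quad\text{for } \alpha\in C\cap\Lambda.
\]
This is the bookkeeping step where some care is needed. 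The issue is that $[0,\alpha)$ versus $[0,\alpha]$ matters: the top fiber might be permuted with points above it. I would therefore work with the open sets $\pi^{-1}([0,\alpha))$ and their closures. For $\alpha\in C\cap\Lambda$, the set $\pi^{-1}([0,\alpha))$ is open with closure $\pi^{-1}([0,\alpha])$, since every top point is a limit of lower points by Lemma \ref{lemma:symbundle}. Therefore $h$ maps $\pi^{-1}([0,\alpha])$ onto $\pi^{-1}([0,\alpha])$ as well.

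\textbf{Step 2: pick a fixed point of $\xi\mapsto\omega^\xi$ in the club.} The map $\xi\mapsto\omega^\xi$ is normal, so its fixed points form a club. Take $\alpha\in C$ with $\omega^\alpha=\alpha$. Then $h$ restricts to a homeomorphism
\[
[0,\alpha]\times n\;\cong\;[0,\alpha]\times m.
\]

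\textbf{Step 3: count points of top rank.} In $[0,\alpha]$, the Cantor--Bendixson rank of an ordinal $\xi$ is the last exponent in its Cantor normal form. Since $\alpha=\omega^\alpha$, the point $\alpha$ is the unique point of rank $\alpha$, and every other point has smaller rank. In the product with the discrete space $n$, each $[0,\alpha]\times\{i\}$ is clopen, so CB ranks are computed fiberwise. Thus $[0,\alpha]\times n$ has exactly $n$ points of maximal rank, and $[0,\alpha]\times m$ has exactly $m$. CB ranks are preserved by homeomorphisms, so $n=m$.

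I expect the main obstacle to be Step 1, namely making the trapping-on-a-club argument precise.
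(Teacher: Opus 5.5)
Your proof is correct, and Step 1 is essentially the paper's argument: the paper simply asserts that a homeomorphism maps $\pi^{-1}(\{\alpha\})$ onto $\pi^{-1}(\{\alpha\})$ for club many $\alpha$, then counts points. Your two equalities $h(\pi^{-1}([0,\alpha)))=\pi^{-1}([0,\alpha))$ and $h(\pi^{-1}([0,\alpha]))=\pi^{-1}([0,\alpha])$ already show that $h$ maps the $n$-point fiber over $\alpha$ bijectively onto the $m$-point fiber, so Steps 2--3 (the Cantor--Bendixson count) are valid but unnecessary.
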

\begin{proof}
   Any homeomorphism sends $\pi^{-1}(\{\alpha\})$ of the first one to $\pi^{-1}(\{\alpha\})$
   of the second one for a club set of $\alpha$, and we conclude with the previous lemma.
\end{proof}

The previous corollary seems a triviality but does not hold for non-transverse $n$-cp$\omega_1$,
as seen in Example \ref{ex:banded}.
Notice also the following.

\begin{cor}
   \label{cor:covering}
   Let $\mathscr{B}$ be a transverse family in $\omega_1\times n$.
   Then $\pi:K(\mathscr{B})\to\omega_1$ is a $n$-sheeted covering map.
\end{cor}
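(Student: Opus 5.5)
The plan is to show directly that $\pi$ is a local homeomorphism with evenly covered neighborhoods: every point $\alpha\in\omega_1$ should have an open neighborhood $V$ such that $\pi^{-1}(V)$ is a disjoint union of $n$ open sets, each mapped homeomorphically onto $V$ by $\pi$. Surjectivity is clear, and $\pi$ is continuous because $K(\mathscr{B})$ is an $n$-cp$\omega_1$. Since every fiber $\pi^{-1}(\{\alpha\})=\{\alpha\}\times n$ has exactly $n$ points, we get $n$ sheets.

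Fix $\alpha$ and take $V=[0,\alpha]$, which is clopen in $\omega_1$. By Lemma \ref{lemma:symbundle}, $\pi^{-1}([0,\alpha])$ is homeomorphic to $[0,\alpha]\times n$ with the product topology. More concretely, the sets $B(\alpha,0),\dots,B(\alpha,n-1)$ are pairwise disjoint clopen sets partitioning $\pi^{-1}([0,\alpha])$. By transversality, each $B(\alpha,i)$ meets each fiber $\pi^{-1}(\{\beta\})$, $\beta\le\alpha$, in exactly one point.

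It follows that $\pi\upharpoonright B(\alpha,i):B(\alpha,i)\to[0,\alpha]$ is a continuous bijection. To see that it is a homeomorphism, note that $B(\alpha,i)$ is closed in $K(\mathscr{B})$ and $\pi$ is a closed map. Hence the restriction of $\pi$ to the closed subspace $B(\alpha,i)$ is closed, and a closed continuous bijection is a homeomorphism. Alternatively, one can argue by compactness: $[0,\alpha]$ is compact, and $\pi^{-1}([0,\alpha])$ is compact as the closed preimage of a compact set under a perfect map with finite fibers. Then $B(\alpha,i)$ is compact, the target is Hausdorff, and the continuous bijection is again a homeomorphism.

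Thus $V=[0,\alpha]$ is evenly covered by the $n$ sheets $B(\alpha,i)$, and $\pi$ is an $n$-sheeted covering map. The only point requiring care is that the $B(\alpha,i)$ are genuinely open in $K(\mathscr{B})$. This is by construction, since the family engenders the topology and the $B(\alpha,i)$ are clopen. Even so, I would check it against condition (\ref{eq:engenders}), and I expect this verification to be the only mildly delicate step.
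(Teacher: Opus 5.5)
Your proposal is correct and takes the same approach as the paper. The paper simply cites Lemma \ref{lemma:symbundle}: $\pi^{-1}([0,\alpha])$ is the discrete union of the $n$ clopen sheets $B(\alpha,i)$, each a copy of the open set $[0,\alpha]$. Your extra check that each $\pi\upharpoonright B(\alpha,i)$ is itself a homeomorphism, via a closed continuous bijection, makes explicit what the paper leaves implicit.
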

Recall that a continuous $f:X\to Y$ is a {\em covering map} iff for each $y\in Y$ there is an open $U_y\ni y$
such that $f^{-1}(U_y)$ is a discrete union of subspaces of $X$ each homeomorphic to $U_y$.
The covering is $n$-sheeted iff $f^{-1}(U_y)$ is a discrete union of exactly $n$ copies of $U_y$ for each $y\in Y$.
\begin{proof}
   It is immediate that $f^{-1}(B(\alpha,i))$ is a discrete union of $n$ copies of it by Lemma \ref{lemma:symbundle}.
\end{proof}

If $\mathscr{B}$ is transverse,
then an equivalent way to define $B(\alpha,i)$ is 
by giving a function $\nu_\alpha:[0,\alpha]\to \mathfrak{S}_n$,
where $\mathfrak{S}_n$ is the group of permutations of $\{0,\dots,n-1\}$,
letting  $\nu_\alpha(\beta)(i) = j$ for the unique $j\in n$ such that $\langle\gamma,j\rangle\in B(\alpha,i)$.
Notice that by definition $\nu_\alpha(\alpha) = id$.
Conversely, given such a collection of $\nu_\alpha:\alpha\to \mathfrak{S}_n$
(with $\nu_\alpha(\alpha) = id$),
set $B(\alpha,i) = \{\langle\beta,\nu_\alpha(\beta)(i)\rangle,\,\beta\le\alpha\}$.
We denote by $\Upsilon(\mathscr{B})$ the collection of $\nu_\alpha$ obtained from the family
$\mathscr{B}$ and by $\mathscr{B}(\Upsilon)$ the family obtained from $\Upsilon$.
By definition, $\mathscr{B}(\Upsilon(\mathscr{B})) = \mathscr{B}$ and $\Upsilon(\mathscr{B}(\Upsilon)) = \Upsilon$.
Let us write once for all what we need to define a transverse $n$-cp$\omega_1$ from this data.

\begin{lemma}
   \label{lemma:transversestuff}
   Let $\mathscr{B}=\{B(\alpha,i)\,:\,\alpha\in\omega_1,\,i\in n\}$ be a family in $\omega_1\times n$ 
   %satisfying (\ref{eq:engenders})
   such that $|B(\alpha,i)\cap\{\beta\}\times n| = 1$ for all $\beta\le\alpha$.
   Let $\Upsilon(\mathscr{B})$ be defined as above.
   Then the topology engendered by the $B(\alpha,i)$ is transverse
   iff 
   \begin{equation}
   \label{eq:Balpha}
      \forall \beta<\alpha\,\exists\gamma<\beta\,
      \text{ such that if }\langle\beta,j\rangle\in B(\alpha,i)\text{ then }
       B(\alpha,i)\cap\pi^{-1}((\gamma,\beta]) = B(\beta,j)\cap\pi^{-1}((\gamma,\beta])
   \end{equation}
   iff
   \begin{equation}
   \label{eq:nualpha}
      \forall \beta<\alpha\,\exists\gamma<\beta \text{ such that } 
      \nu_\alpha(\eta) = \nu_\beta(\eta)\circ\nu_\alpha(\beta)
      \,\forall \gamma<\eta\le\beta.
   \end{equation}
\end{lemma}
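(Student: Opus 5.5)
We prove the two equivalences separately: first (\ref{eq:Balpha}) $\Leftrightarrow$ (\ref{eq:nualpha}), which is pure bookkeeping with the $\nu_\alpha$, and then that transversality of the engendered topology is equivalent to (\ref{eq:Balpha}). The latter rests on the preceding lemma characterizing when a family engenders an $n$-cp$\omega_1$, namely condition (\ref{eq:engenders}). We read ``the topology engendered by the $B(\alpha,i)$ is transverse'' as: $\mathscr{B}$ is transverse and engenders the topology of an $n$-cp$\omega_1$. The singleton hypothesis $|B(\alpha,i)\cap\{\beta\}\times n|=1$ is already part of the assumptions.

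\textbf{Step 1: (\ref{eq:engenders}) $\Leftrightarrow$ (\ref{eq:Balpha}) under the singleton hypothesis.} Since $B(\alpha,i)$ meets $\{\beta\}\times n$ in exactly one point $\langle\beta,j\rangle$, the union on the right of (\ref{eq:engenders}) has exactly one term, $B(\beta,j)\cap((\gamma,\beta]\times n)$. So (\ref{eq:engenders}) reads literally as (\ref{eq:Balpha}). One small check is needed: if the $\gamma$ in (\ref{eq:engenders}) is allowed to depend on $i$, we take the maximum of the finitely many $\gamma$'s over $i\in n$. With this, the previous lemma gives ``engenders an $n$-cp$\omega_1$'' $\Leftrightarrow$ (\ref{eq:Balpha}). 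Transversality of the family itself is exactly the singleton hypothesis, so it adds nothing.

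\textbf{Step 2: (\ref{eq:Balpha}) $\Leftrightarrow$ (\ref{eq:nualpha}).} By definition, $B(\alpha,i)\cap\pi^{-1}(\{\eta\})=\{\langle\eta,\nu_\alpha(\eta)(i)\rangle\}$. If $\langle\beta,j\rangle\in B(\alpha,i)$, then $j=\nu_\alpha(\beta)(i)$. The equality of the two sets fiberwise over $(\gamma,\beta]$ therefore says
$$\nu_\alpha(\eta)(i)=\nu_\beta(\eta)(j)=\nu_\beta(\eta)\bigl(\nu_\alpha(\beta)(i)\bigr)\quad\text{for all }\gamma<\eta\le\beta.$$
Quantifying over all $i\in n$ gives $\nu_\alpha(\eta)=\nu_\beta(\eta)\circ\nu_\alpha(\beta)$ as permutations. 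Both directions are reversible, again taking the maximum $\gamma$ over the finitely many $i$ when going from a per-$i$ version of (\ref{eq:Balpha}) to (\ref{eq:nualpha}).

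\textbf{Main obstacle.} The main point needing care is the interpretation in Step 1: ``the topology is transverse'' is defined as the existence of \emph{some} transverse family engendering it, not necessarily $\mathscr{B}$. For the forward direction we would argue directly. Suppose $\mathscr{B}$ engenders a legitimate $n$-cp$\omega_1$ topology; the clopen neighborhoods $B(\alpha,i)\cap\pi^{-1}((\gamma',\alpha])$ then form a base. Take $\beta<\alpha$ and let $\langle\beta,j\rangle\in B(\alpha,i)$. Since $B(\alpha,i)$ is open and contains $\langle\beta,j\rangle$, it contains a basic neighborhood $B(\beta,j)\cap\pi^{-1}((\gamma,\beta])$. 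Conversely, the sets $B(\beta,k)$ for $k\ne j$ are open, contain the points $\langle\beta,k\rangle\notin B(\alpha,i)$, and partition $\pi^{-1}([0,\beta])$. Choosing $\gamma$ large enough that each of these finitely many neighborhoods is also controlled by $B(\alpha,i')$ for the appropriate $i'\ne i$ yields the reverse inclusion, hence (\ref{eq:Balpha}). The converse direction is the previous lemma applied via Step 1.
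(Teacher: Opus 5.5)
Your proposal is correct and follows the paper's proof. The singleton hypothesis collapses the union in (\ref{eq:engenders}) to the single term $B(\beta,\nu_\alpha(\beta)(i))$, which is literally (\ref{eq:Balpha}), and unwinding $\nu_\alpha$ fiber by fiber turns (\ref{eq:Balpha}) into (\ref{eq:nualpha}). Your ``main obstacle'' paragraph is harmless but unnecessary: speaking of the topology engendered by $\mathscr{B}$ already presupposes (\ref{eq:engenders}) for $\mathscr{B}$ itself, so Step 1 settles both directions.
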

\begin{proof}
   Both conditions are equivalent by definition, 
   and are equivalent to (\ref{eq:engenders}) since $|B(\alpha,i)\cap\{\beta\}\times n| = 1$.
   Hence, $\mathscr{B}$ engenders a topology of an $n$-cp$\omega_1$.
\end{proof}

We now fix the dimension $n$.
Given two collections $\mathscr{A},\mathscr{B}$ 
in $\omega_1\times n$,
we write $\mathscr{A} \sim \mathscr{B}$ iff $id:K(\mathscr{A})\to K(\mathscr{B})$ is an homeomorphism.
In other words: they define the same topology on the same underlying set. 

\begin{lemma}
   \label{lemma:symsim}
   Let $\mathscr{B}$ satisfy (\ref{eq:engenders}). 
   Suppose that for each $\alpha$, there is $\beta(\alpha)<\alpha$
   such that $B(\alpha,i)\cap\pi^{-1}(\{\xi\})$ 
   is a singleton for each $\xi$ with $\beta(\alpha)<\xi\le\alpha$.
   Then there is a transverse $\mathscr{A}$ such that $\mathscr{A} \sim \mathscr{B}$.
\end{lemma}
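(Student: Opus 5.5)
We build $\mathscr{A}$ by transfinite induction on $\alpha$. The idea is to keep the ``tail'' of each $B(\alpha,i)$ on $(\beta(\alpha),\alpha]$, which is already transverse. Below $\beta(\alpha)$ we replace the rest by a transverse clopen partition of $\pi^{-1}([0,\beta(\alpha)])$ into $n$ pieces, taken from the already constructed $A(\beta(\alpha),j)$.

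Concretely, for successor $\alpha=\eta+1$ we set $A(\alpha,i)=\{\langle\alpha,i\rangle\}\cup A(\eta,i)$. For limit $\alpha$, write $\beta=\beta(\alpha)$; without loss of generality $\beta$ is a successor ordinal, by increasing it within $\alpha$. By hypothesis the map sending $i$ to the unique $j$ with $\langle\beta+1,j\rangle\in B(\alpha,i)$ is a bijection of $n$, since the $B(\alpha,i)$ partition the fibre over $\beta+1$ and each meets it in one point. Call this permutation $p$ and define
$$A(\alpha,i)=\bigl(B(\alpha,i)\cap\pi^{-1}((\beta,\alpha])\bigr)\cup A(\beta,p(i)).$$
Here $A(\beta,j)$ is the piece of the transverse family at $\beta$. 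One should instead use the index determined by the point of $B(\alpha,i)$ in the fibre over $\beta$, intersected appropriately; for $\beta$ a successor of the form $\beta'+1$, that point may be taken to be the one in the fibre $\beta+1$'s predecessor. The cleanest choice is to let $p(i)$ be the $j$ with $\langle\beta,j\rangle\in B(\alpha,i)$, which works once we ask $\beta(\alpha)<\beta$ strictly. Each fibre over $\xi\le\alpha$ then meets $A(\alpha,i)$ in exactly one point. The sets $A(\alpha,i)$, $i<n$, partition $[0,\alpha]\times n$. Hence $\mathscr{A}$ is a family with the singleton property, and by Lemma \ref{lemma:transversestuff} it suffices to check (\ref{eq:Balpha}) for $\mathscr{A}$.

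The key point is $\mathscr{A}\sim\mathscr{B}$. Since both families give neighbourhood bases of $\langle\alpha,i\rangle$ by intersecting with $\pi^{-1}((\gamma,\alpha])$ for $\gamma<\alpha$, and $A(\alpha,i)$ and $B(\alpha,i)$ agree above $\beta(\alpha)$, the two neighbourhood bases at every point coincide. So the topologies agree, provided both families actually engender topologies, i.e.\ satisfy (\ref{eq:engenders}). For $\mathscr{B}$ this is assumed.

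For $\mathscr{A}$, fix $\eta<\alpha$. If $\eta>\beta(\alpha)$, choose $\gamma$ above both $\beta(\alpha)$ and $\beta(\eta)$, and below the $\gamma$ given by (\ref{eq:engenders}) for $\mathscr{B}$ at $\alpha,\eta$. On $(\gamma,\eta]$ all the relevant $A$-sets equal the corresponding $B$-sets, so (\ref{eq:Balpha}) holds. If $\eta\le\beta(\alpha)$, then on $[0,\beta(\alpha)]$ the set $A(\alpha,i)$ equals $A(\beta(\alpha),p(i))$, and the induction hypothesis for the pair $\beta(\alpha),\eta$ gives a suitable $\gamma$. The equality case $\eta=\beta(\alpha)$ is covered by the same reasoning, since $A(\beta(\alpha),p(i))$ contains $\langle\beta(\alpha),p(i)\rangle$.

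The main obstacle is bookkeeping at the junction point $\beta(\alpha)$. We must make sure that the piece glued below is exactly the $A(\beta(\alpha),j)$ whose top point lies in $B(\alpha,i)$, so that no mismatch occurs in the fibre over $\beta(\alpha)$ itself. We must also make sure each $A(\alpha,i)$ is open in $K(\mathscr{B})$. Openness follows from the equality of neighbourhood bases established inductively, together with the clopenness of the $A(\beta(\alpha),j)$ in the already identified topology on $\pi^{-1}([0,\beta(\alpha)])$.
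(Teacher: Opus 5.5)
Your proposal is correct and uses the paper's own construction. You induct on $\alpha$ and add the top point at successor stages; at limit stages you glue the transverse tail $B(\alpha,i)\cap\pi^{-1}((\beta,\alpha])$ onto the already built $A(\beta,j)$, with $j$ read off in the fibre over the junction $\beta$. Your final choice of a junction strictly above $\beta(\alpha)$ is what makes $j$ well defined (the paper's formula is loose on this point). With that choice, make the case split in your verification of (\ref{eq:Balpha}) at the junction rather than at $\beta(\alpha)$, and take the $\gamma$ there at least as large as, not below, the one given by (\ref{eq:engenders}).
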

Note that $\mathscr{B}$ satisfies (\ref{eq:Balpha}).
\begin{proof}
   By induction on $\alpha$. If $\alpha=\beta+1$, take
   $A(\alpha,i)= \{\langle\alpha,i\rangle\}\cup B(\beta,i)$.
   If $\alpha$ is limit, take 
   $$ A(\alpha,i) = A(\beta(\alpha),j)\cup \left( B(\alpha,i) \cap \pi^{-1}((\beta(\alpha)+1,\alpha])\right) $$
   for the unique $j$ such that $B(\alpha,i)\ni\langle\beta,j\rangle$.
\end{proof}

Also, transverse families have a property very similar to that of $\SSS^1$-bundles of order $n$ over $\LL_+$.
We again say that a subspace is unbounded if its projection on $\omega_1$ is unbounded.

\begin{lemma}
   \label{lemma:symorder}
   Let $\mathscr{B}$ be transverse and satisfy (\ref{eq:Balpha}). For any club $E\subset K(\mathscr{B})$
   there is $m\le n$ and a club $C\subset\omega_1$ such that
   $|E\cap\{\alpha\}\times n| = m$ for each $\alpha\in C$. 
\end{lemma}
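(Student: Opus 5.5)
The plan is to copy the usual ``essential infimum plus Fodor'' scheme, which here becomes a pressing-down argument on the finite set $\{1,\dots,n\}$. Set $g(\alpha)=|E\cap\{\alpha\}\times n|$ for $\alpha\in\pi(E)$. Since $E$ is club, $\pi(E)$ is club: $\pi$ is closed, and $E$ is unbounded. Let $m$ be the \emph{minimal} value such that $S_m=\{\alpha\in\pi(E)\,:\,g(\alpha)=m\}$ is stationary; some value is stationary since there are only $n$ values. Then $\{\alpha\in\pi(E)\,:\,g(\alpha)<m\}$ is a finite union of non-stationary sets, so there is a club $C_0\subset\pi(E)$ with $g\ge m$ on $C_0$. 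The real work is to show that $g\le m$ on a club; then $C$ is the intersection of the two clubs.

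For the upper bound, fix for each $\alpha\in S_m\cap\Lambda$ some $\gamma(\alpha)<\alpha$ witnessing (\ref{eq:Balpha}). Shrinking it if needed, we may also assume that for each point $\langle\alpha,i\rangle\notin E$ the set $B(\alpha,i)\cap\pi^{-1}((\gamma(\alpha),\alpha])$ misses $E$. This is possible because $E$ is closed and the sets $B(\alpha,i)\cap\pi^{-1}((\beta,\alpha])$ form a neighbourhood basis, and there are only finitely many $i$. Consider the pair $(\gamma(\alpha),\nu)$, where $\nu$ records which of the $n$ transverse threads of the $B(\alpha,\cdot)$ carry points of $E$ at $\alpha$. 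By Fodor, together with a finite pigeonhole on $\nu$, we get a stationary $S\subset S_m$ and $\gamma$ such that $\gamma(\alpha)=\gamma$ for all $\alpha\in S$.

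Now take $\alpha<\alpha'$ in $S$. By (\ref{eq:Balpha}) the thread $B(\alpha',i)$ coincides near $\alpha$ with a single thread $B(\alpha,j)$, and transversality makes this correspondence a bijection. However, the two neighbourhoods only agree on some $(\gamma',\alpha]$ with $\gamma'$ depending on the pair, so this step needs care. It is cleaner to use transversality globally. By Lemma \ref{lemma:symbundle}, $\pi^{-1}([0,\alpha'])$ is homeomorphic to $[0,\alpha']\times n$ via the clopen threads $B(\alpha',i)$. In that product, the closed set $E\cap\pi^{-1}([0,\alpha'])$ meets the $\alpha'$-fiber in exactly the $m$ threads that contain points of $E$ at $\alpha'$. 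The other $n-m$ threads miss $E$ on $(\gamma,\alpha']$ by the choice of $\gamma$. Hence for every $\xi\in(\gamma,\alpha']$ we get $g(\xi)\le m$. Since $S$ is unbounded, every $\xi>\gamma$ lies below some $\alpha'\in S$, so $g\le m$ on all of $\pi(E)\setminus[0,\gamma]$.

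Intersecting $C_0$ with $\pi(E)\setminus[0,\gamma]$ gives the club $C$ on which $g=m$. The main obstacle is the step I glossed over: making the ``the thread misses $E$ on $(\gamma(\alpha),\alpha]$'' condition persist when passing from $\alpha$ to a larger $\alpha'\in S$. The condition is stated with respect to the threads of $\alpha'$ itself, and pressing down only fixes the bound $\gamma$, not the thread identity. I would handle this by applying Fodor at each $\alpha'\in S$ for its own threads, which is exactly what was done. So no coherence between different $\alpha$'s is actually needed, only the common lower bound $\gamma$, and (\ref{eq:Balpha}) is used merely to know that the topology is transverse.
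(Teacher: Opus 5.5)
Your proof is correct and is essentially the paper's argument: closedness of $E$ plus Fodor gives a single $\gamma$ and stationarily many $\alpha$ such that the threads $B(\alpha,i)$ through the points of $E$ at $\alpha$ cover $E\cap\pi^{-1}((\gamma,\alpha])$, and transversality then bounds $|E\cap\{\xi\}\times n|$ for every $\xi>\gamma$. The difference is only organizational. You take the minimal stationary value and bound from above directly, while the paper takes the maximal one and derives a contradiction from a smaller stationary value. You also state only the inclusion that is actually needed, whereas the paper writes it as an equality.
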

\begin{proof}
   Let $m$ be maximal such that $S_m = \{\alpha\in\omega_1\,:\,|E\cap\{\alpha\}\times n| = m\}$ is stationary.
   If $S_m$ contains a club, we are over.
   Otherwise, $\omega_1-S_m$ is stationary, and there is thus some $k<m$ such that $S_k$ is also stationary.
   By (\ref{eq:Balpha}) and Fodor, there is $\beta$ and a stationary $\wt{S}\subset S_k$ such that
   $$
      \bigcup_{i:\langle\alpha,i\rangle\in E} B(\alpha,i) \cap (\beta,\alpha] = E\cap (\beta,\alpha]
   $$
   when $\alpha\in \wt{S}$.
   Taking $\eta\in S_m$ and $\alpha\in \wt{S}$ with $\beta<\eta<\alpha$,
   we have that the union of $k$-many $B(\alpha,i)$ contain $m$ distinct points in $E\cap\{\eta\}\times n$,
   a contradiction since $k<m$ and $\mathscr{B}$ is transverse.
   Hence,  $\omega_1-S_m$ is not stationary and $S_m$ contains a club.
\end{proof}

\begin{defi}
   The order of $K$ is the smallest $m$ for which there is a club $E\subset K(\mathscr{B})$
   such that $|E\cap\{\alpha\}\times n| = m$ for each $\alpha\in\pi(E)$.
\end{defi}

%%%%%%%%%%%%%%%%%%%%%%%%%%%%%%%%%%%%%%%%%%%%%%%%%%%%%%%%%%%%%%%%%%%%%%%%%%%%%%%%%%%%%%%%%%%%%%%%%%%%%%%%%%%
\subsection{Principal $\Z_n$-bundles over $\omega_1$}

To avoid annoying examples such as \ref{ex:banded},
we may restrict the class of homeomorphisms between $n$-cp$\omega_1$.

\begin{defi}
   Let $K_0,K_1$ be $n$-cp$\omega_1$. Then an homeomorphism
   $h:K_0\to K_1$ is a bundle-homeomorphism if
   it leaves the fibers $\pi^{-1}(\{\alpha\})$ invariant.
\end{defi} 
Here, we denote both projections of $K_0,K_1$ by $\pi$ since they share the same underlying set
$\omega_1\times n$ and $\pi$ depends only on it.
The term ``bundle'' used in a context where the base space is not connected is slightly unusual,
but is somewhat justified by Lemma \ref{lemma:symbundle} and the definition below.
The nice thing with this notion is the following.

\begin{lemma}
   Let $\mathscr{A},\mathscr{B}$ be families in $\omega_1\times n$ such that
   $K(\mathscr{A})$ and $K(\mathscr{B})$ are bundle-homeomorphic.
   Then $\mathscr{A}$ is banded [transverse] iff $\mathscr{B}$ is banded [transverse].
\end{lemma}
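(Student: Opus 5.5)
The plan is to transport an engendering family along the bundle-homeomorphism, and to read ``banded/transverse'' at the level of topologies, as in the definition: the topology of $K$ is banded/transverse iff some family engendering it is. Fix a bundle-homeomorphism $h:K(\mathscr{A})\to K(\mathscr{B})$. Assume the topology of $K(\mathscr{A})$ is banded (resp. transverse), witnessed by a family $\mathscr{A}'\sim\mathscr{A}$. Since $h^{-1}$ is also a bundle-homeomorphism, the converse direction is symmetric. So it suffices to produce a banded (resp. transverse) family $\mathscr{C}$ with $\mathscr{C}\sim\mathscr{B}$.

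Because $h$ leaves each fiber $\{\alpha\}\times n$ invariant, $h\upharpoonright(\{\alpha\}\times n)$ is a permutation $p_\alpha$ of $n$, that is, $h(\langle\alpha,i\rangle)=\langle\alpha,p_\alpha(i)\rangle$. Define
$$ C(\alpha,p_\alpha(i)) = h\bigl(A'(\alpha,i)\bigr). $$

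We then check the following points in order.

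First, $\mathscr{C}$ is a family in $\omega_1\times n$. Since $h$ is fiber-preserving, $h(\pi^{-1}([0,\alpha]))=\pi^{-1}([0,\alpha])$. Hence the $C(\alpha,j)$ are subsets of $[0,\alpha]\times n$, they partition it because $h$ is a bijection, and $C(\alpha,j)\ni\langle\alpha,j\rangle$.

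Second, $\mathscr{C}$ engenders the topology of $K(\mathscr{B})$. The sets $A'(\alpha,i)\cap\pi^{-1}((\beta,\alpha])$ form a neighborhood base at $\langle\alpha,i\rangle$ in $K(\mathscr{A})$. Their images are exactly $C(\alpha,p_\alpha(i))\cap\pi^{-1}((\beta,\alpha])$, again because $h$ commutes with $\pi$. Since $h$ is a homeomorphism, these images form a neighborhood base at $h(\langle\alpha,i\rangle)$ in $K(\mathscr{B})$. Therefore $\mathscr{C}\sim\mathscr{B}$. In particular $\mathscr{C}$ satisfies (\ref{eq:engenders}): the identity $h(X\cap\pi^{-1}(I))=h(X)\cap\pi^{-1}(I)$ transports the equalities from $\mathscr{A}'$ to $\mathscr{C}$, with indices permuted by $p_\beta$.

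Third, the two properties transfer fiberwise. For every $\beta\le\alpha$,
$$C(\alpha,p_\alpha(i))\cap\pi^{-1}(\{\beta\})=h\bigl(A'(\alpha,i)\cap\pi^{-1}(\{\beta\})\bigr).$$
Consequently, this set is a singleton iff $A'(\alpha,i)\cap\pi^{-1}(\{\beta\})$ is, which gives transversality. Likewise, it equals the whole fiber $\{\beta\}\times n$ iff $A'(\alpha,i)\cap\pi^{-1}(\{\beta\})$ does, and it is nonempty iff that set is, which gives bandedness.

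I expect no real obstacle; all the computations are immediate from fiber-preservation. The only delicate point is the reading of the statement. Banded/transverse must be understood for the engendered topologies, that is, up to $\sim$, and not for the literal given families. Indeed two literal families can give the same topology while only one of them is banded; modifying the initial parts of the $B(\alpha,i)$ does not change the topology. With that reading, the construction of $\mathscr{C}$ above is the whole proof.
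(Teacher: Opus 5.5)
Your proof is correct and follows the paper's argument. The paper's one-line proof (a bundle-homeomorphism restricts to a permutation of each fiber) is exactly what your third step uses, and your transported family $C(\alpha,p_\alpha(i))=h(A'(\alpha,i))$ spells out the step the paper leaves implicit; your reading of banded/transverse up to $\sim$ agrees with the paper's definition of a banded/transverse topology.
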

\begin{proof}
   A bundle homeomorphism is just a permutation when restricted to a fiber.
\end{proof}

Let us now emulate principal $\SSS^1$-bundles.
We write $\Z_n$ for the group of integers with addition modulo $n$.

\begin{defi}
   An $n$-cp$\omega_1$ $K$ is a principal $\Z_n$-bundle over $\omega_1$
   if there is a continuous free transitive group action of $\Z_n$ leaving each
   fiber $\pi^{-1}(\{\alpha\})$ invariant.
   We write the action on the fiber $\pi^{-1}(\{\alpha\})$ as
   $k,\langle\alpha,i\rangle\mapsto \langle\alpha,k{\bm{\cdot}_\alpha}i\rangle$.
\end{defi}

Any transverse $2$-cp$\omega_1$ is such a bundle:
the action is given by permuting $0$ and $1$. This sends $B(\alpha,0)$ to $B(\alpha,1)$
for any $\alpha$.
The situation is a bit more complex if $n\ge 3$. Actually,
whether an $n$-cp$\omega_1$ $K$ can be homeomorphic to a principal $\Z_n$-bundle over $\omega_1$ 
is not very clear when $n$ is not fixed.

\begin{example}
   A family $\mathscr{B}$ in $\omega_1\times 2$ and 
   a family $\mathscr{C}$ in $\omega_1\times 3$ such that $K(\mathscr{B})$ and $K(\mathscr{C})$
   are homeomorphic, $K(\mathscr{B})$ is a principal $\Z_2$-bundle but 
   $K(\mathscr{C})$ is not a principal $\Z_3$-bundle over $\omega_1$.
\end{example}
\begin{proof}[Details]
   This is a special case of Example \ref{ex:banded}.
   Set $B(\alpha,i)=[0,\alpha]\times\{i\}$,
   $C(\alpha,0)=[0,\alpha]\times\{0\}$,
   $C(\alpha,1) = \{\langle\alpha,1\rangle\}\cup[0,\alpha)\times\{1,2\}$
   and $C(\alpha,2) = \{\langle\alpha,2\rangle\}$.
   One sees easily (i.e. by applying Lemma \ref{lemma:bundlesym} below) that
   $\mathscr{C}$ does not yield a principal $\Z_3$-bundle.
\end{proof}

This annoyance disappears when one looks only at bundle-homeomorphisms,
as the action can be carried by it, hence two $n$-cp$\omega_1$ $K$ which are 
bundle-homeomorphic either both possess such an action, or both do not.

\begin{lemma}
   Let $K_0$ be a principal $\Z_n$-bundle over $\omega_1$ and $h:K_0\to K_1$ be a bundle-homeomorphism.
   Then $h$ induces a permutation $\sigma_\alpha$ on $\{\alpha\}\times n$,
   and $K_1$ is a principal $\Z_n$-bundle over $\omega_1$ with action 
   $\star_\alpha$ given by
   $k,\langle\alpha,i\rangle\mapsto
    \langle\alpha,k\star_\alpha i\rangle=\langle\alpha,\sigma_\alpha(k\bm{\cdot}_\alpha\sigma^{-1}_\alpha(i))\rangle$.
\end{lemma}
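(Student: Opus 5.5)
The proof is a transport of structure: we conjugate the action on $K_0$ by $h$ and check that everything survives. Since $h$ is a bundle-homeomorphism, it maps each fiber $\{\alpha\}\times n$ of $K_0$ bijectively onto the fiber $\{\alpha\}\times n$ of $K_1$. So $h\upharpoonright\{\alpha\}\times n$ has the form $\langle\alpha,i\rangle\mapsto\langle\alpha,\sigma_\alpha(i)\rangle$ for a unique permutation $\sigma_\alpha\in\mathfrak{S}_n$, which gives the first claim. Let $\Phi_k:K_0\to K_0$ denote the action of $k\in\Z_n$ on $K_0$, so $\Phi_k(\langle\alpha,i\rangle)=\langle\alpha,k\bm{\cdot}_\alpha i\rangle$. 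We then define $\Psi_k=h\circ\Phi_k\circ h^{-1}:K_1\to K_1$. Evaluating this fiberwise gives exactly $\Psi_k(\langle\alpha,i\rangle)=\langle\alpha,\sigma_\alpha(k\bm{\cdot}_\alpha\sigma_\alpha^{-1}(i))\rangle=\langle\alpha,k\star_\alpha i\rangle$.

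We then verify the axioms in order.

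\emph{Group action.} We have $\Psi_0=h\circ id\circ h^{-1}=id$ and $\Psi_{k+l}=h\Phi_k\Phi_l h^{-1}=h\Phi_kh^{-1}h\Phi_lh^{-1}=\Psi_k\Psi_l$. So $k\mapsto\Psi_k$ is a homomorphism from $\Z_n$ to the permutations of $K_1$.

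\emph{Fiber invariance.} Both $h$ and $h^{-1}$ preserve fibers, and each $\Phi_k$ does too, so each $\Psi_k$ leaves every fiber invariant.

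\emph{Free and transitive on fibers.} On the fiber over $\alpha$, the action $\star_\alpha$ is the action $\bm{\cdot}_\alpha$ conjugated by the bijection $\sigma_\alpha$. Conjugation by a bijection preserves freeness and transitivity: if $k\star_\alpha i=i$ then $k\bm{\cdot}_\alpha\sigma_\alpha^{-1}(i)=\sigma_\alpha^{-1}(i)$, hence $k=0$. Likewise, any $j$ is reached from $i$ by the $k$ with $k\bm{\cdot}_\alpha\sigma_\alpha^{-1}(i)=\sigma_\alpha^{-1}(j)$.

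\emph{Continuity.} Since $\Z_n$ is discrete, continuity of the action $\Z_n\times K_1\to K_1$ amounts to continuity of each $\Psi_k$. This holds because $\Psi_k$ is a composition of the homeomorphisms $h^{-1}$, $\Phi_k$ and $h$; in fact each $\Psi_k$ is itself a homeomorphism.

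Finally, $K_1$ is an $n$-cp$\omega_1$ by hypothesis, so $K_1$ with the action $\star_\alpha$ is a principal $\Z_n$-bundle over $\omega_1$.

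There is no real obstacle in this argument. The only point needing care is the bookkeeping of the conjugation formula, namely checking that the fiberwise expression of $h\circ\Phi_k\circ h^{-1}$ agrees with the stated $\sigma_\alpha(k\bm{\cdot}_\alpha\sigma_\alpha^{-1}(i))$ and not with some inverse variant.
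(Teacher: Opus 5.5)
Your proposal is correct and takes the same approach as the paper, whose proof is just the commutative square showing that $\star_\alpha$ is $\bm{\cdot}_\alpha$ conjugated by $\sigma_\alpha$, i.e.\ the action transported along $h$. You have written out the checks that the diagram leaves implicit (that $k\mapsto h\circ\Phi_k\circ h^{-1}$ is a homomorphism, preserves fibers, is free and transitive, and is continuous).
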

\begin{proof}\ \\
   \begin{center} 
     \begin{tikzcd}
     \sigma_\alpha(i) 
       \arrow[r, rightarrow , "\star_\alpha"]
       \arrow[d, leftarrow ,"\sigma_\alpha"]
     &
     k \star_\alpha i
       \arrow[d, leftarrow , "\sigma_\alpha"]
     \\
     i
       \arrow[r, rightarrow ,"\bm{\cdot}_\alpha"]
     &
     k\bm{\cdot}_\alpha i 
     \end{tikzcd}
   \end{center}
\end{proof}

\begin{rem}
   There is an unfortunate clash of notation between the permutation $\sigma_\alpha$ and the 
   enumeration $\sigma_\alpha(n)$ of $L_\alpha$ in a ladder system used in previous sections.
   Since no ladder system is used in this subsection, this should not cause too much confusion.
\end{rem}

\begin{lemma}
   \label{lemma:bundlesym}
   If a topology $\tau$ on $\omega_1\times n$ yields a
   principal $\Z_n$-bundle $K$ over $\omega_1$, then $\tau$ is transverse. 
   Moreover, the order of $K$ divides $n$.
\end{lemma}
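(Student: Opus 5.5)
We need to show that a principal $\Z_n$-bundle topology is transverse, and that its order divides $n$.

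\emph{Transversality.} The plan is to use the action to produce, around each fibre, clopen sets meeting every fibre at most once, and then invoke Lemma \ref{lemma:symsim}.

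Fix $\alpha$ limit and choose disjoint clopen $B(\alpha,i)$ engendering the topology as in the generalities. The action of $k\in\Z_n$ is a homeomorphism of $K$ preserving fibres; write it as $g_k$.

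Since $\langle\alpha,0\rangle$ has a neighbourhood $V$ with $g_k(V)\cap V=\varnothing$ for $k\neq 0$ (Hausdorff, finitely many $k$, freeness at $\langle\alpha,0\rangle$), we may shrink $B(\alpha,0)$ to $V\cap B(\alpha,0)\cap\pi^{-1}((\beta,\alpha])$. We then replace $B(\alpha,g_k\cdot 0)$ by $g_k(B(\alpha,0))$. These $n$ translates are pairwise disjoint clopen sets covering the fibre $\{\alpha\}\times n$ (transitivity) and containing each point $\langle\alpha,i\rangle$ exactly once.

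By the closed-map argument of the generalities, there is $\beta(\alpha)<\alpha$ such that their union contains $\pi^{-1}((\beta(\alpha),\alpha])$. Each translate meets every fibre $\{\xi\}\times n$ in at most one point. Indeed, if $\langle\xi,j\rangle$ and $\langle\xi,j'\rangle$ with $j\neq j'$ both lay in $g_k(V)$, transitivity gives $\langle\xi,j'\rangle=g_m\langle\xi,j\rangle$ with $m\neq0$, so $g_k(V)\cap g_{k+m}(V)\neq\varnothing$, a contradiction.

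Since there are $n$ disjoint translates and $n$ points in each fibre of $(\beta(\alpha),\alpha]$, each translate meets each such fibre exactly once. This is precisely the hypothesis of Lemma \ref{lemma:symsim}, where successor $\alpha$ are handled trivially. Hence there is a transverse $\mathscr{A}\sim\mathscr{B}$, so $\tau$ is transverse.

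\emph{Order divides $n$.} Take a club $E$ witnessing the order $m$, so $|E\cap\{\alpha\}\times n|=m$ on $\pi(E)$. Lemma \ref{lemma:symorder} lets us assume this holds on a club. Consider the stabilizer-type set
\[
H_\alpha=\{k\in\Z_n : g_k(E\cap\{\alpha\}\times n)=E\cap\{\alpha\}\times n\}.
\]
The goal is to find a club $E_0\subset E$ on which the fibre sets are single $H$-orbits for one subgroup $H$. Then $|E_0\cap\{\alpha\}\times n|=|H|$ divides $n$, and minimality of $m$ forces $m=|H|$.

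First, Fodor gives a fixed $H$ and fixed ``shape'' on a stationary set. The shape of the fibre set, as a subset of the $\Z_n$-torsor, is determined up to translation by the finite data of differences; there are finitely many possibilities, so one of them occurs stationarily.

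Next, we argue with closedness, as in Lemma \ref{lemma:final}. Convergence is preserved under the continuous $g_k$, so the difference pattern is preserved in limits. Hence the set of $\alpha\in\pi(E)$ with a given pattern is closed, and the pattern is constant on a club. Choosing one translate class of points, namely the points $x$ whose $g_k$-images for $k$ in a fixed coset pattern lie in $E$, gives a closed unbounded subset of $E$ meeting each fibre in an orbit of the subgroup generated by the pattern's differences.

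I expect the main obstacle to be this last step: to guarantee that the minimal club splits into a \emph{full} orbit of a subgroup. The approach I would try is to take a minimal-order club $E$ and an $x\in E_\alpha$, and observe that for each difference $d$ occurring in the pattern, $E\cap g_d(E)$ is again club (it contains the limit behaviour on the club where the pattern is constant). Its fibre count is at most $m$, and by minimality equals $m$, so $g_d(E)=E$ on a club. Thus the differences form a subgroup acting transitively on each fibre set, which gives $m\mid n$.
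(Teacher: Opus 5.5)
Your proposal is correct and follows the paper's proof. For transversality you use the action as the paper does: the $n$ translates of a small clopen neighbourhood of $\langle\alpha,0\rangle$ are disjoint and cover the nearby fibres, hence meet each of them exactly once, so Lemma \ref{lemma:symsim} applies. For divisibility your final paragraph is the paper's minimality argument, namely that if $g_k(E)\cap E$ met stationarily many fibres in a nonempty set of fewer than $m$ points, Lemma \ref{lemma:symorder} would yield a club with constant fibre count below $m$, so $g_k(E)$ and $E$ coincide or are disjoint on club-many fibres.

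The paper runs this directly for every $k\in\Z_n$, so you can drop the club-constant ``pattern'' step. You can also drop the ``one translate class'' sentence, which as stated assumes the conclusion: an orbit of the subgroup generated by the pattern's differences contains the whole fibre set of $E$ and may be strictly larger, so it lies in $E$ only once you know the fibre set is a coset.
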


In what follows, $k\bm{\cdot} E$ denotes $\{\langle\alpha,k\bm{\cdot}_\alpha i\rangle\,:\,\langle\alpha,i\rangle\in E\}$,
i.e. the image of $E$ under the action of $k$.
\begin{proof}
   Define the $B(\alpha,i)$ starting from $\tau$ as above.
   Since the continuous
   action leaves the fibers invariant and the $B(\alpha,i)$ partition $\{\alpha\}\times n$,
   for each $k,i$, there is some $\beta_i<\alpha$ and some $j$ such that 
   $\left(k{\bm{\cdot}}B(\alpha,i)\right)\cap[\beta_i,\alpha]\times n = B(\alpha,j)\cap[\beta_i,\alpha]\times n$.
   Let $\beta$ be $\max_i\beta_i$, then 
   since the action is transitive
   each $B(\alpha,i)$ intersects
   each fiber $\{\xi\}\times n$ for each $\beta<\xi\le\alpha$.
   They must thus intersect it just once and we may conclude with Lemma 
   \ref{lemma:symsim}.
   \\
   Now, let $m$ be the order of $K$.
   If $E$ is club in $K$ such that $|E\cap\{\alpha\}\times n|=m$ for each $\alpha\in\pi(E)$,
   there is a club $C$ such that
   for each $\alpha\in C$ we have either
   $k{\bm{\cdot}} E\cap \left(\{\alpha\}\times n \right)= E\cap \left(\{\alpha\}\times n\right)$
   or $\left(k{\bm{\cdot}} E\right)\cap E\cap \left(\{\alpha\}\times n \right)= \varnothing$. 
   Indeed, if not, then there is a stationary $S$ such that
   $\left((k{\bm{\cdot}} E)\cap E\right)\cap \left(\{\alpha\}\times n\right)$ and 
   $\left((k{\bm{\cdot}}E) - E\right)\cap \left(\{\alpha\}\times n\right)$
   are non-empy for $\alpha\in S$. In particular, the club set $F = \left(k{\bm{\cdot}} E\right)\cap E$ 
   satisfies $|F\cap\{\alpha\}\times n|<m$ for each $\alpha\in S$.
   Hence there is a stationary $S_0\subset S$ and $k<m$ such that $|F\cap\{\alpha\}\times n|=k$
   when $\alpha\in S_0$. This shows that the order of $K$ is $\le k<m$ by
   Lemma \ref{lemma:symorder}, a contradiction.
   Hence, the images of $E$ by the action of $\Z_n$ partition $n$ in pieces of size $m$, so $m$ divides $n$.
\end{proof}

The fact that each fiber is a discrete space yields complications when one 
tries to describle for which kind of families $\mathscr{B}$ in $\omega_1\times n$ is $K(\mathscr{B})$ 
a principal $\Z_n$ bundle. Call these {\em principal families}.
If $E\subset K$ and $i\in n$, we write $E^{\bm{+}}$ 
for $\{\langle\alpha,k+1\rangle\,:\,\langle\alpha,k\rangle\in E\}$.
It would be nice if principal families were simply those for which one obtains $B(\alpha,i+1)$ simply
by pushing up $B(\alpha,i)$: $B(\alpha,i+1)= B(\alpha,i)^{\bm{+}}$, for each $i$. 
(Of course, all additions are modulo $n$.)
But Lemma \ref{lemma:symsim} indicates that it cannot be {\em that} simple.
Also, nothing garantees that the action of $\Z_n$ is given by 
$\langle\alpha,k\bm{\cdot}_\alpha i\rangle=\langle\alpha,i+k\rangle$.
One of the problems lies in the fact that 
we may permute the members of some (or each) $\pi^{-1}(\{\alpha\})$,
altering in the same time the $B(\alpha,i)$ and the group action, rendering things more difficult
to decipher. We thus take a moment to talk about these global permutations,
which forces us to switch to group theoretic considerations (which remain, thankfully for us, elementary).
\\
Recall that $\mathfrak{S}_n$ is the group of permutations on $n=\{0,1,\dots,n-1\}$.
A {\em global permutation} $\bm{\sigma}$ of $\omega_1\times n$ is a 
map 
$\omega_1\to\mathfrak{S}_n$. We write $\sigma_\alpha$ for $\bm{\sigma}(\alpha)$.
Given a family $\mathscr{B}$ in $\omega_1\times n$, we define $\bm{\sigma}\mathscr{B}$
as the family 
\begin{equation}
    \label{eq:sigmaB}
    \bm{\sigma}B(\alpha,i) = 
    \{\langle\beta,\sigma_\beta(j)\rangle\,:\,\beta\le\alpha,\,\langle\beta,j\rangle\in B(\alpha,i)\}.
\end{equation}
If $\tau$ is the topology on $\omega_1\times n$ engendered by $\mathscr{B}$,
the topology engendered by $\bm{\sigma}\mathscr{B}$ is simply
$\tau_{\bm{\sigma}}=\{\bm{\sigma}(U)\,:\,U\in\tau\}$.
The map 
$h_{\bm{\sigma}}:\langle K(\mathscr{B}),\tau\rangle \to \langle K(\bm{\sigma}\mathscr{B}),\tau_{\bm{\sigma}}\rangle$
defined as $\langle\alpha,i\rangle\mapsto\langle\alpha,\sigma_\alpha(i)\rangle$ is obviously a bundle-homeomorphism. 
In a sense, $\bm{\sigma}$ proceeds by simply relabelling the fibers.

\begin{lemma}
   Let $\mathscr{A},\mathscr{B}$ be transverse such that $K(\mathscr{A})$ and $K(\mathscr{B})$ are bundle-homeomorphic.
   Then there is a global permutation $\bm{\sigma}$ such that $\bm{\sigma}\mathscr{B}\sim\mathscr{A}$.
\end{lemma}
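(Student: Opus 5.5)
The plan is to read the global permutation directly off the given bundle-homeomorphism. Let $h:K(\mathscr{B})\to K(\mathscr{A})$ be a bundle-homeomorphism. Since $h$ leaves each fiber $\pi^{-1}(\{\alpha\})=\{\alpha\}\times n$ invariant and is a bijection, it restricts on each fiber to a permutation. We therefore define $\sigma_\alpha\in\mathfrak{S}_n$ by $h(\langle\alpha,i\rangle)=\langle\alpha,\sigma_\alpha(i)\rangle$, and set $\bm{\sigma}=\langle\sigma_\alpha\,:\,\alpha\in\omega_1\rangle$. With this choice $h=h_{\bm{\sigma}}$ as maps of the underlying set $\omega_1\times n$.

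Next we compare topologies. As observed just before the statement, $h_{\bm{\sigma}}$ is a homeomorphism from $\langle\omega_1\times n,\tau_{\mathscr{B}}\rangle$ onto $\langle\omega_1\times n,\tau_{\bm{\sigma}}\rangle$, where $\tau_{\bm{\sigma}}=\{\bm{\sigma}(U)\,:\,U\in\tau_{\mathscr{B}}\}$ is the topology engendered by $\bm{\sigma}\mathscr{B}$. Since $h$ is a homeomorphism onto $K(\mathscr{A})$, we also have $\tau_{\mathscr{A}}=\{h(U)\,:\,U\in\tau_{\mathscr{B}}\}$. These two descriptions coincide because $h=h_{\bm{\sigma}}$ setwise, so $\tau_{\mathscr{A}}=\tau_{\bm{\sigma}}$. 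Hence $id:K(\bm{\sigma}\mathscr{B})\to K(\mathscr{A})$ is a homeomorphism, which is exactly the statement $\bm{\sigma}\mathscr{B}\sim\mathscr{A}$.

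The one step that needs care is the claim that the family $\bm{\sigma}\mathscr{B}$ of (\ref{eq:sigmaB}) really engenders $\tau_{\bm{\sigma}}$. To check it, recall that the neighborhood base of $\langle\alpha,i\rangle$ in $K(\mathscr{B})$ consists of the sets $B(\alpha,i)\cap\pi^{-1}((\beta,\alpha])$. Applying $h_{\bm{\sigma}}$, which commutes with $\pi$, turns these into $\bm{\sigma}B(\alpha,i)\cap\pi^{-1}((\beta,\alpha])$. These are neighborhoods of $\langle\alpha,\sigma_\alpha(i)\rangle$, and by (\ref{eq:sigmaB}) that point lies in $\bm{\sigma}B(\alpha,i)$. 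The only subtlety is the labelling convention. The set $\bm{\sigma}B(\alpha,i)$ is the basic set at the point $\langle\alpha,\sigma_\alpha(i)\rangle$, not at $\langle\alpha,i\rangle$. So $\bm{\sigma}\mathscr{B}$ is really a family only after reindexing its members by $\sigma_\alpha(i)$ at level $\alpha$. This reindexing changes neither the collection of sets nor the topology, so it does not affect $\sim$.

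We do not expect transversality of $\mathscr{A},\mathscr{B}$ to be needed beyond guaranteeing that both are legitimate families satisfying (\ref{eq:engenders}). Transversality is also preserved under $\bm{\sigma}$, since each $\sigma_\beta$ is a bijection of the fiber at $\beta$, which keeps the resulting family within the transverse class.
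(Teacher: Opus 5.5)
Your proof is correct and follows the paper's argument. You read off $\sigma_\alpha$ from the restriction of $h$ to each fiber, so that $h=h_{\bm{\sigma}}$, and the topology of $K(\mathscr{A})$ is then the image topology $\tau_{\bm{\sigma}}$ engendered by $\bm{\sigma}\mathscr{B}$. Your extra remark that $\bm{\sigma}B(\alpha,i)$ must be reindexed as the basic set at $\langle\alpha,\sigma_\alpha(i)\rangle$ is accurate (it is the same convention implicit in the factor $\sigma_\alpha^{-1}$ in the paper's formula for $\bm{\sigma}\nu_\alpha$), and it does not affect $\sim$.
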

\begin{proof}
   A bundle-homeomorphism $h$ permutes the members of the fibers, hence $h=h_{\bm{\sigma}}$
   for some global permutation $\bm{\sigma}$. Thus $\bm{\sigma}\mathscr{B}$ and $\mathscr{A}$
   define the same topology on $\omega_1\times n$.
\end{proof}

\begin{lemma}
   Let $\bm{\sigma}$ be a global permutation and $\mathscr{B}$ be a transverse family in $\omega_1\times n$.
   Then $\Upsilon(\bm{\sigma}\mathscr{B})$, which we denote as $\bm{\sigma}\Upsilon$, 
   is given by the collection $\bm{\sigma}\nu_\alpha$, where 
   $$ \bm{\sigma}\nu_\alpha(\beta) = \sigma_\beta\circ\nu_\alpha(\beta)\circ\sigma_\alpha^{-1}.$$
\end{lemma}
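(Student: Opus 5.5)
The plan is to unwind the definitions and compute $\bm{\sigma}\nu_\alpha(\beta)(i)$ directly from (\ref{eq:sigmaB}). First I would check that $\bm{\sigma}\mathscr{B}$ is again transverse. By (\ref{eq:sigmaB}), $\bm{\sigma}B(\alpha,i)\cap\pi^{-1}(\{\beta\})$ is the image under $\sigma_\beta$ of the singleton $B(\alpha,i)\cap\pi^{-1}(\{\beta\})$, so it is a singleton. Hence $\Upsilon(\bm{\sigma}\mathscr{B})$ makes sense.

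Next I need to fix the indexing. The set $\bm{\sigma}B(\alpha,i)$ contains $\langle\alpha,\sigma_\alpha(i)\rangle$, because $B(\alpha,i)\ni\langle\alpha,i\rangle$ (recall $\nu_\alpha(\alpha)=id$). So in the relabelled family, the member attached to the point $\langle\alpha,i'\rangle$ is $\bm{\sigma}B(\alpha,\sigma_\alpha^{-1}(i'))$. This reindexing is the only delicate step. Without it, the composite would come out as $\sigma_\beta\circ\nu_\alpha(\beta)$, which does not map $i'$ to itself at $\beta=\alpha$. With it, we also get the sanity check $\bm{\sigma}\nu_\alpha(\alpha)=\sigma_\alpha\circ id\circ\sigma_\alpha^{-1}=id$, as required.

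Now fix $\beta\le\alpha$ and an index $i'$, and set $i=\sigma_\alpha^{-1}(i')$. The unique point of $B(\alpha,i)$ in the fiber over $\beta$ is $\langle\beta,\nu_\alpha(\beta)(i)\rangle$. By (\ref{eq:sigmaB}), the unique point of $\bm{\sigma}B(\alpha,i)$ in that fiber is therefore $\langle\beta,\sigma_\beta(\nu_\alpha(\beta)(i))\rangle$. By the definition of $\Upsilon$, this gives
$$\bm{\sigma}\nu_\alpha(\beta)(i')=\sigma_\beta\circ\nu_\alpha(\beta)\circ\sigma_\alpha^{-1}(i'),$$
which is the claimed formula.

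Finally, I would verify that (\ref{eq:nualpha}) is preserved, which confirms that $\bm{\sigma}\Upsilon$ really is $\Upsilon$ of a transverse topology. Take the same $\gamma$ that works for $\nu$. Then for $\gamma<\eta\le\beta$,
$$\bm{\sigma}\nu_\beta(\eta)\circ\bm{\sigma}\nu_\alpha(\beta)=\sigma_\eta\nu_\beta(\eta)\sigma_\beta^{-1}\sigma_\beta\nu_\alpha(\beta)\sigma_\alpha^{-1}=\sigma_\eta\nu_\alpha(\eta)\sigma_\alpha^{-1}=\bm{\sigma}\nu_\alpha(\eta).$$
This is consistent with the fact, noted before the statement, that $h_{\bm{\sigma}}$ is a bundle-homeomorphism.
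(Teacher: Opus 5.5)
Your proof is correct and is essentially the paper's. The paper reads off from (\ref{eq:sigmaB}) that the member of $\bm{\sigma}\mathscr{B}$ through $\langle\alpha,\sigma_\alpha(i)\rangle$ meets the fiber over $\beta$ at $\sigma_\beta(\nu_\alpha(\beta)(i))$, i.e.\ $\bm{\sigma}\nu_\alpha(\beta)\circ\sigma_\alpha=\sigma_\beta\circ\nu_\alpha(\beta)$, and then composes with $\sigma_\alpha^{-1}$ on the right, which is exactly your reindexing $i=\sigma_\alpha^{-1}(i')$; your extra checks that $\bm{\sigma}\mathscr{B}$ is transverse and that (\ref{eq:nualpha}) is preserved are not in the paper's proof but are correct.
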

\begin{proof}
   By (\ref{eq:sigmaB}), 
   $$ \sigma_\beta\Bigl(\nu_\alpha(\beta)\bigl(\sigma_\alpha(i)\bigr)\Bigr) = \sigma_\beta\bigl(\nu_\alpha(\beta)(i)\bigr).$$
   The result follows by applying $\sigma_\alpha^{-1}$ on the right.
\end{proof}

Contents-wise, the next result is just a glorified remark, but
it makes our life quite easier, hence its promotion as a proposition.

\begin{prop}
   \label{prop:lifeiseasy}
   If $K$ is a principal $\Z_n$-bundle over $\omega_1$, then there is 
   a global permuation $\bm{\sigma}$ of $\omega_1\times n$ such that after applying it,
   the action by $\Z_n$ becomes $\langle\alpha,k\bm{\cdot}_\alpha i\rangle = \langle\alpha,i+k\rangle$.
\end{prop}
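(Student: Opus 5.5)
The plan is to work fiber by fiber. The action is free and transitive on each fiber $\{\alpha\}\times n$, so each fiber is a $\Z_n$-torsor. For each $\alpha$ I would fix the base point $\langle\alpha,0\rangle$. Free transitivity then makes the map $k\mapsto k\bm{\cdot}_\alpha 0$ a bijection $\Z_n\to n$. Define $\sigma_\alpha\in\mathfrak{S}_n$ as its inverse, that is, $\sigma_\alpha(k\bm{\cdot}_\alpha 0)=k$. This gives a global permutation $\bm{\sigma}=\langle\sigma_\alpha\,:\,\alpha\in\omega_1\rangle$. No coherence between different $\alpha$ is needed, because a global permutation is an arbitrary map $\omega_1\to\mathfrak{S}_n$.

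Next I apply $\bm{\sigma}$, i.e.\ pass to $\bm{\sigma}\mathscr{B}$ and the bundle-homeomorphism $h_{\bm{\sigma}}$. By the lemma on transporting the action through a bundle-homeomorphism, the new space is a principal $\Z_n$-bundle with action $k\star_\alpha i=\sigma_\alpha(k\bm{\cdot}_\alpha\sigma_\alpha^{-1}(i))$. To compute this, write $\sigma_\alpha^{-1}(i)=i\bm{\cdot}_\alpha 0$. Since the action is a group action, $k\bm{\cdot}_\alpha(i\bm{\cdot}_\alpha 0)=(k+i)\bm{\cdot}_\alpha 0$. Applying $\sigma_\alpha$ therefore gives $k\star_\alpha i=k+i$ modulo $n$, which is the required form.

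Finally I would check that continuity of the action survives. This is automatic: $\star$ is the conjugate of a continuous action by the homeomorphism $h_{\bm{\sigma}}$, and it still leaves fibers invariant. The only mild obstacle is bookkeeping, namely making sure the direction of $\sigma_\alpha$ versus $\sigma_\alpha^{-1}$ matches the conventions in (\ref{eq:sigmaB}) and in the transport lemma. Everything else is the elementary fact that a $\Z_n$-torsor with a chosen base point is isomorphic to $\Z_n$ acting on itself by translation.
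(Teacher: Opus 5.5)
Your proof is correct, and it arrives at the same fibrewise relabelling as the paper by a slightly different mechanism.

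The paper takes the permutation $\rho_\alpha$ by which the generator acts on $\{\alpha\}\times n$. It then uses, tacitly, that freeness and transitivity make $\rho_\alpha$ an $n$-cycle, and invokes the fact that permutations with the same cycle type are conjugate in $\mathfrak{S}_n$. This produces some $\sigma_\alpha$ with $\sigma_\alpha\rho_\alpha\sigma_\alpha^{-1}=\mu$.

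You instead write the conjugator down explicitly, as the inverse of the orbit map $k\mapsto k\bm{\cdot}_\alpha 0$. Indeed, $\sigma_\alpha\rho_\alpha\sigma_\alpha^{-1}(i)=\sigma_\alpha\bigl(1\bm{\cdot}_\alpha(i\bm{\cdot}_\alpha 0)\bigr)=i+1$. So your $\sigma_\alpha$ is one of the $n$ conjugators the paper's argument allows; by Lemma \ref{lemma:conjmu} these differ by powers of $\mu$, and your choice of base point $\langle\alpha,0\rangle$ pins one down.

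Your route is more self-contained:
\begin{itemize}
\item it needs no cycle-type theorem;
\item it avoids checking that $\rho_\alpha$ is an $n$-cycle;
\item it gives $k\star_\alpha i=i+k$ for every $k$ at once, sidestepping the point left implicit in the paper that $\rho_\alpha$ must be the action of $1$ rather than of an arbitrary generator.
\end{itemize}

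The paper's conjugacy phrasing is what gets reused later in Lemma \ref{lemma:rho}, where only some $\sigma$ with $\mu_\gamma=\sigma\mu\sigma^{-1}$ is needed. Your construction supplies that too, with $\sigma=\sigma_\gamma^{-1}$.
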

\begin{proof}
   Since $\Z_n$ is a cyclic group of order $n$, any action of
   it on $\{0,1,\dots,n-1\}$
   can be seen as that of a cyclic subgroup of $\mathfrak{S}_n$ of order $n$. Let $\rho$ be a generator of this
   subgroup, and write $\mu$ for the map $i\mapsto i+1\text{ mod } n$ seen as a member of $\mathfrak{S}_n$.
   It is well known that two elements of $\mathfrak{S}_n$ are conjugate iff
   they have the same cycle type, hence there is $\sigma\in\mathfrak{S}_n$
   such that $\sigma\rho\sigma^{-1} = \mu$.
   We may thus define $\sigma_\alpha$ looking at the generator $\rho_\alpha$
   of the action of $\Z_n$ on $\{\alpha\}\times n$,
   and the situation is as in the diagram below.
   \begin{center} 
     \begin{tikzcd}
     \sigma_\alpha(i) 
       \arrow[r, rightarrow ,"\mu"]
       \arrow[d, leftarrow ,"\sigma_\alpha"]
     &
     \sigma_\alpha(i) +1\text{ (mod }n) 
       \arrow[d, leftarrow , "\sigma_\alpha"]
     \\
     i
       \arrow[r, rightarrow ,"\rho_\alpha"]
     &
     \rho_\alpha(i)\\
     \end{tikzcd}
   \end{center}
   Setting $\bm{\sigma}$ as $\bm{\sigma}(\alpha) = \sigma_\alpha$,
   we obtained the desired global permutation.
\end{proof}

\iffalse
\begin{rem}
   It is known that
   any two isomorphic subgroups of $\mathfrak{S}_n$ that act freely and transitively on $\{0,\dots,n-1\}$
   are conjugate, hence Proposition \ref{prop:lifeiseasy} holds more generally.
   If one removes this condition, there are counter-examples
   For instance, $\mathfrak{S}_4$ contains two non-conjugate subgroups both isomorphic to
   $\mathfrak{S}_2\times\mathfrak{S}_2$.
\end{rem}
\fi

\begin{cor}
   \label{cor:bundle}
   Let $\mathscr{A}$ be a transverse family of $\omega_1\times n$. 
   Then $K(\mathscr{A})$ is a principal $\Z_n$-bundle
   over $\omega_1$ iff
   there is a global permutation $\bm{\sigma}$
   such that, writing $\mathscr{B} = \bm{\sigma}\mathscr{A}$, for each $\alpha\in\omega_1$ there
   is $\beta(\alpha)<\alpha$ with
   \begin{equation} 
      \label{eq:bundlebeta}
      B(\alpha,i+1)\cap\pi^{-1}((\beta(\alpha),\alpha]) = B(\alpha,i)^{\bm{+}}\cap\pi^{-1}((\beta(\alpha),\alpha]).
   \end{equation}
\end{cor}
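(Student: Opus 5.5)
We prove the two implications separately. Throughout, we use that applying a global permutation is a bundle-homeomorphism (the map $h_{\bm{\sigma}}$). By the lemma on carrying actions along bundle-homeomorphisms, $K(\mathscr{A})$ is a principal $\Z_n$-bundle iff $K(\bm{\sigma}\mathscr{A})$ is. Moreover $\bm{\sigma}\mathscr{A}$ is again transverse, since $\bm{\sigma}$ acts fiberwise by bijections.

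($\Rightarrow$) Assume $K(\mathscr{A})$ is a principal $\Z_n$-bundle. By Proposition \ref{prop:lifeiseasy} there is $\bm{\sigma}$ such that in $K(\mathscr{B})$, with $\mathscr{B}=\bm{\sigma}\mathscr{A}$, the action is $k\bm{\cdot}\langle\alpha,i\rangle=\langle\alpha,i+k\rangle$. Thus the action of $1$ is the map $E\mapsto E^{\bm{+}}$, which is a homeomorphism of $K(\mathscr{B})$. Fix $\alpha$ and $i$. The set $B(\alpha,i)^{\bm{+}}$ is clopen in $\pi^{-1}([0,\alpha])$ and contains $\langle\alpha,i+1\rangle$. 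Hence, as $B(\alpha,i+1)\cap\pi^{-1}((\beta,\alpha])$ for $\beta<\alpha$ forms a neighbourhood base at $\langle\alpha,i+1\rangle$, there is $\beta_i<\alpha$ with
$$B(\alpha,i+1)\cap\pi^{-1}((\beta_i,\alpha])\subset B(\alpha,i)^{\bm{+}}.$$
By transversality both sets meet each fiber in exactly one point, so this inclusion is an equality on $(\beta_i,\alpha]$. We then take $\beta(\alpha)=\max_{i<n}\beta_i$. For successor $\alpha$ the condition is trivial once $\beta(\alpha)$ is the predecessor, so the only work is at limit $\alpha$, which is handled the same way.

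($\Leftarrow$) Assume (\ref{eq:bundlebeta}) holds for $\mathscr{B}=\bm{\sigma}\mathscr{A}$. We define the action by $k\bm{\cdot}\langle\alpha,i\rangle=\langle\alpha,i+k\rangle$. It is free and transitive on each fiber and leaves fibers invariant, so it suffices to check that the generator $g:\langle\alpha,i\rangle\mapsto\langle\alpha,i+1\rangle$ is continuous; $g^{-1}=g^{n-1}$ then follows. Continuity at $\langle\alpha,i\rangle$ reduces to the following: for each $\beta<\alpha$ there is $\beta'<\alpha$ with
$$g\bigl(B(\alpha,i)\cap\pi^{-1}((\beta',\alpha])\bigr)\subset B(\alpha,i+1)\cap\pi^{-1}((\beta,\alpha]).$$
Now $g(B(\alpha,i))=B(\alpha,i)^{\bm{+}}$, so taking $\beta'=\max(\beta,\beta(\alpha))$ works by (\ref{eq:bundlebeta}). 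At isolated points continuity is automatic. Transporting the action back through $h_{\bm{\sigma}}^{-1}$ gives the principal bundle structure on $K(\mathscr{A})$.

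The main obstacle is bookkeeping rather than substance. In ($\Rightarrow$) one must check that the neighbourhood bases are indeed given by the sets $B(\alpha,j)\cap\pi^{-1}((\beta,\alpha])$ in the relabelled family. This holds because $\bm{\sigma}\mathscr{A}$ engenders the transported topology $\tau_{\bm{\sigma}}$, and it is exactly here that transversality is needed to upgrade the one-sided inclusion into the equality (\ref{eq:bundlebeta}).
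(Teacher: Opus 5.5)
Your proof is correct and follows the paper's route. For ($\Rightarrow$) you use Proposition \ref{prop:lifeiseasy} to normalize the action to addition modulo $n$ and then compare neighbourhood bases at $\langle\alpha,i+1\rangle$; for ($\Leftarrow$) you check continuity of the additive action directly. Your use of transversality to upgrade the one-sided inclusion to the equality (\ref{eq:bundlebeta}) is a cleaner justification of the step the paper only sketches by pointing to the proof of Lemma \ref{lemma:bundlesym}.
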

\begin{proof}
   First assume that $K(\mathscr{A})$ is a principal $\Z_n$-bundle
   over $\omega_1$.
   Take $\bm{\sigma}$ given by the proposition. We may thus assume that
   the action on each fiber is by addition mod $n$ in $K(\bm{\sigma}\mathscr{A})$.
   The rest of the proof is the same as that of
   \ref{lemma:bundlesym}: since $\langle\alpha,i\rangle\in B(\alpha,i)$,
   adding one means that $B(\alpha,i+1)$ is obtained by pushing up $B(\alpha,i)$,
   at least above some $\beta(\alpha)$.\\
   Conversely, up to applying $\bm{\sigma}$ we may assume that for each $\alpha$
   there is $\beta(\alpha)$ such that (\ref{eq:bundlebeta}) holds.
   But then the action $k,i\mapsto i+k\mod n$ on each fiber is continuous, free and transitive. 
\end{proof}

\begin{lemma}
   \label{lemma:bundlelast}
   If $\mathscr{B}$ is a family such that for each $\alpha$ there is $\beta(\alpha)<\alpha$
   such that (\ref{eq:bundlebeta}) holds, then there is a family $\mathscr{C}\sim\mathscr{B}$
   for which this holds with $\beta(\alpha) = 0$.
\end{lemma}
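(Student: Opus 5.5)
We build $\mathscr{C}$ by transfinite induction on $\alpha$, in the spirit of Lemma \ref{lemma:symsim}. We keep at every stage the invariant that $C(\alpha,i+1)=C(\alpha,i)^{\bm{+}}$ on all of $[0,\alpha]$, that $C(\alpha,i)$ agrees with $B(\alpha,i)$ near $\alpha$, and that the $C(\alpha,i)$ are clopen in $K(\mathscr{B})$. The first observation is that (\ref{eq:bundlebeta}) together with $\langle\alpha,i\rangle\in B(\alpha,i)$ forces, above $\beta(\alpha)$, each $B(\alpha,i)$ to be $B(\alpha,0)$ shifted by $i$. Hence the whole structure near $\alpha$ is determined by $B(\alpha,0)$.

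\textbf{Initial and successor steps.} Put $C(0,i)=\{\langle 0,i\rangle\}$. If $\alpha=\beta+1$, set $C(\alpha,i)=\{\langle\alpha,i\rangle\}\cup C(\beta,i)$. This clearly preserves the invariant: the point $\langle\alpha,i\rangle$ is isolated, and the invariant holds for $\beta$.

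\textbf{Limit steps.} Let $\alpha$ be a limit and put $\beta=\beta(\alpha)$ (we may increase it to a successor ordinal if convenient). For $\xi\in(\beta,\alpha]$, the set $B(\alpha,0)\cap\pi^{-1}(\{\xi\})$ is nonempty, since $B(\alpha,0)\cap\pi^{-1}(\{\xi\})\subset\{\xi\}\times n$ and its shifts partition the fiber. Because the shifts by $k\in\Z_n$ of a subset $D$ of a fiber are pairwise either equal or disjoint, those shifts partition $\{\xi\}\times n$. Since the $n$ sets $B(\alpha,i)$ partition it, $D$ must be a singleton. So $B(\alpha,0)\cap\pi^{-1}(\{\beta+1\})=\{\langle\beta+1,j\rangle\}$ for a unique $j$. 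We then define
$$ C(\alpha,i)=C(\beta+1,j+i)\cup\bigl(B(\alpha,i)\cap\pi^{-1}((\beta+1,\alpha])\bigr). $$
On $(\beta+1,\alpha]$ the shift relation comes from (\ref{eq:bundlebeta}). On $[0,\beta+1]$ it follows from the induction hypothesis, which gives $C(\beta+1,j+i+1)=C(\beta+1,j+i)^{\bm{+}}$. Hence $C(\alpha,i+1)=C(\alpha,i)^{\bm{+}}$ everywhere, that is, (\ref{eq:bundlebeta}) holds with $\beta(\alpha)=0$. Each $C(\alpha,i)$ contains $\langle\alpha,i\rangle$, and the family partitions $[0,\alpha]\times n$. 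The latter holds because on the lower part $i\mapsto j+i$ is a bijection of $\Z_n$, and on the upper part the $B(\alpha,i)$ already partition.

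\textbf{Topology check.} It remains to verify $\mathscr{C}\sim\mathscr{B}$. This is the only place needing care, and it is the step I expect to be the main obstacle. It suffices that every $C(\alpha,i)$ is open in $K(\mathscr{B})$ and every $B(\alpha,i)$ is open in $K(\mathscr{C})$. Indeed, for a limit $\alpha$, $C(\alpha,i)$ and $B(\alpha,i)$ coincide on $\pi^{-1}((\beta(\alpha)+1,\alpha])$, so the traces on tails $(\gamma,\alpha]$ give the same neighborhood bases at $\langle\alpha,i\rangle$. For the openness claim we argue inductively. The set $C(\beta+1,j+i)$ is clopen in $K(\mathscr{B})$ by the inductive hypothesis. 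The upper piece is $B(\alpha,i)$ intersected with the clopen set $\pi^{-1}((\beta+1,\alpha])$, where clopenness uses the continuity of $\pi$ and the fact that $\beta+1$ is isolated. A union of these two open sets is open. The reverse inclusion of topologies is symmetric, because points of $K(\mathscr{B})$ have basic neighborhoods of the form $B(\alpha,i)\cap\pi^{-1}((\gamma,\alpha])$, which equal $C(\alpha,i)\cap\pi^{-1}((\gamma,\alpha])$ once $\gamma\ge\beta(\alpha)+1$.

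Finally, one should make sure that $\mathscr{C}$ satisfies (\ref{eq:engenders}) so that $K(\mathscr{C})$ makes sense. This follows from the same coincidence on tails together with the fact that $\mathscr{B}$ satisfies it.
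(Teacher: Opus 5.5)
Your proposal is correct and follows the paper's route. The paper just says to rerun the induction of Lemma~\ref{lemma:symsim}: add an isolated point at successors, and at limits glue an already-built lower set to the top part of $B(\alpha,i)$. Your choice of index $j+i$ and gluing level $\beta(\alpha)+1$ makes the shift relation hold on all of $[0,\alpha]$, and it also covers the fiber over $\beta(\alpha)+1$, which the displayed formula in Lemma~\ref{lemma:symsim} leaves out.

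One harmless slip: shifts of an arbitrary subset of a fiber need not be pairwise equal or disjoint (e.g.\ $\{0,1\}$ in $\Z_3$). You do not need this, because the sets $B(\alpha,i)\cap\pi^{-1}(\{\xi\})$ are already pairwise disjoint, since the $B(\alpha,i)$ partition $[0,\alpha]\times n$.
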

\begin{proof}
   Same proof as Lemma \ref{lemma:symsim}: build $C(\alpha,i)$ by induction.
\end{proof}

%%%%%%%%%%%%%%%%%%%%%%%%%%%%%%%%%%%%%%%%%%%%%%%%%%%%%%%%%%%%%%%%%%%%%%%%%%%%%%%%%%
\subsection{Principal $\Z_n$-bundles over $\omega_1$ of any order dividing $n$ under $\clubsuit_C$.}

In this short subsection, we show that it is very easy to use the functions $z_\alpha$ of Section \ref{sec:tool}
to obtain the spaces given in the title.

\begin{thm}[{$\clubsuit_C$}]
   \label{thm:Zmbundle}
   For any $k$ dividing $m$, there is a principal $\Z_m$-bundle over $\omega_1$ of order $k$.
\end{thm}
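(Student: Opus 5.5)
Given $k\mid m$, write $m=k\ell$. I will mimic Section \ref{sec:usingclub} with $\Z_m$ in place of $\SSS^1$. Fix a $\clubsuit_C$-sequence $\mathscr{L}$ satisfying (\ref{eq:LalphaLambda0}) and let $z_\alpha$ be modelled on $\mathscr{L}$ (Lemma \ref{lemma:z_alpha}, Definition \ref{defi:modelled}). The jolt will be $\ell\in\Z_m$, the discrete analogue of $\theta=2\pi/k$.

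\textbf{Construction.} Define $\nu_\alpha:[0,\alpha]\to\mathfrak{S}_m$ by letting $\nu_\alpha(\beta)$ be the translation $i\mapsto i+\ell\cdot z_\alpha(\beta)\pmod m$. Then set $B(\alpha,i)=\{\langle\beta,i+\ell z_\alpha(\beta)\rangle:\beta\le\alpha\}$.

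\textbf{Transversality and the bundle structure.} This family is transverse, and condition (\ref{eq:nualpha}) is exactly property \ref{enum:lemma1} of Lemma \ref{lemma:z_alpha}. Indeed, translations commute, so
\[
\nu_\beta(\eta)\circ\nu_\alpha(\beta)=\text{translation by }\ell\bigl(z_\beta(\eta)+z_\alpha(\beta)\bigr)=\nu_\alpha(\eta)
\]
near $\beta$. For successor $\alpha$ and ordinals below the first limit, nothing needs checking, using \ref{enum:lemma4}. Lemma \ref{lemma:transversestuff} then gives a transverse $m$-cp$\omega_1$. Since $B(\alpha,i+1)=B(\alpha,i)^{\bm{+}}$ outright, Corollary \ref{cor:bundle} with $\bm{\sigma}=\mathrm{id}$ shows that it is a principal $\Z_m$-bundle, with action given by addition mod $m$.

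\textbf{Order at most $k$.} All shifts $\ell z_\alpha(\beta)$ lie in the subgroup $\ell\Z_m$, of size $k$. Hence $E=\omega_1\times\ell\Z_m$ is a union of basic sets $B(\alpha,i)$ for $i\in\ell\Z_m$, so it is clopen. It is unbounded, hence club, and it meets each fiber in $k$ points. So the order is at most $k$.

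\textbf{Order at least $k$.} By Lemma \ref{lemma:bundlesym} the order $r$ divides $m$; suppose $r<k$. Take a club $F$ meeting fibers in exactly $r$ points over a club $C$. Intersecting with a translate of $E$ as in the proof of Lemma \ref{lemma:bundlesym}, I may assume $F\subset E$ up to translation, so $F$ sits inside a coset of $\ell\Z_m\cong\Z_k$. Arguing as in that proof, $F\cap(\{\alpha\}\times m)$ is then a coset of a subgroup of order $r$ in $\ell\Z_m$. Since $r<k$, this coset is not invariant under adding $\ell$.

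Now run the argument of Theorem \ref{thm:omega_1}. By (\ref{eq:Balpha}) and Fodor, there are $\beta$ and a stationary $S_0\subset\{\alpha\in C:L_\alpha\subset C\}$ such that for $\alpha\in S_0$,
\[
F\cap\pi^{-1}((\beta,\alpha])=\bigcup_{\langle\alpha,i\rangle\in F}B(\alpha,i)\cap\pi^{-1}((\beta,\alpha]).
\]
Pick $\alpha\in S_0'\cap\Lambda_2$ above $\beta$ and $\eta_n\in S_0$ converging to $\alpha$. Corollary \ref{cor:z_alpha} gives $\xi=\xi_n\in L_\eta\subset C$ with $\xi>\beta$, where $\eta=\eta_n$, such that
\[
z_\alpha(\xi)=z_\eta(\xi)+z_\alpha(\eta)+1.
\]
Write $F_\gamma$ for the fiber set $F\cap(\{\gamma\}\times m)$, viewed as a subset of $\Z_m$. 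Following the $B(\alpha,\cdot)$ sets gives $F_\xi=F_\alpha+\ell z_\alpha(\xi)$ and $F_\eta=F_\alpha+\ell z_\alpha(\eta)$. Following the $B(\eta,\cdot)$ sets gives $F_\xi=F_\eta+\ell z_\eta(\xi)$. Combining these yields $F_\alpha+\ell=F_\alpha$, contradicting non-invariance.

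\textbf{Main obstacle.} The delicate step is establishing that the fiber sets of a minimal club are cosets of a subgroup, so that invariance under $+\ell$ forces the order to be at least $k$. I expect to handle it by iterating the dichotomy in the proof of Lemma \ref{lemma:bundlesym} over all $j\in\Z_m$, on a common club.
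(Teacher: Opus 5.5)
Your proof is correct and is essentially the paper's: you use the same family $B(\alpha,i)$, the same clopen set $\omega_1\times\ell\Z_m$ for the upper bound, and the same Fodor/$\clubsuit_C$/Corollary~\ref{cor:z_alpha} jolt argument for the lower bound; the only difference is that the paper first passes to the $\Z_k$-bundle $K(\mathscr{A})$ with jolt $1$, while you work directly in $\Z_m$ with jolt $\ell$. Your ``main obstacle'' is not one, since $F_\alpha+\ell=F_\alpha$ already makes $F_\alpha$ a nonempty union of cosets of $\ell\Z_m$ and hence $|F_\alpha|\ge k$ without any coset analysis; also, the Fodor step only gives the inclusion $F\cap\pi^{-1}((\beta,\alpha])\subseteq\bigcup_{\langle\alpha,i\rangle\in F}B(\alpha,i)$ (a closed $F$ need not contain all of $B(\alpha,i)$), which suffices because every fiber over $C$ has exactly $r$ points, and you should take $\alpha\in S_0\cap S_0'$ (or $n$ large) so that this inclusion is also available at $\alpha$.
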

\begin{proof}
   Let $\ell$ be such that $\ell\cdot k = m$.
   Let $\mathscr{L}$ be a $\clubsuit_C$-sequence and $z_\alpha$ be modelled on $\clubsuit_C$
   (Definition \ref{defi:modelled}).
   We define the family $\mathscr{B}$ in $\omega_1\times m$ as
   $B(\alpha,i) = \{\ell\cdot z_\alpha(\beta) + i\text{ (mod }m)\,:\,\beta\le\alpha\}$.
   Then $\Z_m$ acts on $K(\mathscr{B})$ by addition in each fiber,
   so $K(\mathscr{B})$ is a principal $\Z_m$-bundle over $\omega_1$.
   Since $z_\alpha$ takes integer values, the subspaces
   $$ P_i = \{\langle \alpha,i + j\cdot \ell\rangle\,:\,\alpha\in\omega_1,\,j=0,\dots,k-1\} 
      =\cup_{0\le j\le k-1}B(\alpha,i+j\cdot\ell)
   $$
   are clopen in $K(\mathscr{B})$.
   This shows that the order of $K(\mathscr{B})$ is at most $k$.\\
   Each $P_i$ is bundle-homeomorphic to the $\Z_k$-bundle $K(\mathscr{A})$, with 
   $A(\alpha,i) = \{z_\alpha(\beta) + i\text{ (mod }k)\,:\,\beta\le\alpha\}$.
   We now show that $K(\mathscr{A})$ has order $k$.
   (The proof is very similar -- but easier -- than that for principal $\SSS^1$-bundles over $\LL_+$.)
   If its order is $\ell<k$, there is
   a club $E\subset K(\mathscr{A})$ which contains exactly $\ell$ points in each fiber that it intersects.
   Let $C=\pi(E)$, and $S$ be stationary such that $L_\alpha\subset C$ for $\alpha\in S$.
   By (\ref{eq:Balpha}), for each $\alpha\in C$, there is $\beta(\alpha)<\alpha$ such that
   $$ E \cap \pi^{-1}((\beta(\alpha),\alpha])\subset \cup_{\langle\alpha,i\rangle\in E} A(\alpha,i). $$
   Let $S_0\subset S\cap C$ be stationary with $\beta(\alpha) = \beta$ for each $\alpha\in S_0$.
   Take $\alpha\in(S\cap C)'$ and an increasing sequence $\eta_n\in S\cap C$ with limit $\alpha$.
   Take the $\xi_n\in L_{\eta_n}$ given by Corollary \ref{cor:z_alpha}.
   Fix $n$ such that $\xi_n>\beta$.
   By definition of $\mathscr{A}$, 
   $A(\alpha,i)$ contains $\langle \eta_n, z_\alpha(\eta_n) + i\rangle$.
   But if $A(\alpha,i)$ contains $\langle \xi_n, j\rangle$,
   then $A(\eta_n, z_\alpha(\eta_n) + i)$ contains $\langle \xi_n, j-1\rangle$ (by Corollary \ref{cor:z_alpha}).
   Hence, the $\ell$-many $A(\alpha,i)$ containing points of $E\cap\pi^{-1}(\{\alpha\})$
   and the $\ell$-many $A(\eta_n,i)$ containing points of $E\cap\pi^{-1}(\{\eta_n\})$
   are separated by $1$ in $\pi^{-1}(\{\xi_n\})$. This is not possible if $\ell<k$ since the $\ell$ points
   in $E\cap\pi^{-1}(\{\xi_n\})$ must belong to both.
\end{proof}

%%%%%%%%%%%%%%%%%%%%%%%%%%%%%%%%%%%%%%%%%%%%%%%%%%%%%%%%%%%%%%%%%%%%%%%%%%%%%%%%%%
\subsection{Relations between both types of principal bundles}

Another very short subsection containing only one result, which shows that the similarities between
the two types of principal bundles are genuine.

\begin{thm}  
   \label{thm:relationship}
   Let $M$ be a principal $\SSS^1$-bundle over $\LL_+$ of order $n$
   and $K$ be a principal $\Z_n$-bundle over $\omega_1$ of order $n$.
   Then the following holds.
   
   (a) $M$ contains a principal $\Z_n$-bundle of order $n$ over $\omega_1$.
   
   (b) If $F\subset M$ is a copy of 
       an $m$-cp$\omega_1$, then $m \ge n$ and
       there is a club $C\subset\omega_1$ and $E\subset F$ such that
       $E\cap\pi^{-1}(C)$ is a principal $\Z_n$-bundle of order $n$ over $\omega_1$.
   
   (c) There is a principal $\SSS^1$-bundle over $\LL_+$ of order $n$ that contains $K$.
\end{thm}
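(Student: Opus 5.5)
For (a) I would apply Theorem \ref{thm:Nyikos} to $F=M$. Since $M$ has order $n$, we get a club $E\subset M$ and a club $C\subset\omega_1$ such that $E\cap M_\alpha=\{x_\alpha+\frac{2\pi k}{n}\,:\,k<n\}$ for $\alpha\in C$. Put $K_0=E\cap\pi^{-1}(C)$; it is closed because $C$ is closed and $\pi$ is continuous. Identify $C$ with $\omega_1$ by its increasing enumeration, and label the points of $K_0\cap M_\alpha$ by $k\in n$ via $x_\alpha+\frac{2\pi k}{n}$. Then $\pi\upharpoonright K_0$ is a closed continuous $n$-to-$1$ map onto $C\cong\omega_1$. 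Closedness follows from the countable compactness of $\pi^{-1}([p,\omega_1))$ (Lemma \ref{lemma:ctblycpct}): a countable set in $K_0$ whose projections accumulate at $\alpha$ has accumulation points in $M_\alpha\cap K_0$. Rotation by $\frac{2\pi}{n}$, i.e. the $\SSS^1$-action restricted to $\Z_n\subset\SSS^1$, leaves $K_0$ invariant. It is continuous, free and transitive on each fiber, so $K_0$ is a principal $\Z_n$-bundle over $\omega_1$. Its order is $n$: a club $E'\subset K_0$ meeting club-many fibers in $m<n$ points is also club in $M$. Applying Theorem \ref{thm:Nyikos} to $E'$ then gives order $\le m$ for $M$, which contradicts the hypothesis.

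For (b), let $F\subset M$ be a copy of an $m$-cp$\omega_1$. It is closed and unbounded (by countable compactness), so by passing to the club of $\alpha$ where the homeomorphism respects fibers (Fodor, as in Lemma \ref{lemma:copyomega_1}) we may assume $F\cap M_\alpha$ has at most $m$ points on a club. Apply Theorem \ref{thm:Nyikos} to $F$. Alternative (a) is impossible since fibers of $F$ are finite, so we get a club $E\subset F$ with $j$ equally spaced points per fiber. Minimality of the order forces $j\ge n$, hence $m\ge j\ge n$. To get exactly $n$ points, I would then intersect $E$ with the club $E_0$ from (a), or rather compare them. For $\alpha$ in a club, $E\cap M_\alpha$ is a regular $j$-gon. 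Lemma \ref{lemma:Fclosed} plus Fodor, as in the proof of Theorem \ref{thm:main}, shows that the relative angle between $E\cap M_\alpha$ and $E_0\cap M_\alpha$ stabilizes modulo the appropriate lattice. From this I would extract a regular $n$-gon sub-club of $E$, which I expect to be the main obstacle. My first attempt would be the removal procedure of Lemma \ref{lemma:final} applied to $E\cup(E_0+\phi)$, together with an essinf argument on the angular offset. Once a regular $n$-gon sub-club $E_1\subset E$ is found, the argument of (a) gives the principal $\Z_n$-bundle of order $n$ on $E_1\cap\pi^{-1}(C)$.

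For (c), use Proposition \ref{prop:lifeiseasy}, Corollary \ref{cor:bundle} and Lemma \ref{lemma:bundlelast} to present $K$ transversally by $\nu_\alpha:[0,\alpha]\to\Z_n$ acting by translation, with coherence (\ref{eq:nualpha}). Define $a_\alpha$ on ordinals by $a_\alpha(\beta)=\frac{2\pi}{n}\nu_\alpha(\beta)$ and interpolate on each $[\beta,\beta+1]$ as in Section \ref{sec:constr}. Coherence $a_\beta=^*_\beta a_\alpha$ follows from (\ref{eq:nualpha}) by an inductive argument like Lemma \ref{lemma:crux}, possibly after first modifying the family so that (\ref{eq:nualpha}) holds with controlled $\gamma$. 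Theorem \ref{thm:Mbundle} then gives a principal $\SSS^1$-bundle $M$, and $\omega_1\times\{\frac{2\pi k}{n}\}$ with the induced topology is exactly $K$. Hence $M$ has order $\le n$. If $M$ had order $m<n$, part (a)'s argument would give a club with a regular $m$-gon structure; intersecting it with the $\Z_n$-orbits near $K$ (Lemma \ref{lemma:Fclosed}) would yield a club of $K$ with fewer than $n$ points per fiber, contradicting that $K$ has order $n$.
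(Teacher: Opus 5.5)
Your part (a) is correct and follows the paper: rotation by $2\pi/n$ preserves a regular $n$-gon club. Checking the order by applying Theorem~\ref{thm:Nyikos} to a hypothetical smaller club and invoking minimality is a fine substitute for Lemma~\ref{lemma:uncountable}. One slip: Theorem~\ref{thm:Nyikos} applied to $F=M$ returns alternative (a), since $h_M\equiv 0$. The $n$-gon club comes from the definition of ``order $n$'', not from the theorem applied to $M$.

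\textbf{The gap.} Parts (b) and (c) are missing the same step. Given two clubs whose fibers are regular polygons, you must produce a club inside one of them with at most as many points per fiber as the other.
\begin{itemize}
\item In (b) you explicitly leave this open.
\item In (c), ``intersecting with the $\Z_n$-orbits near $K$ (Lemma~\ref{lemma:Fclosed})'' does not give a closed unbounded set. Two misaligned polygon clubs can be disjoint in every fiber, and Lemma~\ref{lemma:Fclosed} only describes a closed set just below a single fiber.
\item Taking unions and applying the removal procedure of Lemma~\ref{lemma:final} does not help either.
\end{itemize}

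\textbf{The missing idea} is to align the two clubs by a \emph{constant} rotation; this is what the paper does in (c), and the paper's one-line proof of (b) tacitly needs the same step.
\begin{itemize}
\item Let $E_0$ be a regular $n$-gon club. For $\alpha$ in a club, let $d(\alpha)\in[0,2\pi/n)$ be the least $\phi$ with $x+\phi\in E_0$ for some $x\in E\cap M_\alpha$.
\item Argue exactly as in Lemma~\ref{lemma:essinf}: take cluster points of witnesses and use Lemma~\ref{lemma:x_ntheta_n} and closedness. This shows $d$ equals $\theta_0=\mathrm{essinf}\,d$ on a club.
\item Hence $G=(E+\theta_0)\cap E_0$ is a club meeting every fiber over a club. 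It lies in $E_0$, its rotation $G-\theta_0$ lies in $E$, and it has at most $\min(|E\cap M_\alpha|,|E_0\cap M_\alpha|)$ points per fiber.
\item In (b), let $E\subset F$ be your $j$-gon club. Theorem~\ref{thm:Nyikos} gives a $g$-gon club $G'\subset G$ with $g\le n$, and minimality of the order forces $g=n$. So $G'-\theta_0\subset F$ is the $n$-gon club to which (a) applies.
\item In (c), take $E_0=F_K$ and $E$ an $m$-gon club with $m<n$. Then $G$ is a club of $F_K\cong K$ with at most $m$ points per fiber, which contradicts Lemma~\ref{lemma:symorder} and the order of $K$.
\end{itemize}

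\textbf{Smaller points in (b).} The argument of Lemma~\ref{lemma:copyomega_1} does not show that the embedding $h:K\to F$ respects fibers on a club. An $m$-cp$\omega_1$ may have isolated points over limit levels (Example~\ref{ex:banded}), so continuity at limits does not apply.
\begin{itemize}
\item To exclude alternative (a), use that $F$ is scattered, so it contains no circle.
\item To get $m\ge n$, fix a club $D$ of $\alpha$ with $h(\pi_K^{-1}([0,\alpha)))=F\cap\pi^{-1}((0,\alpha))$. The points of $F$ over $D$ that are limits of points of $F$ in lower fibers have $h$-preimages over the same level. So there are at most $m$ of them per fiber, and they form a club. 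Theorem~\ref{thm:Nyikos} with minimality then gives $n\le m$.
\end{itemize}

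\textbf{Smaller points in (c).} Coherence is immediate from (\ref{eq:nualpha}), with no Lemma~\ref{lemma:crux}-type induction. Apply it to the pairs $(\alpha,\eta)$ and $(\beta,\eta)$ at each limit $\eta\le\beta$ and subtract; this shows $b_\alpha-b_\beta$ is constant just below $\eta$. This does require the interpolation on $[\xi,\xi+1]$ to be translation invariant mod $n$, for instance always moving in the positive direction. Naive linear interpolation between representatives in $\{0,\dots,n-1\}$ can make $a_\alpha-a_\beta$ wind once around $\SSS^1$ on cofinally many intervals below a limit, and then $a_\alpha-a_\beta$ is not continuous there.
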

\begin{proof}
   Let $M$ be a principal $\SSS^1$-bundle over $\LL_+$ of order $n$.\\
   (a)
   Take the club $E$ given by Theorem \ref{thm:Nyikos} (b).
   Then the action of $\Z_n$ given by the rotation by $\frac{2\pi}{n}$ radians leaves $E$ invariant.
   By Lemma \ref{lemma:uncountable}, $E$ is of order $n$.
   \\
   (b)
   Take the $E\subset F$ given by Theorem \ref{thm:Nyikos} (b), and apply (a).
   \\
   (c)
   Let now $K$ be a principal $\Z_n$-bundle of order $n$ over $\omega_1$.
   By Corollary \ref{cor:bundle} and Lemma \ref{lemma:bundlelast},
   we may assume up to a global permutation that (\ref{eq:bundlebeta}) holds with $\beta(\alpha)=0$   
   for some family $\mathscr{B}$ which engenders the topology, and
   that the action of $\Z_n$ is by addition on each fiber.
   Let $b_\alpha(\gamma)\in n$ be 
   $\nu_\alpha(\beta)(0)$, that is:   
   the unique $i$ such that $\langle\gamma,i\rangle\in B(\alpha,0)$.
   Define $a_\alpha:(0,\alpha+1)\to\SSS^1$ (interval in $\LL_+$)
   as $a_\alpha(\gamma) = b_\alpha(\gamma)\cdot\frac{2\pi}{n}$ for each $\gamma<\alpha$ and interpolating between
   $a_\alpha(\gamma)$ and $a_\alpha(\gamma+1)$ in $[\gamma,\gamma+1]\subset\LL_+$.  
   By (\ref{eq:nualpha}), the topology defined by the family $a_\alpha$ is that of a
   principal $\SSS^1$-bundle which we call $M$.
   By construction, $K$ is contained in $M$ as the images of $b_\alpha(\gamma) + k\frac{2\pi}{n}$,
   hence the order of $M$ is at most $n$. 
   Write $F_K$ for this copy of $E$ in $M$.
   Let $E$ be club such that (b) of Theorem \ref{thm:Nyikos} holds for $m\le n$ minimal.
   By Lemma \ref{lemma:essinf}, 
   there is a club $C$ such that there are pairs of points respectively in $E$ and $K_F$
   whose separation is exactly $\theta\ge 0$ on the fibers above $C$.
   By the action of $\SSS^1$, there is a rotated copy $E_0$ of $E$ such that
   one point of $E_0$ and of $F_K$ coincide (in the fibers over $C$).
   If $m<n$, $G = E_0\cap F_K\cap\pi^{-1}(C)$ is a club subspace of $F_K$
   such that $M_\alpha\cap G$ contains $\le m<n$ points. 
   This contradicts Lemma \ref{lemma:symorder} since $F_K$ is a copy of $K$.
\end{proof}

We remark that the combination of Theorem \ref{thm:main} and \ref{thm:relationship} 
gives an indirect (much longer) proof of Theorem \ref{thm:Zmbundle}.

%%%%%%%%%%%%%%%%%%%%%%%%%%%%%%%%%%%%%%%%%%%%%%%%%%%%%%%%%%%%%%%%%%%%%%%%%%%%%%%%%%
\subsection{Transverse families that do not yield principal $\Z_n$-bundles}
\label{subsec:notprincipal}

Principal $\Z_n$-bundles over $\omega_1$ are transverse.
Since the fibers are discrete, one may get the impression that, 
by reordering them cleverly when needed, any transverse family yields a principal $\Z_n$-bundle.
This is actually not the case: some 
transverse families
are not principal.
The construction and proof is not difficult and is similar to what we did in Section \ref{sec:usingclub},
but it is somewhat tedious to write due to the presence of permutations in the fibers. 
We use the usual notation $(a_0, a_1, a_2,\dots, a_{k-1})$ for the permutation
sending $a_i$ to $a_{i+1\text{ (mod } k)}$.
From now on, $\mu$ denotes the permutation $(0, 1, 2, \dots, n-1)$ (that is: $i\mapsto i+1\text{ (mod } n)$
seen as a member of $\mathfrak{S}_n$).
Except when it disserves clarity, we use multiplicative notation for composition of members of $\mathfrak{S}_n$.

\begin{lemma}
   \label{lemma:conjmu}
   Let $\sigma\in\mathfrak{S}_n$.
   If $\sigma\mu\sigma^{-1} = \mu$, then $\sigma = \mu^k$ for some $k$.
\end{lemma}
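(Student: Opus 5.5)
The plan is to show that the centralizer of the $n$-cycle $\mu$ in $\mathfrak{S}_n$ is the cyclic group it generates. Everything follows by evaluating $\sigma$ along the orbit of $0$ under $\mu$.

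First, rewrite the hypothesis $\sigma\mu\sigma^{-1}=\mu$ as $\sigma\mu=\mu\sigma$, that is, $\sigma(i+1)=\sigma(i)+1 \pmod n$ for every $i\in n$. Set $k=\sigma(0)$. A direct induction on $i=0,1,\dots,n-1$ then gives $\sigma(i)=k+i=\mu^k(i) \pmod n$. The base case is $i=0$, and the induction step is exactly the commutation relation applied at $i$. Since every element of $n$ is of the form $\mu^i(0)=i$, we get $\sigma=\mu^k$.

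Alternatively, one can count. The conjugacy class of $\mu$ consists of all $n$-cycles, of which there are $(n-1)!$, so the centralizer has order $n!/(n-1)!=n$. It contains $\langle\mu\rangle$, which already has order $n$, so the centralizer equals $\langle\mu\rangle$. I would present the first, elementary argument, since it avoids the orbit–stabilizer bookkeeping.

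I expect no real obstacle. The only care needed is reading the composition order correctly: $\sigma\mu$ means apply $\mu$ first, as in the multiplicative notation just fixed. Arithmetic in $n=\{0,\dots,n-1\}$ is taken mod $n$, so the wrap-around from $n-1$ to $0$ is consistent.
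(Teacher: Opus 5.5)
Your proof is correct and is essentially the paper's argument. The paper reads off $\sigma(i)=i+k \pmod n$ from the relabelling formula $\sigma\mu\sigma^{-1}=(\sigma(0),\sigma(1),\dots,\sigma(n-1))$; this is the relation $\sigma(i+1)=\sigma(i)+1$ that you derive from $\sigma\mu=\mu\sigma$ and then unwind by induction.
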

\begin{proof}
   Conjugation in $\mathfrak{S}_n$ is relabelling, hence 
   $\sigma\mu\sigma^{-1} = (\sigma(0),\sigma(1),\dots,\sigma(n-1))$.
   It follows that $\sigma(i) = i+k\text{ (mod } n)$ for some $k$,
   and $\sigma = \mu^k$.
\end{proof}

We see $\Z_n$ as the subgroup of $\mathfrak{S}_n$ generated by $\mu$.
If $\mu$ acts on $\omega_1\times n$ as 
$\mu,\langle \alpha,i\rangle\mapsto \langle \alpha,\mu\bm{\cdot}_\alpha i\rangle$,
we write $\mu_\alpha$ for the permutation $i\mapsto \mu\bm{\cdot}_\alpha i$.
If $B\subset\omega_1\times n$, we write 
$\mu\bm{\cdot} B = \{\langle\alpha,\mu_\alpha(i)\rangle\,:\,\langle\alpha,i\rangle\in B\}$.

\begin{lemma}
   \label{lemma:actionbeta}
   If $K(\mathscr{B})$ is a principal $\Z_n$ bundle, then for each limit $\alpha$,
   there is $\beta(\alpha)<\alpha$ such that for each $i$,
   $$ \Bigl(\mu\bm{\cdot} B(\alpha,i)\Bigr) \cap \Bigl((\beta(\alpha),\alpha]\times n \Bigr)=  
      B(\alpha,\mu_\alpha(i)) \cap \Bigl((\beta(\alpha),\alpha]\times n\Bigr),
   $$
\end{lemma}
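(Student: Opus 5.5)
The plan is to use continuity of the map $x\mapsto\mu\bm{\cdot}x$ at the finitely many points of the fiber $\{\alpha\}\times n$, together with the fact that the $B(\alpha,j)$ are clopen and partition $\pi^{-1}([0,\alpha])$. Fix a limit $\alpha$ and $i\in n$. The action of $\mu$ is a homeomorphism of $K(\mathscr{B})$ that leaves each fiber invariant. Hence $\mu\bm{\cdot} B(\alpha,i)$ is a clopen subset of $\pi^{-1}([0,\alpha])$. It contains $\langle\alpha,\mu_\alpha(i)\rangle$, and it meets $\{\alpha\}\times n$ only in that point.

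The key step is a small separation argument. The set $B(\alpha,\mu_\alpha(i))$ is an open neighbourhood of $\langle\alpha,\mu_\alpha(i)\rangle$. By continuity of the action at $\langle\alpha,i\rangle$, there is $\beta_i^0<\alpha$ such that
$$\mu\bm{\cdot}\bigl(B(\alpha,i)\cap((\beta_i^0,\alpha]\times n)\bigr)\subset B(\alpha,\mu_\alpha(i)).$$
Here we use that the sets $B(\alpha,i)\cap((\beta,\alpha]\times n)$ form a neighbourhood base at $\langle\alpha,i\rangle$. Since $\mu$ preserves fibers, this yields the inclusion ``$\subset$'' of the desired equality on $(\beta_i^0,\alpha]\times n$. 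For the reverse inclusion we apply the same argument to $\mu^{-1}=\mu^{n-1}$ at the point $\langle\alpha,\mu_\alpha(i)\rangle$. This gives $\beta_i^1<\alpha$ with
$$\mu^{-1}\bm{\cdot}\bigl(B(\alpha,\mu_\alpha(i))\cap((\beta_i^1,\alpha]\times n)\bigr)\subset B(\alpha,i).$$
Applying $\mu$ to both sides, and again using that fibers are preserved, gives ``$\supset$'' on $(\beta_i^1,\alpha]\times n$.

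Finally, set $\beta(\alpha)=\max_{i\in n}\max(\beta_i^0,\beta_i^1)$. This maximum is still $<\alpha$ because $n$ is finite. Both inclusions hold on $(\beta(\alpha),\alpha]\times n$ for every $i$, which is the claimed equality.

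There is no genuine obstacle in this argument. The only point requiring care is the justification that the traces $B(\alpha,i)\cap\pi^{-1}((\beta,\alpha])$, for $\beta<\alpha$, form a neighbourhood base at $\langle\alpha,i\rangle$ when $\alpha$ is limit. This was noted at the beginning of the subsection on generalities. The same remark gives the needed neighbourhoods for $\mu^{-1}$, since $\mu^{-1}$ is also continuous.
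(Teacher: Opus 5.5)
Your proof is correct and is essentially the paper's argument: the paper simply notes that the homeomorphism given by $\mu$ must carry the basic neighbourhood $B(\alpha,i)$ of $\langle\alpha,i\rangle$ to a neighbourhood of $\langle\alpha,\mu_\alpha(i)\rangle$, i.e.\ to $B(\alpha,\mu_\alpha(i))$ near $\alpha$. You spell this out more carefully, getting the two inclusions from the continuity of $\mu$ and of $\mu^{-1}$, and taking the maximum over the finitely many $i$ to obtain a single $\beta(\alpha)$.
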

\begin{proof}
  The action by $\mu$ is an homeomorphism and thus must send a neighborhood
  of $\langle\alpha,i\rangle$ (i.e. $B(\alpha,i)$)
  to a neighborhood of $\langle\alpha,\mu_\alpha(i)\rangle$ (i.e. $B(\alpha,\mu_\alpha(i))$.
\end{proof}

Let us give a lemma that we may use in a concrete example.
\begin{lemma}
   \label{lemma:rho}
   Let $n\ge 3$ and
   $\mathscr{B}$ be family in $\omega_1\times n$.
   Suppose that there are $\gamma<\beta<\alpha$ such that
   $\bigl(\mu \bm{\cdot} B(\alpha,i)\bigr) \cap A = B(\alpha,\mu_\alpha(i))\cap A$
   and 
   $\bigl(\mu \bm{\cdot} B(\beta,i)\bigr) \cap A = B(\beta,\mu_\beta(i))\cap A$
   for $i=0,1,\dots,n-1$, with $A=\{\gamma,\beta,\alpha\}\times n$.
   Suppose moreover that there is some $\rho\in\mathfrak{S}_n$ such that the following holds for each $i$:
   
   (i) $B(\alpha,i)\ni\langle\beta,i\rangle$,
   
   (ii) $B(\beta,i)\ni\langle\gamma,i\rangle$,
   
   (iii) $B(\alpha,i)\ni\langle\gamma,\rho(i)\rangle$.\\
   Then, in $\mathfrak{S}_n$, $\rho$ is conjugate to $\mu^k$ for some $k$.
\end{lemma}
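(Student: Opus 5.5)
The plan is to turn the three incidence conditions (i)--(iii) into equalities between permutations of $n$, and then reduce to Lemma \ref{lemma:conjmu}. Throughout I use that $\mu_\alpha,\mu_\beta,\mu_\gamma$ come from a free transitive action of $\Z_n=\langle\mu\rangle$ on the corresponding fibers. This is implicit in the notation. It means that each $\mu_x$ generates a cyclic subgroup of order $n$ acting freely and transitively on $n$, so each $\mu_x$ is an $n$-cycle.

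First I check that on the three fibers of $A$ everything is a singleton. The sets $B(\alpha,0),\dots,B(\alpha,n-1)$ partition $[0,\alpha]\times n$. By (i), each of them meets $\{\beta\}\times n$, and by (iii), each meets $\{\gamma\}\times n$. Since there are $n$ sets and $n$ points in each fiber, each $B(\alpha,i)$ meets each of these fibers in exactly one point: $B(\alpha,i)\cap(\{\beta\}\times n)=\{\langle\beta,i\rangle\}$ and $B(\alpha,i)\cap(\{\gamma\}\times n)=\{\langle\gamma,\rho(i)\rangle\}$. In the same way, (ii) gives $B(\beta,i)\cap(\{\gamma\}\times n)=\{\langle\gamma,i\rangle\}$.

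Next I compare the two sides of the hypothesis fiber by fiber.
\begin{itemize}
\item On the fiber over $\beta$, $\mu\bm{\cdot}B(\alpha,i)$ contains $\langle\beta,\mu_\beta(i)\rangle$, while $B(\alpha,\mu_\alpha(i))$ contains $\langle\beta,\mu_\alpha(i)\rangle$. The hypothesis on $A$ then forces $\mu_\beta=\mu_\alpha$.
\item Using the family at $\beta$ on the fiber over $\gamma$, the same comparison gives $\mu_\gamma=\mu_\beta$.
\item Using the family at $\alpha$ on the fiber over $\gamma$: $\mu\bm{\cdot}B(\alpha,i)$ contains $\langle\gamma,\mu_\gamma(\rho(i))\rangle$, while $B(\alpha,\mu_\alpha(i))$ contains $\langle\gamma,\rho(\mu_\alpha(i))\rangle$. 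Hence $\mu_\gamma\rho=\rho\mu_\alpha$.
\end{itemize}
Writing $m=\mu_\alpha=\mu_\beta=\mu_\gamma$, the last identity says that $\rho m\rho^{-1}=m$, so $\rho$ commutes with $m$.

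Finally, $m$ is an $n$-cycle, so it has the same cycle type as $\mu$ and is conjugate to it: $m=\tau\mu\tau^{-1}$ for some $\tau\in\mathfrak{S}_n$. Then $\sigma=\tau^{-1}\rho\tau$ satisfies $\sigma\mu\sigma^{-1}=\mu$. By Lemma \ref{lemma:conjmu}, $\sigma=\mu^k$ for some $k$, so $\rho=\tau\mu^k\tau^{-1}$ is conjugate to $\mu^k$, as claimed.

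The only delicate step is the bookkeeping in the second paragraph. One must make sure that the hypothesized equality of traces on $A$ really identifies single points, which is why the singleton observation comes first. One must also keep straight which fiber's permutation ($\mu_\beta$ or $\mu_\gamma$) acts on which point. The hypothesis $n\ge 3$ plays no role in this argument; presumably it matters only for the subsequent example, since for $n\le 2$ every element of $\mathfrak{S}_n$ is already a power of $\mu$.
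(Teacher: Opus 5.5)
Your proof is correct and follows the paper's argument: the fiberwise comparisons give the paper's $(*)$, $(**)$, $(***)$ with $\nu_\alpha(\beta)=\nu_\beta(\gamma)=id$ and $\nu_\alpha(\gamma)=\rho$. From these you get $\mu_\alpha=\mu_\beta=\mu_\gamma$ commuting with $\rho$, conjugate this $n$-cycle to $\mu$, and conclude with Lemma \ref{lemma:conjmu}. Your explicit singleton check (which justifies the $\nu$ notation for a family not assumed transverse) and your flagging of the implicit free-transitivity assumption (which the paper uses through Proposition \ref{prop:lifeiseasy}) are both appropriate.
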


%The condition $\mu \bm{\cdot} B(\alpha,i) \cap A = B(\alpha,\mu_\alpha(i))\cap A$
%means that when one looks only at the fibers above $A$,
%the action by $\mu$ takes $B(\alpha,i)$ inside $B(\alpha,\mu_\alpha(i))$.
Notice that there are $\rho$ which are not conjugate to $\mu^k$ for any $k$ if $n\ge 3$:
$\mu^k$ is a product of cycles, each of the same length $\ell$ which must divide $n$.  
Thus, for instance, any cyclic permutation of length $n-1$ is not conjugated to any $\mu^k$, 
as in Figure \ref{fig:rho} (with $n=4$).
\begin{figure}
   \begin{center}
       \epsfig{figure=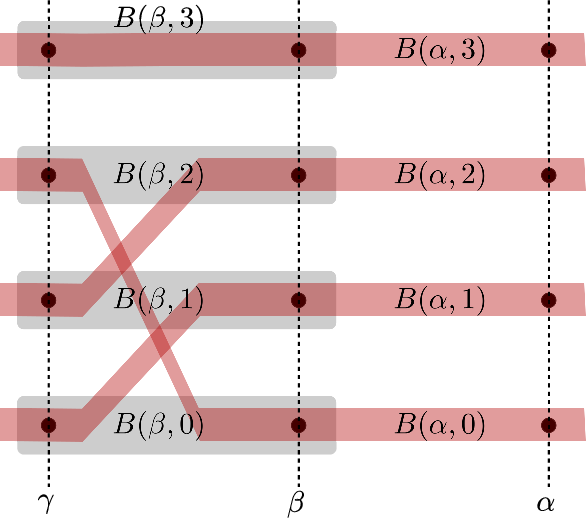, width=.45\textwidth}
       \caption{An example where Lemma \ref{lemma:rho} cannot hold.}
       \label{fig:rho}
   \end{center}
\end{figure}

\begin{proof}
   Let $\Upsilon = \Upsilon(\mathscr{B})$.
   The condition $\mu \bm{\cdot} B(\alpha,i) \cap A = B(\alpha,\mu_\alpha(i))\cap A$
   means in particular (when looking at $\{\beta\}\times n$)
   that $\mu_\beta(\nu_\alpha(\beta)(i)) = \nu_\alpha(\beta)(\mu_\alpha(i)) $, or in other words
   \begin{equation}
     \label{eq:boringone}
     \tag{$*$}
     \mu_\beta \cdot \nu_\alpha(\beta) = \nu_\alpha(\beta)\cdot \mu_\alpha.
   \end{equation}
   When looking at $\{\gamma\}\times n$, this yields 
   \begin{equation}
     \label{eq:boringtwo}
     \tag{$**$}
      \mu_\gamma \cdot \nu_\alpha(\gamma) = \nu_\alpha(\gamma)\cdot \mu_\alpha.
   \end{equation}
   Finally, the condition $\mu \bm{\cdot} B(\beta,i) \cap A= B(\beta,\mu_\alpha(i))\cap A$
   at $\{\gamma\}\times n$ gives
   \begin{equation}
     \label{eq:boringthree} 
     \tag{$***$}
     \mu_\gamma \cdot \nu_\beta(\gamma) = \nu_\beta(\gamma)\cdot \mu_\alpha.
   \end{equation}
   Then, (i)--(ii)--(iii) imply that $\nu_\alpha(\beta) = \nu_\beta(\gamma) = id$ and $\nu_\alpha(\gamma) = \rho$.
   Hence, by (\ref{eq:boringone}), (\ref{eq:boringtwo}) and (\ref{eq:boringthree}),
   we have that $\mu_\alpha = \mu_\beta=\mu_\gamma$ and $\mu_\gamma \rho = \rho\mu_\gamma$.
   Now, by Proposition \ref{prop:lifeiseasy}, 
   $\mu_\gamma = \sigma\mu\sigma^{-1}$ for some $\sigma$, hence 
   $$ 
      \sigma\mu\sigma^{-1} \rho = \rho \sigma\mu\sigma^{-1}
      \quad\Rightarrow\quad
      \mu(\sigma^{-1}\rho\sigma) = (\sigma^{-1}\rho\sigma) \mu.
   $$
   By Lemma \ref{lemma:conjmu}, $\sigma^{-1}\rho\sigma = \mu^k$, which is what we wanted to show.
\end{proof}

It is now rather easy to define a transverse family which is not principal: it suffices to
choose a $\rho$ not conjugated with $\mu$ and to define $B(\alpha,i)$ so that (i), (ii) and (iii)
of Lemma \ref{lemma:rho} hold for sufficiently many $\gamma<\beta<\alpha$.

\begin{example}
   \label{ex:simnotcyc}
   A family $\mathscr{B}$ in $\omega_1\times n$ ($n\ge 3$)
   which is transverse but not principal.
\end{example}
\begin{proof}[Details]
   Fix a $\rho\in\mathfrak{S}_n$ which is not conjugated to $\mu^k$ for any $k$.
   Define $B(\alpha,i)$ by induction as follows. The successor steps are as usual: 
   $B(\alpha+1,i) = B(\alpha,i)\cup\{\langle\alpha,i\rangle\}$.
   We assume (induction hypothesis):
   \begin{equation}
     \begin{array}{l}
     \forall \xi<\alpha,\,\xi\in\Lambda,\, 
     \text{ there are sequences }\gamma_m<\beta_m<\gamma_{m+1}\text{ with limit }\xi\\
     \text{ such that }  \forall i\,
     B(\xi,i)\ni\langle\beta_m,i\rangle,\, B(\beta_m,i)\ni\langle\gamma_m,i\rangle
       \text{ and }B(\xi,i)\ni\langle\gamma_m,\rho(i)\rangle.
     \end{array}
     \label{eq:induction}
   \end{equation}
   If $\alpha = \beta+\omega$ with $\beta\in\Lambda\cup\{0\}$, 
   it is easy to make (\ref{eq:induction}) work for $\xi=\alpha$ by choosing $\gamma_n=\beta + 2n$,
   $\beta_n = \beta+2n+1$ and defining $B(\alpha,i)$ accordingly.
   If $\alpha\in\Lambda_2$, choose an increasing sequence $\beta_m\in\Lambda$ converging to $\alpha$.
   Then, pick some $\gamma_m$ with $\beta_{m-1}<\gamma_m<\beta_m$ such that 
   $B(\beta_m,i)\ni\langle\gamma_m,i\rangle$ (which exists by the induction hypothesis).
   If $\gamma_m= \eta + 1$ for some $\eta$, let $\eta_m = \eta$.
   Otherwise, let $\eta_m$ be $\beta_{m-1}$.
   Let $A(\beta_m,i)$ be equal to $B(\beta_m,i)$ except 
   in the interval $(\eta_m,\gamma_m]$ 
   where it contains $(\eta_m,\gamma_m]\times\{\rho(i)\}$.
   Set now
   $$
      B(\alpha,i) = \{\langle\alpha,i\rangle\}\cup\bigcup_{m\in\omega} A(\beta_m,i)\cap (\beta_{m-1},\beta_m]\times n.
   $$
   Then $B(\alpha,i)$ satisfies (\ref{eq:Balpha}) and (\ref{eq:induction}).
   Hence,
   $K(\mathscr{B})$ is indeed a transverse $n$-cp$\omega_1$.
   \\
   Suppose now that there is a continuous free transitive action by $\Z_n$ on the $K(\mathscr{B})$.
   By Lemma \ref{lemma:actionbeta} and Fodor, there is $\beta$ and a stationary set $S$ of $\alpha$ 
   such that the action by $\mu$ sends $B(\alpha,i)\cap A$ to $B(\alpha,\mu_\alpha(i))\cap A$ when
   $\alpha\in S$, with $A=(\beta,\alpha]\times n$.
   Take $\alpha\in S\cap\Lambda_2$ and $m$ such that $\beta<\gamma_m$ given by (\ref{eq:induction}).
   We are then exactly in the situation of Lemma \ref{lemma:rho},
   with $\beta=\beta_m$ and $\gamma=\gamma_m$.
   Since $\rho$ is not conjugate to $\mu^k$, 
   no such action exists.
\end{proof}

Since a club $C\subset\omega_1$ is homeomorphic to $\omega_1$, 
if $K$ is an $n$-cp$\omega_1$ then $K\cap (C\times n)$ is also an $n$-cp$\omega_1$.
It may happen that a non-principal transverse family becomes principal when restricted to a club
set of fibers.
Using our old friend $\clubsuit_C$, we may obtain an example where this does not happen.

\begin{example}[{$\clubsuit_C$}]
   \label{ex:simnotcycclub}
   A transverse family $\mathscr{B}$ in $\omega_1\times n$ ($n\ge 3$)
   whose restriction to $C\times n$ is not principal, for any club $C\subset\omega_1$.
\end{example}
\begin{proof}[Details]
   Exactly the same construction as the previous example, except that we choose $\gamma_n,\beta_n$
   according to a $\clubsuit_C$ sequence $\mathscr{L}=\langle L_\alpha\,:\,\alpha\in\Lambda\rangle$.
   That is, 
   when $\alpha\in\Lambda_2$ and $\beta_m$ list $L_\alpha$ in increasing order, we
   take $\gamma_m\in L_{\beta_m}$ such that $\beta_{m-1}<\gamma_m<\beta_m$.
   If there is an action of $\Z_n$ on $K(\mathscr{B})\cap (C\times n)$ for some club $C$, 
   then there is a stationary set $S_0$ such that $L_\alpha\subset C$ for $\alpha\in S_0$.
   There is $\beta\in\omega_1$ such that
   for a stationary subset $S_1\subset S_0$, 
   the action by $\mu$ sends $B(\alpha,i)\cap A$ to $B(\alpha,\mu_\alpha(i))\cap A$,
   where $A=((\beta,\alpha]\cap C)\times n$.
   Take now $\alpha\in (S_1\cap C)'$ to conclude as in the previous example.
\end{proof}

\begin{q}
   Is $\clubsuit_C$ necessary in Example \ref{ex:simnotcycclub}~?
\end{q}

%%%%%%%%%%%%%%%%%%%%%%%%%%%%%%%%%%%%%%%%%%%%%%%%%%%%%%%%%%%%%%%%%%%%%%%%%%%%%%%%%%%%%%%%%%%%%%
\subsection{A non-normal principal $\Z_n$-bundle}

We describe here yet another Nyikos construction: that of a principal $\Z_n$-bundle which is non-normal
in {\bf ZFC}. He actually only gave an example when $n=2$ in \cite[Example 8.1]{Nyikoscoherent}, but
it is easy to generalize it to any $n$. The ideas are very similar (and actually easier) to what we did 
in the previous subsection.
We first notice the following.

\begin{lemma}[Nyikos]
    If a transverse $n$-cp$\omega_1$ contains $n$ disjoint copies of $\omega_1$ and is normal,
    it is homeomorphic to $\omega_1\times n$ with the product topology.
\end{lemma}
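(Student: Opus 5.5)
Let $K$ be a normal transverse $n$-cp$\omega_1$ containing $n$ pairwise disjoint copies $W_0,\dots,W_{n-1}$ of $\omega_1$. My plan is to show that, after thinning to a club set of fibers, the $W_j$ become $n$ disjoint clopen transversals. From them I will then build a homeomorphism with $\omega_1\times n$. Throughout I fix a transverse family $\mathscr{B}$ engendering the topology and satisfying (\ref{eq:Balpha}). By Lemma \ref{lemma:symbundle}, $\pi^{-1}([0,\alpha])\cong[0,\alpha]\times n$ for every $\alpha$.

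The first step is to normalize the copies. Each $W_j$ is closed in $K$, since a copy of $\omega_1$ in a countably compact space whose closure adds no point at $\omega_1$ is closed here ($\pi$ is closed and $K$ is countably compact). As in Lemma \ref{lemma:copyomega_1}, $\pi\upharpoonright W_j$ agrees on a club with the identity up to reindexing. So there is a club $C$ such that $W_j\cap\pi^{-1}(\{\alpha\})$ is a single point $\langle\alpha,w_j(\alpha)\rangle$ for every $\alpha\in C$ and every $j$. Disjointness then forces $j\mapsto w_j(\alpha)$ to be a permutation of $n$. Hence $\pi^{-1}(C)=\bigcup_j(W_j\cap\pi^{-1}(C))$, with the pieces closed and pairwise disjoint.

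The second step, which I expect to be the main obstacle, is to use normality to make the pieces clopen and to control what happens off $C$. Normality, applied inductively or in its finite-family form, gives pairwise disjoint open sets $U_j\supset W_j$. For each limit $\alpha\in C$, (\ref{eq:Balpha}) and transversality yield $\beta(\alpha)<\alpha$ such that $B(\alpha,w_j(\alpha))\cap\pi^{-1}((\beta(\alpha),\alpha])\subset U_j$ for every $j$. Fodor then gives a stationary $S$ with a constant $\beta$. Above $\beta$, each fiber $\pi^{-1}(\{\xi\})$ with $\xi$ below some point of $S$ has exactly one point in each $U_j$, because the $n$ sets $B(\alpha,w_j(\alpha))$ are disjoint transversals lying in the disjoint $U_j$. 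Since $S$ is unbounded, this holds for every $\xi>\beta$. Set $V_j=U_j\cap\pi^{-1}((\beta,\omega_1))$. These sets are open and pairwise disjoint, each meets every fiber above $\beta$ exactly once, and together they cover $\pi^{-1}((\beta,\omega_1))$. Each $V_j$ is therefore also closed in $\pi^{-1}((\beta,\omega_1))$, being the complement of the union of the others. The delicate point is that Fodor only gives stationarity, so I have to check carefully that unboundedness of $S$ suffices to cover all fibers. I believe it does, because each $B(\alpha,i)$ covers the whole interval $(\beta,\alpha]$.

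The third step is to finish. Each $V_j$ is a clopen transversal, so $\pi\upharpoonright V_j$ is a continuous closed bijection onto $(\beta,\omega_1)$, hence a homeomorphism. Consequently $\pi^{-1}((\beta,\omega_1))\cong(\beta,\omega_1)\times n$. The remaining part $\pi^{-1}([0,\beta+1])$ is clopen (taking $\beta$ to be a successor if needed) and is homeomorphic to $[0,\beta+1]\times n$ by Lemma \ref{lemma:symbundle}. Gluing the two pieces, with $[0,\beta+1]\sqcup(\beta+1,\omega_1)\cong\omega_1$ on each level, gives $K\cong\omega_1\times n$.
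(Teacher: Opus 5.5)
Your proposal is correct and follows essentially the same route as the paper. You normalize the copies on a club, separate them by pairwise disjoint open $U_j$ using normality, use Fodor to get a single $\beta$ above which each $U_j$ meets every fiber exactly once, and glue the resulting clopen transversals with Lemma \ref{lemma:symbundle}. You even supply details the paper leaves implicit, such as closedness of the copies and closedness of $\pi\upharpoonright V_j$. The only nit is bookkeeping: split $K$ as $\pi^{-1}([0,\beta])\sqcup\pi^{-1}((\beta,\omega_1))$, both already clopen, rather than pairing $[0,\beta+1]$ with $(\beta,\omega_1)$, which double-counts the fiber over $\beta+1$.
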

\begin{proof}
    There is a club $C\subset\omega_1$ such that each of the $n$ copies $A_0,\dots,A_{n-1}$ of $\omega_1$ intersects 
    exactly one member of the fibers above $C$, as easily seen.
    If $U_0,\dots,U_{n-1}$ are open disjoint and respectively contain $A_0,\dots,A_{n-1}$,
    by Fodor there is $\beta<\omega_1$ such that $\cup_{i=1,\dots,n} U_i \supset \pi^{-1}((\beta,\omega_1))$.
    Then each $U_i\cap\pi^{-1}((\beta,\omega_1))$ is clopen and provide an homeomorphism
    between $\pi^{-1}((\beta,\omega_1))$ and $(\beta,\omega_1)\times n$ with the product topology.
    We may then conclude with Lemma \ref{lemma:symbundle}.
\end{proof}

\begin{lemma}
  \label{lemma:coherent}
  There is a family of functions $u_\alpha:[0,\alpha]\to n$ ($\alpha\in\Lambda$, interval in $\omega_1$) 
  such that:
  
  (i) $u_\alpha(\beta) = 0 $ for each $\beta\in\Lambda$, $\beta\le\alpha$;
  
  (ii) for each $\beta<\alpha$, 
       the set $D_{\alpha,\beta}=\{\gamma\le\beta\,:\,u_\alpha(\gamma)\not=u_\beta(\gamma)\}$ is finite;
  
  (iii) for each $\alpha\in\Lambda_2$, each increasing sequence $\beta_m\in\Lambda$ with limit $\alpha$
        and each $i=1,\dots,n-1$ there is a sequence $\gamma_{m,i}<\beta_m$ with $\lim_{m\to\infty}\gamma_{m,i}=\alpha$
        such that $u_\alpha(\gamma_{m,i}) =  u_{\beta_m}(\gamma_{m,i}) + i \text{ (mod }n)$.
\end{lemma}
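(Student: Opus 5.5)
The plan is to copy the construction of Lemma \ref{lemma:z_alpha}, with two changes. Values will be taken in $\Z_n$. Each ``$+1$ on an interval'' in (\ref{eq:defz_alpha}) becomes ``$+i$ at a single successor point'', for $i=1,\dots,n-1$. Making the modifications pointwise keeps the difference sets $D_{\alpha,\beta}$ finite, which is (ii); putting them only at successor ordinals keeps (i). Since we work in ZFC, fix any ladder system $\mathscr{L}$ satisfying (\ref{eq:LalphaLambda0}), and use the fat ladders $M_\alpha=\{\delta_\alpha(k)\}$ of Lemma \ref{lemma:fatladder}.

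The definition is by induction on $\alpha\in\Lambda$. Take $u_\omega\equiv 0$, and let $u_{\alpha+\omega}$ equal $u_\alpha$ on $[0,\alpha]$ and $0$ above, as in (IV) of Lemma \ref{lemma:z_alpha}. For $\alpha\in\Lambda_2$ and each $k$, let $\delta=\delta_\alpha(k+1)$, and let $\ell$ be minimal with $\sigma_\delta(\ell)>\delta_\alpha(k)$; choose $\ell$ large enough when $\delta\in\Lambda_2$ so that the ladder of $\delta$ has $n$ further points below $\delta$. On $(\delta_\alpha(k),\delta]$ set
$$ u_\alpha(\zeta)=u_\delta(\zeta)+i \text{ (mod }n)\quad\text{if }\zeta=\sigma_\delta(\ell+i-1)+1,\ i=1,\dots,n-1, $$
and $u_\alpha=u_\delta$ elsewhere on the block. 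When $\delta\notin\Lambda_2$ there are no ladder points in $\Lambda$, and I use the points $\delta'+2i$, where $\delta=\delta'+\omega$. Since only successors are modified, (i) is immediate from the induction hypothesis.

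For (ii), take $\beta<\alpha$ with $\beta\le\delta_\alpha(K)$. On each of the finitely many blocks below $\delta_\alpha(K)$, $u_\alpha$ differs from $u_{\delta_\alpha(k+1)}$ in at most $n-1$ points. Then $u_{\delta_\alpha(k+1)}$ and $u_\beta$ differ finitely on the relevant range, by induction (coherence composes, as in Lemma \ref{lemma:=^*}(a)). So $D_{\alpha,\beta}$ is a finite union of finite sets.

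For (iii) I would prove the stronger single-point statement, by induction exactly as in (III) of Lemma \ref{lemma:z_alpha}. The statement is: for $\eta\in\Lambda$ with $\sigma_\alpha(m)\le\eta<\alpha$, $m$ maximal, and each $i\in\{1,\dots,n-1\}$, there is $\gamma\in(\sigma_\alpha(m),\eta)$ with $u_\alpha(\gamma)=u_\eta(\gamma)+i$. Given a sequence $\beta_m\to\alpha$, this yields $\gamma_{m,i}\to\alpha$. By Lemma \ref{lemma:fatladder} there are two cases. If $\eta=\delta_\alpha(k+1)$, the designated point $\sigma_\eta(\ell+i-1)+1$ is a witness by definition. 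Otherwise $\sigma_\delta(0)<\eta<\delta=\delta_\alpha(k+1)\in\Lambda_2$. By induction there is a witness $\gamma'$ for $\delta$ and $\eta$, lying above the last $\sigma_\delta(j)$ below $\eta$. Then $u_\alpha$ and $u_\delta$ agree at both $\gamma'$ and $\eta$, unless $\gamma'$ is one of the finitely many modified points. In that case I shift the witness: I use the unmodified limit-above part, or pick the witness for the residue $i$ minus the added shift, which exists since all residues are available.

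The main obstacle is exactly this collision, i.e.\ guaranteeing that the witness inherited from $\delta$ is not one of the $n-1$ points where $u_\alpha$ was modified relative to $u_\delta$. The same issue arises in the first case through $u_\delta$ versus $u_\eta$. I would first try to avoid it structurally. Modifications in a block sit just above ladder points $\sigma_\delta(\ell+i-1)\le\delta_\alpha(k)$-adjacent ones, whereas inherited witnesses can be taken in a tail just below $\eta$. By strengthening the induction hypothesis, witnesses can be chosen in $(\sigma_\eta(j),\eta)$ for every sufficiently large $j$, and so made disjoint from any prescribed finite set. If that strengthening fails, the fallback is the residue-shifting trick just described.
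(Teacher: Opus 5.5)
\textbf{There is a genuine gap.} It sits at exactly the collision step you single out as the main obstacle, and neither of your two remedies works.

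Your structural strengthening is incompatible with (ii). A witness $\gamma$ with $u_\alpha(\gamma)=u_\eta(\gamma)+i$, $i\ne 0$, lies in $D_{\alpha,\eta}$. That set is finite and does not contain $\eta$, since both functions vanish there by (i). So every witness lies below $\max D_{\alpha,\eta}<\eta$. Witnesses can therefore never be found in $(\sigma_\eta(j),\eta)$ for all large $j$, nor be made to avoid an arbitrary finite set (take that set to be $D_{\delta,\eta}$ itself).

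The residue-shifting fallback fails as well. In $(\sigma_\delta(j),\eta)$ the only possible modified point is $p=\sigma_\delta(j)+1$, with some shift $s$. Suppose $p$ is the inherited witness for residue $i$. The inherited witnesses for the other residues are different points, hence unmodified, so they still witness their own residues, while $p$ now witnesses $i+s$. Picking ``the witness for $i-s$'' only returns an unmodified point, which still witnesses $i-s$. So residue $i$ is lost, and an induction hypothesis giving one witness per nonzero residue cannot recover it.

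What is missing is a location invariant tied to the fat ladder, not to a tail below $\eta$. Let $\delta_\alpha(k)$ be the $M_\alpha$-predecessor of $\eta$; the invariant says the witnesses for $(\alpha,\eta)$ can be taken in $(\delta_\alpha(k)+1,\eta)$.

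Your designated points satisfy this, since $\sigma_\eta(\ell+i-1)>\delta_\alpha(k)$, respectively $\delta'+2i\ge\delta'+2$.

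In the second case, the inherited witness lies in $(\delta_\delta(k')+1,\eta)$, where $\delta_\delta(k')$ is the $M_\delta$-predecessor of $\eta$. Two facts then finish the step:
\begin{enumerate}
\item Every $\alpha$-modification in the block has the form $\lambda+1$ with $\lambda\in L_\delta\subset M_\delta$. So none lies in $(\delta_\delta(k')+1,\eta)$, because $(\delta_\delta(k'),\eta)$ contains no point of $M_\delta$.
\item $\delta_\delta(k')\ge\delta_\alpha(k)$. In the construction of Lemma \ref{lemma:fatladder}, the $M_\alpha$-predecessor of a point $\nu_i\in M_\alpha\cap\Lambda_2$ is $\xi_{i+1}$, which is a ladder point of $\nu_i$.
\end{enumerate}
With this invariant your scheme goes through. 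Also state the single-point claim with $\sigma_\alpha(m)<\eta$: for $\eta\in L_\alpha$ the interval $(\sigma_\alpha(m),\eta)$ is empty.

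The paper sidesteps the whole issue. It builds the ladder together with the $u_\alpha$, taking $L_\alpha\subset\{0\}\cup\Lambda_2$ when possible, and uses no fat ladders. It places the $n-1$ modifications of the block $(\alpha_k,\alpha_{k+1}]$ inside $(\alpha_k,\alpha_k+\omega)$. They go at points where $u_{\alpha_{k+1}}$ agrees with every $u_\beta$, $\beta\in\Lambda\cap(\alpha_k,\alpha_{k+1})$; such points exist by the auxiliary invariant (iv). One set of points then witnesses (iii') for every $\beta$ in the block at once, and no witness is inherited recursively.
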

\begin{proof}
    We build $u_\alpha$ and a ladder system $L_\alpha$ together by induction, such that (i) and (ii) hold, together
    with the following two conditions:
    
    (iii') for each $\alpha\in\Lambda_2$ and each $\beta<\alpha$, $\beta\in\Lambda$, there is $k\in\omega$
           such that $\sigma_\alpha(k)<\beta\le\sigma_\alpha(k+1)$ and for each $i=1,\dots,n-1$,
           there is $\gamma_i\in(\sigma_\alpha(k),\beta) - \Lambda$ such that 
           $u_\alpha(\gamma_i) = u_\beta(\gamma_i) + i \text{ (mod }n)$;
    
    (iv) for each $\alpha\in\Lambda_2$ and each $\eta<\alpha$, $\eta\in\Lambda$,
         the set 
         $(\eta,\eta+\omega) - (\cup_{\beta\in\Lambda\cap(\eta,\alpha)} D_{\alpha,\beta})$
         is infinite.
    \vskip .2cm
    It is immediate that (iii') implies (iii). 
    Let us start the construction by setting $u_0(0)=0$.
    If $\alpha = \beta+\omega$ for some $\beta\in\Lambda\cup\{0\}$, 
    set $u_\alpha(\gamma) =0 $ for each $\gamma\in(\beta,\alpha]$
    and $u_\alpha\upharpoonright[0,\beta] = u_\beta$.
    Let also $L_\alpha$ be $\{0\}\cup\{\beta+m\,:\,m\in\omega\}$.
    If $\alpha\in\Lambda_2$ but $\Lambda_2$ is not cofinal in $[0,\alpha)$,
    then $\alpha = \beta+\omega\cdot\omega$ for some $\beta\in\Lambda_2\cup\{0\}$.
    In that case set $L_\alpha=\{0\}\cup\{\beta + \omega\cdot m\,:\,m\in\omega\}$,
    and define $u_\alpha( \beta + \omega\cdot m + i ) = i$ for each $i=1,\dots,n-1$,
    $u_\alpha( \gamma ) = 0$ for each other $\gamma\in(\beta,\alpha]$.
    By (iv) for $\beta$, there are minimal $\gamma_1,\dots,\gamma_{n-1}\in(0,\omega)$ such that
    $u_\beta(\gamma_i) = u_\eta(\gamma_i)$ for each $\eta<\beta$ in $\Lambda$.
    We define $u_\alpha\upharpoonright[0,\beta] = u_\beta$ except at $\gamma_i$
    where it takes the value $u_\beta(\gamma_i) + i\text{ (mod }n)$.
    Then (i), (ii), (iii') and (iv) hold by construction (and induction).\\
    Let now $\alpha$ be such that $\Lambda_2$ is cofinal in $[0,\alpha)$.
    Choose $L_\alpha\subset\{0\}\cup\Lambda_2$, with $0\in L_\alpha$.
    To alleviate the notation, denote its members by $\alpha_k$ ($k\in\omega$).
    By (iv) again, there are minimal $\gamma_1,\dots,\gamma_{n-1}\in(\alpha_k,\alpha_{k}+\omega)$
    such that for all $\beta\in\Lambda$ with $\alpha_k<\beta<\alpha_{k+1}$, we have
    $u_{\alpha_{k+1}}(\gamma_i) = u_\beta(\gamma_i)$.
    Define $u_\alpha$ such that it is equal to $u_{\alpha_{k+1}}$ on $(\alpha_k,\alpha_{k+1}]$
    except at those $\gamma_i$ where it takes the value $u_{\alpha_{k+1}}(\gamma_i) + i \text{ (mod }n)$.
    Then (i) and (ii), and (iii') hold by induction.
    Since $u_\alpha$ differs from $u_{\alpha_{k+1}}$ only at finitely many points,
    (iv) still holds by induction.
\end{proof}

\begin{example}[{Nyikos \cite[Example 8.1]{Nyikoscoherent}}]
   \label{ex:nonnormal}
   For each $n\ge 2$, there is a non-normal principal $\Z_n$-bundle.
\end{example}
\begin{proof}[Details]
   Take the sequence $u_\alpha$ given by Lemma \ref{lemma:coherent}. In $\omega_1\times n$,
   let $\mathscr{B}$ be the family 
   $B(\alpha,i) = \{\langle \beta, u_\alpha(\beta) +i\text{ (mod }n)\rangle\,:\,\beta\le\alpha\}$
   for $\alpha\in\Lambda$, and $B(\alpha+1,i) = \{\langle\alpha+1,i\rangle\}\cup B(\alpha,i)$.
   By (ii), $\mathscr{B}$ satisifes (\ref{eq:Balpha}) and thus engenders a symmetric $n$-cp$\omega_1$.
   The action of $\Z_n$ by addition modulo $n$ in each fiber is continuous, hence 
   $K(\mathscr{B})$ is a principal $\Z_n$-bundle.
   By (i), the topology on $A_i = \Lambda\times\{i\}$ is the product topology, hence the
   $A_i$ are disjoint copies of $\omega_1$.
   But $A_i$ and $A_j$ cannot be put on disjoint open sets if $i<j$. Indeed, if $U,V$ are 
   open and contain respectively $A_i$ and $A_j$, by Fodor there is some $\eta$ and
   a stationary $S$ such that $U\supset B(\alpha,i)\cap\pi^{-1}((\eta,\alpha])$ and
   $V\supset B(\alpha,j)\cap\pi^{-1}((\eta,\alpha])$ for each $\alpha\in S$.
   Take $\alpha\in S''$ with $\alpha>\eta$. Then $\alpha\in\Lambda_2$ and there is a sequence
   $\beta_m\in\Lambda\cap S$ with limit $\alpha$.
   Then by (iii) there are $m\in\omega$ and $\gamma\in(\eta,\beta_m)$ such that 
   $u_\alpha(\gamma) + i = u_{\beta_m}(\gamma) + j \text{ (mod }n)$.
   By definition, $B(\alpha,i)\cap B(\beta_m,j)\ni\langle \gamma, u_\alpha(\gamma) + i \text{ (mod }n)\rangle$.
   It follows that $U\cap V\not=\varnothing$ and thus $K(\mathscr{B})$ is not normal.
\end{proof}

\begin{cor}
  There is an $n$-sheeted covering map $\pi:K(\mathscr{B})\to\omega_1$
  such that the base space $\omega_1$ is normal while the covering space $K(\mathscr{B})$ is not.
\end{cor}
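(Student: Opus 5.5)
The corollary should follow from Example \ref{ex:nonnormal} together with Corollary \ref{cor:covering}. I will take $K(\mathscr{B})$ to be the non-normal principal $\Z_n$-bundle built in Example \ref{ex:nonnormal}, and $\pi$ the projection on the first factor.

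The first step is to check that $\mathscr{B}$ is transverse. By construction $B(\alpha,i)=\{\langle\beta,u_\alpha(\beta)+i \text{ (mod }n)\rangle\,:\,\beta\le\alpha\}$ for $\alpha\in\Lambda$, and the successor sets are obtained by adding one point. Hence each $B(\alpha,i)$ meets every fiber $\{\beta\}\times n$ with $\beta\le\alpha$ in exactly one point. Condition (ii) of Lemma \ref{lemma:coherent} says that $u_\alpha$ and $u_\beta$ differ at only finitely many points below $\beta$. This gives (\ref{eq:Balpha}), so by Lemma \ref{lemma:transversestuff} the topology is a transverse $n$-cp$\omega_1$. (Alternatively, Lemma \ref{lemma:bundlesym} gives transversality directly from the principal bundle structure.)

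Corollary \ref{cor:covering} now applies to the transverse family $\mathscr{B}$. It shows that $\pi:K(\mathscr{B})\to\omega_1$ is an $n$-sheeted covering map: over each clopen $[0,\alpha]$ the preimage splits as the $n$ clopen pieces $B(\alpha,i)$, each mapped homeomorphically onto $[0,\alpha]$ (Lemma \ref{lemma:symbundle}). It is also worth recording that $\pi$ is continuous and closed, which holds by the definition of an $n$-cp$\omega_1$.

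For the base, $\omega_1$ is a linearly ordered space, so it is (hereditarily) normal. For the covering space, Example \ref{ex:nonnormal} shows that the disjoint closed sets $A_i=\Lambda\times\{i\}$ and $A_j$ (with $i\ne j$) cannot be separated, so $K(\mathscr{B})$ is not normal. One small point needs checking here: each $A_i$ must be closed. Each $A_i$ is a copy of $\omega_1$ by (i), and it is closed because it is countably compact and $\pi$ restricted to it is injective with closed image $\Lambda$. No step here is a real obstacle; the only care needed is confirming that the example's family really is transverse, so that Corollary \ref{cor:covering} applies.
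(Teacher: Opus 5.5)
Your proposal is correct and matches the paper's proof, which simply observes that the space of Example \ref{ex:nonnormal} is transverse and non-normal and then invokes Corollary \ref{cor:covering}. The extra checks you supply (normality of $\omega_1$, closedness of the $A_i$, and (\ref{eq:Balpha})) are details the paper leaves implicit; note that at limit $\beta$ the derivation of (\ref{eq:Balpha}) uses (i), which gives $u_\alpha(\beta)=0$, as well as (ii).
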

\begin{proof}
  Example \ref{ex:nonnormal} is non-normal and transverse. By 
  Corollary \ref{cor:covering}, $\pi$ is an $n$-sheeted covering map.
\end{proof}
Of course, once we find a $2$-sheeted covering map with non-normal $K(\mathscr{B})$, adding a copy of 
$\omega_1\times (n-2)$ with the product topology gives an $n$-sheeted such covering map.
The bonus in Example \ref{ex:nonnormal} when $n>2$ is the principal bundle structure.

We notice that sequences of functions $u_\alpha$ satisfying (ii) of Lemma \ref{lemma:coherent}
are called {\em coherent}. One way to obtain a coherent sequence is to start with
a function $u:\omega_1\to n$ and to define $u_\alpha$ by making finitely many changes to 
$u\upharpoonright[0,\alpha]$. If, given the $u_\alpha$, 
we can find such an $u$, the sequence is called {\em uniformizable}.
Another interpretation of the proof that $K(\mathscr{B})$ is non-normal is that the sequence
$u_\alpha$ is {\em not} uniformizable. This similarity can be pushed even further:
$K(\mathscr{B})$ is normal iff $u_\alpha$ is uniformizable.
(This equivalence is the key insight of Nyikos on this subject, see Section 4 in \cite{Nyikoscoherent} for more).
Many 
non-uniformizable coherent
sequences have appeared in the litterature since the 1970s (at least), 
but they usually have range $2$, not an arbitrary finite $n$. 
Since we also wanted our sequence to satisfy (i), we thought it was simpler to 
define it ourselves instead of trying to adapt a published one to our purposes.
Notice that if a non-uniformizable sequence $u_\alpha$ has range $2$, and the
$B(\alpha,i)$ and $A_i$ are defined as above, then $A_i$ cannot
be separated from $A_{i+1}$ but $A_i$ can be separated from $A_{i+2}$. We find that
our version where no pair $A_i,A_j$ can be separated is more elegant.

%%%%%%%%%%%%%%%%%%%%%%%%%%%%%%%%%%%%%%%%%%%%%%%%%%%%%%%%%%%%%%%%%%%%%%%%%%%%%%%%%%%%%%%%%%%%%%
%%%%%%%%%%%%%%%%%%%%%%%%%%%%%%%%%%%%%%%%%%%%%%%%%%%%%%%%%%%%%%%%%%%%%%%%%%%%%%%%%%%%%%%%%%%%%%
%%%%%%%%%%%%%%%%%%%%%%%%%%%%%%%%%%%%%%%%%%%%%%%%%%%%%%%%%%%%%%%%%%%%%%%%%%%%%%%%%%%%%%%%%%%%%%

\section{A variation on $\clubsuit_C$.}
\label{sec:variationclub}

While trying to fill the gaps in Nyikos' draft, we first 
thought that the principle $\clubsuit_C^2$ below was necessary for the proof of Theorem \ref{thm:intro}.
It was not clear to us whether it is consistent, but after asking in Mathoverflow,
G. Goldberg and A. Lietz provided a proof that it holds in the constructible universe
$\mathbf{L}$. We then found a simple proof that the 
more general principle $\clubsuit_C^{<\omega}$
follows from $\diamondsuit$.
This family of principles lack proper applications, but in the spirit of Nyikos' drafts which contain 
many propositions of similar axioms, we decided to include them here.
(In this section, we abandon the interval notation $[0,\alpha)$ and use $\alpha$ instead, as it is more usual
when dealing with these principles.)

\begin{defi}\ \\
   If $\mathscr{L} = \langle L_\alpha\,:\,\alpha\in\omega_1\rangle$ 
   is a ladder system, define by induction $T_\alpha^0(\mathscr{L}) = \{\alpha\}$ and
   $T^{n+1}_\alpha(\mathscr{L}) = T^{n}_\alpha(\mathscr{L})\cup\cup_{\beta\in T^{n}_\alpha(\mathscr{L})}L_\beta$.
   (We assume that $L_\alpha=\varnothing$ if $\alpha$ is successor.)
   Set $T^{\omega}_\alpha(\mathscr{L}) = \cup_{n\in\omega}T^{n}_\alpha(\mathscr{L})$. 
   Then, for $n\le\omega$, we define the following axioms:
   \vskip .2cm\noindent
   \begin{minipage}{.06\textwidth} 
       $\clubsuit^{n}_C$ : 
    \end{minipage} 
    \begin{minipage}{.85\textwidth}              
                     There is a ladder system $\mathscr{L}=\langle L_\alpha\,:\,\alpha\in\Lambda\rangle$
                     such that for each 
                     club $C\subset\omega_1$, there is a stationary set of $\alpha$ such that
                     $T^{n}_\alpha(\mathscr{L})\subset C$.                
    \end{minipage} 
    \vskip .2cm\noindent
    \begin{minipage}{.06\textwidth} 
       $\clubsuit^{<\omega}_C$ : 
    \end{minipage} 
    \begin{minipage}{.85\textwidth}              
                     There is a ladder system $\mathscr{L}=\langle L_\alpha\,:\,\alpha\in\Lambda\rangle$
                     such that for each 
                     club $C\subset\omega_1$, 
                     for each $n\in\omega$
                     there is a stationary set of $\alpha$ such that
                     $T^{n}_\alpha(\mathscr{L})\subset C$.                
    \end{minipage}    
\end{defi}
Recall Jensen's diamond (which holds e.g. in $\mathbf{L}$):
\vskip .2cm\noindent
    \begin{minipage}{.05\textwidth} 
       $\diamondsuit$ : 
    \end{minipage} 
    \begin{minipage}{.85\textwidth}              
                     There is a $\diamondsuit$-sequence $\langle A_\alpha\,:\,\alpha\in\omega_1\rangle$,
                     i.e. for each $E\subset\omega_1$ there is a stationary set of $\alpha$ for which
                     $A_\alpha\cap\alpha = E\cap\alpha$.                
    \end{minipage}                    
\vskip .2cm
Recall that when $E\subset\omega_1$ we denote by $E'$ is its derived set (i.e. its limit points).
Of course, $E'\subset\Lambda$ for any $E\subset\omega_1$ and $S'$ is stationary whenever $S$ is stationary.

\begin{lemma}
   $\clubsuit_C^\omega$ is inconsistant.
\end{lemma}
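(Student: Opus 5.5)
The plan is to find a single club $C$ that no set $T^\omega_\alpha(\mathscr{L})$ can fit inside. The key fact is that $T^\omega_\alpha(\mathscr{L})$ always contains a successor ordinal, or $0$, while a club can be chosen to contain only limit ordinals.

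\textbf{Step 1: $T^\omega_\alpha(\mathscr{L})$ contains a non-limit ordinal.} Fix any ladder system $\mathscr{L}=\langle L_\alpha\,:\,\alpha\in\Lambda\rangle$ and any $\alpha\in\Lambda$. We follow the tree $T^\omega_\alpha$ downward along minimal elements. Set $\alpha_0=\alpha$. If $\alpha_k\in\Lambda$, let $\alpha_{k+1}=\min L_{\alpha_k}$; since $L_{\alpha_k}$ is cofinal in $\alpha_k$, we have $\alpha_{k+1}<\alpha_k$. By induction, $\alpha_k\in T^k_\alpha(\mathscr{L})\subset T^\omega_\alpha(\mathscr{L})$ for every $k$ for which it is defined. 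A strictly decreasing sequence of ordinals is finite, so the process must stop. It can only stop at some $\alpha_k\notin\Lambda$, that is, $\alpha_k$ is $0$ or a successor ordinal. (If one wants to be fussy, ladders might not contain $0$, but this does not matter: the stopping point is still a non-limit ordinal.)

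\textbf{Step 2: choose the club.} Take $C=\Lambda$, the club of countable limit ordinals. Whether we regard $0$ as a limit or not, $C$ contains no successor ordinal. If $0$ is a concern, use $C=\Lambda\setminus\{0\}$, which is still club.

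\textbf{Step 3: conclude.} By Step 1, for every $\alpha$ the set $T^\omega_\alpha(\mathscr{L})$ contains an element outside $C$. Hence the set of $\alpha$ with $T^\omega_\alpha(\mathscr{L})\subset C$ is empty, and in particular not stationary. Since $\mathscr{L}$ was an arbitrary ladder system, $\clubsuit^\omega_C$ fails, so it is inconsistent outright, already in ZF.

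The argument has no serious obstacle. The only point needing care is the convention on $L_\beta$ for successor $\beta$ and the status of $0$, which is why $C$ is chosen to avoid $0$ as well. The same descent argument also shows why $\clubsuit^{<\omega}_C$ survives: any fixed finite level $T^n_\alpha$ cuts off the descent before it reaches a successor ordinal.
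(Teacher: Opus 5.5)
Your proof is correct and is essentially the paper's argument: the paper shows by induction on $\alpha\in\Lambda$ that $T^\omega_\alpha(\mathscr{L})$ always contains a successor ordinal and then takes the club $\Lambda$, and your descent along $\min L_{\alpha_k}$ is that same well-founded induction unrolled along one branch (with the small extra care of excluding $0$ from the club, since your descent may end at $0$). Your closing remark about $\clubsuit^{<\omega}_C$ is only a heuristic, not an argument: finite levels $T^n_\alpha(\mathscr{L})$ can perfectly well contain successor ordinals, and the paper obtains $\clubsuit^{<\omega}_C$ from $\diamondsuit$ by a genuine construction.
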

\begin{proof}
   We show that 
   for any ladder system $\mathscr{L}$,
   $T^\omega_\alpha(\mathscr{L})$ contains successor ordinals for each $\alpha\in\Lambda$.
   This is clear if $\alpha=\omega$. Suppose by induction that it is true for each $\beta<\alpha$
   with $\beta,\alpha\in\Lambda$.
   If $L_\alpha$ contains successor ordinals, we are over. Otherwise, $L_\alpha\subset\Lambda$ and thus
   $T^\omega_\beta(\mathscr{L})$ contains successor ordinals for each $\beta\in L_\alpha$.
   It follows that $T^\omega_\alpha(\mathscr{L}) = L_\alpha\cup \cup_{\beta\in L_\alpha}T^\omega_\beta(\mathscr{L})$
   also contains (lots of) successor ordinals.
   This shows that $\clubsuit_C^\omega$ cannot hold, since e.g. $\Lambda$ is a club without successor ordinals.
\end{proof}

\begin{lemma}
    \label{lemma:diamondclub}
    $\diamondsuit \,\Longrightarrow\, \clubsuit^{<\omega}_C$
\end{lemma}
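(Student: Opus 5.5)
We plan to build the ladder system by induction on $\alpha\in\Lambda$, using the $\diamondsuit$-sequence $\langle A_\alpha\,:\,\alpha\in\omega_1\rangle$ to guess clubs. Since each $T^{n}_\alpha(\mathscr{L})$ only involves ladders $L_\beta$ with $\beta\le\alpha$, we may choose $L_\alpha$ once all $L_\beta$, $\beta<\alpha$, are defined.

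\emph{Choice of $L_\alpha$.} Fix $\alpha\in\Lambda$. If $A_\alpha\cap\alpha$ is unbounded in $\alpha$ and closed in $\alpha$, we look for a ladder at $\alpha$ whose trees stay inside $A_\alpha$. We call $\beta<\alpha$ \emph{$m$-good} if $\beta\in A_\alpha$ and $T^m_\beta(\mathscr{L})\subset A_\alpha$. If for every $m$ the $m$-good ordinals are cofinal in $\alpha$, we take $L_\alpha=\{\beta_k\,:\,k\in\omega\}$ strictly increasing, cofinal in $\alpha$, with $\beta_k$ being $k$-good. If this fails for some $m$, we let $m_\alpha$ be the least $m$ for which it fails, and pick $L_\alpha$ with $\beta_k$ being $(m_\alpha-1)$-good when $m_\alpha\ge 1$. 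In every other case $L_\alpha$ is arbitrary.

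\emph{Verification.} Fix a club $C$ and $n\in\omega$; we show by induction on $n$ that $S_n(C)=\{\alpha\,:\,T^n_\alpha\subset C\}$ is stationary for every club $C$. For $n=0$ this is just $C$. Now take an arbitrary club $D$. By $\diamondsuit$, the set $S=\{\alpha\in C'\cap D\,:\,A_\alpha\cap\alpha=C\cap\alpha\}$ is stationary, and since $\alpha\in C'$ the set $A_\alpha\cap\alpha$ is club in $\alpha$. For $\alpha\in S$, every $\beta\in S_m(C)\cap\alpha$ is $m$-good, and for $\alpha\in S$ we have $T^m_\beta\subset C\cap\alpha=A_\alpha\cap\alpha$ because $T^m_\beta\subset\beta+1$.

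The hard part is to ensure that $S_m(C)\cap\alpha$ is cofinal in $\alpha$ for all $m$ at such $\alpha$. We handle this by a closing-off argument. For each $m$, the set $S_m(C)$ is stationary by induction, hence unbounded, so the set $E_m$ of limit points of $S_m(C)$ is club. Put $E=\bigcap_m E_m$, which is club. Then $S\cap E\cap D$ is nonempty; indeed it is stationary, by applying $\diamondsuit$ with $D\cap E$. For $\alpha$ in this set, every $m$ has cofinally many $m$-good $\beta$, so $L_\alpha=\{\beta_k\}$ with $\beta_k$ $k$-good. Hence $L_\alpha\subset C$, and
$$T^{n}_\alpha=\{\alpha\}\cup L_\alpha\cup\bigcup_k T^{n-1}_{\beta_k}.$$
For $k\ge n-1$ we have $T^{n-1}_{\beta_k}\subset T^k_{\beta_k}\subset C$.

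The finitely many $k<n-1$ are the main obstacle: $\beta_k$ is only $k$-good. We fix this by modifying the choice so that $\beta_k$ is $k'$-good for a larger index. For instance, we choose $\beta_k$ to be $(k+j)$-good for the largest $j\le k$ available, or more simply we index the goodness requirement by $k$ but start the ladder only after guessing. A cleaner fix is to require $\beta_k$ to be $m$-good for all $m\le$ the largest $\ell$ such that $\ell$-good ordinals are cofinal, truncated at $k$. Alternatively, and this is what we would do in the end, we make the requirement depend on $\alpha$ alone and not on $k$. By monotonicity ($T^m\subset T^{m+1}$), being $m$-good implies being $m'$-good for $m'\le m$. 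For $\alpha$ in our stationary set all $m$ work, so we can diagonalize: choose $\beta_k$ being $k$-good and also above $\beta_{k-1}$. The remaining finite part $\bigcup_{k<n-1}T^{n-1}_{\beta_k}$ is then handled by noting that the set of $\alpha$ whose first $n$ ladder points are $n$-good is still stationary, via Fodor applied to $\alpha\mapsto\beta_{n}$. Concretely, we choose $\beta_k$ to be $k$-good only for $k\ge$ some threshold, and below it $\beta_k$ is the least $\ell$-good for $\ell$ maximal. Checking that this bookkeeping gives $T^n_\alpha\subset C$ on a stationary set is the step we expect to need the most care.
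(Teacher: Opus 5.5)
Your verification breaks down exactly at the step you flag, and none of the sketched repairs closes it.

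\textbf{The gap.} At a guessing point $\alpha$ where all $m$-good ordinals are cofinal (the case your closing-off argument puts you in), you take $\beta_k$ to be only $k$-good. For $k<n-1$ nothing then gives $T^{n-1}_{\beta_k}\subset C$, so nothing gives $T^n_\alpha\subset C$. None of the proposed fixes works:
\begin{itemize}
\item In that case there is no ``largest available $\ell$''.
\item You cannot make a single $\beta_k$ $m$-good for every $m$. That would mean $T^\omega_{\beta_k}\subset C$, which is impossible for instance when $C=\Lambda$; this is the argument showing $\clubsuit^\omega_C$ inconsistent.
\item Fodor applied to $\alpha\mapsto\beta_n$ gives no control over how good $\beta_0,\dots,\beta_{n-2}$ are.
\end{itemize}

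The obstruction is structural. The ladder at $\alpha$ is chosen once, for all $n$ at the same time. For a club $C\subset\Lambda$, its first point satisfies $T^m_{\beta_0}\subset C$ only for $m$ below some finite bound, so $T^n_\alpha\subset C$ holds only for $n\le N(\alpha)<\omega$. What you need is that, for each $n$, stationarily many guessing points have $N(\alpha)\ge n$. A rule that sees only $A_\alpha\cap\alpha=C\cap\alpha$ and the earlier ladders has no way to know which $n$ to aim for at $\alpha$.

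There is also a circularity. You prove that $S_n(C)$ is stationary by induction on $n$, but forming $E=\bigcap_m E_m$ requires $S_m(C)$ to be unbounded for every $m$, including $m\ge n$.

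\textbf{The missing idea} is to let $\diamondsuit$ guess the target level as well as the club. The paper does this by guessing stationary sets rather than clubs:
\begin{itemize}
\item Put $L_\alpha\subset A_\alpha\cap\Lambda$ whenever this set is cofinal in $\alpha$. No recursion on earlier ladders is needed.
\item Given $C$, let $S_0=C$, let $X=S_{n-1}\cap C\cap\Lambda$, and let $S_n$ be the set of limit points $\alpha$ of $X$ with $A_\alpha\cap\alpha=X\cap\alpha$. This set is stationary.
\item For $\alpha\in S_n$ the whole ladder lies in $S_{n-1}\cap C$, so every $\beta\in L_\alpha$ is $(n-1)$-good simultaneously.
\item Induction then gives $T^n_\alpha=\{\alpha\}\cup\bigcup_{\beta\in L_\alpha}T^{n-1}_\beta\subset C$, with $\alpha\in C$ by closedness.
\end{itemize}
Guess $X$ itself, not the derived set $(S_{n-1}\cap C)'$ as written in the paper. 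Limit points of $S_{n-1}\cap C$ need not lie in $S_{n-1}$, so the paper's ``in particular $L_\alpha\subset S_{n-1}\cap C$'' needs this adjustment.

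Alternatively, you can keep your goodness framework and code the level into the guess:
\begin{itemize}
\item Let $\diamondsuit$ guess $\{n\}\cup(C\setminus\omega)$.
\item At $\alpha>\omega$, decode $n_\alpha$ from $A_\alpha\cap\omega$ and $C_\alpha=A_\alpha\setminus\omega$, and choose $L_\alpha$ cofinal among the $n_\alpha$-good ordinals.
\item At points guessing $\{n-1\}\cup(C\setminus\omega)$ that are limits of $S_{n-1}(C\setminus\omega)$, the ladder then lies inside $S_{n-1}(C\setminus\omega)$.
\end{itemize}
With this, a plain induction on $n$, carried out for all clubs simultaneously, goes through.
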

\begin{proof}
   Let $\langle A_\alpha\,:\,\alpha\in\omega_1\rangle$ be a $\diamondsuit$-sequence.
   Let $\alpha\in\Lambda$. If $A_\alpha\cap\Lambda$ is cofinal in $\alpha$, we 
   take a cofinal increasing sequence $L_\alpha\subset A_\alpha\cap\Lambda$.
   If $A_\alpha\cap\Lambda$ is bounded in $\alpha$, take any increasing cofinal sequence $L_\alpha$.
   We now prove that these form a $\clubsuit_C^{<\omega}$-sequence.
   \\
   Let $C\subset\omega_1$ be club.
   We shall define stationary sets $S_n$ by induction on $n$.
   Set $S_0=C$. 
   Given $S_n$, we let $\wt{S}_{n+1} = \{\alpha\in\omega_1\,:\,A_\alpha\cap\alpha = (S_n\cap C)'\cap\alpha\}$.
   Then $\wt{S}_{n+1}$ is stationary.
   Since $(S_n\cap C)'$ is stationary  
   and in particular unbounded in $\omega_1$, the set 
   $D_{n+1} = \{\alpha\in\omega_1\,:\, (S_n\cap C)'\text{ is cofinal in }\alpha\}$
   is club.
   We then set $S_{n+1} = \wt{S}_{n+1}\cap D_{n+1}$, then $S_{n+1}$ is stationary.
   \begin{claim}
      For each $n\ge 0$, 
      $S_n \subset\{\alpha\in\omega_1\,:\,T^{n}_\alpha(\mathscr{L})\subset C\}$.
   \end{claim}
   \begin{proof}
      By induction. 
      We abbreviate $T^{n}_\alpha(\mathscr{L})$ by $T^{n}_\alpha$ as $\mathscr{L}$ is clear from the context.
      If $\alpha \in S_0=C$, this is clear since $T^0_\alpha=\{\alpha\}$.  
      Assume that the claim holds for each $m<n$.
      If $\alpha\in S_n$, then $(S_{n-1}\cap C)'$ is cofinal in $\alpha$ and 
      $\Lambda\cap A_\alpha\cap\alpha = \Lambda\cap (S_{n-1}\cap C)'\cap\alpha = (S_{n-1}\cap C)'\cap\alpha$.
      It follows that $\Lambda\cap A_\alpha$ is cofinal in $\alpha$, hence
      $L_\alpha\subset A_\alpha\subset (S_{n-1}\cap C)'$. In particular, $L_\alpha\subset S_{n-1}\cap C$.
      By induction, $T^{n-1}_\beta\subset C$ for each $\beta\in L_\alpha$.
      Since $L_\alpha\subset C$ as well, by closedness $\alpha\in C$. 
      The claim follows since
      $T^{n}_\alpha = \{\alpha\}\cup L_\alpha \cup_{\beta\in L_\alpha} T^{n-1}_\beta$. 
   \end{proof}
   \noindent
   This shows that $\{\alpha\in\omega_1\,:\,T^{n}_\alpha(\mathscr{L})\subset C\}$
   contains a stationary set, which finishes the proof.
\end{proof}

Of course, $\clubsuit^{<\omega}_C\Rightarrow\clubsuit^{n}_C\Rightarrow\clubsuit^{m}_C$
for each $m<n<\omega$.
\begin{q} 
   Does $\clubsuit_C^m$ imply $\clubsuit_C^n$ when $m<n$~? Does it imply $\clubsuit_C^{<\infty}$~?
\end{q}

\bibliographystyle{plain}
\bibliography{../biblio}

\end{document}